\documentclass[reqno]{amsart}

\usepackage{multirow}
\usepackage{hyperref}
\usepackage{enumerate}
\usepackage{amsfonts,amssymb,amsmath,amsthm}
\usepackage{epsfig}
\usepackage{graphics}
\usepackage[normalem]{ulem}
\usepackage{color}
\usepackage{version}
\usepackage{caption}
\usepackage{subcaption}
\usepackage{numprint}
\usepackage{thm-restate}
\usepackage{lipsum}

\usepackage{hyperref}

\usepackage{cleveref}
\usepackage{bm}
\usepackage{dsfont}
\usepackage{color}
\usepackage{tikz}
\usetikzlibrary{cd}

\newcommand{\Seven}{S_{\text{even}}}
\newcommand{\Sodd}{S_{\text{odd}}}

\DeclareMathOperator{\Tr}{Tr}

\setcounter{tocdepth}{1}
\setlength{\textheight}{20cm} \textwidth16cm \hoffset=-2truecm

\input xy 
\xyoption{all}
\numberwithin{equation}{section}


\definecolor{OrangeRed}{cmyk}{0,0.6,1,0}            
\definecolor{DarkBlue}{cmyk}{1,1,0,0.20}
\definecolor{DarkGreen}{cmyk}{1,0,0.6,0.2}
\definecolor{myblue}{rgb}{0.66,0.78,1.00}
\definecolor{Violet}{cmyk}{0.79,0.88,0,0}
\definecolor{Lavender}{cmyk}{0,0.48,0,0}

\newtheorem{thm}{Theorem}[section]
\newtheorem{theorem}[thm]{Theorem}
\newtheorem*{theorem*}{Theorem}
\newtheorem{main theorem}[thm]{Main Theorem}
\newtheorem{corollary}[thm]{Corollary}
\newtheorem*{main}{Main Theorem}
\newtheorem{lemma}[thm]{Lemma}

\newtheorem{prop}[thm]{Proposition}

\theoremstyle{definition}
\newtheorem{question}{Question}

\newtheorem{definition}[thm]{Definition}
\newtheorem{remark}[thm]{Remark}

%


%
%


\def\bcases{\begin{cases}}

\def\ecases{\end{cases}}


\newcommand{\bea}{\begin{eqnarray*}}
\newcommand{\eea}{\end{eqnarray*}}

\newcommand{\be}{\begin{equation}}
\newcommand{\ee}{\end{equation}}

\DeclareMathOperator{\lab}{lab}
\DeclareMathOperator{\ext}{ext}
\DeclareMathOperator{\inte}{int}

\newcommand{\diam}{\mathrm{diam}}
\newcommand{\dist}{\operatorname{dist}}
\renewcommand{\epsilon}{\varepsilon}

\DeclareMathOperator{\TorusCompatContours}{\Upsilon}
\DeclareMathOperator{\SetOfContours}{\Omega}
\DeclareMathOperator{\Balancedtori}{\mathbf{T}}
\DeclareMathOperator{\torus}{\mathcal{T}}

\definecolor{gG}{RGB}{ 60, 186,  84}
\definecolor{gY}{RGB}{244, 194,  13}
\definecolor{gB}{RGB}{48., 88.6667, 158.}
\definecolor{gR}{RGB}{219,  50,  54}

\definecolor{c0}{RGB}{255,0,0}
\definecolor{c1}{RGB}{0,0,255}

\begin{document}

\title{On boundedness of zeros of the independence polynomial of tori}

\author[D. de Boer]{David de Boer}
\author[P. Buys]{Pjotr Buys}
\author[H. Peters]{Han Peters}
\author[G. Regts]{Guus Regts}

\thanks{$\ddagger$ DdB and PB were funded by the Netherlands Organisation of Scientific Research (NWO): 613.001.851. GR was funded by the Netherlands Organisation of Scientific Research (NWO): VI.Vidi.193.068}
\address[David de Boer, Pjotr Buys, Han Peters, Guus Regts]{Korteweg de Vries Institute for Mathematics, University of Amsterdam. P.O. Box 94248  
1090 GE Amsterdam  
The Netherlands}
\email{\{daviddeboer2795,pjotr.buys,hanpeters77,guusregts\}@\texttt{gmail.com} }

\date{\today}


\begin{abstract}
We study boundedness of zeros of the independence polynomial of tori for sequences of tori converging to the integer lattice. We prove that zeros are bounded for sequences of balanced tori, but unbounded for sequences of highly unbalanced tori.
Here balanced means that the size of the torus is at most exponential in the shortest side length, while highly unbalanced means that the longest side length of the torus is super exponential in the product over the other side lengths cubed.
We discuss implications of our results to the existence of efficient algorithms for approximating the independence polynomial on tori.

This project was partially inspired by the relationship between zeros of partition functions and  holomorphic dynamics, a relationship that in the last two decades played a prominent role in the field. Besides presenting new results, we survey this relationship and its recent consequences.
\end{abstract}

\maketitle

\section{Introduction}\label{sec: introduction}

\subsection{Main results} The independence polynomial of a finite simple graph $G = (V,E)$ is defined by
$$
Z_G(\lambda) = \sum_I \lambda^{|I|},
$$
where the summation runs over all \emph{independent} subsets $I \subseteq V$. Besides its relevance in graph theory, the independence polynomial is studied extensively in the statistical physics literature, where it appears as the partition function of the hard-core model, and in theoretical computer science, where one is primarily interested in the (non-)existence of efficient algorithms for the computation or approximation of $Z_G$.

From the physical viewpoint it is particularly interesting to consider sequences of graphs $G_n$ that converge to a regular lattice. We will consider the integer lattice, and focus on sequences of $d$-dimensional tori converging to $\mathbb Z^d$ for $d \geq 2$, i.e. tori whose minimal cycle lengths tend to infinity. 
Write $\mathbb{Z}_{n}$ for $\mathbb{Z}/n \mathbb{Z}$.
A $d$-dimensional torus with side lengths $\ell_1,\ldots,\ell_d$ is the Cartesian product $\mathbb{Z}_{\ell_1} \times \cdots \times \mathbb{Z}_{\ell_d}$.
For technical reasons explained below we only consider tori for which all side lengths are even and call those tori \emph{even}.
The main result of this paper is the following:

\begin{main}\label{thm:main}
Let $\mathcal{F}$ be a family of even $d$-dimensional tori. If $\mathcal{F}$ is balanced, then the zeros of the independence polynomials $\{Z_{\mathcal{T}}:\mathcal{T} \in \mathcal{F}\}$ are uniformly bounded. If $\mathcal{F}$ is highly unbalanced, then the zeros are not uniformly bounded.
\end{main}

Here we say that a family of $d$-dimensional tori $\mathcal{F}$ is balanced if there exists a $C > 0$ such that for all $\mathcal{T} \in \mathcal{F}$ we have that $\ell_d \le \mathrm{Exp}(C\cdot \ell_1)$, where $\ell_1 \leq \cdots \leq \ell_d$ denote the side lengths of $\mathcal{T}$. On the other hand we say that the family is \emph{highly unbalanced} if there is no uniform constant $C>0$ such that $\ell_d \le \mathrm{Exp}(C \cdot \left(\ell_1 \cdots \ell_{d-1}\right)^3)$ for all $\mathcal{T} \in \mathcal{F}$.

We remark that a family that is not balanced is not necessarily highly unbalanced, hence the addition of the adjective highly. It is not clear to the authors that either estimate is sharp, and it would be interesting if one or both of the results could be sharpened in order to obtain a conclusive statement for all families of even tori.

The motivation for our main result comes from three different directions: (1) statistical physics, (2) the existence of efficient approximation algorithms and (3) the relationship with holomorphic dynamics. The first two of these will be briefly discussed in this introduction. The last will be discussed more extensively in the next section.

\subsection{Motivation from statistical physics}
Understanding the location and distribution of zeros of the independence polynomial plays a prominent role in statistical physics. For a sequence of graphs $G_n = (V_n, E_n)$ and for $\lambda \ge 0$ the free energy per site (also called the \emph{pressure}) is defined by
\begin{equation}\label{eq:free energy}
\rho(\lambda) := \lim_{n \rightarrow \infty} \frac{\log  Z_{G_n}(\lambda)}{|V_n|},
\end{equation}
whenever this limit exists. It was shown by Yang and Lee~\cite{YangLee} that the free energy per site exists for induced subgraphs $G_n$ of $\mathbb Z^d$ that converge in the sense of van Hove, i.e. sequences of graphs for which
$$
\frac{|\partial V_n|}{|V_n|}\rightarrow 0.
$$
It turns out that the limit also exists and agrees for many other sequences of graphs, including cylinders, i.e. products of paths and cycles, and tori, i.e. products of cycles, as long as the length of the shortest path or cycle diverges. This motivates the notion of sequences of tori converging to $\mathbb Z^d$. However, we emphasize that the convergence above occurs specifically for real parameters $\lambda\ge 0$.

\medskip
Independence polynomials have positive coefficients and their zeros therefore never lie on the positive real axis. The location of the complex zeros is however closely related to the behavior of the normalized limit $\rho(\lambda)$. Let $G_n$ be again a sequence of graphs converging to $\mathbb Z^d$ in the sense discussed above. Yang and Lee~\cite{YangLee} showed that if there exists a zero-free neighborhood of the parameter $\lambda_0 \ge 0$,  then $\rho$ is analytic near $\lambda_0$. 
In the other direction, knowledge of the distribution of the zeros can be used to characterize the regularity near phase transitions: parameters $\lambda_0$ where the free energy is not analytic.

As remarked above, the limit behavior on the positive real axis of the normalized logarithm of the independence polynomials is to a large extent independent from the sequence of graphs. 
The motivating question for this work is to what extent this remains true for the distribution and location of the complex zeros of the independence polynomial. In particular we focus on the question whether the zero sets are uniformly bounded or not.

Jauslin and Lebowitz~\cite{JauslinLebowitzhigh} and He and Jauslin~\cite{he2024high} showed that for a variety of non-sliding hard-core particle models on $\mathbb{Z}^d$ (which can be thought of as ordinary hard-core models on a graph obtained from $\mathbb{Z}^d$ by adding certain edges) the pressure is analytic in $z=1/\lambda$ near $z=0$, which according to~\cite[Section 1.4]{JauslinLebowitzhigh} implies that the zeros of the independence polynomials of the associated finite graphs are bounded.
It was shown by Helmuth, Perkins and the last author~\cite{PirogovSinaiWillGuusTyler} that for sequences of \emph{padded} induced subgraphs of $\mathbb Z^d$ the zeros of their independence polynomials are uniformly bounded. We recall that an induced subgraph of $\mathbb Z^d$ is said to be padded if all of its boundary points share the same parity.



Our main result shows that the boundedness of zeros for tori requires additional assumptions on the `balancedness' of the tori. 
We note that in the statistical physics literature it is shown that various other properties of models also show a dependence on a notion of balancedness of the volumes, see for example~\cite{BorgImbireunbalanced,Beauunbalanced}.

\subsection{Implications for efficient approximation algorithms}

The distribution of zeros of the independence polynomial is not only closely related to the analyticity of the limiting free energy, but also to the existence of efficient algorithms for the approximation of the independence polynomial.
Indeed, let $\mathcal{G}$ be a class of bounded degree graphs. 
Then if $Z_G(\lambda)\neq0$ for all $G\in \mathcal{G}$ and $\lambda$ in some open set $U$ containing $0$, then by Barvinok's interpolation method~\cite{BarBook} and follow up work of Patel and the last author ~\cite{PatelRegts17} there exists an algorithm that for each $\lambda\in U$ and $\varepsilon>0$ computes on input of an $n$-vertex graph $G$ from $\mathcal{G}$ a number $\xi$ such that 
\begin{equation}
 |\xi-Z_G(\lambda)|\leq \varepsilon |Z_G(\lambda)|\label{eq:approx}   
\end{equation} in time polynomial in $n/\varepsilon$. 
Such an algorithm is called a \emph{Fully Polynomial Time Approximation Scheme} or \emph{FPTAS} for short.


Recently, Helmuth, Perkins and the last author~\cite{PirogovSinaiWillGuusTyler} were able to extend this algorithmic approach to zero-free regions that do not contain the point $0$, but rather the point $\infty$, for certain subgraphs of the integer lattice. See also~\cite{JKP20} for extensions of this to other families of  bounded degree graphs.
The algorithmic results from~\cite{PirogovSinaiWillGuusTyler} also apply to the torus with all side lengths equal and of even length $n$, but the resulting algorithm is technically not an FPTAS, since it restricts the choice of $\varepsilon$ to be at least $e^{-c n}$ for some constant $c>0$.
The results of the present paper allow to remedy this and moreover extend it to non-positive evaluations and the collection of all balanced tori (at the cost of decreasing the domain).

The following result is almost a direct corollary of our main result combined with the algorithmic approach from~\cite{PirogovSinaiWillGuusTyler}; we will provide details for its proof in Section~\ref{sec:algorithms}.
\begin{prop}\label{prop:algorithm}
Let $d\in \mathbb{Z}_{\geq 2}$ and let $\Balancedtori_d$ be a family of balanced even $d$-dimensional tori. 
Then there exists a $\Lambda>0$ such that for each $\lambda\in \mathbb{C}$ with $|\lambda|>\Lambda$ there exists an FPTAS for approximating $Z_\mathcal{T}(\lambda)$ for $\torus \in \Balancedtori_d$.
\end{prop}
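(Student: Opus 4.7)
The plan is to combine the uniform zero-freeness furnished by the Main Theorem with the polymer-based interpolation scheme of~\cite{PirogovSinaiWillGuusTyler}. Applying the Main Theorem to the family $\Balancedtori_d$ produces a uniform constant $\Lambda>0$, depending only on $d$ and the balancedness parameter, such that $Z_\mathcal{T}(\lambda)\ne 0$ for every $\mathcal{T}\in\Balancedtori_d$ and every $\lambda\in\mathbb{C}$ with $|\lambda|>\Lambda$. It is this $\Lambda$ that will appear in the statement, and the remainder of the argument consists in turning the zero-free set $\{|\lambda|>\Lambda\}$ into a fully polynomial time approximation scheme.

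First I would set up the Pirogov--Sinai / abstract polymer representation of $Z_\mathcal{T}(\lambda)$ for the hard-core model at high fugacity. On an even torus the maximum independent sets are the two bipartition classes, which play the role of two ground states, and each configuration is encoded by its contours (connected regions of deviation from a ground state). Following~\cite{PirogovSinaiWillGuusTyler}, one writes
\[
Z_\mathcal{T}(\lambda)=2\,\lambda^{|V(\mathcal{T})|/2}\,\Xi_\mathcal{T}(1/\lambda),
\]
where $\Xi_\mathcal{T}(z)$ is a polymer partition function whose polymers are connected contour structures in $\mathcal{T}$; the factor $2$ and the equality (rather than approximate equality) come from the parity-reversing symmetry of the even torus, which identifies the two ground-state partition functions. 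The zero-freeness of $Z_\mathcal{T}$ on $\{|\lambda|>\Lambda\}$ then transfers directly to zero-freeness of $\Xi_\mathcal{T}$ on the disk $\{|z|<1/\Lambda\}$, uniformly over $\mathcal{T}\in\Balancedtori_d$.

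Next I would apply the Barvinok--Patel--Regts interpolation method~\cite{BarBook, PatelRegts17} to $\Xi_\mathcal{T}$ in the variable $z$ around $z=0$. Because $\mathcal{T}$ has bounded degree and the polymers are connected structures supported in $\mathcal{T}$, the coefficient of $z^k$ in $\Xi_\mathcal{T}(z)$ depends only on polymers of size at most $k$ and can be computed in time $|V(\mathcal{T})|\cdot g_d(k)$ by a standard enumeration argument. Given the zero-free disk, the truncation of the Taylor series of $\log \Xi_\mathcal{T}$ at degree $k = O(\log(|V(\mathcal{T})|/\varepsilon))$ yields an additive $\varepsilon$-approximation of $\log\Xi_\mathcal{T}(1/\lambda)$; after exponentiating and multiplying by $2\lambda^{|V(\mathcal{T})|/2}$ this produces a multiplicative $(1+\varepsilon)$-approximation of $Z_\mathcal{T}(\lambda)$ in time polynomial in $|V(\mathcal{T})|/\varepsilon$, as required by the definition of an FPTAS.

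The main obstacle, and the reason the Main Theorem is used in an essential way rather than only as a precedent, is the uniformity of the zero-free region across the whole family $\Balancedtori_d$ together with its non-perturbative character. In~\cite{PirogovSinaiWillGuusTyler} the analogous step established zero-freeness only up to corrections of order $e^{-cn}$ in the side length, which forced the restriction $\varepsilon\ge e^{-cn}$; the Main Theorem supplies \emph{exact} zero-freeness and thereby removes this restriction. A secondary technical point to verify is that the constants in the polymer enumeration and in the interpolation estimate depend only on $d$ and $\Lambda$, not on the particular aspect ratio of $\mathcal{T}\in\Balancedtori_d$, which follows from the bounded-degree structure of the torus graphs and from the fact that $\Lambda$ was chosen uniformly over the family.
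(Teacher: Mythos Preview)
Your overall strategy---uniform zero-freeness from the Main Theorem plus Barvinok--Patel--Regts interpolation---is the right one, but the execution has two gaps that the paper's proof specifically works to close.

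First, the claimed identity $Z_\mathcal{T}(\lambda)=2\,\lambda^{|V(\mathcal{T})|/2}\,\Xi_\mathcal{T}(1/\lambda)$ with $\Xi_\mathcal{T}$ a single polymer partition function does not hold. As the paper notes explicitly (just after Lemma~\ref{lem:contourpartitionfunction_usingtoruscomptaibility}), $Z_{\text{match}}(\mathcal{T};z)$ cannot be viewed as one polymer partition function because it contains two distinct empty configurations, one for each ground state. Instead one has $Z_{\text{match}} = p_1 + p_2$ with $p_1 = Z_{\text{match}}^{\text{even}} + Z_{\text{match}}^{\text{large}}$ and $p_2 = Z_{\text{match}}^{\text{odd}}$; each $p_i$ is a polymer partition function but their sum is not, and the parity symmetry does \emph{not} collapse this to a factor of $2$ since $p_1$ carries the large-contour contribution that $p_2$ lacks. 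This matters algorithmically: the cluster expansion, which is what lets one compute the first $m$ coefficients of $\log p_i$ in time $n\cdot e^{O(m)}$ by enumerating connected clusters, applies to $p_1$ and $p_2$ separately, not to their sum. The paper bridges this with a Newton-identity trick: compute coefficients of $\log p_1,\log p_2$ via cluster expansion, convert to coefficients of $p_1,p_2$, add to get coefficients of $p$, then convert back to coefficients of $\log p$, and only then apply Barvinok interpolation to $p$ (which is zero-free by Theorem~\ref{thm:main zero-freeness result contour version}).

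Second, your claim that ``polymers are connected contour structures'' fails on the torus: large contours---those whose components wrap around---have disconnected support by definition. The standard enumeration (pick a root vertex, grow a connected subgraph) therefore does not list them in time $n\cdot e^{O(m)}$. The paper modifies the enumeration in Theorem~2.2 of~\cite{PirogovSinaiWillGuusTyler} to allow disconnected supports, and here the balancedness hypothesis enters essentially: each component of a large contour has size at least $\ell_1$, so a large contour of total size $m$ has at most $m/\ell_1$ components, and choosing an anchor vertex in each component costs a factor $n^{m/\ell_1}\le e^{Cm}$ by the balancedness bound $n\le e^{C\ell_1}$. Without this, the enumeration of large contours and of clusters containing them would not run in the required time.
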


The interpolation method crucially depends on there being an open set not containing any zeros of the independence polynomial for graphs in the given family.
There is essentially no way to circumvent this, at least for the family of all graphs of a given maximum degree $d\geq 3$, $\mathcal{G}_d$. 
Indeed, it was shown in~\cite{BGGS20,BGGS23,de2021zeros} that the closure of the set of $\lambda\in \mathbb{C}$ for which approximating the evaluation of independence polynomial at $\lambda$ (in the sense of~\eqref{eq:approx}) is computationally hard (technically \#P-hard) contains the closure of the set $\lambda \in \mathbb{C}$ for which there exists a graph $G\in \mathcal{G}_d$ such that $Z_G(\lambda)=0$.
It would be interesting to see to what extent such a result hold for more restricted families of bounded degree graphs. 
We suspect that, by slightly enlarging the family of highly unbalanced tori, using the techniques of~\cite{de2021zeros,bencs2022approximating}, it can be shown that approximating the evaluation of the independence polynomial at large $\lambda$ for graphs in this family (in the sense of~\eqref{eq:approx}) is as hard as computing the evaluation exactly. 

\subsection{Proof techniques}

The proof of the main theorem relies upon two different techniques: the zero-freeness for balanced tori relies heavily on Pirogov-Sinai theory, while the existence of unbounded zeros for highly unbalanced tori uses the transfer-matrix expression of the independence polynomial on tori.

\subsubsection{Pirogov-Sinai theory} 

Intuitively, for $\lambda \in \mathbb{C}$ with large norm, the value of independence polynomial in $\lambda$ is determined by the large independent sets. 
Pirogov-Sinai theory builds on this intuition \cite{PirogovSinai}. The main idea is to study the independence polynomial as deviations from the maximal independent sets. For even tori, there are two distinct largest independent sets, one containing the even vertices of the torus and the other containing the odd vertices. 
The vertices where an independent set locally differs from one of these maximal independent sets will be part of so-called \emph{contours}. 
The use of contours goes back to Peierls \cite{peierls_1936} and was further developed by Minlos and Sinai in \cite{MinlosSinai1} and \cite{minlos1967phenomenon}, both originally for the Ising model. 
Using ideas from Pirogov-Sinai theory the independence polynomial can be expressed as a partition function of a polymer model, similar to as was done in \cite{PirogovSinaiWillGuusTyler} for padded regions in $\mathbb Z^d$, where the polymers will be certain sets of contours. 
One of the challenges in this rewriting is posed by the geometry of the torus. 
In particular, contours that wrap around the torus pose difficulties.
We deal with this by defining a suitable compatibility relation we call \emph{torus-compatibility} and by splitting the contours up in two `small' and `large' contours that we analyze separately, exploiting the symmetry of the torus.
This is similar in spirit as to how Borgs Chayes and Tetali~\cite{BCTtorpid} use Pirogov-Sinai theory for the ferromagnetic Potts model on the torus, although our notions of what it means for a contour to be `large' differ. See also~\cite{BCHPTalltemperatures} in which Borgs, Chayes, Helmuth, Perkins and Tetali build on~\cite{BCTtorpid} to develop an efficient algorithm for approximating the partition function of the Potts model on the discrete torus for all temperatures.

In our analysis, we apply Zahradn\'ik's truncated-based approach to Pirogov-Sinai theory \cite{Zahradnk1984AnAV}, and take inspiration from its usage by Borgs and Imbrie in \cite{BorgsImbrie}.
The idea of this approach is to first restrict the polymer partition function to well-behaved contours, so-called \emph{stable contours}. Then one applies the theory to the truncated partition function and with the estimates that follow one shows in fact all contours are stable, obtaining bounds for the original polymer partition function.


\subsubsection{Transfer-matrices}The transfer-matrix method, introduced by Kramers and Wannier \cite{kramers1941statistics1,kramers1941statistics2}, can be applied to rewrite the partition function of a one-dimensional lattice. It is heavily used in the literature to obtain both rigorous results and numerical approximations regarding the accumulation of zeros on physical parameters for other models; see for example \cite{Onsager,shrock2001,SalasSokal,shrock2009,shrock2015}.

In our setting we fix even integers $n_1, \dots, n_{d-1}$ and consider the sequence of tori $\torus_n = \mathbb{Z}_{n_1} \times \cdots \times \mathbb{Z}_{n_{d-1}} \times \mathbb{Z}_n$. The transfer-matrix method allows us to write the independence polynomials of these tori as
\[
    Z_{\torus_n}(\lambda) = \mathrm{Tr}\left(M_\lambda^{n}\right).
\]
Here $M_\lambda$ is a matrix whose entries are indexed by independent sets of the fixed torus $\mathbb{Z}_{n_1} \times \cdots \times \mathbb{Z}_{n_{d-1}}$ and contain monomials in $\lambda$; see Section~\ref{sec: transfer-matrix method} details. If we denote the eigenvalues of $M_\lambda$ by $e_1(\lambda), \ldots, e_N(\lambda)$, the above equation translates to
\[
Z_T(\lambda) = e_1(\lambda)^n + \cdots + e_N(\lambda)^n.
\]
For $|\lambda|$ large we will show that there are two simple eigenvalues, which we denote by $q^+$ and $q^-$, of approximately the same norm that dominate the remaining eigenvalues. Normality arguments then give a relatively quick proof that zeros of $\{Z_{\torus_n}\}_{n \geq 1}$ accumulate at $\infty$. We note that this type of argument can be seen as a special case of Theorem 1.5 from Sokal~\cite{Sokaldense}, where the existence of multiple dominant functions are used to deduce the accumulation of zeros; in that case the zeros of the chromatic polynomial. See also \cite{BKW78} for other related results.

The normality argument does not give any bounds on how large $n$ has to be with respect to $(n_1, \dots, n_{d-1})$ to obtain zeros of a certain magnitude. We will more thoroughly investigate the eigenvalues of $M_\lambda$, and in particular $q^{\pm}$, to explicitly describe such bounds. These bounds imply the unboundedness of zeros for highly unbalanced tori.

\subsection{Questions for future work}

When it comes to describing the complex zeros of the independence polynomials for graphs that converge to the integer lattice, the results in this paper barely touch the surface and raise a number of interesting questions. A first issue, already addressed above, is to close the gap between balanced and highly-unbalanced tori. 

Several steps of the proof for boundedness of zeros of balanced tori rely in an essential way on the assumption that the tori are balanced. On the other hand, the highly-unbalanced assumption on the family of tori that guarantees the existence of unbounded zeros seems far from sharp, evidenced for example by the fact that the demonstrated zeros of the tori escape very rapidly in terms of the sizes of the tori. It therefore seems reasonable to expect that the balanced assumption is necessarily, while the highly-unbalanced assumption is not.

\begin{question}
    Let $\mathcal{F}$ be a family of even $d$-dimensional tori for which the zeros of the independence polynomials are uniformly bounded. Is $\mathcal{F}$ necessarily balanced?
\end{question}

As discussed above, there are many other natural families of graphs that converge to the integer lattice, in the sense that the free energy per site converges to the same limit. Knowing that for families of induced subgraphs of $\mathbb Z^d$ with padded boundaries the zeros are automatically uniformly bounded, while for tori an additional assumption is required, it would be interesting to have a more general criterion that guarantees boundedness of the zero sets. 

\begin{question}
    Let $\mathcal{F}$ be a family of graphs for which the free energy per site exists and agrees with the free energy per site of $d$-dimensional balanced even tori. Under which conditions are the zeros of the independence polynomials uniformly bounded? Of particular interest are graphs with boundaries that are not necessarily padded, such as rectangles and cylinders.
\end{question}


Further questions are discussed in the next section, where we discuss the relationship between partition functions on sequences of related graphs, and iteration of rational maps in the complex domain. While this relationship does not seem to play a strong role in the new results discussed in this paper, the relationship with holomorphic dynamics is very prominent in new developments in the field and functioned as an inspiration for the current paper.

\subsection{Organization of the paper}

In Section \ref{sec:dynamics} we give a thorough discussion of the relationship with holomorphic dynamics and discuss further open questions inspired by this relationship. Section \ref{sec:PirogovSinai} provides a self-contained background in Pirogov-Sinai theory used to prove boundedness of zeros for balanced tori in Section \ref{sec:boundedzerosbalancedtori}. 
We prove the unboundedness of zeros for highly unbalanced tori in Section \ref{sec:unboundedzeros}. In Section \ref{sec:algorithms} we finish by proving implications to efficient approximation algorithms for balanced tori.

\medskip

\noindent {\bf Acknowledgment} The authors are grateful to Ferenc Bencs for inspiring discussions related to the topic of this paper. They are moreover thankful to the referees for useful comments and to Aernout van Enter for suggesting useful references.

\section{Relationship with holomorphic dynamics}\label{sec:dynamics}

For a graph $G = (V,E)$ and a vertex $v \in V$, we denote by $G-v$ and $G-N[v]$ the graphs obtained respectively by removing the vertex $v$ or the vertex and all its neighbors. Since each independent subset either contains $v$ or does not, we obtain the following fundamental equation:
\begin{equation}\label{eq:fundamental}
Z_G(\lambda) =  \lambda \cdot Z_{G-N[v]}(\lambda) + Z_{G-v}(\lambda).
\end{equation}
Together with the fact that the independence polynomial of an empty graph is $1$, this recursive formula can be used as the definition of the independence polynomial. However, for large arbitrary graphs the recursive formula does not provide an efficient means of computing the independence polynomial: the number of graphs that can occur by repeatedly removing either a vertex or a vertex-neighborhood can grow exponentially fast in terms of the number of vertices.

However, when a graph exhibits a clear recursive structure, the above formula can provide a simple recursive formula for computing the independence polynomial, and the description of the zeros of the sequence of independence polynomials can often be translated to a question concerning rational dynamical systems. Let us discuss two explicit examples of graphs with recursive structures.

\subsection{Rooted $d$-ary trees}

In what follows fix an integer $d \ge 2$. We define the sequence of rooted $d$-ary trees $T_n$ by letting $T_0$ be a single vertex, which is of course also its root, and recursively defining $T_{n+1}$, by taking $d$ copies of $T_n$, adding a new root vertex $v$, and adding an edge between $v$ and all the root vertices $v_1, \ldots, v_d$ of the $d$ copies. Alternativelty we can characterize the finite tree $T_n$ by asking that each non-leaf of $T_n$ has exactly $d$ children, and that each leaf has distance $n$ to the root vertex.

The recursive definition of $T_n$ immediately gives rise to a recursive formula: let us write 
$$
x_n = Z_{T_n - v}(\lambda) \; , \quad \mathrm{and} \quad y_n = Z_{T_n - N[v]}(\lambda)
$$
Then it follows that
$$
\left(\begin{matrix}
    x_{n+1}\\
    y_{n+1}
\end{matrix}\right) = 
\left(\begin{matrix}
    (x_n  + y_n)^d\\
    \lambda \cdot x_n^d
\end{matrix}\right),
$$
with initial values $x_0 = 1$ and $y_0 = \lambda$. We are interested in the parameters for which $x_n + y_n = 0$. Since this line and the recursive relation are both given by homogeneous equations in $x$ and $y$, we may work in projective space $\mathbb P^1$ instead of $\mathbb C^2$. Thus we can work with the variable $r_n = y_n / x_n$, which induces the rational function
$$
f_{\lambda,d}(r) = f_\lambda(r) = \frac{\lambda}{(1+r)^d}.
$$
The variable $r$ is often referred to as the occupation ratio: it describes the odds of the vertex $v$ lying inside or outside of a randomly chosen independent subset.

The $\lambda$-values for which the independence polynomial is zero correspond to the parameters $\lambda$ for which 
$$
f_\lambda^n(\lambda) = f_\lambda^{n+1}(0) = -1.
$$
Since $f$ has two critical points $0$ and $\infty$, and $0 = f_\lambda(\infty)$, the orbit of $0$ is the only critical orbit. Observing that $f_\lambda^2(-1) = 0$, we note that the zeros of $Z_{T_n}$ correspond to the parameters $\lambda$ for which the rational function $f_\lambda$ has a super-attracting periodic cycle of period $n+3$. Just as for the family of quadratic polynomials $z^2 + c$, the super-attracting parameters are the unique centers of the hyperbolic components, except for the hyperbolic component centered at $0$ which contains no zeros of the independence polynomial.

As the period $n$ of these cycles converges to infinity, the probability measures that place equal masses on each of the super-attracting parameters of period $n$ converge weakly to what is called the bifurcation measure, see~\cite{DeMarco2001}. The support of this measure is the bifurcation locus: it is the maximal set where the family $(f_\lambda)$ does not locally form a normal family. An illustration of the zeros of the independence polynomial and the bifurcation locus of $f_c$ can be found in Figure \ref{figure:degree3}.

The hyperbolic component centered at $0$, which we will call the main hyperbolic component, contains all the parameters for which $f_\lambda$ has an attracting fixed point. By the Fatou-Julia Lemma, the unique critical orbit must converge to this attracting fixed point, hence the orbit of $-1$ converges to the attracting fixed point and cannot pass through $-1$, which implies this component is zero-free. Just as for the Main Cardioid of the Mandelbrot set, which contains the parameters $c$ for which $z^2 + c$ has an attracting fixed point, the boundary of the main hyperbolic component can be explicitly parameterized. The intersection of the main hyperbolic component with the real axis coincides with the open interval bounded by the critical parameters
$$
\lambda_{-} = \frac{-d^d}{(d+1)^{d+1}}\; \quad \mathrm{and} \quad \lambda_{+} = \frac{d^d}{(d-1)^{d+1}}.  
$$
As these two parameters lie on the bifurcation locus, the zeros of the independence polynomial on $d$-ary trees accumulate on $\lambda_-$ and $\lambda_+$. The idea that the $d$-ary trees are in some sense extremal led Sokal to conjecture that the real interval $(\lambda_-, \lambda_+)$ has an open neighborhood in $\mathbb C$ that is zero-free for all graphs of degree at most $d+1$ \cite{sokal2001personal}. This conjecture was confirmed in \cite{PetersRegts2017}.

A natural follow-up question was to describe the complex zeros for the family of bounded degree graphs in relationship with the main hyperbolic component of the map $f_d$. It follows from the results in \cite{BGGS20,de2021zeros} that the zeros are dense outside of the main hyperbolic component. In fact, it was shown in \cite{de2021zeros} that the accumulation set of the complex zeros corresponds to the non-normality locus of the family of occupation ratios, just as for the family of $d$-ary trees. In \cite{buys20} it was however shown that the zeros do intersect the main hyperbolic component. The papers cited here all rely in essential way on using dynamical ideas to control the zero sets of graphs, but not necessarily on iteration of a single function $f_{\lambda,d}$. For example, to show that there are graphs for which the zeros lie in the main hyperbolic component, different elements in the semi-group generated by the functions $f_{\lambda,1}, \ldots, f_{\lambda,d}$ were considered. By results of Ma\~n\'e-Sad-Sullivan \cite{ManeSadSullivan1983}, iterates of a composition
$$
g_\lambda = f_{\lambda, d_n} \circ \cdots \circ f_{\lambda, d_1}
$$
cannot form a normal family near a parameter $\lambda_0$ where $g_\lambda$ has a non-persistent neutral fixed point, see also \cite[Theorem 4.2]{McMullen1994}. It was shown in \cite{buys20} that for suitable compositions the curves in parameter space containing the parameters for which there exists a neutral fixed point may intersect the main hyperbolic component. In particular, zeros do accumulate at some points in the main hyperbolic component.

The exact maximal zero-free domain for the families of graphs of bounded degree $d+1$ has not been characterized, and perhaps it is unrealistic to expect an analytic characterization of these domains. It was shown in \cite{Bencs2021} that as $d\rightarrow \infty$, proper rescalings of the maximal zero-free domains do converge to a limit domain, which can be studied via the dynamics of (a semi-group of) exponential functions $\mathrm{Exp}(c\cdot z)$, and that the limit domain is strictly smaller than the analogous limit of the main hyperbolic components. A significant part of the boundary of the limit domain can be explicitly parameterized, and it seems likely that, away from the intersection with the negative real axis, the boundary of the limit domain is piecewise real analytic.

For the sequence of $d$-ary trees, the induced dynamical systems were successfully exploited \footnote{The results mentioned in this paragraph have been claimed by Rivera-Letelier and Sombra~\cite{RiveraLetelier2019}, but a publication is at the time of this writing not available. Our citation contains a link to the recording of a presentation at the Fields Institute by Rivera-Letelier; a presentation containing main results and outlines of techniques used in the proof. Some of these steps have been worked out in more detail in the master's thesis of Jos van Willigen~\cite{Willigen}.} to describe phase transiions: their locations and their order, i.e. the order $r$ for which the free energy per site $\rho(\lambda)$ is not real-analytic but still $C^r$-smooth. In unpublished work of Rivera-Letlier and Sombra \cite{RiveraLetelier2019} it was claimed that for the sequence of $d$-ary trees, for fixed $d$, the phase transition at $\lambda_+$ is of infinite order: the free energy per site $\rho(\lambda)$ defined in equation~\eqref{eq:free energy} is $C^\infty$ but not real analytic at $\lambda_+$. The result relies upon careful estimates on the number of super-attracting parameters in the proximity of $\lambda_+$.

\begin{figure}
\includegraphics[width=0.45\textwidth]{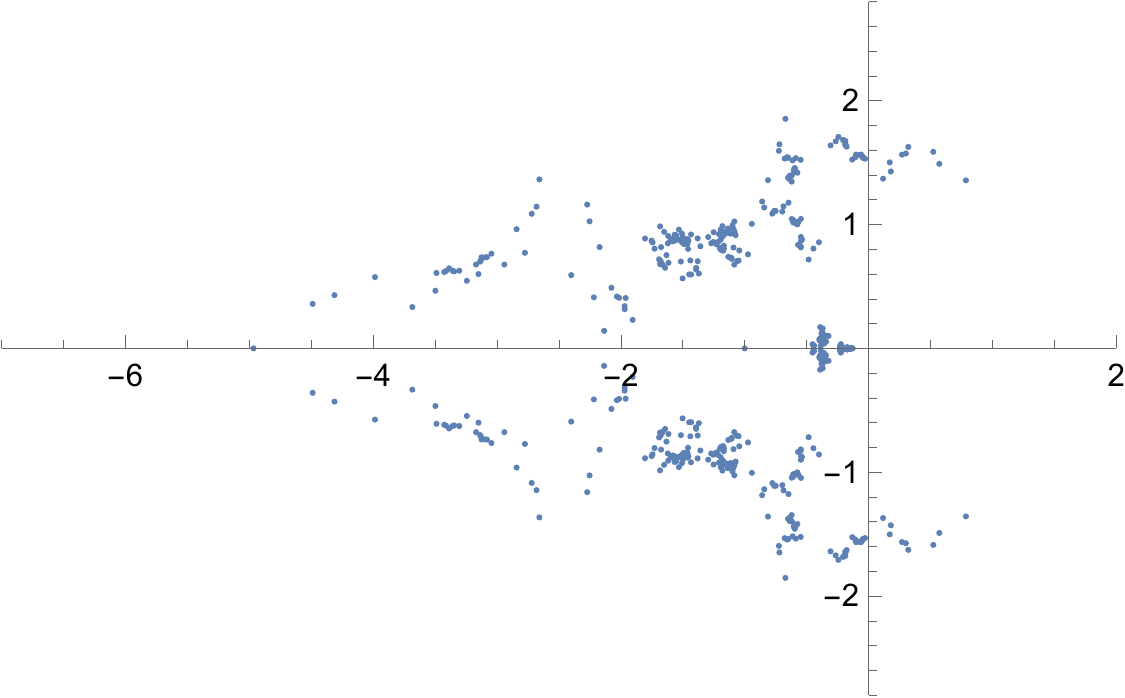}
\includegraphics[width=0.45\textwidth]{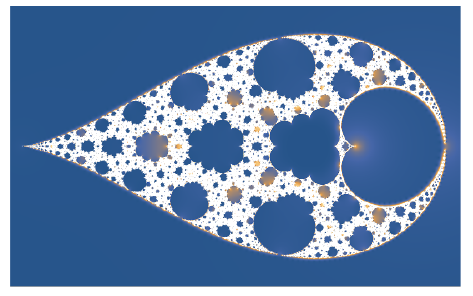}
\caption{On the left the zeros of partition function of the the $d$-ary trees, and on the right the bifurcation diagram of the functions $f_{\lambda,d}$, for $d=3$. The roots were computed by first exploiting the recursive structure to find an exact formula for the independence polynomial $Z_{T_n}$, and then algebraically solving for the roots of the equation $Z_{T_n} = 0$. The degree and especially the size of the coefficients of these polynomials grow respectively exponentially and super-exponentially with the depth $n$: We have plotted the zeros for depth $n=6$, in which case the independence polynomial has degree $820$, and the coefficients add up to roughly $10^{255}$. The number $n=6$ is clearly far too small to obtain a good idea what happens as $n \rightarrow \infty$. In particular the left hand side plot sheds little light on the known phase transition at $\lambda_+=27/16$.\\
The bifurcation diagram on the right is generated by computing the approximate values of $f^n(\lambda)$ for a discrete raster, and using nearby values to find a discrete approximation of the spherical derivative. Computationally this method is far more efficient. The illustration on the right depicts the bifurcation diagram for $n=200$, and the phase transition at $\lambda_+$ is clearly visible as the right-most point where the bifurcation locus intersects the real axis, which is
the horizontal line midway up the figure.}
\label{figure:degree3}
\end{figure}

It is tempting to say that this result describes the phase transition of the infinite $d$-ary tree, but we emphasize that the function $\rho(\lambda)$ is defined as the limit of a sequence of functions, each defined in terms of a single finite graphs. For sequences of graphs that converge in a suitable topology to a specific infinite limit, such as the cubic lattice, it can be shown that the free energy per site only depends on the limit graph, and not on the sequence of finite graphs, and in this case one can reasonably speak about phase transitions of the model on the infinite graph.

However, we note that the $d$-ary trees do not converge to this infinite $d$-ary tree in the sense of van Hove, or even Benjamini-Schramm. The $d$-ary trees can be regarded as a nested sequence of subgraphs whose union equals the infinite $d$-ary tree, but this does not warrant conclusions about the infinite union. For example, one can construct a similar sequence of nested rooted trees, called \emph{Fibonacci trees}, by starting with $G_0$ equal to a single vertex and $G_1$ equal to two vertices joined by an edge, and recursively defining $G_{n+2}$ by joining the two roots of $G_n$ and $G_{n+1}$ to a new root vertex. By writing $r_n$ for the occupation ratio of the root vertex of $G_n$ one obtains the relationship
$$
r_{n+2} = \frac{\lambda}{(1+r_{n+1})\cdot(1+r_n)}.
$$
By studying the iteration of the two-dimensional rational map
$$
(x,y) \mapsto (\frac{\lambda}{(1+x)\cdot(1+y)}, x)
$$
it was shown in~\cite{BGGS20} that for any parameter away from the negative real axis, the orbit of $(\lambda/(1+\lambda), \lambda)$ converges to a fixed point $(x,x)$, where $x$ is a fixed point of $f_\lambda$. As this fixed point varies analytically on $\mathbb R_+$, there is no phase transition.

This sequence of Fibonacci trees was used elegantly in~\cite{BGGS20} as a crucial ingredient in the proof that approximation of the independence polynomial on the family of bounded degree graphs is computationally hard outside of the main hyperbolic component. The idea is that for these trees the occupation ratio converges to a fixed point, which for $\lambda$ outside of the main hyperbolic component is actually a repelling fixed point for the map $f_{\lambda,d}$. By ``implementing'' the Fibonacci trees into the regular $d$-ary trees, one can obtain non-normality and as a result computational hardness.

A description of the phase transition of the infinite $d$-ary tree was made by Sly and Sun in \cite{Slysun}, who considered the infinite $d$-ary tree as the limit of either a sequence of random $d$-regular graphs, or a sequence of random $d$-regular bipartite graphs. 
While these sequences of random graphs portray no apparent recursive structure, the behavior of the functions $f_\lambda$ plays a role here as well. For $\lambda<\lambda_+$ the occupation ratio is to a large extend determined by the local behavior of the graphs, and for random graphs these local neighborhoods stabilize almost certainly to $d$-ary trees. It should therefore come as no surprise that for $\lambda < \lambda_+$ the free energy per site can be expressed in terms of the attracting fixed point of $f_\lambda$. For $\lambda> \lambda_+$ it is not the fixed point, but the attracting periodic $2$-cycle that plays a crucial role in determining the free energy per site, although the situation is different depending on whether the graphs are assumed to be bipartite or not. The location of the phase transition is the same as for the sequence of $d$-ary trees, but the order of the phase transition remains to be determined.

\subsection{Diamond hierarchical graphs}

Zeros of partition functions on the diamond hierarchical lattice have been studied extensively, we refer to the papers \cite{berker1979renormalisation,griffiths1982spin, derrida1983fractal, bleher1989asymptotics,BLRI, BLRII} for the case of the Ising model, and to \cite{chio2021chromatic} for the chromatic polyonomial. In either case the recursive nature of the sequence of graphs allows for a translation into a purely dynamical setting, a translation that is often referred to as Migdal-Kadanoff renormalization \cite{migdal1975recurrence,kadanoff1976notes}. We will discuss this renormalization scheme in the setting of the independence polynomial, where one similarly obtains a rational dynamical systems.

The sequence of graphs is defined as follows: start with a $4$-cycle $G_1$, with vertices labeled $1,2,3$, and $4$, with the pairs $(1,3)$ and $(2,4)$ not being connected by an edge. We choose $1$ and $3$ to be marked points.

Having constructed a graph $G_n$ with two marked points $v_n$ and $w_n$, the graph $G_{n+1}$ is obtained by replacing each edge $(v,w)$ of $G_n$ with the $4$-cycle, where the vertices $v$ and $w$ correspond to the marked points $1$ and $3$. Alternatively, the graph $G_{n+1}$ can be obtained by replacing each edge of $G_1$ with the graph $G_n$, where the two endpoints of the edge correspond to the marked vertices $v_n$ and $w_n$ of the graph $G_n$.

We introduce $3$ variables $x_n,y_n,z_n$ that are defined as follows:
$$
\begin{aligned}
x_n & = \sum_{I \subset V(G_n) \setminus \{v_n, w_n\}}\lambda^{|I|},\\
y_n & = \sum_{I \subset V(G_n) \setminus \{v_n\}: \; w_n \in I}\lambda^{|I|},\\
z_n & = \sum_{I \subset V(G_n): \; \{v_n, w_n\} \subset I}\lambda^{|I|},\\
\end{aligned}
$$
where in each case we sum only over independent subsets of $V(G_n)$. By symmetry it follows that
$$
Z_{G_n} = x_n + 2 y_n + z_n.
$$
On the other hand, it follows from the fundamental equation \eqref{eq:fundamental} that $(x_{n+1}, y_{n+1}, z_{n+1}) = F_\lambda(x_n, y_n, z_n)$, where
$$
\begin{aligned}
F_\lambda:(x,y,z) & \mapsto (x^4 + 2x^2y^2/\lambda + y^4/\lambda^2, x^2y^2 + 2xy^2z/\lambda  + y^2z^2/\lambda^2, y^4 + 2y^2z^2/\lambda  + z^4/\lambda^2)\\
&= ((x^2 + y^2/\lambda)^2, y^2(x+z/\lambda)^2, (y^2 + z^2/\lambda)^2) 
\end{aligned}
$$
We note that 
$$
\left(
\begin{matrix}
    x_1\\
    y_1\\
    z_1
\end{matrix}
\right) = \left(
\begin{matrix}
    \lambda^2 + 2\lambda + 1\\
    \lambda^2 + \lambda\\
    \lambda^2
\end{matrix}
\right).
$$
Hence the independence polynomial is given by
$$
Z_{G_n}(\lambda) = \pi \circ F^{n-1}(\lambda^2 + 2\lambda + 1, \lambda^2 + \lambda, \lambda^2) = \pi \circ F^n (1,\lambda, 0),
$$
where $\pi(x,y,z) = x+2y+z$.

The dynamical system that we have obtained is quite similar to what was obtained for ferromagnetic Ising model on diamond hierarchical lattices in \cite{BLRI, BLRII}, and for the chromatic polynomial in \cite{chio2021chromatic}. The zeros of independence polynomial correspond to the parameters for which the point $(1,\lambda, 0)$ is contained in the inverse image
\begin{equation}\label{eq:pullbacks}
F_\lambda^{-n} \{x+2y+z = 0\}.
\end{equation}

While the formula for $F_\lambda$ is different for the three different partition functions, the independence polynomial, the partition function of the Ising model, and the chromatic polynomial, there are strong similarities between the three settings. For example, for each of the three models one obtains a homogeneous polynomial $F_\lambda$. The map $F_\lambda$ does not act holomorphically on $\mathbb P^2$, for example for the independence polynomial the line through $(1,i\sqrt{\lambda},-\lambda)$ is mapped to $(0,0,0)$, which gives rise to an indeterminancy point on $\mathbb P^2$, significantly complicating the analysis of the dynamical system. We note that the occurance of indeterminancy points meant a significant complication for the analysis in the Ising model as well, see~\cite{BLRI, BLRII}.

Roughly speaking, the method used in the previous studies is to obtain equidistribution of the inverse images in equation~\eqref{eq:pullbacks}, not in terms of manifolds but in terms of $(1,1)$-currents: objects over which one can integrate $1$-forms. The distribution of the zeros is obtained by slicing the equidistribution current with the line given by multiples of $(x_1, y_1, z_1)$.

It would be interesting to see whether the methods developed for the other two models can also be applied in the setting of the independence polynomial, and whether the phase transition(s) of the independence polynomial on the hierarchical lattices can be succesfully described using methods from tools from potential theory and higher dimensional complex dynamical systems.

\subsection{Zeros and normal families in regular lattices}

From a physical perspective, the main interest in $d$-ary trees or hierarchical lattices is as models for actual physical lattices that are much harder to describe, such as the cubic lattice $\mathbb Z^d$. While $\mathbb Z^d$ also seems to have a recurring structure that repeats at different levels of the lattice, there is no clear renormalization operator that relates different scale partition functions. A characterization of the renormalization operation in terms of iteration of a possibly rational map could signify a major breakthrough in our understanding of phase transitions on regular lattices, but it remains unclear whether such characterization actually exists.

In the meantime it is worthwhile to determine whether the conclusions drawn in the literature for the models, i.e. sequences of recursive graphs, also hold for sequences of graphs that converge to $\mathbb Z^d$, in this case in the sense of van Hove. A first natural question is the following:

\begin{figure}
    \centering
    \begin{minipage}{0.48\textwidth}
    \newpage\vspace{-7pt}
        \centering
        \includegraphics[width=\textwidth]{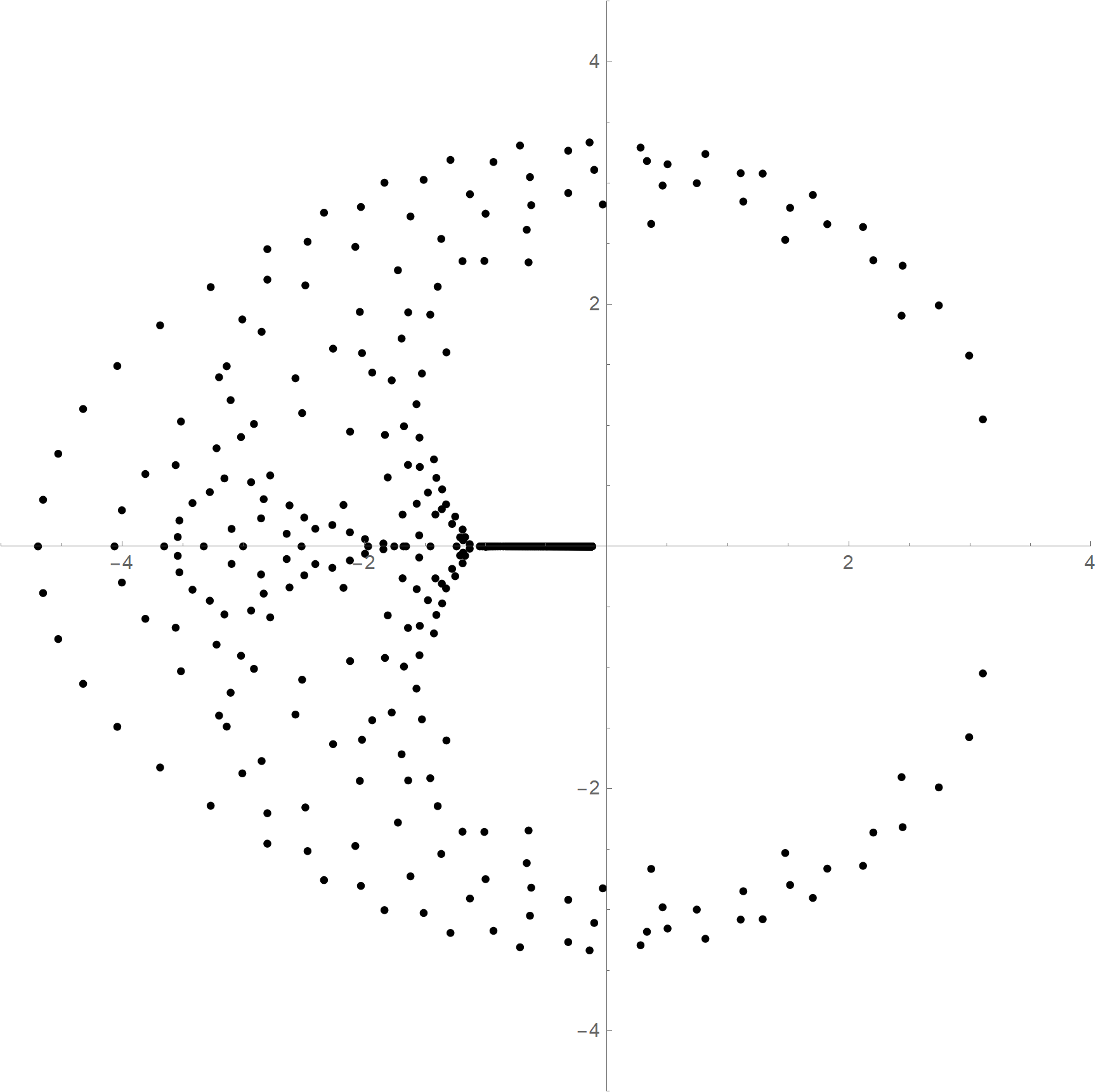}
    \end{minipage}\hfill
    \begin{minipage}{0.48\textwidth}
        \centering
        \includegraphics[width=\textwidth]{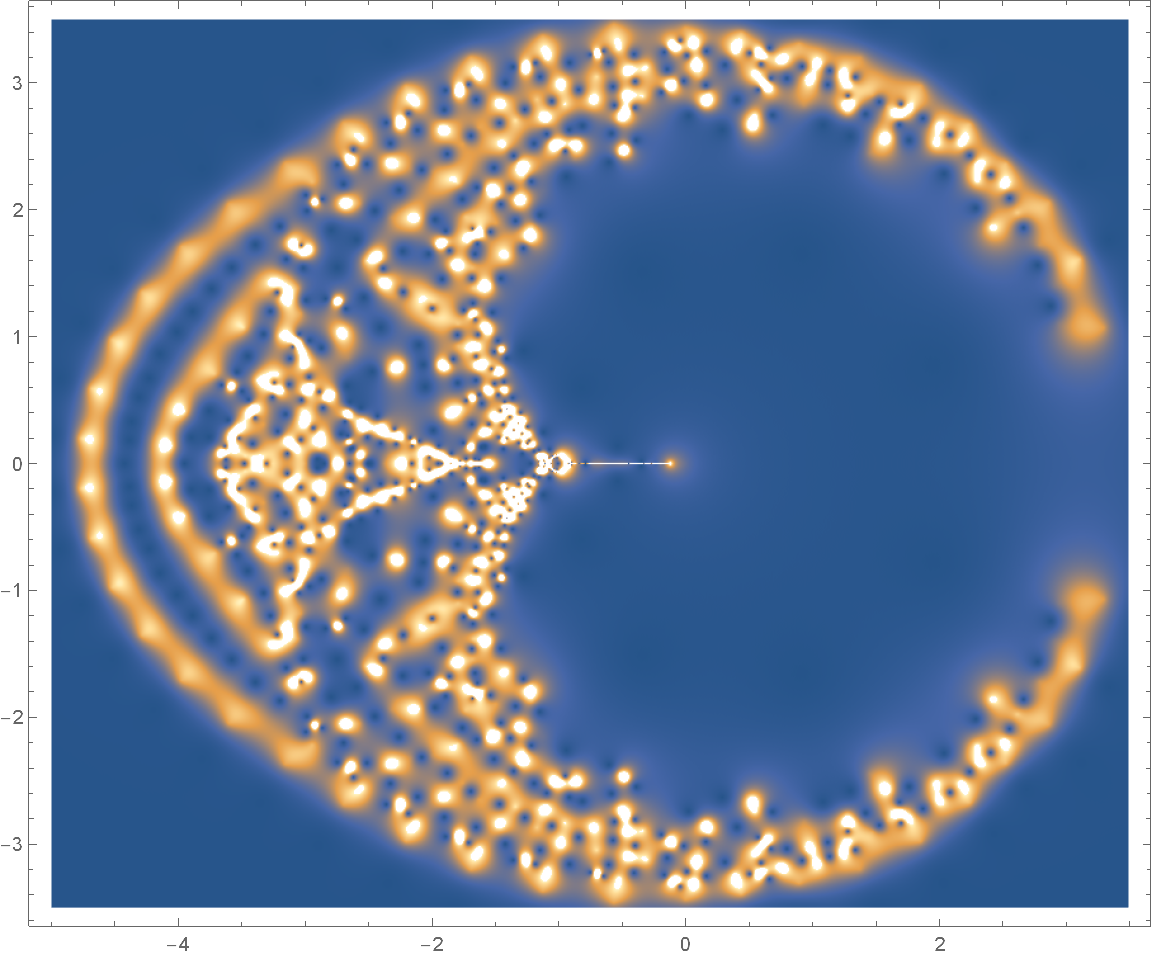}
    \end{minipage}
        \caption{The figure on the left depicts the zeros of the independence polynomial the subgraph $B_{22}(0)$ in $\mathbb Z^2$: all points in the square lattice whose distance in the graph to the origin is at most $22$. The figure on the right depicts the spherical derivative of the occupation ratio of $(B_{22}(0),0)$. }    \label{fig:diamond}
\end{figure}

\begin{question}\label{ques:one}
Let $G_n$ be a sequence of graphs that converges to $\mathbb Z^d$ in the sense of van Hove, and let $v_n \in V(G_n)$ be a sequence of marked vertices. Is it true that the accumulation set of the zeros of the independence polynomials $Z_{G_n}$ coincides with the non-normality locus of the sequence of occupation ratios $R_{G_n, v_n}$?
\end{question}

Figures \ref{fig:diamond} and \ref{fig:torus} illustrate the relationship between the zero sets and non-normality for two subgraphs of the square lattice $\mathbb Z^2$. The illustrations in \ref{fig:diamond} cover the distance-$22$ neighborhood of the point $(0,0) \in \mathbb Z^2$ in terms of the graph-distance. The figure on the right is drawn by plotting values of the (discrete approximation) of the spherical derivative of the occupation ratio, just as was done for the $d$-ary trees in Figure \ref{figure:degree3}. We note that in this setting there is no proven relationship between these two illustrations, and it is even unclear whether the zero sets will converge to the set where the sequence of spherical derivatives of the occupation ratios is unbounded as the the radius of the neighborhood converges to infinity, but to our mind Figure \ref{fig:diamond} clearly suggests a positive answer to Question \ref{ques:one}, at least for well-chosen sequences of graphs and marked vertices. The authors are grateful to Raymond van Veneti\"e for his help in writing efficient code to compute the independence polynomial of the distance-$22$ neighborhood. 

Figure \ref{fig:torus} is generated by considering the $18\times 18$ torus; not formally a subgraph of $\mathbb Z^2$ but note that the $n\times n$ tori do converge to $\mathbb Z^2$ in the sense of van Hove. Since the automorphism group of this graph is transitive it does not matter which marked vertex is chosen. Again we see a clear relationship between the distribution of the zeros on the left, and the approximation of the ``bifurcation locus'' depicted on the right, which gives further evidence for a positive answer to Question \ref{ques:one}. 

Comparing the depictions of the bifurcation diagrams on the $22$-neighborhood and the $18\times 18$-torus to that of the depth $200$ $d$-ary tree, it is clear that the right hand side diagram in figure \ref{figure:degree3} is much sharper than the diagrams in Figures \ref{fig:diamond} and \ref{fig:torus}. This is not due to an increased resolution, the resolutions are similar, but to the fact that the degree of the independence polynomial for the $d$-ary tree is of a much larger order of magnitude. The possibility of computing the independence polynomial for such large degrees is of course due to the description of the renormalization map in terms of a rational dynamical system. Nevertheless, there also is a resemblance between the three pictures: the bifurcation locus always seems to be bounded sets, the complement seems to consists of infinity many simply-connected components, and the bifurcation locus seems to intersect the positive real axis in a single parameter, the conjectured unique phase transition. We recall that all three of these properties are known to hold for the $d$-ary trees, and all three are open questions for graphs converging to lattices $\mathbb Z^d$.

The desire to obtain a rigorous basis for these observed properties has motivated this project. In what follows we focus on the boundedness of the zeros of the independence polynomials for different sequences of tori converging to $\mathbb Z^d$.


\begin{figure}
    \centering
    \begin{minipage}{0.50\textwidth}
    \newpage\vspace{-5pt}
        \centering
        \includegraphics[width=\textwidth]{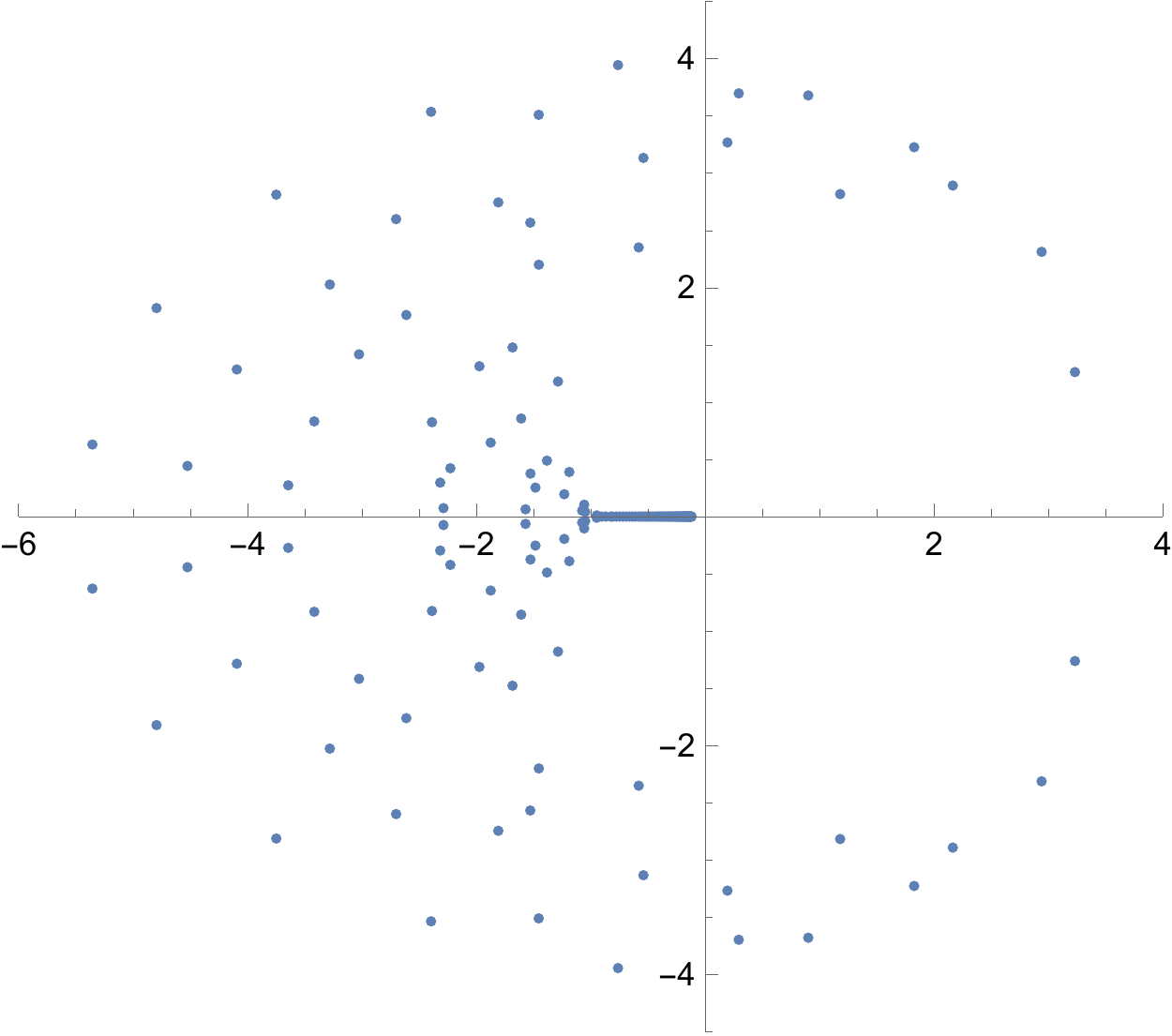}
    \end{minipage}\hfill
    \begin{minipage}{0.48\textwidth}
        \centering
        \includegraphics[width=\textwidth]{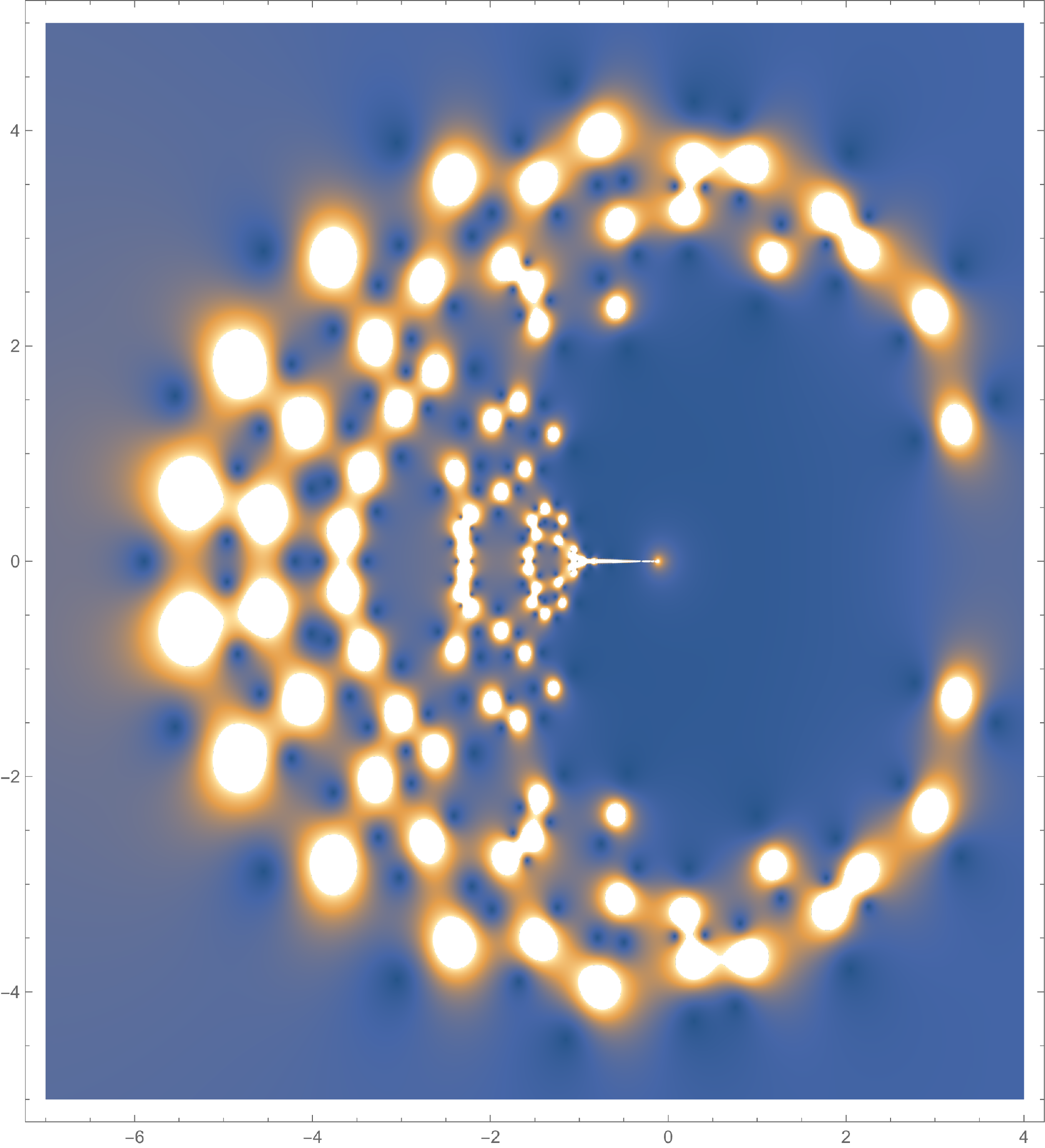}
    \end{minipage}
        \caption{The figure on the left depicts the zeros of the independence polynomial of the $2$-dimensional torus of size $18\times 18$. The figure on the right depicts the spherical derivative of the occupation ratio of this torus.}    \label{fig:torus}
\end{figure}

\section{Pirogov-Sinai theory}\label{sec:PirogovSinai}

This section provides a self-contained background in Pirogov-Sinai theory. We closely follow the framework of \cite{PirogovSinaiWillGuusTyler}, but apply it to the independence polynomial of tori, which requires several adjustments.
While much of this background section is classical, proofs in the literature are often omitted or stated in a different context. For this reason this section contains several results and proofs that are not original but may be difficult to find in the literature.

In subsection \ref{sec: cluster expansion} we discuss contains the required background on polymer partition functions. in what follows we rewrite the independence polynomial of the torus as a suitable polymer partition function.


\subsection{Polymer models and the Koteck\'y-Preiss theorem}\label{sec: cluster expansion}

A polymer model consists of a finite set of polymers $S$, an anti-reflexive and symmetric compatibility relation $\sim$ on $S$ and a weight function $w:S \to \mathbb{C}$.
We define $\SetOfContours$ to be the set of collections of pairwise compatible polymers. The \emph{polymer partition function} is defined as
\[
Z_{\text{pol}} := \sum_{\Gamma \in \SetOfContours} \prod_{\gamma \in \Gamma} w(\gamma).
\]
We note $\emptyset \in \SetOfContours$, hence if $w(\gamma) = 0$ for all $\gamma \in S$ we see $Z_{\text{pol}} = 1$. 

\begin{remark}
    Whenever $f$ is a holomorphic function with $f(0) > 0$, we write $\log f(z)$ for a branch with $\log f(0) \in \mathbb R$. We will use this convention throughout the paper. 
\end{remark}

The \emph{cluster expansion}, see for example \cite{kotecky1986cluster} and Section 5.3 in \cite{friedli_velenik_2017}, states that the polymer partition function can be expressed in terms of the following formal power series in the weights:
\begin{equation}\label{eq:clusterexpansionorderedobjects}
\log Z_{\text{pol}} = \sum_{k\geq 1}   \frac{1}{k!} \sum_{(\gamma_1, \ldots, \gamma_k)} \psi (\gamma_1, \ldots, \gamma_k) \prod_{i=1}^k w(\gamma_i),
\end{equation}
where the sum runs over ordered $k$-tuples of polymers and $\psi$ is the \emph{Ursell function} defined as follows. Let $H$ be the \emph{incompatibility graph} of the polymers $\gamma_1, \ldots, \gamma_k$, i.e. the graph with vertex set $[k]$ and an edge between $i$ and $j$ if $\gamma_i$ is incompatible with $\gamma_j$. Then
\[
\psi(\gamma_1, \ldots, \gamma_k) := \sum_{\substack{E \subseteq E(H)\\ \text{spanning, connected}}} (-1)^{|E|}.
\]
A multiset $\{\gamma_1, \ldots, \gamma_k\}$ of polymers is  a \emph{cluster} if its incompatibility graph is connected. 
For a cluster $X:= \{\gamma_1, \ldots, \gamma_k\}$ of polymers we define
\[
\Phi(X) = \prod_{\gamma \in S} \frac{1}{n_X(\gamma)!} \psi(\gamma_1, \ldots, \gamma_k) \prod_{i=1}^k w(\gamma_i),
\]
with $n_X(\gamma)$ the number of times the polymer $\gamma$ appears in $X$. Then one sees the cluster expansion \eqref{eq:clusterexpansionorderedobjects} can be equivalently written as
\begin{equation}\label{eq:onlyclustersclusterexpansion}
    \log Z_{\text{pol}} = \sum_{X \text{cluster}} \Phi(X).
\end{equation}

The Koteck\'y-Preiss theorem provides a sufficient condition on the weights that guarantees convergence of the cluster expansion, see Theorem 1 in \cite{kotecky1986cluster}:

\begin{theorem}\label{thm:KoteckyPreiss}
Suppose there are functions $a: S \rightarrow [0,\infty)$, $b: S \rightarrow [0,\infty)$ such that for every polymer $\gamma \in S$ we have
\[
\sum_{\gamma'\not \sim \gamma } |w(\gamma')| e^{a(\gamma') + b(\gamma')} \leq a(\gamma),
\]
then $Z_{\text{pol}} \neq 0$, the cluster expansion of the polymer partition function is convergent and for any polymer $\gamma \in S$ we have
\[
\sum_{X\not \sim \gamma } |\Phi(X)| e^{b(X)} \leq a(\gamma),
\]
where for a cluster $X$ we define $b(X) = \sum_{\gamma \in X} b(\gamma)$ and we write $X\not \sim \gamma $ if and only if there is a $\gamma'\in X$ with $\gamma'\not \sim \gamma$.
\end{theorem}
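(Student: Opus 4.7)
My plan is to establish the cluster sum bound
\[
\sum_{X \not\sim \gamma} |\Phi(X)| e^{b(X)} \leq a(\gamma) \qquad (\star)
\]
first, and then deduce both the non-vanishing of $Z_{\text{pol}}$ and the absolute convergence of the cluster expansion as easy consequences. The first reduction is that every cluster $X$ incompatible with $\gamma$ must contain at least one polymer $\gamma' \not\sim \gamma$, so by a union bound
\[
\sum_{X \not\sim \gamma} |\Phi(X)| e^{b(X)} \leq \sum_{\gamma' \not\sim \gamma} \sum_{X \ni \gamma'} |\Phi(X)| e^{b(X)}.
\]
Combined with the hypothesis of the theorem, the inequality $(\star)$ reduces to the per-polymer estimate
\[
\sum_{X \ni \gamma^*} |\Phi(X)| e^{b(X)} \leq |w(\gamma^*)| e^{a(\gamma^*) + b(\gamma^*)} \qquad (\star\star)
\]
valid for every polymer $\gamma^* \in S$.

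To prove $(\star\star)$, I would invoke the Penrose tree-graph identity, which bounds $|\psi(\gamma_1,\ldots,\gamma_k)|$ by the number of spanning trees of the incompatibility graph of $\{\gamma_1,\ldots,\gamma_k\}$. The multiplicity factors $1/n_X(\gamma)!$ built into $\Phi(X)$ are exactly what is needed to reorganize the resulting bound as a sum over rooted, vertex-labeled trees whose root carries $\gamma^*$ and whose edges only connect mutually incompatible polymers. Letting $G_n(\gamma)$ denote this truncated rooted-tree series restricted to trees of size at most $n$, cutting the root off a tree and grouping subtrees by the number $j$ of children of the root gives the recurrence
\[
G_{n+1}(\gamma) \leq |w(\gamma)| e^{b(\gamma)} \sum_{j \geq 0} \frac{1}{j!}\Big(\sum_{\gamma' \not\sim \gamma} G_n(\gamma')\Big)^j = |w(\gamma)| e^{b(\gamma)} \exp\!\Big(\sum_{\gamma' \not\sim \gamma} G_n(\gamma')\Big),
\]
where the $1/j!$ arises from the unordered children.

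The induction on $n$ is then closed by feeding in the hypothesis of the theorem: if $G_n(\gamma') \leq |w(\gamma')| e^{a(\gamma') + b(\gamma')}$ for all $\gamma'$, then
\[
\sum_{\gamma' \not\sim \gamma} G_n(\gamma') \leq \sum_{\gamma' \not\sim \gamma} |w(\gamma')| e^{a(\gamma') + b(\gamma')} \leq a(\gamma),
\]
so $G_{n+1}(\gamma) \leq |w(\gamma)| e^{a(\gamma) + b(\gamma)}$. The base case $G_0 \equiv 0$ (or $G_1(\gamma) = |w(\gamma)| e^{b(\gamma)}$) is trivial. Monotone convergence yields $(\star\star)$, and then $(\star)$ as explained. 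Once $(\star)$ holds, the series $\sum_X \Phi(X)$ converges absolutely, and the identity \eqref{eq:onlyclustersclusterexpansion} (established formally for small weights and extended by a standard continuity argument in the weights) gives $Z_{\text{pol}} = \exp\bigl(\sum_X \Phi(X)\bigr) \neq 0$.

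The main obstacle in this program is the combinatorial bookkeeping in the Penrose step: one must verify that the multiplicity factors $1/n_X(\gamma)!$ in the definition of $\Phi(X)$ line up precisely with the $1/j!$ produced by expanding the exponential in the branching recurrence, so that the spanning-tree bound on $|\psi|$ translates cleanly into the functional inequality above. Once this combinatorial identification is in place, both the induction and the deduction of non-vanishing are essentially mechanical.
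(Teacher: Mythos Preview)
The paper does not give its own proof of this theorem; it is stated as background and attributed to Koteck\'y--Preiss \cite{kotecky1986cluster}. So there is no in-paper argument to compare against.

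Your outline is one of the standard correct routes to the Koteck\'y--Preiss criterion, namely the tree-graph approach (closer in spirit to the later proofs by Ueltschi and by Friedli--Velenik than to the original 1986 inductive argument). Two small remarks on the execution. First, your union-bound reduction from $(\star)$ to $(\star\star)$ is fine but slightly lossy: a cluster $X$ with several polymers incompatible with $\gamma$ is counted multiple times on the right. It is cleaner, and what the tree-graph bound actually delivers, to bound $\sum_{X} n_X(\gamma^*)\,|\Phi(X)|e^{b(X)}$ (each cluster weighted by the multiplicity of $\gamma^*$ in it) by the rooted-tree series $G_\infty(\gamma^*)$; since $n_X(\gamma^*)\ge 1$ whenever $\gamma^*\in X$, this implies your $(\star\star)$. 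Second, the ``combinatorial bookkeeping'' you flag is genuine but routine: working in the ordered-tuple formulation \eqref{eq:clusterexpansionorderedobjects}, the $1/k!$ together with the Penrose bound $|\psi(\gamma_1,\ldots,\gamma_k)|\le \#\{\text{spanning trees of the incompatibility graph}\}$ converts the cluster sum exactly into a sum over vertex-labeled rooted trees, and decomposing such a tree at the root produces the $1/j!$ in the branching recurrence. Once $(\star\star)$ and hence $(\star)$ are in hand, absolute convergence and $Z_{\text{pol}}\neq 0$ follow as you say.
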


\subsection{Contour representation of the independence polynomial on tori}\label{sec: contour representation}

In this section we express the independence polynomial of a torus as a sum of two polymer partition functions, using contours as polymers. 
This is based on ideas and definitions from Pirogov-Sinai theory \cite{PirogovSinai, Zahradnk1984AnAV}, as applied to the independence polynomial in \cite{PirogovSinaiWillGuusTyler}. 
In \cite{PirogovSinaiWillGuusTyler} the contour models are defined for padded induced subgraphs in $\mathbb{Z}^d$; we will modify the ideas and definitions such that they apply to tori instead. 

\subsubsection{Preliminaries on the topology of tori}

\begin{definition}
We denote a $d$-dimensional torus by $\torus := \mathbb{Z}_{\ell_1} \times \cdots \times \mathbb{Z}_{\ell_d}$ with $ \ell_1 \leq \ell_2 \leq \cdots \leq \ell_d$ and write $|\torus| :=\prod_{i=1}^d \ell_i$.
Let $C>0$. A torus $\torus$ is said to be \emph{$C$-balanced} if $|\torus| \le e^{C \ell_1}$. We denote the set of all even $d$-dimensional $C$-balanced tori by $\Balancedtori_d(C)$.
\end{definition}

Note that a family of $d$-dimensional tori is balanced as defined in the introduction if and only if there exists a uniform $C>0$ such that every torus in the family is $C$-balanced.
In particular the $d$-dimensional torus with equal side lengths, denoted by $\mathbb{Z}_n^{d}$, is in $\Balancedtori_d(1)$ for any $d$ and any even $n \geq e^d$. 

We label the vertices of $\torus$ as $(v_1, \ldots v_d)$ with $v_i\in \{-\ell_i/2, \ldots, 0, \ldots, \ell_i/2 -1\}$ for each $i \in [d]$. 
Denote the $d$-dimensional zero vector by $\vec{0}$.
Throughout this and later sections we let $\torus$ be an even $d$-dimensional torus, for a fixed $d \ge 2$. When $\torus$ is assumed to be balanced or unbalanced we will state so explicitly.

\begin{definition}
     We denote the $\infty$-neighborhood of $v \in \torus$ by 
    \[
        N_\infty[v]:=\{u \in \mathcal{T}: \|v-u\|_\infty \leq 1\}.
     \]
     Note that each neighborhood $N_\infty[v]$ consists of $3^d$ distinct vertices.
     We say an induced subgraph $\Lambda \subset \torus$ is $\infty$-connected if for each $u,v \in \Lambda$ there is a sequence $(v_0, \ldots, v_n)$ of vertices in $\Lambda$ such that $v_0 = v$, $v_n = u$ and for each $i$ we have $v_{i+1} \in N_\infty[v_i]$. Such a sequence is called an $\infty$-path from $v$ to $u$ in $\Lambda$. 
\end{definition}

We denote the set of connected components of a graph $G$ by $\mathcal{C}(G)$.

\begin{definition}
    For subsets $A,B \subseteq V(\torus)$ we define their \emph{distance} and the \emph{box-diameter} as
    \[
    \dist(A,B) := \min_{a\in A,\ b \in B} \dist(a,b) \text{ and } \diam_{\square}(A) := \max_{i=1, \dots, d} |A_i|,
    \]
    where $\dist$ denotes the graph distance on $\torus$ and $A_i$ denotes the $i$th marginal of $A$. We define $\diam_{\square}(\emptyset) = 0$. Define the closure of $A$ as 
    \[
        \textrm{cl}(A) = A_1 \times \cdots \times A_d.
    \]
    When we apply these parameters to induced subgraphs of $\torus$ it should be read as applying it to their vertex sets.
    
    Let $\ell_1$ denote the length of the shortest side of $\torus$. If $\diam_{\square}(A) < \ell_1$ it is not hard to see that $\torus \setminus \textrm{cl}(A)$ is contained in a unique connected component of $\torus \setminus A$, which we will denote by $\ext(A)$. We let $\inte(A) = \mathcal{T} \setminus (A \cup \ext(A))$.
\end{definition}

The following lemma will be used implicitly several times.
\begin{lemma}
    Let $A_1,A_2$ be induced subgraphs of a torus $\torus$ with shortest side $\ell_1$ such that $\diam_{\square}(A_i) < \ell_1$, $\dist(A_1,A_2) \geq 2$ and both $A_1 \subseteq \ext(A_2)$ and $A_2 \subseteq \ext(A_1)$. Then $\inte(A_1) \cap \inte(A_2)=\emptyset$.
\end{lemma}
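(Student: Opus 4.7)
The plan is to argue by contradiction: suppose some $v \in \inte(A_1) \cap \inte(A_2)$, and for $i = 1, 2$ let $C_i$ denote the connected component of $\torus \setminus A_i$ containing $v$. By definition of the interior, $C_i$ is a component of $\torus \setminus A_i$ different from $\ext(A_i)$, and since $\torus \setminus \mathrm{cl}(A_i) \subseteq \ext(A_i)$, one has $C_i \subseteq \mathrm{cl}(A_i) \setminus A_i$.

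The key step is to identify $C_1 = C_2$. The hypothesis $A_2 \subseteq \ext(A_1)$ together with the fact that $C_1$ and $\ext(A_1)$ are distinct components of $\torus \setminus A_1$ yields $A_2 \cap C_1 = \emptyset$. Thus $C_1$ is a connected subset of $\torus \setminus A_2$ containing $v$, forcing $C_1 \subseteq C_2$; the reverse inclusion follows by swapping the roles of $A_1$ and $A_2$. Writing $C := C_1 = C_2$, every vertex in the boundary $\partial C := \{u \in \torus \setminus C : u \sim w \text{ for some } w \in C\}$ must lie in $A_1 \cap A_2$, since $C$ is simultaneously a component of $\torus \setminus A_1$ and of $\torus \setminus A_2$.

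It therefore suffices to show $\partial C \neq \emptyset$, equivalently $C \subsetneq \torus$. This follows from $C \subseteq \mathrm{cl}(A_1)$ together with the inequality $\diam_\square(A_1) < \ell_1 \leq \ell_i$ for every $i$, which ensures that each marginal of $\mathrm{cl}(A_1)$ is a proper subset of the corresponding cyclic factor $\mathbb{Z}_{\ell_i}$. Any vertex in $\partial C$ then lies in $A_1 \cap A_2$, contradicting $\dist(A_1, A_2) \geq 2$. I expect no genuine obstacle; the only subtle point is the final non-emptiness of $\partial C$, where the strict bound $\diam_\square(A_1) < \ell_1$ combined with the side-length ordering $\ell_1 \leq \cdots \leq \ell_d$ is what prevents $\mathrm{cl}(A_1)$ from wrapping around any coordinate of $\torus$.
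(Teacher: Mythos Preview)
Your proof is correct and proceeds along a genuinely different route from the paper's. The paper argues by a minimality trick: among all counterexamples it takes one with $|\inte(A_1)|$ minimal, picks a vertex $v\in\inte(A_1)$ adjacent to $A_1$, shows $v\in\ext(A_2)$ using $\dist(A_1,A_2)\geq 2$ and $A_1\subseteq\ext(A_2)$, and then checks that $A_1\cup\{v\}$ together with $A_2$ still satisfies all hypotheses while $|\inte(A_1\cup\{v\})|$ is strictly smaller, contradicting minimality. Your argument is more direct: you identify the two interior components $C_1=C_2=:C$ outright, observe that the outer boundary of $C$ must lie in $A_1\cap A_2$, and obtain a contradiction from $\dist(A_1,A_2)\geq 2$ once you know $C\subsetneq\torus$. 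Both approaches use the same ingredients, but yours avoids the induction entirely and makes the geometric reason for the contradiction (the shared component would need a boundary in both sets) transparent; the paper's version has the mild advantage of never explicitly invoking $C\subseteq\mathrm{cl}(A_1)$, though that fact is immediate from the definition of $\ext$.
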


\begin{proof}
    Suppose for the sake of contradiction that $A_1,A_2$ is a counterexample for which $|\inte(A_1)|$ is minimized. Let $v$ be a vertex of $\inte(A_1)$ that is connected to a vertex in $A_1$, say $u$. Because $\dist(A_1,A_2) \geq 2$ it follows that $u \not \in A_2$. Therefore $u$ and $v$ lie in the same connected component of $\torus\setminus A_2$. Because $u$ lies in $\ext(A_2)$ it follows that $v \in \ext(A_2)$. Note also that $v$ is not connected to an element of $A_2$ because $A_2 \subseteq \ext(A_1)$ and $v \in \inte(A_1)$. Because $\diam_{\square}(A_1 \cup \inte(A_1)) = \diam_{\square}(A_1)$ it follows that
    \begin{itemize}
        \item $\diam_{\square}(A_1 \cup \{v\}) < \ell$ and $\diam_{\square}(A_2) < \ell_1$;
        \item $\dist(A_1 \cup \{v\}, A_2) \geq 2$;
        \item both $A_1 \cup \{v\} \subseteq \ext(A_2)$ and $A_2 \subseteq \ext(A_1 \cup \{v\} )$;
        \item $\inte(A_1 \cup \{v\}) \cap \inte(A_2) = \inte(A_1) \cap \inte(A_2)$, which is non-empty by assumption.
    \end{itemize}
    This is a contradiction because $\inte(A_1 \cup \{v\})$ is strictly smaller than $\inte(A_1)$.
\end{proof}


Let $\Lambda \subseteq \torus$ be an induced subgraph. We denote the boundary of $\Lambda$ by $\partial \Lambda \subseteq \Lambda$, i.e. the subgraph of $\Lambda$ induced by the vertices of $\Lambda$ with at least one neighbor in $\torus \setminus \Lambda$. We define $\partial^c \Lambda := \partial (\torus\setminus \Lambda)$. Denote by $\Lambda^{\circ}=\Lambda \setminus \partial \Lambda$ the interior of $\Lambda$. 
We write $|\Lambda|$ instead of $|V(\Lambda)|$ and we write $v \in \Lambda$ instead of $v \in V(\Lambda)$. 


\begin{remark}\label{rmk:infty connected boundaries of small subgraphs}
     Let $\torus$ be a $d$-dimensional even torus with minimal side length $\ell_1$. For any induced subgraph $\Lambda$ in $\torus$ with $\diam_\square(\Lambda) < \ell_1$, the induced subgraph $\ext(\Lambda) \cap \partial^c \Lambda$ is $\infty$-connected by Proposition B.82 in \cite{friedli_velenik_2017}.
\end{remark}

\begin{lemma}
    \label{lem: inf_con}
 Let $\torus$ be a $d$-dimensional even torus with minimal side length $\ell_1$. Let $\Lambda_1, \ldots, \Lambda_n$ and $A$ be induced subgraphs of $\torus$ satisfying
 \begin{enumerate}
     \item for each $i$ we have $\diam_\square(\Lambda_i) < \ell_1$ and $\Lambda_i^\circ \subseteq A$;
     \item for $i \neq j$ we have $\dist(\Lambda_i, \Lambda_j) \geq 2$;
     \item $A$ is $\infty$-connected,
 \end{enumerate}
then $\cap_{i=1}^ n \ext(\Lambda_i) \cap A$ is $\infty$-connected.
\end{lemma}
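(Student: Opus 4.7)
\emph{Plan.} I would prove this by induction on $n$. The base case $n=0$ is exactly hypothesis (3). The main content is the case $n=1$; the inductive step is a reduction to it via an ``outermost contour'' argument.

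\emph{Reduction.} For the inductive step from $n-1$ to $n$, I would pick an index $k$ such that $\bar\Lambda_k := \Lambda_k\cup\inte(\Lambda_k)$ is maximal with respect to inclusion among $\{\bar\Lambda_j\}_{j=1}^n$. The hypotheses (1)--(2) (pairwise distance $\geq 2$ and $\diam_\square(\Lambda_j)<\ell_1$) constrain the family $\{\bar\Lambda_j\}$ to be organized in a nested-or-disjoint fashion: if $\bar\Lambda_j\cap\bar\Lambda_k\neq\emptyset$ for some $j\neq k$, then either $\bar\Lambda_j\subseteq\bar\Lambda_k$ or $\bar\Lambda_k\subseteq\bar\Lambda_j$ (considering each connected component of $\Lambda_j$, which must sit entirely in $\ext(\Lambda_k)$ or $\inte(\Lambda_k)$ by the distance bound). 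The maximality of $\bar\Lambda_k$ then forces $\Lambda_k\subseteq\bigcap_{j\neq k}\ext(\Lambda_j)$, so $\Lambda_k^\circ\subseteq A':= A\cap\bigcap_{j\neq k}\ext(\Lambda_j)$. By the inductive hypothesis applied to $\{\Lambda_j\}_{j\neq k}$ and $A$, the set $A'$ is $\infty$-connected, and conditions (1)--(3) hold for $A'$ and $\Lambda_k$. So everything reduces to proving the lemma in the case $n=1$, applied to $A'$ and $\Lambda_k$.

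\emph{The case $n=1$.} Given $v,w\in A\cap\ext(\Lambda)$, pick an $\infty$-path $\gamma$ in $A$ from $v$ to $w$. Write $\bar\Lambda:=\Lambda\cup\inte(\Lambda)$ and decompose $\gamma$ into maximal subpaths lying in $\ext(\Lambda)$, separated by maximal excursions into $\bar\Lambda$. For each excursion, one has a vertex $a\in A\cap\ext(\Lambda)$ immediately preceding it and a vertex $b\in A\cap\ext(\Lambda)$ immediately following it, both $\infty$-adjacent to $\bar\Lambda$. The goal is to replace each excursion by an $\infty$-path from $a$ to $b$ in $A\cap\ext(\Lambda)$. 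The key tool is Remark~\ref{rmk:infty connected boundaries of small subgraphs}, which provides the $\infty$-connected ``outer collar'' $\ext(\Lambda)\cap\partial^c\Lambda$. One first argues that $a$ and $b$ can be $\infty$-connected within $A\cap\ext(\Lambda)$ to vertices of $A\cap\partial^c\Lambda\cap\ext(\Lambda)$ (using local analysis around the excursion endpoints and the fact that they sit within $\ell_\infty$-distance $1$ of $\bar\Lambda$), then concatenates with an $\infty$-path in the outer collar.

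\emph{Main obstacle.} The substantive difficulty is the last rerouting step: Remark~\ref{rmk:infty connected boundaries of small subgraphs} guarantees $\infty$-connectedness of $\ext(\Lambda)\cap\partial^c\Lambda$ but not its containment in $A$, and the hypothesis only provides $\Lambda^\circ\subseteq A$ (inner vertices of the contour), not containment of the outer collar in $A$. One therefore needs to leverage $\Lambda^\circ\subseteq A$ together with the $\infty$-connectedness of $A$ itself to show that, locally near each excursion, the collar $\partial^c\Lambda\cap\ext(\Lambda)$ can be accessed through $A$; I expect this to be the most delicate step, likely carried out by a local argument showing that every vertex of $\ext(\Lambda)\cap\partial^c\Lambda$ reachable from an excursion endpoint via an $\infty$-step lies in $A$, or by a slight $\infty$-thickening of the collar that is forced into $A$ by the hypothesis.
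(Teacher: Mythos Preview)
Your inductive reduction to $n=1$ via an outermost-contour argument is unnecessary. The paper handles all the $\Lambda_i$ simultaneously: take an $\infty$-path in $A$ from $u$ to $v$; let $l$ be the first index with $a_l\in\Lambda_i$ for some $i$, and $m>l$ the first subsequent index with $a_m\notin\Lambda_i$; then replace the segment between $a_{l-1}$ and $a_m$ by an $\infty$-path in the outer collar $\ext(\Lambda_i)\cap\partial^c\Lambda_i$ (which is $\infty$-connected by the cited Remark). The observation your reduction misses is that hypothesis~(2) forces this collar---sitting at graph-distance $1$ from $\Lambda_i$---to be disjoint from every other $\Lambda_j$. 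Hence the rerouted segment avoids \emph{all} the $\Lambda_j$ at once, and one just iterates the surgery until the path misses $\bigcup_j\Lambda_j$. No induction on $n$, no nesting analysis, no choice of an outermost $\Lambda_k$.

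Your ``main obstacle'' is exactly the point where the paper's one-line argument lives: it asserts that the hypothesis immediately yields $\ext(\Lambda_i)\cap\partial^c\Lambda_i\subseteq A$, so the entire rerouting stays inside $A$. You are right that this does not follow from $\Lambda_i^\circ\subseteq A$ as literally written; in fact a $1\times 3$ path $\Lambda$ inside a $1\times 5$ path $A$ already violates the conclusion under that reading. What the argument actually needs, and what is verified in the only place the lemma is applied, is the transposed hypothesis $\Lambda_i\subseteq A^\circ$: then every vertex of $\partial^c\Lambda_i$ is a graph-neighbour of a point of $A^\circ$, hence lies in $A$, and the collar inclusion is immediate. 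So the resolution of your obstacle is not a delicate local argument or an $\infty$-thickening---those cannot work under the weaker hypothesis and are not needed under the stronger one---but a direct reading of the (corrected) assumption.
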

\begin{proof}
Take $u, v \in \cap_{i=1}^ n \ext(\Lambda_i) \cap A$. Because $A$ is $\infty$-connected, there is an $\infty$-path from $u$ to $v$ through $A$.
Denote this path by $(a_0, \ldots, a_k)$ for some $k \geq 0$, where $a_0 = u$ and $a_k = v$.
If the path has empty intersection with the sets $\Lambda_i$, we are done. 
Let $l$ denote the minimal index such that $a_l \in \Lambda_i$ for some $i$.
As $u, v \in \ext(\Lambda_i)$, there is a minimal index $m$ with $l<m<k$ such that $a_m \not \in \Lambda_i$.
As $\Lambda_i^\circ \subseteq A$, we see $a_{l-1},a_m \in \ext(\Lambda_i) \cap \partial^c \Lambda_i$.
We now claim there is a $\infty$-path from $a_{l-1}$ to $a_m$ which does not intersect $\Lambda_i$.

To prove this claim, first note for each $i$ we have $\diam_\square(\Lambda_i) < \ell_1$ and thus the induced subgraph $\ext(\Lambda_i) \cap \partial^c \Lambda_i$ is $\infty$-connected by Remark~\ref{rmk:infty connected boundaries of small subgraphs}.
Since $\Lambda_i^\circ \subseteq A$, we see $\ext(\Lambda_i) \cap \partial^c \Lambda_i \subseteq A$.
As for any $j \neq i$ we have $\dist(\Lambda_i, \Lambda_j) \geq 2$ we see 
$\ext(\Lambda_i) \cap \partial^c \Lambda_i$ does not intersect $\Lambda_j$.
Hence there is a $\infty$-path from $a_{l-1}$ to $a_m$ using only vertices from $\ext(\Lambda_i) \cap \partial^c \Lambda_i$, and none of the vertices of this path intersect $\Lambda_j$ for $j \neq i$.
This proves the claim from which the lemma follows. 
\end{proof}

\subsubsection{Contour representation of independent sets}
\begin{definition}
    Let $\Lambda \subseteq \torus$ be an induced subgraph. A map $\sigma:V(\Lambda) \to \{0,1\}$ is called a \emph{feasible configuration on $\Lambda$} if $I_\sigma = \{v \in V(\Lambda): \sigma(v) = 1\}$ is an independent set of $\Lambda$. 
\end{definition}

Given an independent set $I$ we denote the associated feasible configuration by $\sigma_I$.

   \begin{definition} We call a vertex of $\torus$ either \emph{even} or \emph{odd} if the sum of its coordinates is even or odd respectively. For an induced subgraph $\Lambda \subset \torus$ we denote by $\Lambda_\text{even}$ the subgraph induced by the even vertices of $\Lambda$ and $\Lambda_\text{odd}$ the subgraph induced by the odd vertices of $\Lambda$. The feasible configurations corresponding to the two maximal independent subsets of $\torus$, consisting of either all even or all odd vertices, are denoted by $\sigma_{\text{even}}$ and $\sigma_{\text{odd}}$. We refer to $\{\text{even}, \text{odd}\}$ as the set of \emph{ground states}.  Given a ground state $\varphi$, the complementary ground state will be denoted by $\overline{\varphi}$.
   \end{definition}

\begin{definition}\label{def:correct}
     Let $\Lambda \subseteq \torus$ be an induced subgraph. Given any feasible configuration $\sigma:V(\Lambda)\to \{0,1\}$ we say a vertex $v \in V(\Lambda)$ is \emph{correct} if there exists a ground state $\varphi \in \{\text{even}, \text{odd}\}$ such that for all $u \in N_\infty[v]\cap \Lambda$ we have $\sigma(u) = \sigma_\varphi(u)$, otherwise $v$ is defined to be \emph{incorrect}. We write $\overline{\Gamma(\Lambda,\sigma)}$ for the subgraph of $\Lambda$ induced by the set of incorrect vertices in $\Lambda$ with respect to $\sigma$.
\end{definition}

\begin{figure}
  \includegraphics[scale=0.7]{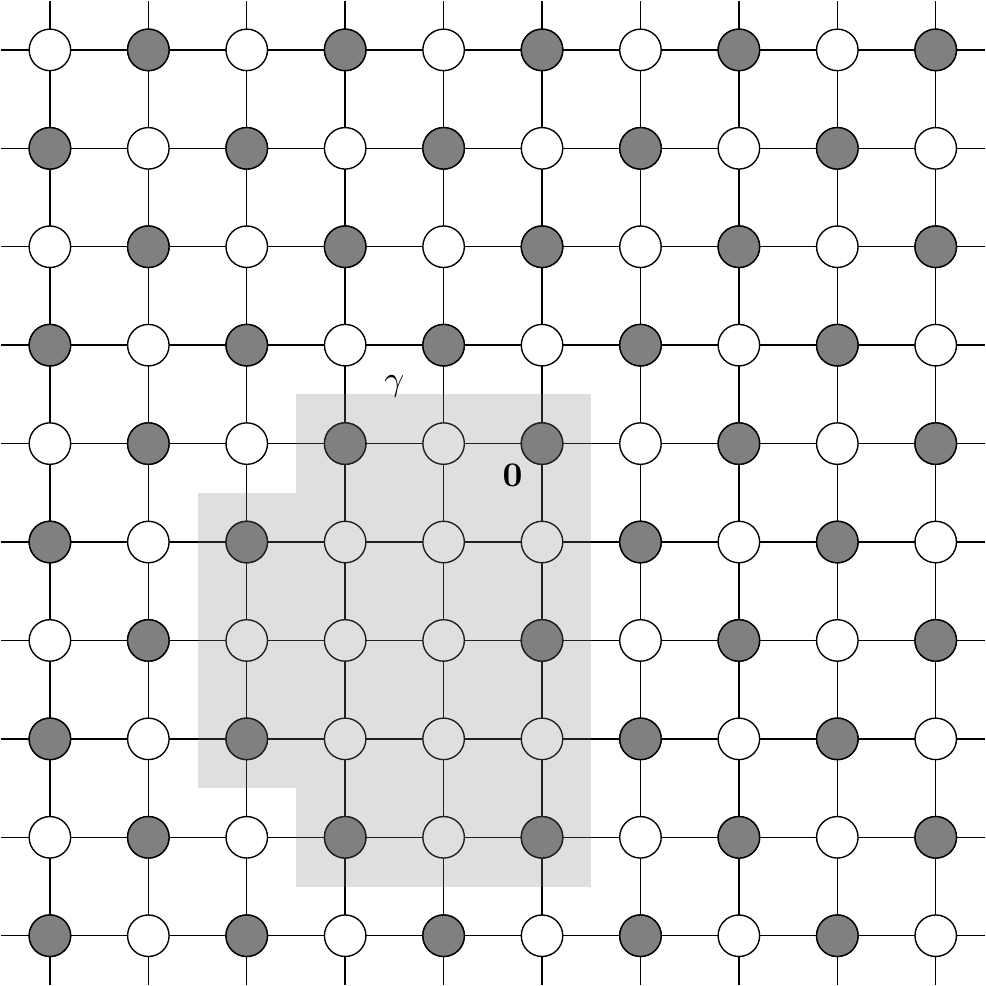}
\caption{A contour $\gamma$ in a $10$ by $10$ torus. Vertices $v$ such that $\sigma(v)=1$ are in dark gray and vertices $v$ such that $\sigma(v)=0$ are in white. The shaded gray region denotes the support of $\gamma$. The label of $\mathbb{Z}_{10}^2 \setminus \overline{\gamma}$ is even.}
    \label{fig:SingleContour}
\end{figure}

\begin{definition}\label{def:contour}
    Let $\gamma$ be a tuple $(\overline{\gamma},\sigma_\gamma)$ with \emph{support} $\overline{\gamma}$ a nonempty induced subgraph of $\torus$ and a feasible configuration $\sigma_\gamma: V(\overline{\gamma}) \to \{0,1\}$ for which there exists a labeling function $\lab_\gamma: \mathcal{C}(\torus\setminus \overline{\gamma}) \to \{ \text{even}, \text{odd}\}$ such 
    that the map $\hat{\sigma}_{\gamma}: V(\torus)\to \{0,1\}$ given by \[\hat{\sigma}_{\gamma}(v) = \begin{cases} 
    \sigma_\gamma(v)    &\text{ if }        v \in V(\overline{\gamma})\\
    \sigma_{\lab_\gamma(A)}(v) &\text{ if }     v \in V(A) \text{ with } A\in \mathcal{C}(\torus\setminus \overline{\gamma})
    \end{cases}
    \]
    is a feasible configuration on $\torus$ and $\overline{\gamma} = \overline{\Gamma(\torus,\hat{\sigma}_{\gamma})}$.
    Let $\ell_1$ denote the minimal side length of $\torus$. We say that $\gamma$ is a \emph{small contour} if $\overline{\gamma}$ is connected and satisfies $\diam_\square(\overline{\gamma})<\ell_1$.
    We say that $\gamma$ is a \emph{large contour} if each connected component of $\overline{\gamma}$ satisfies $\diam_\square(\overline{\gamma}) \geq \ell_1$. A \emph{contour} is either a small or a large contour. Two contours $\gamma_1, \gamma_2$ in $\torus$ have \emph{compatible support} if 
    \[
    \dist(\overline{\gamma_1},\overline{\gamma_2}) \geq 2.
    \] 
\end{definition}

See Figure \ref{fig:SingleContour} for an illustration of a contour $\gamma$ in the torus $\mathbb{Z}_{10}^2$.

\begin{remark}
    A contour $\gamma=(\overline{\gamma},\sigma_\gamma)$ uniquely determines the labeling function, $\lab_\gamma$, and the associated feasible configuration, $\hat{\sigma}_\gamma$.
\end{remark}

\begin{definition}
    We denote the exterior of a small contour $\gamma$ by $\ext(\gamma)$ instead of $\ext(\overline{\gamma})$. The label of $\ext(\gamma)$ is called the \emph{type} of $\gamma$. For a set $\Gamma$ of small contours we define the exterior $\ext(\Gamma) = \cap_{\gamma \in \Gamma} \ext(\gamma)$, with the convention that $\ext(\emptyset) = \torus$. For a large contour we do not define the exterior, but we artificially define the type of a large contour to be even.
    
    For any contour $\gamma$ and any ground state $\xi \in \{\text{even}, \text{odd}\}$ we define the \emph{$\xi$-interior of $\gamma$} as the union over all non-exterior connected components of $\torus \setminus \overline{\gamma}$ with label $\xi$, we denote this induced subgraph of $\torus $ by $\inte_\xi(\gamma)$. Denote the \emph{interior} of a contour $\gamma$ by  $\inte(\gamma) = \inte_{\text{even}}(\gamma)\cup \inte_{\text{odd}}(\gamma)$. 
\end{definition}

We note that the interior of any small contour $\gamma$ cannot contain a connected component of a large contour because its box-diameter is strictly less than $\ell_1$ (where $\ell_1$ denotes the minimum side length of the underlying torus).

\begin{definition}\label{def: new matching contours def}
    Let $\Gamma$ be a set of contours with pairwise compatible supports containing at most one large contour. We say $\Gamma$ is a \emph{matching set of contours} if there is a labeling function
\[
    \text{lab}_\Gamma: \mathcal{C}\big(\mathcal{T} \setminus \bigcup_{\gamma \in \Gamma} \overline{\gamma}\big) \to \{\text{even}, \text{odd}\}
\]
such that for each $A \in \mathcal{C}\big(\mathcal{T} \setminus \bigcup_{\gamma \in \Gamma} \overline{\gamma}\big)$ and $\gamma \in \Gamma$ with $\dist(A,\overline{\gamma})=1$ we have that $\hat{\sigma}_\gamma$ is equal to $\sigma_{\text{lab}(A)}$ when restricted to $A$.
\end{definition}

For any contour $\gamma$ the set $\Gamma = \{\gamma\}$ is a matching set of contours. 

\begin{definition}\label{def: omegamatch and the two empty clusters}
     For non-empty $\Gamma$ the labeling function $\lab_\Gamma$ is unique. If $\Gamma$ is empty there are two possible labeling functions, namely the one that assigns either even or odd to $\torus$. We view these as distinct matching sets of contours and denote them by $\emptyset_{\text{even}}$ and $\emptyset_{\text{odd}}$. Formally we thus define 
\[
    \Omega_{\text{match}}(\torus) = \{\Gamma: \text{$\Gamma$ a non-empty matching set of contours}\} \cup \{\emptyset_{\text{even}}\} \cup \{\emptyset_{\text{odd}}\}
\]
as the set of all matching sets of contours.
\end{definition}

\begin{figure}
  \includegraphics[scale=0.7]{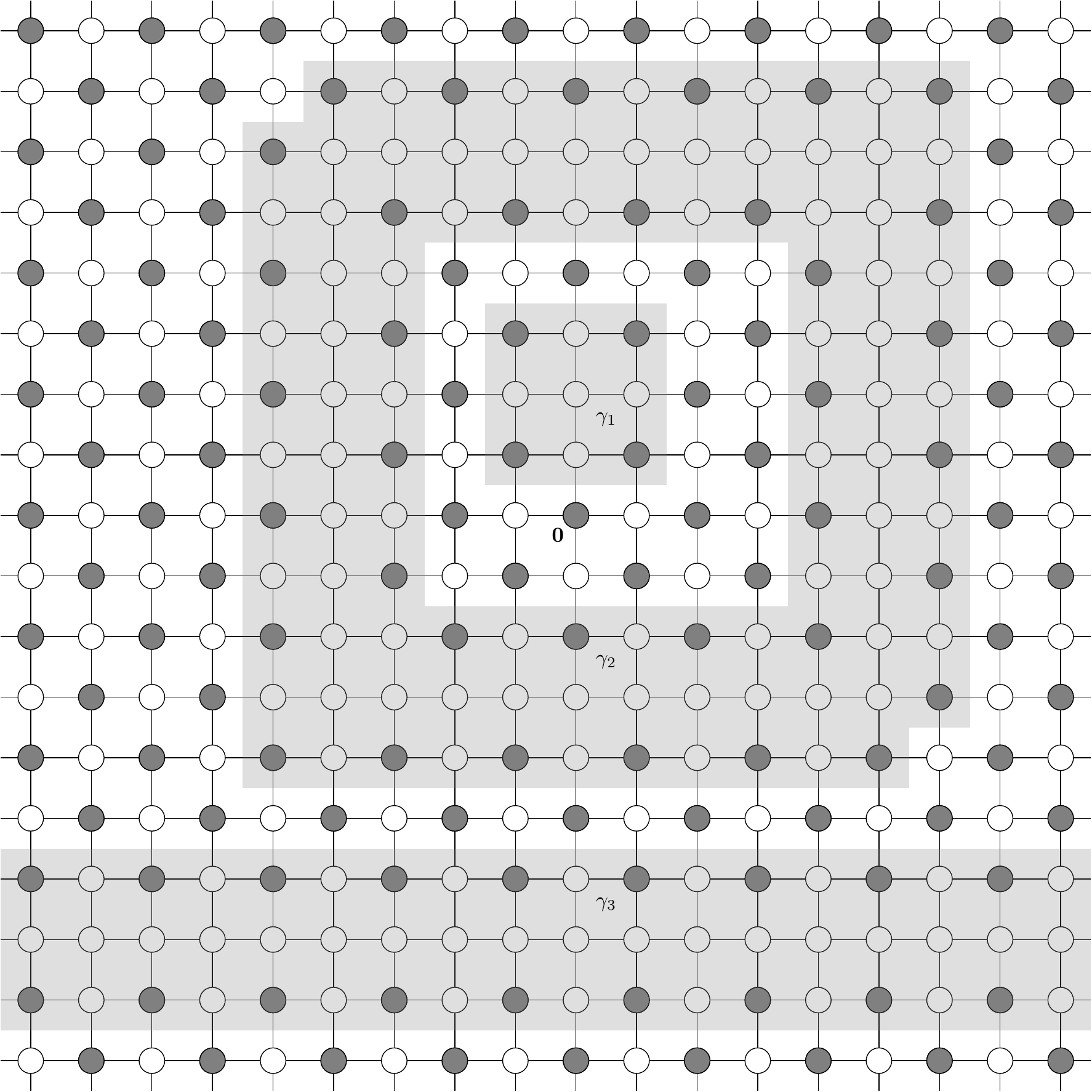}
\caption{A matching set of contours in an $18$ by $18$ torus. The contour $\gamma_1$ is small of type even, $\gamma_2$ is small of type odd and $\gamma_3$ is large. The contours $\gamma_2$ and $\gamma_1$ lie in the odd-interior of $\gamma_3$, the contour $\gamma_1$ lies in the even-interior of $\gamma_2$.}
    \label{fig:ConfigurationContours}
\end{figure}

See Figure \ref{fig:ConfigurationContours} for an illustration of a matching set of contours in an $18$ by $18$  torus.

\begin{definition}
    \label{def: surface energy contour}
     For a contour $\gamma = (\overline{\gamma},\sigma_\gamma)$ in $\torus$ we define the \emph{surface energy} as
    \[
    ||\gamma|| := \frac{1}{4d} \sum_{\substack{v \in V(\overline{\gamma})\\ \sigma_\gamma(v) = 0}} \Big(2d - \sum_{u \in N(v)} \hat{\sigma}_\gamma(u)\Big).
    \]
    For a matching set of contours $\Gamma$ we define $||\Gamma|| = \sum_{\gamma \in \Gamma} ||\gamma||$.
\end{definition}

In Theorem \ref{thm: bijection matching contours and spin configs and formula for surface energy part 2} we show the surface energy is always integer.


\begin{theorem}\label{thm: bijection matching contours and spin configs and formula for surface energy part 2}
      There is a bijection between the set of all sets of matching contours $ \SetOfContours_{\text{match}}(\torus) $ and the set of feasible configurations on an even torus $\torus$ such that for any $\Gamma \in \SetOfContours_{\text{match}}(\torus)$ and its corresponding feasible configuration $\tau:V(\torus) \to \{0,1\}$ we have
    \begin{equation}
    \label{eq: surface_energy_correspondence}
 ||\Gamma|| = \frac{|\torus|}{2} - |I_{\tau}|.
    \end{equation}
\end{theorem}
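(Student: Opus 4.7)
The plan is to construct the bijection explicitly and then derive the surface-energy identity via a global edge count. Given $\Gamma \in \SetOfContours_{\text{match}}(\torus)$, define $\tau$ by gluing: $\tau := \sigma_\gamma$ on each $\overline{\gamma}$ for $\gamma\in\Gamma$ and $\tau := \sigma_{\lab_\Gamma(A)}$ on each $A \in \mathcal{C}(\torus \setminus \bigcup_{\gamma\in\Gamma} \overline{\gamma})$; the two empty matching sets $\emptyset_\varphi$ map to the ground states $\sigma_\varphi$. Compatibility (graph distance $\geq 2$) rules out edges between distinct supports, and the matching condition forces $\tau|_{\overline{\gamma}}$ and $\tau|_A$ to agree on every edge between them, so $\tau$ is feasible. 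For the inverse, given a feasible $\tau$, let $B = \overline{\Gamma(\torus,\tau)}$ be its set of incorrect vertices; partition $B$ into its graph-connected components, keeping each component of box-diameter $< \ell_1$ as an individual small support and uniting the rest into one large support. Attach to each support $C$ the restriction $\tau|_C$, and to each component $D \in \mathcal{C}(\torus\setminus C)$ the ground state $\varphi$ such that $\tau = \sigma_\varphi$ on $N_\infty[v]$ for any correct vertex $v \in D$ adjacent to $C$. The basic observation that graph-adjacent correct vertices share the same local ground state (otherwise their witnessing ground states would disagree at one of them), together with the $\infty$-connectedness of $\ext(\Lambda)\cap\partial^c\Lambda$ for small $\Lambda$ (Remark~\ref{rmk:infty connected boundaries of small subgraphs}), shows that this label is well-defined and that the resulting tuple is a matching set of contours inverse to the forward map.

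With the bijection in hand, the surface-energy identity follows from a short global count. Feasibility rules out $1$--$1$ edges, and each $1$-vertex has all $2d$ neighbors equal to $0$, so the number of mixed edges satisfies $e_{01}(\tau) = 2d|I_\tau|$; subtracting from the total edge count $d|\torus|$ gives $e_{00}(\tau) = d|\torus| - 2d|I_\tau|$. Any correct vertex $v$ with $\tau(v) = 0$ has parity opposite to its local ground state and therefore all neighbors satisfying $\tau = 1$, so it contributes nothing to the double count $2e_{00}(\tau) = \sum_{v:\tau(v)=0}\#\{u\in N(v): \tau(u) = 0\}$. Restricting the sum to incorrect $0$-vertices and replacing $\tau$ by $\hat\sigma_\gamma$ on $N[\overline{\gamma}]$ (a local consistency that follows from the bijection) rewrites the sum as $\sum_{\gamma\in\Gamma}\sum_{v\in\overline{\gamma},\,\sigma_\gamma(v)=0}\bigl(2d - \sum_{u\in N(v)}\hat\sigma_\gamma(u)\bigr) = 4d\,||\Gamma||$. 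Equating the two evaluations yields $||\Gamma|| = \tfrac{1}{2}|\torus| - |I_\tau|$, which in particular shows $||\Gamma||$ is always a nonnegative integer.

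The main obstacle is the geometric bookkeeping in setting up the bijection: showing that the labels assembled per-contour from a feasible $\tau$ are well-defined and glue into a global matching-set label, correctly handling the asymmetric treatment of small versus large components, and deducing the local identity $\tau = \hat\sigma_\gamma$ on $N[\overline{\gamma}]$ from the distance-$2$ compatibility of supports together with the defining property $\overline{\gamma} = \overline{\Gamma(\torus,\hat\sigma_\gamma)}$ of a contour. Once these preliminaries are settled, the surface-energy identity reduces to the three-line edge count above.
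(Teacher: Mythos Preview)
Your approach is essentially the paper's: build $\tau$ from $\Gamma$ by gluing the $\sigma_\gamma$ with the labels, build $\Gamma$ from $\tau$ by partitioning the incorrect set into small components and one large piece, and derive the surface-energy identity by counting $0$--$0$ edges globally. The edge count is exactly the paper's argument.

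There is one genuine gap in your well-definedness argument for the per-contour labels. You invoke only Remark~\ref{rmk:infty connected boundaries of small subgraphs}, which gives $\infty$-connectedness of $\ext(\Lambda)\cap\partial^c\Lambda$ for small $\Lambda$; this settles the label on the \emph{exterior} component of a small support. But for an \emph{interior} component $D$ of a small support, and for every component of the complement of the large support, $D$ may contain supports of other contours, so the correct vertices in $D$ adjacent to $C$ need not be mutually reachable through correct vertices by your ``adjacent correct vertices share a ground state'' observation alone. The paper closes this gap with Lemma~\ref{lem: inf_con}: after removing the interiors of the small supports lying inside $D$, the remaining set (which contains $D\cap\partial^c C$ and consists of correct vertices) is still $\infty$-connected, and only then does the constancy of the ground state follow. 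You correctly flag the geometric bookkeeping as the main obstacle, but the specific missing ingredient is this connectivity lemma for the non-exterior case.
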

\begin{proof}


For $\Gamma \in \SetOfContours_{\text{match}}(\torus)$ we define the feasible configuration $\tau_\Gamma$ as 
\[
    \tau_\Gamma(v) = 
\begin{cases}
    \sigma_\gamma(v)  &\text{if $v \in \overline{\gamma}$ for some $\gamma \in \Gamma$} \\
    \sigma_{\lab_\Gamma(A)}(v)  &\text{if $v \in A$ for some $A \in \mathcal{C}\big(\mathcal{T} \setminus \bigcup_{\gamma \in \Gamma} \overline{\gamma}\big)$}. \\
\end{cases}
\]
We recall here that $\Gamma \in \SetOfContours_{\text{match}}(\torus)$ contains two copies of the empty set with either an even or an odd label. These correspond to $\sigma_{\text{even}}$ and $\sigma_{\text{odd}}$ respectively. It follows directly from the definition of $\SetOfContours_{\text{match}}(\torus)$ that $\tau_\Gamma$ is indeed feasible. 


We now define a map from the set of feasible configurations to $\SetOfContours_{\text{match}}(\torus)$. Let $\tau:V(\torus) \to \{0,1\} $ be a feasible configuration. The induced subgraph  $\overline{\Gamma(\torus ,\tau)}$ consists of a union of say $s \geq 0$ connected components with box-diameter strictly less than $\ell_1$ and $m \geq 0$ connected components with box-diameter $\geq \ell_1$. If $s=m=0$ then $\tau$ is equal to either $\sigma_{\text{even}}$ or $\sigma_{\text{odd}}$ which we map to $\emptyset_{\text{even}}$ or $\emptyset_\text{odd}$ respectively.
If $m \geq 1$ then we denote the union of all connected components with box-diameter $\geq \ell_1$ by $\overline{\gamma_{\text{large}}}$. 
Denote the remaining connected components of $\overline{\Gamma(\torus ,\tau)}$ by $\overline{\gamma_i}$ for $i \in \{1, \ldots, s\}$.
By restricting $\tau$, we define a feasible configuration $\sigma_\gamma$ on each support $\overline{\gamma} \in \{\overline{\gamma_{\text{large}}}, \overline{\gamma_1}, \ldots, \overline{\gamma_s} \}$. We have to show that for each such $\overline{\gamma}$ we can define a labeling function $\lab_{\gamma}$ on the connected components of $\torus \setminus \overline{\gamma}$ that makes $\gamma$ into a well-defined contour.

To do this it is sufficient to show that, given $A \in \mathcal{C}(\torus \setminus \overline{\gamma})$, there exists $\varphi\in \{\text{even},\text{odd}\}$ such that $\tau$ restricted to the vertices of $A \cap \partial^c(\overline{\gamma})$ is equal to $\sigma_\varphi$. Indeed, if this is the case then we can define $\lab_{\gamma}(A) = \varphi$. It is not hard to see that the corresponding configuration $\hat{\sigma}_\gamma$ as defined in Definition~\ref{def:contour} is then feasible and satisfies $\overline{\gamma} = \overline{\Gamma(\torus,\hat{\sigma}_\gamma)}$. We distinguish between two cases. 

In the first case $\overline{\gamma}$ is not $\overline{\gamma_{\text{large}}}$ and $A = \ext(\overline{\gamma})$. It then follows from Remark~\ref{rmk:infty connected boundaries of small subgraphs} that $A \cap \partial^c(\overline{\gamma})$ is a $\infty$-connected set of correct vertices with respect to $\tau$. It follows that there is a unique $\varphi$ such that $\tau = \sigma_\phi$ when restricted to $A \cap \partial^c(\overline{\gamma})$. 

In the second case $A$ has empty intersection with $\overline{\gamma_{\text{large}}}$ and thus any $\overline{\gamma}'$ contained in $A$ has box-diameter strictly less than $\ell_1$. Let $\Gamma'$ be the collection of these $\overline{\gamma}'$. Any $\overline{\gamma}' \in \Gamma'$ must be contained in $A^\circ$ because otherwise $\overline{\gamma} \cup \overline{\gamma}'$ would be a single component of $\overline{\Gamma(\torus ,\tau)}$. It now follows from Lemma~\ref{lem: inf_con} that $A' := \cap_{\gamma' \in \Gamma'} \ext(\gamma') \cap A$ is $\infty$-connected. Because $A'$ consists of correct vertices with respect to $\tau$ and $\partial^c(\overline{\gamma}) \cap A \subseteq A'$ it follows that there is a $\varphi$ such that $\tau = \sigma_\varphi$ when restricted to $\partial^c(\overline{\gamma}) \cap A$.

We have shown that $\Gamma_\tau := \{\gamma_{\text{large}}, \gamma_1, \dots, \gamma_s\}$ is a set of contours with pairwise compatible supports. The labeling function $\lab_\Gamma$ that assigns to any component with vertex $v$ the label inherited from $\tau$ shows that indeed $\Gamma_\tau \in \SetOfContours_{\text{match}}(\torus)$. By definition the maps $\tau \mapsto \Gamma_\tau$ and $\Gamma \mapsto \tau_\Gamma$ are each others inverse.


We now prove the equality in equation~{(\ref{eq: surface_energy_correspondence})}. Let $\Gamma$ be a set of matching contours and $\tau$ its corresponding feasible configuration.
We count the number of edges in $\torus$ in two ways. The total number of edges in $\torus$ is $2d \cdot \frac{|\torus|}{2}$, as there are $\frac{|\torus|}{2}$ even vertices in $\torus$ and each even vertex is incident to $2d$ distinct edges. The number of edges in $\torus$ also equals the number of edges between $I_{\tau}$ and $\torus \setminus I_\tau$ plus the number of edges within $\torus \setminus I_\tau$.
The number of edges  between $I_\tau$ and $\torus \setminus I_\tau$ is equal to $2d \cdot |I_\tau|$, as each vertex of $I_\tau$ has degree $2d$ and $I_\tau$ is independent.
For a vertex $v\in \torus \setminus I_\tau$ the number of neighbors of $v$ in $\torus \setminus I_\tau$ is $2d - \sum_{u \in N(v)} \tau(u)$, the degree of $v$ minus the number of neighbors of $v$ in $I_\tau$. 
The number of edges within $\torus \setminus I_\tau$ is the sum of the number of neighbors of $v$ in $\torus \setminus I_\tau$ over all $v\in \torus \setminus I_\tau$ divided by $2$, hence
\[
|E(\torus \setminus I_\tau)| = \frac{1}{2} \sum_{\substack{v \in V(\torus)\\ \tau(v) = 0}} \Big(2d - \sum_{u \in N(v)} \tau(u)\Big) = \frac{1}{2} \sum_{\gamma \in \Gamma} \sum_{\substack{v \in V(\overline{\gamma})\\ \tau(v) = 0}} \Big(2d - \sum_{u \in N(v)} \tau(u)\Big)=2d \cdot ||\Gamma||,
\]
where the second equality follows as each vertex outside of $\cup_{\gamma\in \Gamma} V(\overline{\gamma})$ is correct with respect to $\tau$.
From this we see
\[
2d \cdot ||\Gamma||+ 2d \cdot |I_\tau| = 2d \cdot \frac{|\torus|}{2},
\]
hence $||\Gamma|| = \frac{|\torus|}{2} - |I_\tau|$.
\end{proof}


\begin{definition}
     We define the \emph{matching contour partition function on $\torus$} as
\[
Z_{\text{match}}(\torus;z) := \sum_{\Gamma \in \SetOfContours_{\text{match}}(\torus)} \prod_{\gamma \in \Gamma} z^{||\gamma||}.
\]
\end{definition}

Let $Z_\text{ind}(G;\lambda)$ deote the independence polynomial of a graph $G$ evaluated at $\lambda$.

\begin{corollary}\label{cor:Zmatch = Zind}
    We have
\[
Z_{\text{ind}}( \torus; \lambda) := \lambda^{\frac{|\torus|}{2}} \cdot Z_{\text{match}}(\torus; \frac{1}{\lambda}).
\]
\end{corollary}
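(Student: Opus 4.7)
The plan is to treat this statement as a direct consequence of Theorem~\ref{thm: bijection matching contours and spin configs and formula for surface energy part 2}, which provides a bijection $\Gamma \leftrightarrow \tau_\Gamma$ between $\SetOfContours_{\text{match}}(\torus)$ and the set of feasible configurations on $\torus$, together with the surface-energy identity $||\Gamma|| = |\torus|/2 - |I_{\tau_\Gamma}|$. Recall that feasible configurations $\tau$ on $\torus$ are in bijection with independent sets $I$ of $\torus$ via $I = I_\tau = \{v : \tau(v) = 1\}$, so composing these bijections identifies $\SetOfContours_{\text{match}}(\torus)$ with the set of independent sets of $\torus$.

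First I would expand the definition
\[
Z_{\text{match}}(\torus;z) = \sum_{\Gamma \in \SetOfContours_{\text{match}}(\torus)} \prod_{\gamma \in \Gamma} z^{||\gamma||} = \sum_{\Gamma \in \SetOfContours_{\text{match}}(\torus)} z^{||\Gamma||},
\]
using that $||\Gamma|| = \sum_{\gamma \in \Gamma} ||\gamma||$ by definition. The two empty matching sets $\emptyset_{\text{even}}$ and $\emptyset_{\text{odd}}$ contribute empty products equal to $z^0 = 1$, which is consistent with the bijection since they map to the two maximal independent sets $I_{\sigma_{\text{even}}}$ and $I_{\sigma_{\text{odd}}}$, each of size $|\torus|/2$, so that the identity $||\Gamma|| = |\torus|/2 - |I_\tau|$ gives exponent $0$ as required. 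Applying the bijection and the surface-energy identity then turns the sum into $\sum_{I} z^{|\torus|/2 - |I|}$, indexed over independent sets $I$ of $\torus$.

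Finally, setting $z = 1/\lambda$ and multiplying through by $\lambda^{|\torus|/2}$ yields
\[
\lambda^{|\torus|/2} \cdot Z_{\text{match}}(\torus;1/\lambda) = \sum_I \lambda^{|\torus|/2} \cdot \lambda^{|I|-|\torus|/2} = \sum_I \lambda^{|I|} = Z_{\text{ind}}(\torus;\lambda),
\]
which is the claim. I do not anticipate any real obstacle here: all of the substantive work has been done in Theorem~\ref{thm: bijection matching contours and spin configs and formula for surface energy part 2}, and the corollary amounts to a pure change of variables in the summation.
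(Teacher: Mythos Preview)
Your proof is correct and follows exactly the same approach as the paper, which simply cites Theorem~\ref{thm: bijection matching contours and spin configs and formula for surface energy part 2} and the definition of $Z_{\text{match}}$. You have just spelled out the one-line argument in more detail.
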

\begin{proof}
    This follows from Theorem \ref{thm: bijection matching contours and spin configs and formula for surface energy part 2} and the definition of the matching contour partition function.
\end{proof}

As $Z_{\text{match}}(\torus; 0) = 2 \neq 0$, the first part of the main theorem is equivalent to finding a zero free region around $z=0$ of $Z_{\text{match}}(\torus;z)$ for all $C$-balanced tori.

\subsubsection{Contours as polymers}
We next collect some definitions allowing us to split up $Z_{\text{match}}$ up into two parts which we can then interpret as polymer partition functions.

\begin{definition}
    We partition $\SetOfContours_{\text{match}}(\torus)$ into three subsets. We let $\SetOfContours_{\text{match}}^{\text{large}}(\torus)$ consists of those $\Gamma \in \SetOfContours_{\text{match}}(\torus)$ that contain a large contour. If $\Gamma$ consists of small contours we define the \emph{type} of $\Gamma$ as the label assigned to $\ext(\Gamma)$ by $\lab_{\Gamma}$. We denote those $\Gamma$ with type $\varphi \in \{\text{even}, \text{odd}\}$ by $\SetOfContours_{\text{match}}^\varphi(\torus)$. Note that $\emptyset_{\text{even}} \in \SetOfContours_{\text{match}}^{\text{even}}(\torus)$, $\emptyset_{\text{odd}} \in \SetOfContours_{\text{match}}^{\text{odd}}(\torus)$ and
    \[
        \SetOfContours_{\text{match}}(\torus)=\SetOfContours_{\text{match}}^{\text{even}}(\torus)\cup \SetOfContours_{\text{match}}^{\text{odd}}(\torus) \cup
        \SetOfContours_{\text{match}}^{\text{large}}(\torus).
    \]
    For $\varphi \in \{\text{even}, \text{odd}, \text{large}\}$ we define 
    \[
        Z_{\text{match}}^{\varphi}(\torus;z) = \sum_{\Gamma \in \SetOfContours_{\text{match}}^\varphi(\torus)} \prod_{\gamma \in \Gamma} z^{||\gamma||}.
    \]
\end{definition}

\begin{definition}\label{def:externals}
For any $\varphi \in \{\text{even}, \text{odd}\}$ and $\Gamma \in \SetOfContours_{\text{match}}^\varphi(\torus)$ we define 
\begin{equation}
    \label{eq: externals}
    \Gamma_{\ext} = \{\gamma \in \Gamma: \overline{\gamma} \subseteq \ext(\gamma') \text{ for all }\gamma' \in \Gamma \text{ not equal to $\gamma$}\}.
\end{equation}
We furthermore define 
\[
\SetOfContours_{\text{ext}}^\varphi(\torus) = \{\Gamma \in \SetOfContours_{\text{match}}^\varphi(\torus): \Gamma = \Gamma_\text{ext}\}.
\]
    
\end{definition}

We will further rewrite $Z_{\text{match}}(\torus;z)$ in order to apply the framework outlined in Section \ref{sec: cluster expansion}.
The first step in rewriting is a standard technique from Pirogov-Sinai theory, analogous to what was done for finite induced subgraphs $\Lambda \subseteq \mathbb{Z}^d$ with padded boundary conditions in \cite{PirogovSinaiWillGuusTyler}. 

We define a class of well-behaved induced subgraphs of tori.

\begin{definition}
    Let $\Lambda \subseteq \torus$ be an induced subgraph. If for any small contour $\gamma$ with $\overline{\gamma} \subseteq \Lambda^\circ$ we have $\inte(\gamma) \subseteq \Lambda^\circ$ we say $\Lambda$ is \emph{closed under taking interiors of small contours}, or more succinctly \emph{closed}. 
\end{definition}

Note $\torus$ is closed, and for any contour $\gamma$ the induced subgraphs $\inte_{\text{odd}}(\gamma)$, $\inte_{\text{even}}(\gamma)$ and $\inte(\gamma)$ are also closed. 

\begin{definition}
    Let $\Lambda \subseteq \torus$ be an induced closed subgraph and $\varphi \in \{\text{even,odd}\}$ a ground state. We define 
\[
\SetOfContours^\varphi_{\text{match}}(\Lambda) := \{ \Gamma \in \SetOfContours^\varphi_{\text{match}}(\torus): \text{ for all } \gamma \in \Gamma \text{ we have } \overline{\gamma} \subseteq \Lambda^\circ \}.
\]
and
\[
\SetOfContours^\varphi_{\text{ext}}(\Lambda) := \{ \Gamma \in \SetOfContours^\varphi_{\text{ext}}(\torus): \text{ for all } \gamma \in \Gamma \text{ we have } \overline{\gamma} \subseteq \Lambda^\circ \}
\]
We also define the \emph{matching contour partition function} as
\[
Z_{\text{match}}^\varphi(\Lambda;z) = \sum_{\Gamma \in \SetOfContours^{\varphi}_{\text{match}}(\Lambda)} \prod_{\gamma \in \Gamma} z^{||\gamma||}.
\]
\end{definition}

Note if $\Lambda^\circ = \emptyset$ then $Z_{\text{match}}^\varphi(\Lambda;z) = 1$ as in that case $\SetOfContours^{\varphi}_{\text{match}}(\Lambda)= \{ \emptyset_\varphi \}$. 

\begin{lemma}\label{lem: factorising property Zmatch}
   For induced closed subgraphs  $\Lambda_1, \Lambda_2\subset \torus$ with $\dist(\Lambda_1,\Lambda_2) \geq 2$ and any $\varphi \in \{\text{even, odd}\}$ we have $Z^\varphi_{\text{match}}(\Lambda_1 \cup \Lambda_2;z)= Z^\varphi_{\text{match}}(\Lambda_1;z) \cdot Z^\varphi_{\text{match}}(\Lambda_1;z)$. 
\end{lemma}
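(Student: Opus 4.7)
The plan is to exhibit a natural bijection
\[
\SetOfContours^\varphi_{\text{match}}(\Lambda_1 \cup \Lambda_2) \;\longleftrightarrow\; \SetOfContours^\varphi_{\text{match}}(\Lambda_1) \times \SetOfContours^\varphi_{\text{match}}(\Lambda_2)
\]
under which the monomial weight $\prod_\gamma z^{||\gamma||}$ factorises, and then to sum the resulting identity over the bijection. The map sends $\Gamma$ to $(\Gamma_1,\Gamma_2)$ with $\Gamma_i:=\{\gamma\in\Gamma : \overline{\gamma}\subseteq\Lambda_i\}$, and its inverse is $(\Gamma_1,\Gamma_2)\mapsto\Gamma_1\cup\Gamma_2$. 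To see that this partition is well-defined, note that every $\gamma\in\Gamma$ is small (since $\varphi\in\{\text{even},\text{odd}\}$), so $\overline{\gamma}$ is connected; because $\dist(\Lambda_1,\Lambda_2)\geq 2$ forbids any edge of $\torus$ between $\Lambda_1$ and $\Lambda_2$, the set $\overline{\gamma}\subseteq (\Lambda_1\cup\Lambda_2)^\circ$ lies entirely in one of the two. The same distance hypothesis gives the elementary identity $\Lambda_i^\circ=\Lambda_i\cap(\Lambda_1\cup\Lambda_2)^\circ$, and hence $\overline{\gamma}\subseteq\Lambda_i^\circ$.

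To promote this to a bijection of matching contour sets, I would construct the labelling $\text{lab}_{\Gamma_i}$ on $\mathcal{C}(\torus\setminus\bigcup_{\gamma\in\Gamma_i}\overline{\gamma})$ as follows. If a component $A$ is contained in $\inte(\gamma^*)$ for some $\gamma^*\in\Gamma_i$, then closedness of $\Lambda_i$ gives $\inte(\gamma^*)\subseteq\Lambda_i^\circ$, which is disjoint from every support of $\Gamma_j$ with $j\neq i$; in that case $A$ is also a component of $\torus\setminus\bigcup_{\gamma\in\Gamma}\overline{\gamma}$, and I set $\text{lab}_{\Gamma_i}(A):=\text{lab}_\Gamma(A)$. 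Every remaining component is the outer one containing $\ext(\Gamma)\supseteq\torus\setminus(\Lambda_1\cup\Lambda_2)$, and I assign it label $\varphi$. The matching property for $\Gamma_i$ on interior components then follows from that of $\Gamma$ by construction; on the outer component, the key step is to observe that any $\gamma\in\Gamma_i$ adjacent to it must be external in $\Gamma_i$, and in fact external in $\Gamma$ as well, because supports of $\Gamma_j$ live in $\Lambda_j^\circ$ and therefore cannot contain $\gamma$ in their interior. The matching of $\Gamma$ applied to such an external $\gamma$ then forces $\text{type}(\gamma)=\varphi$, which simultaneously gives the matching condition on the outer component and shows $\Gamma_i$ has type $\varphi$.

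The inverse map is symmetric: given $(\Gamma_1,\Gamma_2)$, form $\Gamma_1\cup\Gamma_2$, with cross-compatibility of supports automatic from $\dist(\Lambda_1,\Lambda_2)\geq 2$, and build $\text{lab}_\Gamma$ by gluing $\text{lab}_{\Gamma_1}$ and $\text{lab}_{\Gamma_2}$ on their respective interior components and using $\varphi$ on the outer component. The two constructions are manifestly mutual inverses. Since $\Gamma=\Gamma_1\sqcup\Gamma_2$ as multisets,
\[
\prod_{\gamma\in\Gamma}z^{||\gamma||}=\prod_{\gamma\in\Gamma_1}z^{||\gamma||}\cdot\prod_{\gamma\in\Gamma_2}z^{||\gamma||},
\]
and summing over the bijection yields the claimed factorisation. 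The main technical obstacle is the label-matching bookkeeping above, specifically proving (i) that a contour of $\Gamma_i$ is external in $\Gamma$ if and only if it is external in $\Gamma_i$, and (ii) that components of $\torus\setminus\bigcup_{\gamma\in\Gamma_i}\overline{\gamma}$ lying inside $\inte(\Gamma_i)$ coincide with those of $\torus\setminus\bigcup_{\gamma\in\Gamma}\overline{\gamma}$ inside the same region; both reductions rest essentially on $\Lambda_i$ being closed under taking interiors of small contours together with $\dist(\Lambda_1,\Lambda_2)\geq 2$.
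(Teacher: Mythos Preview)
Your approach is exactly the one taken in the paper: the proof there simply asserts that the bijection $(\Gamma_1,\Gamma_2)\mapsto\Gamma_1\cup\Gamma_2$ between $\SetOfContours^{\varphi}_{\text{match}}(\Lambda_1)\times\SetOfContours^{\varphi}_{\text{match}}(\Lambda_2)$ and $\SetOfContours^{\varphi}_{\text{match}}(\Lambda_1\cup\Lambda_2)$ does the job, after noting that $\Lambda_1\cup\Lambda_2$ is closed. You have supplied substantially more of the label-matching bookkeeping than the paper does, but the underlying argument is identical.
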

\begin{proof}
Note that the induced subgraph $\Lambda_1 \cup \Lambda_2$ is closed.
 The equality follows from the bijection between $\SetOfContours^{\varphi}_{\text{match}}(\Lambda_1) \times \SetOfContours^{\varphi}_{\text{match}}(\Lambda_2)$ and $\SetOfContours^{\varphi}_{\text{match}}(\Lambda_1 \cup \Lambda_2)$ given by $(\Gamma_1, \Gamma_2) \mapsto \Gamma_1 \cup \Gamma_2$.
\end{proof}

\begin{lemma}
    \label{lem: match_to_ext}
    For any induced closed subgraph $\Lambda \subseteq \torus$, any ground state $\varphi \in \{\text{even}, \text{odd}\}$ and any $z \in \mathbb{C}$ we have
    \[
Z_{\text{match}}^{\varphi}(\Lambda;z) = \sum_{\Gamma \in \SetOfContours^{\varphi}_{\text{ext}}(\Lambda)} \prod_{\gamma \in \Gamma} z^{||\gamma||} Z_{\text{match}}^{\text{even}}(\inte_{\text{even}}(\gamma);z) Z_{\text{match}}^{\text{odd}}(\inte_{\text{odd}}(\gamma);z).
    \]
\end{lemma}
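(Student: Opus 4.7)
The strategy is to reorganize the sum defining $Z_{\text{match}}^{\varphi}(\Lambda;z)$ by grouping matching sets of contours according to their external contours, and then show that the remaining (non-external) contours decompose into independent matching contour systems in the even-interiors and odd-interiors of the external contours. Concretely, I would set up a bijection
\[
\SetOfContours^{\varphi}_{\text{match}}(\Lambda) \;\longleftrightarrow\; \bigsqcup_{\Gamma_0 \in \SetOfContours^{\varphi}_{\text{ext}}(\Lambda)} \prod_{\gamma \in \Gamma_0} \SetOfContours^{\text{even}}_{\text{match}}(\inte_{\text{even}}(\gamma)) \times \SetOfContours^{\text{odd}}_{\text{match}}(\inte_{\text{odd}}(\gamma)),
\]
sending $\Gamma \mapsto (\Gamma_\ext, (\Gamma_{\gamma,\xi})_{\gamma,\xi})$, where $\Gamma_{\gamma,\xi}$ is the set of contours of $\Gamma \setminus \Gamma_\ext$ whose support lies in $\inte_\xi(\gamma)$. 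Since surface energies are additive over disjoint supports, $||\Gamma|| = \sum_{\gamma \in \Gamma_\ext} ||\gamma|| + \sum_{\gamma \in \Gamma_\ext} \sum_{\xi} ||\Gamma_{\gamma,\xi}||$, and the claimed identity then follows by factoring the sum.

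\textbf{Key verifications.} First I need that $\Gamma_\ext$ genuinely lies in $\SetOfContours^{\varphi}_{\text{ext}}(\Lambda)$: it consists of small contours with pairwise compatible supports (inherited from $\Gamma$), and its labeling function is obtained by restricting $\lab_\Gamma$ to those components of $\torus \setminus \bigcup_{\gamma \in \Gamma_\ext}\overline{\gamma}$ that either equal some $\inte_\xi(\gamma)$ (labeled $\xi$) or equal $\ext(\Gamma_\ext)$ (labeled $\varphi$). The compatibility condition of Definition~\ref{def: new matching contours def} for $\Gamma_\ext$ follows from the analogous condition for $\Gamma$, using that removing a non-external contour $\gamma'$ nested in some $\inte_\xi(\gamma)$ only merges components of $\torus \setminus \bigcup_{\gamma' \in \Gamma}\overline{\gamma'}$ that all carried label $\xi$ under $\lab_\Gamma$. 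Second, for each external $\gamma \in \Gamma_\ext$ and $\xi \in \{\text{even}, \text{odd}\}$, one checks that $\inte_\xi(\gamma)$ is closed under taking interiors of small contours (the box-diameter bound forces any small contour inside $(\inte_\xi(\gamma))^\circ$ to have its interior nested there), and that the restriction $\Gamma_{\gamma,\xi}$ is a matching set of type $\xi$ inside $\inte_\xi(\gamma)$, since the pairwise compatibility of supports and the matching condition are local.

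\textbf{Inverse and splitting of the sum.} The inverse map takes a tuple $(\Gamma_0, (\Gamma_{\gamma,\xi}))$ and forms the disjoint union, using Lemma~\ref{lem: factorising property Zmatch} implicitly to glue the labelings: contours belonging to different $\Gamma_{\gamma,\xi}$ have supports at distance $\geq 2$ because they live in disjoint even/odd interiors separated by the external contours. Well-definedness of the resulting labeling function on $\torus \setminus \bigcup \overline{\gamma}$ is precisely the statement that the individual matching labelings agree on shared boundary components, which holds because each $\Gamma_{\gamma,\xi}$ has exterior type $\xi$ matching the label $\lab_{\Gamma_0}$ assigns to $\inte_\xi(\gamma)$. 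Once the bijection is established, the proof is a direct computation: the sum over $\Gamma$ factors as a sum over $\Gamma_\ext$ of $\prod_{\gamma \in \Gamma_\ext} z^{||\gamma||}$ times the product over $\gamma \in \Gamma_\ext$ and $\xi \in \{\text{even},\text{odd}\}$ of independent sums, each of which is by definition $Z^\xi_{\text{match}}(\inte_\xi(\gamma);z)$.

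\textbf{Main obstacle.} The technical heart of the argument is bookkeeping: verifying that the labeling functions on the pieces assemble into the labeling function for $\Gamma$, and that no small contour nested deep inside several interiors gets miscounted. This relies on the closedness of $\Lambda$ (so that interiors of contours in $\Gamma$ remain inside $\Lambda^\circ$) and on the fact that contours in $\Gamma \setminus \Gamma_\ext$ are each contained in exactly one $\inte_\xi(\gamma)$ for a unique external $\gamma$ — a fact that follows from the compatible-support condition $\dist(\overline{\gamma'}, \overline{\gamma}) \geq 2$ together with the box-diameter bound on small contours. Once these geometric statements are in hand, the identity drops out of the bijection.
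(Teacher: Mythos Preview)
Your proposal is correct and follows essentially the same approach as the paper's proof: group each $\Gamma \in \SetOfContours^{\varphi}_{\text{match}}(\Lambda)$ by its external contours $\Gamma_{\ext}$, observe that the non-external contours decompose into matching sets inside the various $\inte_\xi(\gamma)$, and factor the resulting sum. The only cosmetic difference is that the paper first collects the non-external contours into $\cup_{\gamma \in \Gamma_{\ext}} \inte_\xi(\gamma)$ for each $\xi$ and then invokes Lemma~\ref{lem: factorising property Zmatch} to split the product over individual $\gamma$, whereas you build the per-$\gamma$ decomposition directly into the bijection.
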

\begin{proof}
Given an induced closed subgraph $\Lambda \subseteq \torus$ and a set $\Gamma \in \SetOfContours^{\varphi}_{\text{ext}}(\Lambda)$
we have 
\[
\sum_{\substack{\Gamma' \in \SetOfContours^{\varphi}_{\text{match}}(\Lambda)\\ \Gamma'_{\text{ext}} = \Gamma}} \prod_{\gamma \in \Gamma'\setminus\Gamma}  z^{||\gamma||} =  Z_{\text{match}}^{\text{even}}(\cup_{\gamma \in \Gamma} \inte_{\text{even}}(\gamma);z) Z_{\text{match}}^{\text{odd}}(\cup_{\gamma \in \Gamma}\inte_{\text{odd}}(\gamma);z),
\]
because any $\Gamma' \in \SetOfContours^{\varphi}_{\text{match}}(\Lambda)$ with $\Gamma'_{\text{ext}} = \Gamma$ gives an associated set of matching contours in  
\[
\SetOfContours^{\text{even}}_{\text{match}}(\cup_{\gamma \in \Gamma} \inte_{\text{even}}(\gamma)) \times \SetOfContours^{\text{odd}}_{\text{match}}(\cup_{\gamma \in \Gamma} \inte_{\text{odd}}(\gamma)),
\]
as any non external contour $\gamma'\in \Gamma'$ lies in the interior of a unique contour $\gamma \in \Gamma$.
By Lemma \ref{lem: factorising property Zmatch} we see for any induced closed subgraph $\Lambda \subseteq \torus$ and any $\Gamma \in \SetOfContours^{\varphi}_{\text{ext}}(\Lambda)$ that
\[
\prod_{\gamma \in \Gamma} Z_{\text{match}}^{\text{even}}(\inte_{\text{even}}(\gamma);z) Z_{\text{match}}^{\text{odd}}(\inte_{\text{odd}}(\gamma);z) = Z_{\text{match}}^{\text{even}}(\cup_{\gamma \in \Gamma} \inte_{\text{even}}(\gamma);z) Z_{\text{match}}^{\text{odd}}(\cup_{\gamma \in \Gamma}\inte_{\text{odd}}(\gamma);z).
\]
Combined, these two facts yield
 \begin{align*}
      &Z_{\text{match}}^{\varphi}(\Lambda;z) = \sum_{\Gamma \in \SetOfContours^{\varphi}_{\text{ext}}(\Lambda)} \sum_{\substack{\Gamma' \in \SetOfContours^{\varphi}_{\text{match}}(\Lambda)\\ \Gamma'_{\text{ext}} = \Gamma}} \prod_{\gamma \in \Gamma'}  z^{||\gamma||} =   \\
      &\sum_{\Gamma \in \SetOfContours^{\varphi}_{\text{ext}}} Z_{\text{match}}^{\text{even}}(\cup_{\gamma \in \Gamma} \inte_{\text{even}}(\gamma);z) Z_{\text{match}}^{\text{odd}}(\cup_{\gamma \in \Gamma}\inte_{\text{odd}}(\gamma);z) \prod_{\gamma \in \Gamma} z^{||\gamma||} = \\
      &\sum_{\Gamma \in \SetOfContours^{\varphi}_{\text{ext}}(\Lambda)} \prod_{\gamma \in \Gamma} z^{||\gamma||} Z_{\text{match}}^{\text{even}}(\inte_{\text{even}}(\gamma);z) Z_{\text{match}}^{\text{odd}}(\inte_{\text{odd}}(\gamma);z).
 \end{align*}
\end{proof}

\begin{definition}
\label{def: weights}
We define for a contour $\gamma$ in $\torus$ of type $\varphi$ the weight to be the following rational function in $z$
\[
w(\gamma;z) := z^{||\gamma||} \frac{Z_{\text{match}}^{\overline{\varphi}} (\inte_{\overline{\varphi}}(\gamma);z)}{Z_{\text{match}}^{\varphi} (\inte_{\overline{\varphi}}(\gamma);z)}.
\]
\end{definition}

Recall that the type of a large contour is defined to be even. Note that for any induced closed subgraph $\Lambda \subset \torus$ and any contour $\gamma$ in $\Lambda$ the contour $\gamma$ is also a contour in $\torus$ and hence $w(\gamma;z)$ is defined. We also note the denominator of $w(\gamma;z)$ has constant term $1$ for any contour $\gamma$.

The definition of these weights is a standard trick in Pirogov-Sinai theory used to rewrite the independence polynomial as a polymer partition function; see for example \cite{PirogovSinaiWillGuusTyler}. 
To also do this for tori, we define a suitable compatibility relation, which is a modification of the compatibility relation used in \cite{PirogovSinaiWillGuusTyler} to accommodate for the large contours. 

\begin{definition}
    We define two contours $\gamma_1,\gamma_2$ in $\torus$ to be \emph{torus-compatible} if they have compatible supports and if either $(1)$ $\gamma_1$ and $\gamma_2$ are both small and of the same type or if $(2)$ one contour is large and the other is small and of type even. 
 Denote by $\TorusCompatContours^{\varphi}_{\text{small}}(\torus)$ the collection of sets containing small pairwise torus-compatible contours in $\torus$ of type $\varphi$ and by $\TorusCompatContours^{\text{even}}(\torus)$ the collection of sets of torus-compatible contours in $\torus$ of type even in which we allow both large and small contours.
\end{definition}

Note that torus-compatibility is an anti-reflexive and symmetric relation on the set of contours. 

\begin{definition}
    Let $\Lambda \subseteq \torus$ be an induced closed subgraph and let $\varphi \in \{\text{even,odd}\}$ be a ground state. We define 
\[
\TorusCompatContours^\varphi_{\text{small}}(\Lambda) := \{ \Gamma \in \TorusCompatContours^\varphi_{\text{small}}(\torus): \text{ for all } \gamma \in \Gamma \text{ we have } \overline{\gamma} \subseteq\Lambda^\circ \}.
\]
\end{definition}

For any $\Gamma \in \TorusCompatContours^\varphi_{\text{small}}(\Lambda)$ we can define $\Gamma_{\ext}$ exactly how is done in \eqref{eq: externals} in Definition~\ref{def:externals}. 
It is not difficult to see that then $\Gamma_{\ext} \in \SetOfContours^{\varphi}_{\text{ext}}(\Lambda)$. This observation, together with Lemma~\ref{lem: match_to_ext} and the choice of weights in Definition~\ref{def: weights}, allows us to rewrite the matching contour partition function, which is a sum over matching sets of contours, as a sum over sets that only require pairwise compatibility. 

\begin{lemma}\label{lem:contourpartitionfunction_usingtoruscomptaibility}
    Let $\Lambda \subseteq \torus$ be an induced closed subgraph and let $\varphi \in \{\text{even,odd}\}$ be a ground state. We have for any $z \in \mathbb{C}$
    \[
Z_{\text{match}}^{\varphi}(\Lambda;z) = \sum_{\Gamma\in \TorusCompatContours^\varphi_{\text{small}}(\Lambda)} \prod_{\gamma \in \Gamma} w(\gamma;z).
\]
Furthermore, we have
\[
Z_{\text{match}}^{\text{even}}(\torus;z) + Z_{\text{match}}^{\text{large}}(\torus;z) = \sum_{\Gamma\in \TorusCompatContours^{\text{even}}(\torus)} \prod_{\gamma \in \Gamma} w(\gamma;z).
\]
\end{lemma}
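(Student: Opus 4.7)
The plan is to exploit a single algebraic cancellation: the ratio defining $w(\gamma;z)$ is designed so that, when one sums over external contours and then recursively over their interiors, the ``wrong-type'' interior partition functions turn exactly into the weights $w$. I will prove the first identity by induction on $|\Lambda^\circ|$ and then deduce the second by splitting off the large-contour contribution.

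For the first identity, the base case $\Lambda^\circ=\emptyset$ gives $1=1$. For the inductive step, the starting point is Lemma~\ref{lem: match_to_ext}. Each external contour $\gamma\in\Gamma_{\text{ext}}$ has some type $\varphi$, so
\[
z^{||\gamma||} Z_{\text{match}}^{\text{even}}(\inte_{\text{even}}(\gamma);z)\,Z_{\text{match}}^{\text{odd}}(\inte_{\text{odd}}(\gamma);z) = z^{||\gamma||} Z_{\text{match}}^{\varphi}(\inte_{\varphi}(\gamma);z)\,Z_{\text{match}}^{\overline{\varphi}}(\inte_{\overline{\varphi}}(\gamma);z).
\]
Introducing a factor $Z_{\text{match}}^{\varphi}(\inte_{\overline{\varphi}}(\gamma);z)/Z_{\text{match}}^{\varphi}(\inte_{\overline{\varphi}}(\gamma);z)$ identifies $w(\gamma;z)$, and Lemma~\ref{lem: factorising property Zmatch} combines the two remaining $Z^{\varphi}$ factors into $Z_{\text{match}}^{\varphi}(\inte(\gamma);z)$, where $\inte(\gamma)=\inte_{\text{even}}(\gamma)\cup\inte_{\text{odd}}(\gamma)$. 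Since $\inte(\gamma)$ is closed and strictly smaller than $\Lambda^\circ$ (it is disjoint from the non-empty $\overline{\gamma}\subseteq\Lambda^\circ$), the induction hypothesis expands
\[
Z_{\text{match}}^{\varphi}(\inte(\gamma);z)=\sum_{\Gamma_\gamma\in\TorusCompatContours^\varphi_{\text{small}}(\inte(\gamma))}\prod_{\gamma'\in\Gamma_\gamma} w(\gamma';z).
\]
Substituting back, the outer sum over $\Gamma_{\text{ext}}\in\SetOfContours^\varphi_{\text{ext}}(\Lambda)$ together with a choice of $\Gamma_\gamma$ inside each $\inte(\gamma)$ becomes a sum over $\TorusCompatContours^\varphi_{\text{small}}(\Lambda)$ via the bijection $(\Gamma_{\text{ext}},(\Gamma_\gamma)_{\gamma})\mapsto \Gamma_{\text{ext}}\cup\bigcup_\gamma \Gamma_\gamma$, whose inverse extracts the maximal (non-nested) contours from a torus-compatible set and groups the rest according to which external interior contains them.

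For the second identity, partition $\TorusCompatContours^{\text{even}}(\torus)$ by whether a large contour is present; recall that torus-compatibility forbids two large contours in the same set. The no-large-contour summand is $\TorusCompatContours^{\text{even}}_{\text{small}}(\torus)$, contributing $Z_{\text{match}}^{\text{even}}(\torus;z)$ by the first identity. For each large $\gamma_L$, the remaining contours must be small of type even (the second clause in the definition of torus-compatibility forces this) with supports contained in $\inte(\gamma_L)^\circ$. The first identity applied to the closed subgraph $\inte(\gamma_L)$, together with Lemma~\ref{lem: factorising property Zmatch}, evaluates their weighted contribution as $Z_{\text{match}}^{\text{even}}(\inte_{\text{even}}(\gamma_L);z)\cdot Z_{\text{match}}^{\text{even}}(\inte_{\text{odd}}(\gamma_L);z)$. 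Multiplying by $w(\gamma_L;z)$ cancels $Z_{\text{match}}^{\text{even}}(\inte_{\text{odd}}(\gamma_L);z)$ against the denominator of the weight, leaving $z^{||\gamma_L||}Z_{\text{match}}^{\text{even}}(\inte_{\text{even}}(\gamma_L);z)\,Z_{\text{match}}^{\text{odd}}(\inte_{\text{odd}}(\gamma_L);z)$. Summing over $\gamma_L$ reassembles $Z_{\text{match}}^{\text{large}}(\torus;z)$ via the direct analogue of Lemma~\ref{lem: match_to_ext} for matching sets containing a large contour, which decomposes any such set into $\gamma_L$ together with an even-type matching set in $\inte_{\text{even}}(\gamma_L)$ and an odd-type matching set in $\inte_{\text{odd}}(\gamma_L)$.

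I expect the main technical obstacle to be the inductive bijection: given $\Gamma\in\TorusCompatContours^\varphi_{\text{small}}(\Lambda)$, one must verify that its maximal (non-nested) elements form an element of $\SetOfContours^\varphi_{\text{ext}}(\Lambda)$, i.e.\ that the common exterior of these small type-$\varphi$ contours admits a consistent label $\varphi$, that each remaining contour lies in the interior of a \emph{unique} external one, and that torus-compatibility inside each $\inte(\gamma)$ coincides with the one inherited from $\torus$. Each of these facts follows from the box-diameter bound defining small contours together with the closedness of $\Lambda$, but one must be careful because the intrinsic type of a small contour placed deep inside $\inte_{\overline{\varphi}}(\gamma)$ need not match the label of its surroundings — it is precisely this mismatch that the weight $w$ absorbs via the algebraic cancellation above.
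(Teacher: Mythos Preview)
Your proposal is correct and follows essentially the same approach as the paper: induction on the size of $\Lambda$ using Lemma~\ref{lem: match_to_ext}, the algebraic cancellation built into the definition of $w(\gamma;z)$ together with Lemma~\ref{lem: factorising property Zmatch} to produce $w(\gamma;z)\,Z_{\text{match}}^{\varphi}(\inte(\gamma);z)$, and then the bijection between $\SetOfContours^{\varphi}_{\text{ext}}(\Lambda)\times\prod_\gamma\TorusCompatContours^\varphi_{\text{small}}(\inte(\gamma))$ and $\TorusCompatContours^\varphi_{\text{small}}(\Lambda)$; the second identity is likewise obtained by isolating the (unique) large contour and applying the first identity to its interior. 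Your closing paragraph about the bijection is more explicit than the paper, which simply attributes that step to ``the definition of torus-compatibility,'' but the content is the same.
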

\begin{proof}
We prove the first claim by induction on $|\Lambda|$. The base case is trivial. Suppose the claim holds for $|\Lambda'| \leq k$. Next suppose that $|\Lambda| = k+1$. 
By Lemma~\ref{lem: match_to_ext} we have
\begin{align*}
Z_{\text{match}}^{\varphi}(\Lambda;z) &= \sum_{\Gamma \in \SetOfContours^{\varphi}_{\text{ext}}(\Lambda)} \prod_{\gamma \in \Gamma} z^{||\gamma||} Z_{\text{match}}^{\varphi}(\inte_{\varphi}(\gamma);z) Z_{\text{match}}^{\overline{\varphi}}(\inte_{\overline{\varphi}}(\gamma);z),
\end{align*}
which by definition of the weights is equal to
\begin{align*}
&\sum_{\Gamma \in \SetOfContours^{\varphi}_{\text{ext}}(\Lambda)} \prod_{\gamma \in \Gamma} w(\gamma;z)  Z_{\text{match}}^{\varphi}(\inte(\gamma);z) = \sum_{\Gamma \in \SetOfContours^{\varphi}_{\text{ext}}(\Lambda)} \prod_{\gamma \in \Gamma} w(\gamma;z)  \sum_{\Gamma'\in \TorusCompatContours^\varphi_{\text{small}}(\inte(\gamma))} \prod_{\gamma' \in \Gamma'}w(\gamma';z)\\
=&  \sum_{\Gamma\in \TorusCompatContours^\varphi_{\text{small}}(\Lambda)} \prod_{\gamma \in \Gamma} w(\gamma;z),
\end{align*}
where the first equality uses the induction hypothesis on the induced closed subgraph $\inte(\gamma)$ and the last equality follows from the definition of torus-compatibility.
This proves the first part.

Note that $Z_{\text{match}}^{\text{large}}$ is a sum over matching set of contours that contain a large contour. Therefore we can instead write $Z_{\text{match}}^{\text{large}}$ as a sum over all large contours. 
Reasoning as above we obtain
\begin{align*}
    Z_{\text{match}}^{\text{large}}(\torus;z) &=  \sum_{\substack{\gamma} }
    z^{\|\gamma\|}\cdot Z_{\text{match}}^{\text{even}}(\inte_{\text{even}}(\gamma);z) \cdot Z_{\text{match}}^{\text{odd}}(\inte_{\text{odd}}(\gamma_{\text{large}});z) 
    \\
     &= \sum_{\substack{\gamma}} w(\gamma;z)\cdot Z_{\text{match}}^{\text{even}}(\inte(\gamma);z) 
    = \sum_{\substack{\gamma}} w(\gamma;z) \sum_{\Gamma\in \mathcal{T}_{\text{small}}^{\text{even}}(\inte(\gamma_{\text{large}}))} \prod_{\tau \in \Gamma} w(\tau;z),
\end{align*}
where each sum is over large contours $\gamma$ and where the last inequality follows as $\inte(\gamma)$ is an induced closed subgraph of $\torus$.
By the definition of torus-compatibility of contours we obtain
\[
Z_{\text{match}}^{\text{even}}(\torus;z) + Z_{\text{match}}^{\text{large}}(\torus;z) = \sum_{\Gamma\in\TorusCompatContours^{\text{even}}(\torus)} \prod_{\gamma \in \Gamma} w(\gamma;z),
\]
as desired.
\end{proof}

\begin{remark}\label{rmk: translating contours for tori}
    An automorphism $t:\torus\to \torus$ acts on contours by pushing forward their support and pulling back their configurations and associated labels and type. We note that labels are preserved when $t(\vec{0})$ is even, and switched when $t(\vec{0})$ is odd. The surface energy is always preserved.
\end{remark}

\begin{lemma}\label{lem: symmetry even odd tori}
    For all $z\in \mathbb{C}$ we have $Z^{\text{even}}_\text{match}(\torus;z) = Z^{\text{odd}}_\text{match}(\torus;z)$.
\end{lemma}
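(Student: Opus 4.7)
The plan is to exploit the fact that the torus has all even side lengths to produce a graph automorphism that swaps even and odd vertices. Specifically, let $e_1 = (1,0,\ldots,0)$ and let $t:\torus \to \torus$ be translation by $e_1$, i.e.\ $t(v)=v+e_1$. Since each $\ell_i$ is even, $t$ is a well-defined graph automorphism of $\torus$ sending every even vertex to an odd vertex and vice versa. By Remark \ref{rmk: translating contours for tori}, $t$ acts on the set of contours by pushing forward supports and pulling back configurations, labels and types; because $t(\vec 0)$ is odd, types and labels are switched, while surface energies are preserved.

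The first step is to verify that $t$ sends matching sets of contours of type even to matching sets of contours of type odd. Since $t$ is a graph automorphism, it preserves connectedness, box-diameter and pairwise distances of subsets of $\torus$; hence it sends small contours to small contours, large contours to large contours, and preserves compatibility of supports. Moreover, if $\Gamma\in\SetOfContours^{\text{even}}_{\text{match}}(\torus)$ with labeling function $\lab_\Gamma$, then the pushed-forward set $t\cdot\Gamma$ is again matching, with labeling function $\overline{\lab_\Gamma \circ t^{-1}}$ on the components of $\torus \setminus \bigcup_{\gamma\in\Gamma}t(\overline\gamma)$; in particular the exterior component (which is mapped to the exterior component) has its label flipped from even to odd. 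The two exceptional empty matching sets $\emptyset_{\text{even}}$ and $\emptyset_{\text{odd}}$ are swapped under this action.

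The second step is to observe that $t$ is its own inverse at the level of the bijection (or simply note that translation by $-e_1$ gives the inverse map), so $\Gamma \mapsto t\cdot \Gamma$ defines a bijection
\[
\SetOfContours^{\text{even}}_{\text{match}}(\torus) \longrightarrow \SetOfContours^{\text{odd}}_{\text{match}}(\torus).
\]
By Remark \ref{rmk: translating contours for tori}, $\|t\cdot\gamma\| = \|\gamma\|$ for every contour $\gamma$, so $\prod_{\gamma\in\Gamma} z^{\|\gamma\|} = \prod_{\gamma'\in t\cdot\Gamma} z^{\|\gamma'\|}$. Summing this identity over $\Gamma \in \SetOfContours^{\text{even}}_{\text{match}}(\torus)$ yields $Z^{\text{even}}_{\text{match}}(\torus;z) = Z^{\text{odd}}_{\text{match}}(\torus;z)$.

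The only slightly subtle point is checking that the labeling function of $t\cdot\Gamma$ is indeed the swap of that of $\Gamma$, but this is immediate from Remark \ref{rmk: translating contours for tori} since every connected component of the complement is mapped bijectively to another such component and every label on it is flipped by the parity change caused by $t$. No other obstacle is expected, since the argument is purely a symmetry argument.
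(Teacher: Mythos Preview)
Your proof is correct and follows essentially the same approach as the paper: use a unit translation in one coordinate direction (you use $e_1$, the paper uses $(0,\ldots,0,1)$) to obtain an odd automorphism, then invoke Remark~\ref{rmk: translating contours for tori} to conclude that types are swapped and surface energies are preserved. Your write-up is considerably more detailed than the paper's two-line proof, explicitly verifying that the induced map is a bijection $\SetOfContours^{\text{even}}_{\text{match}}(\torus)\to\SetOfContours^{\text{odd}}_{\text{match}}(\torus)$ and that it preserves $\prod_{\gamma}z^{\|\gamma\|}$, but the content is the same.
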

\begin{proof}
Let $t:\torus \to \torus$ be the translation by $(0,\ldots, 0, 1)$. By Remark \ref{rmk: translating contours for tori} any even contour $\gamma$ corresponds to an odd contour $t(\gamma)$ with the same weight.
\end{proof}


Denote the set of small contours of type $\varphi$ with support contained in an induced closed subgraph $\Lambda \subseteq\torus$ by $S^{\varphi}_{\text{small}}(\Lambda)$. Denote the set of all small and large contours of even type with support contained in $\torus$ by $S^{\text{even}}(\torus)$.
Using Lemma \ref{lem:contourpartitionfunction_usingtoruscomptaibility} and the definition of torus-compatibility it follows that for a type $\varphi \in \{ \text{even, odd}\}$ and any induced closed subgraph $\Lambda \subseteq\torus$ the function $Z_{\text{match}}^{\varphi}(\Lambda;z)$ equals a polymer partition function as defined in Section \ref{sec: cluster expansion} with set of polymers $S^{\varphi}_{\text{small}}(\Lambda)$ and torus-compatibility as compatibility relation on $S^{\varphi}_{\text{small}}(\Lambda)$.
Similarly, we see that $Z_{\text{match}}^{\text{even}}(\torus;z) + Z_{\text{match}}^{\text{large}}(\torus;z)$ equals a polymer partition function with set of polymers $S^{\text{even}}(\torus)$.
We observe that by Lemma~\ref{lem: symmetry even odd tori}
\begin{align*}
Z_{\text{match}}(\torus;z) &= Z_{\text{match}}^{\text{odd}}(\torus;z) + Z_{\text{match}}^{\text{even}}(\torus;z) + Z_{\text{match}}^{\text{large}}(\torus;z)
\\
&=2Z_{\text{match}}^{\text{even}}(\torus;z) + Z_{\text{match}}^{\text{large}}(\torus;z),
\end{align*}
giving us the promised way of writing $Z_{\text{match}}(\torus;z)$ as the sum of two polymer partition functions.
Note that we cannot view $Z_{\text{match}}(\torus;z)$ as a single polymer partition function since it contains the occurrence of two `distinct' empty sets of matching contours.


\subsection{Applying the Koteck\'y-Preiss theorem}
To apply the Koteck\'y-Preiss theorem to $Z_{\text{match}}^{\text{odd}}(\torus;z)=Z_{\text{match}}^{\text{even}}(\torus;z)$ and $Z_{\text{match}}^{\text{even}}(\torus;z) + Z_{\text{match}}^{\text{large}}(\torus;z)$ we apply Zahradn\'ik's truncated-based approach to Pirogov-Sinai theory \cite{Zahradnk1984AnAV}, which is also used in \cite{BorgsImbrie}.
The idea is to first restrict to contours for which the weights respect a proper bound which helps us to check the condition of the Koteck\'y-Preiss theorem. This process `truncates' the partition function. 
We then prove, using bounds we obtain from the Koteck\'y-Preiss theorem on the truncated partition function, that in fact all contours satisfy this bound. To define the bound om the weights of the contours, we need the following lemmas and definition.

\begin{lemma}\label{lem:contours bound connected}
 Let $S_m$ denote the set of small contours $\gamma$ in $\torus$ with support of size $m$ containing $\vec{0}$. Then there is a constant $C_d$ depending only on $d$ such that $|S_m| \leq C_d^m$.
\end{lemma}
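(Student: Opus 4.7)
The plan is to bound $|S_m|$ as a product of two counts corresponding to the two pieces of data defining a small contour $\gamma = (\overline{\gamma}, \sigma_\gamma)$: (a) the choice of connected support $\overline{\gamma}$ containing $\vec 0$ with $|\overline{\gamma}| = m$, and (b) the choice of feasible configuration $\sigma_\gamma: V(\overline\gamma) \to \{0,1\}$. Clearly $|S_m|$ is at most the product of these two counts, since every element of $S_m$ is uniquely determined by $(\overline\gamma, \sigma_\gamma)$.

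For (b), the trivial bound $2^m$ is enough, as feasible configurations form a subset of the set of all $\{0,1\}$-valued functions on $V(\overline\gamma)$.

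For (a), I will exploit the defining property of a small contour, namely $\diam_\square(\overline{\gamma}) < \ell_1$. This ensures that $\overline\gamma$ is contained in a box of $\torus$ that is isomorphic (as a graph) to a box in $\mathbb Z^d$, so counting possible supports reduces to counting connected subgraphs of $\mathbb Z^d$ on $m$ vertices containing the origin. This is the standard ``lattice animal'' count, and the classical exponential upper bound of $K_d^m$ (for some $K_d$ depending only on $d$) can be obtained by a depth-first-search / spanning-tree encoding: any connected subgraph on $m$ vertices admits a spanning tree, which can be traversed by a walk of length $2(m-1)$, each step of which chooses one of $2d$ directions, giving the crude but sufficient bound $(2d)^{2m}$.

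Combining, we obtain
\[
|S_m| \leq 2^m \cdot K_d^m = (2K_d)^m =: C_d^m,
\]
with $C_d$ depending only on $d$. There is no significant obstacle here; the only nontrivial ingredient is the lattice-animal-style count in step (a), which is entirely standard and requires only the dimension $d$ (via the degree bound $2d$) and not any property of the particular torus $\torus$.
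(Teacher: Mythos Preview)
Your proposal is correct and follows essentially the same approach as the paper: bound $|S_m|$ by the number of connected subsets of $\mathbb{Z}^d$ of size $m$ containing $\vec{0}$ (lattice site animals) times $2^m$ for the configuration. The only difference is cosmetic --- the paper cites the literature for the exponential bound on lattice animals, whereas you supply a self-contained (and slightly cruder) bound $(2d)^{2m}$ via a spanning-tree encoding; both yield a constant $C_d$ depending only on $d$.
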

\begin{proof}
The size of $S_m$ is bounded by the number of connected subsets in $\mathbb{Z}^d$ of size $m$ containing $\vec{0}$ times $2^m$, as a contour is uniquely determined by its support and its configuration. In  \cite{Barequet2009FormulaeAG} connected subsets of size $m$ containing $\vec{0}$ are called \emph{strongly-embedded lattice site animals} and in \cite{Madras1999APT} just \emph{site animals}. The number of strongly embedded lattice site animals of size $m$ grows as $\lambda_d^m$ for a constant $\lambda_d$ depending on $d$, see \cite{Madras1999APT} and \cite{Barequet2009FormulaeAG}, which implies that there exists a constant $C_d$ such that $|S_m| \leq C_d^m$.
\end{proof}

We also need a lower bound on the surface energy of contours in terms of the number of vertices in the support.

\begin{lemma}[Peierls condition]\label{lem:Peierls condition}
    Let $\gamma$ be a contour in $\torus$.
For $\rho=\rho(d) := \frac{1}{2d\cdot 3^d}$ the surface energy of a contour  satisfies \[
\rho |\overline{\gamma}| \leq ||\gamma|| \leq |\overline{\gamma}|.
\]
\end{lemma}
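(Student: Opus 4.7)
The plan is to prove the two inequalities separately. The upper bound $\|\gamma\|\le|\overline{\gamma}|$ is essentially trivial: each summand $2d-\sum_{u\in N(v)}\hat{\sigma}_\gamma(u)$ in the defining expression for $\|\gamma\|$ is at most $2d$, so $4d\|\gamma\|\le 2d\cdot|\overline{\gamma}|$, which gives even the stronger $\|\gamma\|\le|\overline{\gamma}|/2$. All the work lies in the lower bound.

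For the lower bound, I would first rewrite
\[
4d\,\|\gamma\|=\sum_{\substack{v\in V(\overline{\gamma})\\ \hat{\sigma}_\gamma(v)=0}}\#\{u\in N(v):\hat{\sigma}_\gamma(u)=0\},
\]
so that $4d\|\gamma\|$ counts (with appropriate multiplicity) pairs $(v,u)$ where $v\in\overline{\gamma}$, both endpoints are empty, and $uv$ is an edge. The heart of the Peierls argument is the following local claim: for every $v\in\overline{\gamma}$ there exists $w\in N_\infty[v]$ with $\hat{\sigma}_\gamma(w)=0$ and at least one graph-neighbor $u$ of $w$ with $\hat{\sigma}_\gamma(u)=0$. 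To prove the claim I would use that $v\in\overline{\gamma}$ is incorrect, so for each ground state $\varphi\in\{\mathrm{even},\mathrm{odd}\}$ there is a witness $u_\varphi\in N_\infty[v]$ with $\hat{\sigma}_\gamma(u_\varphi)\ne\sigma_\varphi(u_\varphi)$. A short case analysis on the parity of $v$ and on $\hat{\sigma}_\gamma(v)$ handles the easy subcases where a witness immediately produces an unhappy edge incident to $v$. In the remaining subcases the witness $u$ sits at $\ell_\infty$-distance $1$ but $\ell_1$-distance $k\ge 3$ from $v$; I would then pick a shortest axis-aligned path $v=v_0,v_1,\dots,v_k=u$ through $N_\infty[v]$ (which exists because $\|u-v\|_\infty\le 1$) and show, by propagating independence of $I_{\hat{\sigma}_\gamma}$ along the path, that the $v_i$ must alternate between $I_{\hat{\sigma}_\gamma}$ and its complement. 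Forcing the parity of the path length against the assumed value $\hat{\sigma}_\gamma(u)$ yields a contradiction unless two consecutive $v_i,v_{i+1}$ are both empty; this edge then supplies the desired $w$.

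With the claim in hand, I would then note that any such $w$ is itself necessarily incorrect (a correct empty vertex has all its $N_\infty$-neighbors agreeing with a single ground state, hence cannot have an empty graph-neighbor), so $w\in\overline{\gamma}$. Writing
\[
W=\{w\in\overline{\gamma}:\hat{\sigma}_\gamma(w)=0,\ w\text{ has an empty graph-neighbor}\},
\]
the assignment $v\mapsto w$ has fibres of size at most $|N_\infty[w]|=3^d$, giving $|\overline{\gamma}|\le 3^d\,|W|$. On the other hand every $w\in W$ contributes at least $1$ to the rewritten sum for $4d\|\gamma\|$, so $4d\|\gamma\|\ge|W|$. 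Combining these two estimates produces a Peierls inequality of the form $\|\gamma\|\ge|\overline{\gamma}|/(C_d\cdot 3^d)$; a slightly sharper accounting — either upgrading the vertex assignment to an edge assignment (each edge $e$ is covered by $2\cdot 3^{d-1}$ vertices, and unhappy edges must have an endpoint in $\overline{\gamma}$ by the previous observation) or using that the alternating-path construction exhibits at least two distinct empty neighbors of $w$ — tightens the constant to the stated $\rho=1/(2d\cdot 3^d)$.

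The main obstacle is the local claim, and in particular the path-tracing subcase: one must verify that the chosen axis-aligned path stays inside $N_\infty[v]$ (which is immediate from $\|u-v\|_\infty\le 1$), that independence really does force the expected two-coloring along the path (which is the point where the Peierls constant is bought), and that the parity mismatch between the path length $k$ and the assumed spin of $u$ pins down an empty-empty edge on the path. The remainder is a clean counting argument.
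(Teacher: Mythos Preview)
Your approach is the same as the paper's, and your path-tracing argument for the local claim is correct, but it is far more work than needed. The one-line argument: if no edge inside $N_\infty[v]$ has both endpoints empty, then (since independence forbids both endpoints occupied) every such edge has exactly one occupied endpoint; as the graph induced on $N_\infty[v]$ is connected and bipartite, this forces $\hat{\sigma}_\gamma$ to coincide with one of the two ground states on all of $N_\infty[v]$, contradicting that $v$ is incorrect. This is exactly what the paper asserts (without spelling it out).

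Your counting as written loses a factor of $2$: the single assignment $v\mapsto w$ gives only $|W|\ge|\overline{\gamma}|/3^d$ and hence $\|\gamma\|\ge|\overline{\gamma}|/(4d\cdot 3^d)$. The paper recovers the missing $2$ simply by observing that \emph{both} endpoints $u,w$ of the unhappy edge lie in $N_\infty[v]$ (your path stays inside $N_\infty[v]$, so you have this too) and both belong to $W$; double-counting pairs $(v,x)$ with $x\in W\cap N_\infty[v]$ then yields $|W|\ge 2|\overline{\gamma}|/3^d$ and hence $\|\gamma\|\ge|\overline{\gamma}|/(2d\cdot 3^d)$. Of your two proposed sharpenings, ``two distinct empty neighbors of $w$'' does not follow from the construction; the edge-assignment idea does work, but only once you use that both endpoints of the unhappy edge lie in $N_\infty[v]$, which your argument proves but your stated claim omits.
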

\begin{proof}
The inequality $||\gamma|| \leq |\overline{\gamma}|$ is trivial. For the other inequality, note for each incorrect vertex $v\in \overline{\gamma}$ there is at least one vertex $u\in N_\infty[v]$ and a neighbor $w\in N_\infty[v]$ of $u$ such that $\sigma_\gamma(u)= \sigma_\gamma(w) = 0$. Hence we see that at least two of the $3^d$ vertices in  $N_\infty[v]$ have a contribution of at least $\frac{1}{4d}$ each to $||\gamma||$. 
We double count this contribution at most $3^d$ times, as $|N_\infty[v]| = 3^d$.
This yields $||\gamma|| \geq \rho |\overline{\gamma}|$ for $\rho = \rho(d)=  2\cdot \frac{1}{4d} \cdot \frac{1}{3^d} = \frac{1}{2d\cdot 3^d}$.
\end{proof}

\begin{definition}\label{def:deltad}
Define for any $d \in \mathbb{Z}_{\geq 2}$ and any $x >0$ the real number 
\[
\delta_1(d,x):= e^{-(\log(2C_d)+4d+5\cdot e^{-x3^d}+x)/\rho(d)},
\]
where $C_d$ is the constant from Lemma \ref{lem:contours bound connected} and $\rho(d) = \frac{1}{2d\cdot 3^d}$ is the constant from lemma \ref{lem:Peierls condition}.
\end{definition}

We can now define stability of contours.

\begin{definition}
  Let $C>0$ and $\torus\in \Balancedtori_d(C)$. We define a small contour $\gamma$ in $\torus$ to be \emph{$C$-stable} if for all $|z| < \delta_1(d,C)$
\[
|w(\gamma;z)| \leq |z|^{||\gamma||}e^{5 e^{-C 3^d} \cdot  |\overline{\gamma}|}.
\]
We define a large contour in $\torus$ to be  \emph{$C$-stable} if for all $|z|< \delta_1(d,C)$ 
\[
|w(\gamma;z)| \leq |z|^{||\gamma||}e^{5 e^{-C 3^d} \cdot  |\overline{\gamma}|} \cdot e^{4}.
\]
\end{definition}

For an induced closed subgraph $\Lambda \subseteq \torus$ denote by $\mathcal{C}^\varphi_{\Lambda}(\torus, C)$ the set of clusters $X$ consisting of contours $\gamma$ in $\torus$ that are small and of type $\varphi$, $C$-stable and satisfy $\overline{\gamma} \subseteq \Lambda^\circ$.
Recall that the condition of being a small contour depends on the shortest side length $\ell_1$ of $\torus$.
When $\Lambda = \torus$ we write $\mathcal{C}^\varphi(\torus, C)$ instead of $\mathcal{C}^\varphi_{\torus}(\torus, C)$.

\begin{definition}\label{def:truncated partition functions are still contour partitio functions}
    Let $\torus \in \Balancedtori_d(C)$. For any induced closed subgraph $\Lambda \subseteq\torus$ and ground state $\varphi \in \{\text{even, odd}\}$ we define 
\[
Z_{\text{trunc}}^\varphi(\Lambda;z):= \sum_{\Gamma\in \mathcal{C}^\varphi_{\Lambda}(\torus, C)} \prod_{\gamma \in \Gamma} w(\gamma;z).
\]
   We also define 
    \[
Z_{\text{trunc}}^\text{large}(\torus;z):= \sum_{\substack{\Gamma\in \TorusCompatContours^{\text{large}}(\torus)\\ \text{all $\gamma \in \Gamma$ $C$-stable}}} \prod_{\gamma \in \Gamma} w(\gamma;z).
    \]
Note each of these partition functions is a polymer partition function.
\end{definition}

Analogous to Lemma \ref{lem: symmetry even odd tori} we also see
\[
Z_{\text{trunc}}^\text{even}(\torus;z)=Z_{\text{trunc}}^\text{odd}(\torus;z).
\]

\subsubsection{\texorpdfstring{Convergence of $\log Z^\varphi_{\text{trunc}}$}{Convergence of log(Z_trunc)}}
We apply the Koteck\'y-Preiss theorem to $Z^\varphi_{\text{trunc}}(\Lambda;z)$ for induced closed subgraphs $\Lambda \subseteq \torus$ and $\varphi \in \{\text{even, odd}\}$. 
The set of polymers is the set of small $C$-stable contours of type $\varphi$ in $\Lambda \subseteq \torus$, the weights of a polymer $\gamma$ is defined as $w(\gamma; z)$ and the compatibility relation is torus-compatibility. 
The cluster expansion takes the form
\begin{equation}\label{eq:cluster expansion trunc}
\log Z^\varphi_{\text{trunc}}(\Lambda;z) = \sum_{X \in \mathcal{C}^\varphi_{\Lambda}(\torus, C)} \Phi(X;z),
\end{equation}
where $\Phi(X;z) = \prod_{\gamma \in \Gamma} \frac{1}{n_X(\gamma)!} \psi(\gamma_1, \ldots, \gamma_n) \prod_{i=1}^n w(\gamma_i;z)$ is defined as in Section \ref{sec: cluster expansion}. 
 We define the \emph{support} of a cluster $X = \{\gamma_1, \ldots, \gamma_k\}$ to be $\overline{X} = \cup_{i=1}^k \overline{\gamma_i}$ and we denote by $|\overline{X}|$ the size of the vertex set of $\overline{X}$. Because $X$ is a cluster the incompatibility graph induced by $\gamma_1, \dots, \gamma_k$ is connected, which by definition of torus-compatibility implies that $\overline{X}$ is connected, because the $\gamma_i$ are small contours and thus themselves connected. 

\begin{theorem}\label{thm:logtrunc converges_tori}
Let $C>0$, $d\in \mathbb{Z}_{\geq 2}$ and $\varphi \in \{\text{even, odd}\}$. Let $\torus\in \Balancedtori_d(C)$ and let $\Lambda \subseteq \torus$ be any induced closed subgraph. For all $z \in \mathbb{C}$ with $|z| < \delta_1(d,C)$ the cluster expansion for $\log Z^\varphi_{\text{trunc}}(\Lambda;z)$ is convergent, where $\delta_1(d,C)$ defined in Definition \ref{def:deltad}. Furthermore for any $v \in \Lambda$ and any $|z| < \delta_1(d,C)$ we have
\[
\sum_{\substack{X \in \mathcal{C}^\varphi_{\Lambda}(\torus, C) \\ v \in \overline{X}}} |\Phi(X;z)| e^{ \sum_{\gamma\in X}C|\overline{\gamma}|} \leq 2.
\]
\end{theorem}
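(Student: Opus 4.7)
The plan is to apply the Koteck\'y-Preiss theorem (Theorem~\ref{thm:KoteckyPreiss}) to the polymer model whose polymers are the small $C$-stable contours of type $\varphi$ with support contained in $\Lambda^\circ$, equipped with torus-compatibility and weights $w(\gamma;z)$. The natural choices are $a(\gamma) = 4d\,|\overline{\gamma}|$ and $b(\gamma) = C\,|\overline{\gamma}|$; the latter is dictated by the target inequality, whose exponent is exactly $\sum_{\gamma \in X} C|\overline{\gamma}|$.

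The first step is to bound, for any fixed vertex $v \in \torus$, the sum
\[
S(v) := \sum_{\gamma \ni v} |w(\gamma;z)|\,e^{a(\gamma)+b(\gamma)},
\]
where the sum runs over small $C$-stable contours of type $\varphi$ containing $v$. The Peierls condition $\|\gamma\|\geq \rho |\overline{\gamma}|$ (Lemma~\ref{lem:Peierls condition}), combined with $C$-stability and the fact that $|z|<\delta_1(d,C)<1$, gives
\[
|w(\gamma;z)|\,e^{a(\gamma)+b(\gamma)} \;\leq\; \bigl(|z|^\rho\,e^{5e^{-C3^d}+4d+C}\bigr)^{|\overline{\gamma}|}.
\]
Grouping by $|\overline{\gamma}|=m$ and applying Lemma~\ref{lem:contours bound connected}, we obtain $S(v)\leq \sum_{m\geq 1}\bigl(C_d\,|z|^\rho\,e^{5e^{-C3^d}+4d+C}\bigr)^m$. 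The definition of $\delta_1(d,C)$ (Definition~\ref{def:deltad}) is precisely arranged so that this geometric ratio is at most $\tfrac12$ for $|z|<\delta_1(d,C)$, yielding $S(v)\leq 1$.

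Next I verify the Koteck\'y-Preiss hypothesis. The contours $\gamma'$ not torus-compatible with a given small $C$-stable $\gamma$ of type $\varphi$ are precisely those of the same type whose supports meet $N[\overline{\gamma}]$. Since $|N[\overline{\gamma}]|\leq (2d+1)|\overline{\gamma}|$, we get
\[
\sum_{\gamma'\not\sim\gamma} |w(\gamma';z)|\,e^{a(\gamma')+b(\gamma')} \;\leq\; \sum_{v\in N[\overline{\gamma}]} S(v) \;\leq\; (2d+1)|\overline{\gamma}| \;\leq\; 4d\,|\overline{\gamma}| \;=\; a(\gamma),
\]
so Theorem~\ref{thm:KoteckyPreiss} applies and immediately yields convergence of the cluster expansion~\eqref{eq:cluster expansion trunc}.

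To extract the vertex-localized bound I would use a standard phantom-polymer device: for each $v\in\Lambda$, adjoin a formal polymer $\phi_v$ with weight $w(\phi_v;z):=0$, declared incompatible with a real contour $\gamma$ precisely when $v\in\overline{\gamma}$, and set $a(\phi_v):=2$ and $b(\phi_v):=0$. The incompatibility sum at a real $\gamma$ is unaffected (the phantoms contribute zero), while the one at $\phi_v$ equals $S(v)\leq 1\leq a(\phi_v)$, so the extended hypothesis still holds. The Koteck\'y-Preiss conclusion applied at $\phi_v$ then reads
\[
\sum_{\substack{X\in\mathcal{C}^\varphi_\Lambda(\torus,C)\\ v\in\overline{X}}} |\Phi(X;z)|\,e^{\sum_{\gamma\in X} C|\overline{\gamma}|} \;\leq\; a(\phi_v) \;=\; 2,
\]
which is the desired estimate. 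The only delicate point is matching constants: the exponent $4d+5e^{-C3^d}+C+\log(2C_d)$ in $\delta_1(d,C)$ must absorb one contribution from $a$, one from $C$-stability, one from $b$, and one from the factor $2$ in contour counting (to tame the geometric series to $\tfrac12$); once these are aligned, the rest is a routine combination of Peierls, contour counting, and the bound $(2d+1)\leq 4d$.
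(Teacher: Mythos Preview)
Your proof is correct and follows essentially the same route as the paper: the same choice $a(\gamma)=4d|\overline{\gamma}|$, $b(\gamma)=C|\overline{\gamma}|$, the same combination of $C$-stability, the Peierls bound, Lemma~\ref{lem:contours bound connected}, and the geometric-series estimate coming from the definition of $\delta_1(d,C)$, together with the same artificial/phantom polymer device to extract the vertex-localized bound. The only cosmetic differences are that you first isolate the single-vertex sum $S(v)$ and then sum over $N[\overline{\gamma}]$ (the paper bounds the incompatibility sum directly using $|\overline{\gamma}|+|\partial^c\overline{\gamma}|\le 2d|\overline{\gamma}|$), and you start your geometric series at $m\ge 1$ to get $S(v)\le 1$ rather than the paper's $\le 2$; neither affects the argument.
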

\begin{proof}
Fix $v \in \Lambda$. Define the artificial contour $v_\gamma$ with support $v$, weight $0$, and which is torus incompatible with each small contour $\gamma$ for which $v \in V(\overline{\gamma})$.
Add $v_\gamma$ to the set of small $C$-stable contours of type $\varphi$ in $\Lambda$. With the artificial contour added, $Z^{\varphi}_{\text{trunc}}(\Lambda;z)$ is still equal to the sum over torus-compatible collections of small contours of type $\varphi$, as the weight of $v_\gamma$ is zero.
For $|z| <\delta_1(d,C)$ we verify the condition of Theorem \ref{thm:KoteckyPreiss} with $a(\gamma) = 4d|\overline{\gamma}| $ and $b(\gamma) = C|\overline{\gamma}|$.  For any contour $\gamma$
\[
\sum_{\gamma'\not \sim \gamma } |w(\gamma';z)| e^{a(\gamma')+b(\gamma')} \leq \sum_{\gamma'\not \sim \gamma} |z|^{||\gamma'||} e^{5\cdot e^{-C 3^d} \cdot|\gamma'|} e^{a(\gamma')+b(\gamma')} \leq \sum_{\gamma'\not \sim \gamma} |z|^{\rho|\overline{\gamma'}|} e^{(4d+5\cdot e^{-C 3^d} +C)|\overline{\gamma'}|},
\]
where the sums run over non-artificial contours $\gamma'$. In the final inequality we used $||\gamma'|| \geq \rho |\overline{\gamma'}|$. 
Since $|z| <\delta_1(d,C)$ we have
\[
\sum_{\gamma'\not \sim \gamma} |z|^{\rho|\overline{\gamma'}|} e^{(4d+5\cdot e^{-C 3^d}+C) |\overline{\gamma'}|} < \sum_{\gamma'\not \sim \gamma} e^{-\log(2C_d) |\overline{\gamma'}|}.
\]
There are at most $(|\overline{\gamma}| + |\partial^c \overline{\gamma}|)C_d^m$ small contours $\gamma'\not \sim \gamma$ with $|\overline{\gamma'}| = m$, where $C_d$ is the constant from Lemma \ref{lem:contours bound connected}. 
This can be seen by upper bounding the number of small contours that is torus incompatible with a single vertex and applying this bound for each vertex of $\overline{\gamma}\cup\partial^c \overline{\gamma}$. 
Note that $|\partial^c \overline{\gamma}|<(2d-1)|\overline{\gamma}|$.
Hence 
\[
 \sum_{\gamma'\not \sim \gamma} e^{-\log(2C_d)  |\overline{\gamma'}|} < (|\overline{\gamma}| + |\partial^c \overline{\gamma}|) \cdot \sum_{m \geq 0} (C_d)^m e^{-\log(2C_d)  m} \leq 2 (|\overline{\gamma}| + |\partial^c \overline{\gamma}|)\leq 4d |\overline{\gamma}| = a(\gamma).
\]

This shows the condition of Theorem \ref{thm:KoteckyPreiss} holds, which implies the cluster expansion is convergent for $|z| < \delta_1(d,C)$. By Theorem \ref{thm:KoteckyPreiss} and the definition of $v_\gamma$ we have for any $v \in \Lambda$ and any $|z| < \delta_1(d,C)$ we obtain 
\[
\sum_{\substack{X \in \mathcal{C}^\varphi_{\Lambda}(\torus, C) \\ v \in \overline{X}}} |\Phi(X;z)| e^{ \sum_{\gamma\in X}C|\overline{\gamma}|} = \sum_{\substack{X \in \mathcal{C}^\varphi_{\Lambda}(\torus, C) \\X\not \sim v_\gamma} } |\Phi(X;z)| e^{b(X)} \leq a(v_\gamma)= 2.
\] 
\end{proof}

\subsubsection{All contours are stable}
To prove that all contours are stable we need some estimates on certain subseries of the cluster expansion.

\begin{lemma}\label{lemma:def half free enrgy}
    Let $C>0$ and let $\varphi\in \{\text{even},\text{odd}\}$. Then for any $z \in \mathbb{C}$ with $|z| < \delta_1(d,C)$ the limit
    \[
        \lim_{n \to \infty }
        \sum_{\substack{X \in \mathcal{C}^\varphi(\mathbb{Z}_{n}^{d}, C) \\ \vec{0} \in \overline{X},\ |\overline{X}|<n}} \frac{\Phi(X;z)}{|\overline{X}|}
    \]
    exists and is an analytic function of $z$.
\end{lemma}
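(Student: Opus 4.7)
The plan is to compare each partial sum to an infinite-volume series over clusters of small $C$-stable contours in $\mathbb{Z}^d$, and then control the tail via the Koteck\'y-Preiss estimate of Theorem~\ref{thm:logtrunc converges_tori}.

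\textit{Step 1 (Identification).} For any small contour $\gamma$ with $\diam_\square(\overline{\gamma}) < \ell_1$, both the weight $w(\gamma; z) = z^{\|\gamma\|} Z^{\overline{\varphi}}_{\text{match}}(\inte_{\overline{\varphi}}(\gamma); z)/Z^{\varphi}_{\text{match}}(\inte_{\overline{\varphi}}(\gamma); z)$ and the condition of $C$-stability are intrinsic to $\gamma$: both partition functions in the ratio are sums over small contours supported inside $\inte_{\overline{\varphi}}(\gamma)$, a subgraph of box-diameter less than $\ell_1$, so they are the same whether computed in $\mathbb{Z}_n^d$ or in $\mathbb{Z}^d$. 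For a cluster $X$ with $\vec 0 \in \overline X$ and $|\overline X| < n$, the support $\overline X$ is connected and satisfies $\diam_\square(\overline X) \leq |\overline X| - 1 < \ell_1$, so $X$ lifts uniquely (the anchor $\vec 0$ pinning down the translation) to a cluster of small $C$-stable contours in $\mathbb{Z}^d$ containing $\vec 0$, with identical value $\Phi(X; z)$. Write $\mathcal{C}^\varphi(\mathbb{Z}^d, C)$ for the set of all such clusters in the infinite lattice, and let
\[
S_n(z) := \sum_{\substack{X \in \mathcal{C}^\varphi(\mathbb{Z}^d, C) \\ \vec 0 \in \overline X,\; |\overline X| < n}} \frac{\Phi(X; z)}{|\overline X|}, \qquad S(z) := \sum_{\substack{X \in \mathcal{C}^\varphi(\mathbb{Z}^d, C) \\ \vec 0 \in \overline X}} \frac{\Phi(X; z)}{|\overline X|}.
\]

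\textit{Step 2 (Infinite-volume absolute convergence).} For $N$ large enough that $\mathbb{Z}_N^d \in \Balancedtori_d(C)$, Theorem~\ref{thm:logtrunc converges_tori} applied with $v = \vec 0$ gives, using $\sum_{\gamma \in X} |\overline \gamma| \geq |\overline X|$,
\[
\sum_{\substack{X \in \mathcal{C}^\varphi(\mathbb{Z}_N^d, C) \\ \vec 0 \in \overline X}} |\Phi(X; z)|\, e^{C|\overline X|} \leq 2
\]
uniformly for $|z| < \delta_1(d, C)$. Restricting to $|\overline X| < N$, using the bijection of Step~1, and letting $N \to \infty$ by monotone convergence produces the same bound with $\mathbb{Z}^d$ in place of $\mathbb{Z}_N^d$. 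In particular $S(z)$ is absolutely convergent, uniformly in $z$ on $\{|z| < \delta_1(d, C)\}$.

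\textit{Step 3 (Tail bound).} For every such $z$,
\[
|S_n(z) - S(z)| \leq \sum_{\substack{X \ni \vec 0 \\ |\overline X| \geq n}} \frac{|\Phi(X; z)|}{|\overline X|} \leq \frac{e^{-Cn}}{n} \sum_{X \ni \vec 0} |\Phi(X; z)|\, e^{C|\overline X|} \leq \frac{2 e^{-Cn}}{n},
\]
which tends to $0$. The estimate is uniform on $\{|z| \leq r\}$ for any $r < \delta_1(d, C)$.

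\textit{Step 4 (Analyticity).} Each weight $w(\gamma; z)$ is a rational function of $z$, and for $C$-stable $\gamma$ the stability bound forces $w(\gamma; \cdot)$ to be bounded, hence pole-free, on $\{|z| < \delta_1(d, C)\}$. Each $\Phi(X; z)$ is a finite product of such weights with a combinatorial factor, and is therefore analytic there; so is every partial sum $S_n(z)$. By Weierstrass' theorem the uniform limit $S$ is analytic, and since $S_n \to S$ uniformly on compacta while $S_n$ agrees with the expression in the statement via Step~1, this finishes the proof.

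The main obstacle is the weight-preserving identification in Step~1: verifying that the recursively-defined denominators $Z^\varphi_{\text{match}}(\inte_{\overline\varphi}(\gamma); z)$ are intrinsic to $\gamma$. This follows by an inductive unfolding of the recursion in Lemma~\ref{lem: match_to_ext} — every nested interior that appears has box-diameter bounded by that of $\overline\gamma$, so all contours contributing at every level have support of box-diameter strictly less than $\ell_1$ and thus can be canonically viewed as contours in $\mathbb{Z}^d$.
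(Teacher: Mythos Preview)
Your proof is correct and takes a genuinely different route from the paper's. The paper argues as follows: the Koteck\'y--Preiss bound of Theorem~\ref{thm:logtrunc converges_tori} gives $|S_n(z)|\le 2$ uniformly, so $\{S_n\}$ is a normal family; then, using that clusters of size $<n_1$ in $\mathbb{Z}_{n_1}^d$ embed into $\mathbb{Z}_{n_2}^d$ for $n_2>n_1$, the first $\rho n_1$ Taylor coefficients of $S_{n_1}$ and $S_{n_2}$ agree, so the coefficients stabilize and every subsequential limit is the same; normality then forces convergence of the whole sequence. In contrast, you construct the limit $S(z)$ directly as an absolutely convergent series over anchored clusters in $\mathbb{Z}^d$, and exploit the exponential weight $e^{C|\overline X|}$ in the Koteck\'y--Preiss estimate to get the explicit tail bound $|S_n(z)-S(z)|\le 2e^{-Cn}/n$, from which uniform convergence and analyticity follow by Weierstrass.

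Both arguments rest on the same two ingredients --- the KP bound from Theorem~\ref{thm:logtrunc converges_tori} and the weight-preserving identification of small-support clusters across different ambient tori (your Step~1, which is also implicit in the paper and made explicit in Lemma~\ref{lem:equality for balanced tori and equal side length tori for small clusters}). Your approach is more elementary (no normal-family machinery) and yields a quantitative rate of convergence, at the cost of having to set up the infinite-volume object $\mathcal{C}^\varphi(\mathbb{Z}^d,C)$, which the paper avoids by staying entirely within the tori framework. The paper's normality argument, on the other hand, transparently generalizes to situations where one only has a uniform bound and coefficient stabilization, without needing to identify the limit explicitly.
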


\begin{proof}
First note for any $C>0$, there is an $N=N(d)>0$ such that for all $n \geq N$ we have $\mathbb{Z}_{n}^{d} \in \Balancedtori_d(C)$, as for large enough $n$ we have $e^{C \cdot n} \geq n^d$.
For each $n\geq N$ and each $z \in \mathbb{C}$ with $|z| < \delta_1(d,C)$ define the series
\[
S_{n}(z) :=  \sum_{\substack{X \in \mathcal{C}^\varphi(\mathbb{Z}_{n}^{d}, C) \\ \vec{0} \in \overline{X},\ |\overline{X}|<n}} \frac{\Phi(X;z)}{|\overline{X}|}.
\] 
By Theorem \ref{thm:logtrunc converges_tori} we see $|S_{n}(z)|\leq 2$ for all $n \geq N$ and all $|z| < \delta_1(d,C)$. Thus the family of maps $\{S_n\}_{n \geq N}$ is normal on $B_{\delta_1(d,C)}$, where $B_r$ denotes the open disk centered at $0$ with radius $r$. 
For $n_2> n_1$ any cluster $X$ of small contours in $\mathbb{Z}^d_{n_1}$ with $|\overline{X}| < n_1$ and $\vec{0} \in \overline{X}$ can be unambiguously viewed as a cluster in $\mathbb{Z}^d_{n_2}$ with $|\overline{X}| < n_1$ and $\vec{0} \in \overline{X}$.
From this and the fact that for any contour $\gamma$ we have $||\gamma|| \geq \rho |\overline{\gamma}|$, where $\rho = \rho(d)$ denotes the constant from Lemma \ref{lem:Peierls condition}, we see for $n_2> n_1$ that the first $\rho n_1$ coefficients of the power series expansions of $S_{n_1}(z)$ and $S_{n_2}(z)$ are the same.
Hence the coefficients of $S_{n}(z)$ are stabilizing, which implies that every convergent subsequence of $S_n$ converges to the same limit. Normality implies that the entire sequence converges to this limit.
\end{proof}

\begin{definition}\label{def:half free enrgy}
    We denote the the limit function in the lemma above by $f_{\varphi,C}(z)$.
\end{definition}
In fact, for the definition of $ f_{\varphi,C}(z)$ one can take any sequence of tori $\torus \in \Balancedtori_d(C)$ with increasing minimal side length $\ell_1$, as is implied by the following lemma.

\begin{lemma}\label{lem:equality for balanced tori and equal side length tori for small clusters}
    Let $C>0$ and let $\torus \in \Balancedtori_d(C)$. Denote the smallest side length of $\torus$ by $\ell_1$ and let $\varphi \in \{ \text{even, odd}\}$. For any $|z|<\delta_1(d,C)$ we have
 \begin{align*}
    \sum_{\substack{X\in \mathcal{C}^\varphi(\torus, C)\\  \vec{0} \in \overline{X},\ |\overline{X}|<\ell_1}} \frac{\Phi(X;z)}{|\overline{X}|}  =  \sum_{\substack{X\in \mathcal{C}^\varphi(\mathbb{Z}^{d}_{\ell_1}, C)\\  \vec{0} \in \overline{X},\ |\overline{X}|<\ell_1}} \frac{\Phi(X;z)}{|\overline{X}|}.
\end{align*}
\end{lemma}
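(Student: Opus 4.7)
My approach is to construct a bijection $\iota$ between the two sets of clusters in the claimed equality and prove that it is $\Phi$-preserving term by term. The starting point is that any cluster $X$ appearing on either side has a connected support $\overline{X}$: since all contours in $X$ share the type $\varphi$, two of them can only be torus-incompatible if their supports lie at graph distance at most $1$, and connectedness of the incompatibility graph of $X$ together with the connectedness of each small contour's support forces $\overline{X}$ to be connected. Given the constraints $\vec{0} \in \overline{X}$ and $|\overline{X}| < \ell_1$, the connected set $\overline{X}$ is contained in a box $B$ of side length strictly less than $\ell_1$ around $\vec{0}$. Because both $\torus$ and $\mathbb{Z}^d_{\ell_1}$ have minimal side length $\geq \ell_1$, such a box embeds isomorphically as an induced subgraph in each of them via translation, and we fix a common identification. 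The bijection $\iota$ is then defined on clusters by transporting each contour's support and configuration along this embedding.

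I would then verify that all the quantities entering $\Phi(X; z)$ are preserved by $\iota$. The surface energy $\|\gamma\|$ depends only on $\overline{\gamma}$, $\sigma_\gamma$ and the neighborhoods of its vertices, all of which live inside $B$ for contours with support in $B$. The type of each small contour is determined locally on $\partial^c\overline{\gamma}$, which also lies inside $B$. For any ground state $\psi$, the interior $\inte_\psi(\gamma)$ is contained in $\textrm{cl}(\overline{\gamma}) \subset B$, so interiors correspond as induced subgraphs. Torus-compatibility depends only on the distance between supports, which is preserved within $B$. The key technical step is to prove by strong induction on $|\overline{\gamma}|$ that the weight $w(\gamma; z)$ is preserved: since $w(\gamma; z) = z^{\|\gamma\|}\, Z_{\text{match}}^{\overline{\varphi}}(\inte_{\overline{\varphi}}(\gamma); z) / Z_{\text{match}}^{\varphi}(\inte_{\overline{\varphi}}(\gamma); z)$, it suffices to show that $Z_{\text{match}}^\psi(\inte_{\overline{\varphi}}(\gamma); z)$ agrees in the two ambient tori. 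This matching partition function is a sum over matching sets of contours with support in $\inte_{\overline{\varphi}}(\gamma)^\circ$, each of which is necessarily small and has support of size strictly smaller than $|\overline{\gamma}|$, so the inductive hypothesis applies at one lower level of nesting.

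With weights preserved, the $C$-stability condition, being a weight inequality, is preserved, and $\iota$ maps $\mathcal{C}^\varphi(\torus, C)$-clusters satisfying $\vec{0} \in \overline{X}$ and $|\overline{X}| < \ell_1$ bijectively to the corresponding clusters in $\mathbb{Z}^d_{\ell_1}$. The Ursell function $\psi$ and multiplicities $n_X(\gamma)$ depend only on the multiset structure and the incompatibility graph, both of which are preserved by $\iota$, so $\Phi(X; z) = \Phi(\iota(X); z)$ and summing yields the identity. The main obstacle I anticipate is keeping the inductive argument for weight preservation clean: the matching contour partition functions inside the weights themselves range over arbitrary matching sets (not only $C$-stable ones), so one must carefully track the bijection of contours at every nesting level. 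Fortunately, each level of nesting strictly reduces the support size, ensuring well-foundedness, though some bookkeeping is required to handle labels, types, and the empty-cluster edge cases correctly.
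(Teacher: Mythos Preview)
Your proposal is correct and follows essentially the same approach as the paper, which also establishes the identity via a weight-preserving bijection between clusters supported in a common box around $\vec{0}$; the paper's proof is a terse one-liner, and your outline supplies the details it omits. One minor simplification: since $Z_{\text{match}}^\psi(\Lambda;z)=\sum_\Gamma\prod_{\gamma'} z^{\|\gamma'\|}$ is defined directly in terms of surface energies rather than the recursive weights $w(\gamma';z)$, you can show it agrees by directly bijecting matching sets of contours and their surface energies, avoiding the strong induction on $|\overline{\gamma}|$.
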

\begin{proof}
As $\torus \in \Balancedtori_d(C)$ with minimal side length $\ell_1$, we have $\mathbb{Z}_{\ell_1}^{d} \in \Balancedtori_d(C)$.
Hence Theorem \ref{thm:logtrunc converges_tori} implies that both series are convergent for $|z| < \delta_1(d,C)$.
 Any cluster $X$ in either $\mathcal{C}^\varphi(\mathbb{Z}^{d}_{\ell_1}, C)$ or $\mathcal{C}^\varphi(\mathcal{T}, C)$ with $\vec{0} \in \overline{X}$ and $ |\overline{X}|<\ell_1$ can unambiguously be viewed as being supported on $\{-(\ell_1-1), \dots, \ell_1-1\}^{d}$ because $\overline{X}$ is connected. This yields a weight preserving bijection between the two sets, which implies the equality holds for all $|z| < \delta_1(d,C)$.
\end{proof}

The following estimate is well-known in the statistical physics literature. It is for example used in the proof of Lemma 5.3 of \cite{BorgsImbrie}, though no formal proof is given there. 
The proof we provide here is based on Section 5.7.1 in \cite{friedli_velenik_2017}, adapted to our setting.  

\begin{theorem}\label{thm: extracting volume boundary for tori}
Let $C>0$ and let $\torus \in \Balancedtori_d(C)$. Denote the smallest side length of $\torus$ by $\ell_1$ let $\varphi \in \{ \text{even, odd}\}$. Let $\Lambda \subseteq \torus$ be an induced closed subgraph. For any $|z| < \delta_1(d,C)$ we have 
\[
\left| \log Z^\varphi_\text{trunc}(\Lambda;z) - |\Lambda_{\text{even}}^{\circ}| f_{\varphi,C}(z)-|\Lambda_{\text{odd}}^{\circ}| f_{\overline{\varphi},C}(z)\right| \leq |\partial \Lambda| \cdot 2\cdot e^{-C 3^d}  + |\Lambda^\circ| \frac{4}{\ell_1 e^{C \ell_1}},
\]
where $f_{\varphi}(z)$ and $f_{\overline{\varphi}}(z)$ are the functions defined in Definition~\ref{def:half free enrgy}. 
\end{theorem}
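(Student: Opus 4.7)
The plan is to expand $\log Z^\varphi_{\text{trunc}}(\Lambda;z)$ using the cluster expansion \eqref{eq:cluster expansion trunc} and the identity $\Phi(X;z)=\sum_{v\in\overline X}\Phi(X;z)/|\overline X|$ to obtain a sum over vertices $v\in\Lambda^\circ$, then compare each per-vertex contribution with the limiting free energy $f_{\text{type}(v),C}(z)$, where $\text{type}(v):=\varphi$ if $v$ is even and $\overline\varphi$ if $v$ is odd. Concretely I set
\[
A_v(z):=\sum_{\substack{X\in\mathcal{C}^\varphi_\Lambda(\torus,C)\\ v\in\overline X}}\frac{\Phi(X;z)}{|\overline X|},\qquad B_v(z):=\sum_{\substack{X\in\mathcal{C}^\varphi(\torus,C)\\ v\in\overline X}}\frac{\Phi(X;z)}{|\overline X|},
\]
so that $\log Z^\varphi_{\text{trunc}}(\Lambda;z)=\sum_{v\in\Lambda^\circ}A_v(z)$ and $\sum_{v\in\Lambda^\circ}f_{\text{type}(v),C}(z)=|\Lambda^\circ_{\text{even}}|f_{\varphi,C}(z)+|\Lambda^\circ_{\text{odd}}|f_{\overline\varphi,C}(z)$. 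The total error splits via the triangle inequality into a boundary error $\sum_v(A_v-B_v)$ and an infinite-volume error $\sum_v(B_v-f_{\text{type}(v),C})$.

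For the boundary error, the only clusters contributing are those with $v\in\overline X$ but $\overline X\not\subseteq\Lambda^\circ$. Since each small-contour support is graph-connected and torus-incompatible small contours have supports at graph distance $\leq 1$, the cluster support $\overline X$ is itself graph-connected; combined with $\partial\Lambda$ graph-separating $\Lambda^\circ$ from $\torus\setminus\Lambda$, every such $\overline X$ must contain a vertex of $\partial\Lambda$. Re-indexing the sum gives
\[
\Big|\sum_{v\in\Lambda^\circ}(A_v(z)-B_v(z))\Big|\leq \sum_{w\in\partial\Lambda}\sum_{\substack{X\in\mathcal{C}^\varphi(\torus,C)\\ w\in\overline X}}|\Phi(X;z)|.
\]
A direct check of Definition~\ref{def:correct} shows that a single incorrect vertex forces its entire $\infty$-neighborhood to be incorrect, so every small contour has $|\overline\gamma|\geq 3^d$ and hence so does every cluster of small contours. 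Together with Theorem~\ref{thm:logtrunc converges_tori} this gives
\[
\sum_{X\ni w}|\Phi(X;z)|\leq e^{-C3^d}\sum_{X\ni w}|\Phi(X;z)|e^{C|\overline X|}\leq 2e^{-C3^d},
\]
so the boundary error is at most $2e^{-C3^d}|\partial\Lambda|$.

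For the infinite-volume error, translation invariance combined with the parity rule of Remark~\ref{rmk: translating contours for tori} identifies $B_v(z)$ with the sum of $\Phi(X;z)/|\overline X|$ over type-$\text{type}(v)$ clusters in $\torus$ containing $\vec 0$. By Lemma~\ref{lem:equality for balanced tori and equal side length tori for small clusters}, the partial sum over clusters with $|\overline X|<\ell_1$ coincides with the corresponding partial sum in $\mathbb{Z}_{\ell_1}^d$, and hence also with the analogous partial sum defining $f_{\text{type}(v),C}(z)$. The residual tails on both sides are bounded via $1/|\overline X|\leq 1/\ell_1$ and Theorem~\ref{thm:logtrunc converges_tori}:
\[
\Big|\sum_{\substack{X\ni v\\|\overline X|\geq\ell_1}}\frac{\Phi(X;z)}{|\overline X|}\Big|\leq \frac{e^{-C\ell_1}}{\ell_1}\sum_{X\ni v}|\Phi(X;z)|e^{C|\overline X|}\leq \frac{2}{\ell_1 e^{C\ell_1}},
\]
giving $|B_v(z)-f_{\text{type}(v),C}(z)|\leq 4/(\ell_1 e^{C\ell_1})$ and, after summing over $v\in\Lambda^\circ$, an infinite-volume error of at most $4|\Lambda^\circ|/(\ell_1 e^{C\ell_1})$. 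Adding the two bounds yields the claim. The main technical obstacles I expect are the boundary-connectivity argument (verifying graph-connectedness of cluster supports and that $\partial\Lambda$ separates $\Lambda^\circ$ from the rest of $\torus$) and the careful parity bookkeeping under translation needed to align $B_v$ with the correct limit $f_{\text{type}(v),C}$.
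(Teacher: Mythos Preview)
Your argument is essentially the paper's own proof: the same per-vertex decomposition of the cluster expansion, the same re-indexing of the boundary error over $\partial\Lambda$ (using that cluster supports are connected and must meet $\partial\Lambda$), and the same comparison of $B_v$ with $f_{\text{type}(v),C}$ via Lemma~\ref{lem:equality for balanced tori and equal side length tori for small clusters} together with the tail estimate from Theorem~\ref{thm:logtrunc converges_tori}. The paper makes the step ``partial sum at scale $\ell_1$ versus the limit $f_{\xi,C}$'' explicit through an auxiliary scale $\ell^*\to\infty$ and an $\epsilon$-argument, which is what your phrase ``residual tails on both sides'' is gesturing at.

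One small correction: the assertion that an incorrect vertex forces its entire $\infty$-neighborhood to be incorrect is not true in general (a domain-wall type configuration, with $\sigma=\sigma_{\text{even}}$ on one side and $\sigma=\sigma_{\text{odd}}$ on the other separated by a layer of zeros, gives incorrect vertices whose $\infty$-neighborhood contains correct ones). The bound $\sum_{\gamma\in X}|\overline\gamma|\geq 3^d$ that you actually need is still correct---the paper simply uses it without proof---but it requires a slightly different justification than the one you sketched. Also note that Theorem~\ref{thm:logtrunc converges_tori} bounds $\sum_X|\Phi(X;z)|e^{C\sum_{\gamma\in X}|\overline\gamma|}$ rather than $e^{C|\overline X|}$; since $|\overline X|\leq\sum_{\gamma\in X}|\overline\gamma|$ your inequalities still go through.
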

\begin{proof}
For $|z| < \delta_1(d,C)$ we have the following equalities of convergent power series
\begin{align*}
    &\log Z^\varphi_{\text{trunc}}(\Lambda;z) = \sum_{X \in \mathcal{C}^\varphi_{\Lambda}(\torus, C)} \Phi(X;z) = \sum_{v \in \Lambda^\circ} \sum_{\substack{X \in \mathcal{C}^\varphi_{\Lambda}(\torus, C)\\v \in \overline{X} }} \frac{\Phi(X;z)}{|\overline{X}|}=\\
    & \sum_{v \in \Lambda^\circ} \Big(\sum_{\substack{X \in \mathcal{C}^\varphi(\torus, C)\\v \in \overline{X} }} \frac{\Phi(X;z)}{|\overline{X}|} - \sum_{\substack{X\in \mathcal{C}^\varphi(\torus, C)\\v \in \overline{X} \not \subset \Lambda^\circ}} \frac{\Phi(X;z)}{|\overline{X}|}\Big)=\\
    & |\Lambda_{\text{even}}^{\circ}| \sum_{\substack{X\in \mathcal{C}^\varphi(\torus, C)\\  \vec{0} \in \overline{X}}} \frac{\Phi(X;z)}{|\overline{X}|}+|\Lambda_{\text{odd}}^{\circ}| \sum_{\substack{X\in \mathcal{C}^{\overline{\varphi}}(\torus, C)\\  \vec{0} \in \overline{X}}} \frac{\Phi(X;z)}{|\overline{X}|}- \sum_{v \in \Lambda^\circ} \sum_{\substack{X\in \mathcal{C}^\varphi(\torus, C)\\v \in \overline{X} \not \subset \Lambda^\circ}} \frac{\Phi(X;z)}{|\overline{X}|},
\end{align*}
where in the final equality we use that a cluster of contours containing $v \in \Lambda^\circ$ can be translated to a cluster of contours containing $\vec{0}$; see Remark \ref{rmk: translating contours for tori}.

We prove the following bounds:
\begin{equation}\label{eq:bound 1}
|\sum_{v \in \Lambda^\circ} \sum_{\substack{X\in \mathcal{C}^\varphi(\torus, C)\\v \in \overline{X} \not \subset \Lambda^\circ}} \frac{\Phi(X;z)}{|\overline{X}|}|\leq  |\partial \Lambda|\cdot 2\cdot e^{-C 3^d},
\end{equation}
 and for $\xi\in \{\varphi,\overline{\varphi}\}$,
\begin{equation}\label{eq:bound 2}
\Bigg|\sum_{\substack{X\in \mathcal{C}^\xi(\torus, C)\\  \vec{0} \in \overline{X}}} \frac{\Phi(X;z)}{|\overline{X}|} - f_{\xi,C}(z)\Bigg|\leq \frac{4}{\ell_1 e^{C \ell_1}}.
\end{equation}
Since $|\Lambda^\circ| = |\Lambda_{\text{even}}^{\circ}| + |\Lambda_{\text{odd}}^{\circ}|$ these bounds together complete the proof.

To prove \eqref{eq:bound 1} we bound
\begin{align*}
     \Bigg|\sum_{v \in \Lambda^\circ} \sum_{\substack{X\in \mathcal{C}^\varphi(\torus, C)\\v \in \overline{X} \not \subset \Lambda^\circ}} \frac{\Phi(X;z)}{|\overline{X}|}\Bigg| &=  \Bigg|\sum_{\substack{X\in \mathcal{C}^\varphi(\torus, C)\\ \overline{X} \not \subset \Lambda^\circ}}  \sum_{v \in \Lambda^\circ}  \frac{\Phi(X;z) \mathbf{1}_{\overline{X}}(v)}{|\overline{X}|}\Bigg| = \Bigg|\sum_{\substack{X\in \mathcal{C}^\varphi(\torus, C)\\ \overline{X} \not \subset \Lambda^\circ,\ \overline{X} \cap \Lambda^\circ \neq \emptyset}}   \frac{\Phi(X;z) |\overline{X} \cap \Lambda^\circ|}{|\overline{X}|}\Bigg|\\
     &\leq  \sum_{w \in \partial \Lambda} \sum_{\substack{X\in \mathcal{C}^\varphi(\torus, C)\\ w\in \overline{X} \not \subset \Lambda^\circ}} \Big|\frac{\Phi(X;z)|\overline{X} \cap \Lambda^\circ|}{|\overline{X}|}\Big|\leq  \sum_{w \in \partial \Lambda} \sum_{\substack{X\in \mathcal{C}^\varphi(\torus, C)\\ w\in \overline{X} \not \subset \Lambda^\circ}}|\Phi(X;z)| \\
     &\leq |\partial \Lambda| \max_{w \in \partial \Lambda} \sum_{\substack{X\in \mathcal{C}^\varphi(\torus, C)\\ w\in \overline{X} \not \subset \Lambda^\circ}} |\Phi(X;z)| 
    \leq |\partial \Lambda|\cdot 2\cdot e^{-C 3^d},
\end{align*}
where the last inequality follows from Theorem \ref{thm:logtrunc converges_tori} using that any cluster $X$ with $w \in \overline{X}$ satisfies $\sum_{\gamma \in X} |\overline{\gamma}| \geq 3^d$.

Next we show~\eqref{eq:bound 2}. We split the clusters in $\torus$ based on size and use the triangle inequality
\begin{align*}
 \Bigg|\sum_{\substack{X\in \mathcal{C}^\xi(\torus, C)\\  \vec{0} \in \overline{X}}} \frac{\Phi(X;z)}{|\overline{X}|} - f_{\xi,C}(z)\Bigg|&\leq 
 \Bigg|\sum_{\substack{X\in \mathcal{C}^\xi(\torus, C)\\  \vec{0} \in \overline{X},\ |\overline{X}|<\ell_1}} \frac{\Phi(X;z)}{|\overline{X}|} - f_{\xi,C}(z)\Bigg|+  \Bigg| \sum_{\substack{X\in \mathcal{C}^\xi(\torus, C)\\  \vec{0} \in \overline{X},\ |\overline{X}|\geq\ell_1}} \frac{\Phi(X;z)}{|\overline{X}|}\Bigg|\\
 &\leq  \Bigg|\sum_{\substack{X\in \mathcal{C}^\xi(\torus, C)\\  \vec{0} \in \overline{X},\ |\overline{X}|<\ell_1}} \frac{\Phi(X;z)}{|\overline{X}|} - f_{\xi,C}(z)\Bigg|+ \frac{2}{\ell_1 \cdot e^{C \ell_1}}\\
 &=\Bigg|\sum_{\substack{X\in \mathcal{C}^\xi(\mathbb{Z}^{d}_{\ell_1}, C)\\  \vec{0} \in \overline{X},\ |\overline{X}|<\ell_1}} \frac{\Phi(X;z)}{|\overline{X}|} - f_{\xi,C}(z)\Bigg|+ \frac{2}{\ell_1 \cdot e^{C \ell_1}},
\end{align*}
where the last inequality follows from Theorem \ref{thm:logtrunc converges_tori} and the last equality follows from Lemma \ref{lem:equality for balanced tori and equal side length tori for small clusters}.

For any $\epsilon>0$ there is an $\ell^*$ large enough such that for any $|z| \leq \delta_1(d,C)$ we have 
\[
\Bigg|\sum_{\substack{X\in \mathcal{C}^\xi(\mathbb{Z}^{d}_{\ell^*}, C)\\  \vec{0} \in \overline{X},\ |\overline{X}|<\ell^*}} \frac{\Phi(X;z)}{|\overline{X}|} - f_{\xi,C}(z)\Bigg| \leq \epsilon,
\]
by Lemma \ref{lemma:def half free enrgy}. By increasing $\ell^*$ if necessary we may assume $\ell^* > 2\ell_1$.
We have
\begin{align*}
    \Bigg|\sum_{\substack{X\in \mathcal{C}^\xi(\mathbb{Z}^{d}_{\ell_1}, C)\\  \vec{0} \in \overline{X},\ |\overline{X}|<\ell_1}} \frac{\Phi(X;z)}{|\overline{X}|} - f_{\xi,C}(z)\Bigg| &\leq  \Bigg|\sum_{\substack{X\in \mathcal{C}^\xi(\mathbb{Z}^{d}_{\ell_1}, C)\\  \vec{0} \in \overline{X},\ |\overline{X}|<\ell_1}} \frac{\Phi(X;z)}{|\overline{X}|} -\sum_{\substack{X\in \mathcal{C}^\xi(\mathbb{Z}^{d}_{\ell^*}, C)\\  \vec{0} \in \overline{X},\ |\overline{X}|<\ell^*}} \frac{\Phi(X;z)}{|\overline{X}|}  \Bigg| + \epsilon \\
    &= \Bigg| \sum_{\substack{X\in \mathcal{C}^\xi(\mathbb{Z}^{d}_{\ell^*}, C)\\  \vec{0} \in \overline{X},\ \ell_1 \leq |\overline{X}|<\ell^*}} \frac{\Phi(X;z)}{|\overline{X}|}  \Bigg| + \epsilon \leq \frac{2}{\ell_1 e^{C \ell_1}} + \epsilon,
\end{align*}
where we used Theorem \ref{thm:logtrunc converges_tori} in the last inequality. As this holds for any $\epsilon>0$, we see 
\[
\Bigg|\sum_{\substack{X\in \mathcal{C}^\xi(\torus, C)\\  \vec{0} \in \overline{X},\ |\overline{X}|<\ell_1}} \frac{\Phi(X;z)}{|\overline{X}|} - f_{\xi,C}(z)\Bigg|\leq \frac{2}{\ell_1 e^{C \ell_1}}.
\]
This finishes the proof of~\eqref{eq:bound 2}. 
\end{proof}

With this bound, we can now finally show that all contours are $C$-stable.

\begin{theorem}\label{thm:stable contours are all contours}
Let $C>0$ and let $\torus \in \Balancedtori_d(C)$. Let $\Lambda \subseteq \torus$ be an induced closed subgraph of $\torus$ and let $\varphi \in \{\text{even}, \text{odd}\}$ be a ground state. For all $|z| < \delta_1(d,C)$, we have
\[
Z^\varphi_\text{match}(\Lambda;z) = Z^\varphi_{\text{trunc}}(\Lambda;z)
\]
and
\[
Z^{\text{large}}_\text{match}(\torus;z) = Z^{\text{large}}_{\text{trunc}}(\torus;z).
\]
\end{theorem}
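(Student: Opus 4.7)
\smallskip

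\noindent\textbf{Proof plan.} The conclusion is equivalent to the statement that, for $|z|<\delta_1(d,C)$, every contour in $\torus$ is $C$-stable. Indeed, once this is known, $Z^{\varphi}_\text{match}(\Lambda;z)$ and $Z^{\varphi}_\text{trunc}(\Lambda;z)$ are sums of $\prod_{\gamma\in\Gamma}w(\gamma;z)$ over identical index sets, by Lemma \ref{lem:contourpartitionfunction_usingtoruscomptaibility} and Definition \ref{def:truncated partition functions are still contour partitio functions}, and similarly for the large-contour versions. I would prove stability by strong induction on the encumbrance $m(\gamma):=|\overline{\gamma}|+|\inte(\gamma)|$, first handling the family of small contours for all encumbrances and then, separately, the large contours. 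The reason this ordering works is that in the weight $w(\gamma;z)$ of any small contour, the partition functions $Z^{\xi}_\text{match}(\inte_{\overline{\varphi}}(\gamma);z)$ involve only contours $\gamma'$ with $\overline{\gamma'}\cup\inte(\gamma')\subseteq\inte(\gamma)$, hence with $m(\gamma')<m(\gamma)$; and after all small contours are shown stable, the weight of a large contour only involves sums over small contours (recall that $\SetOfContours^{\xi}_\text{match}$ for $\xi\in\{\text{even},\text{odd}\}$ consists exclusively of sets of small contours).

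For the inductive step on a small contour $\gamma$ of type $\varphi$, the induction hypothesis gives $Z^{\xi}_\text{match}(\inte_{\overline{\varphi}}(\gamma);z)=Z^{\xi}_\text{trunc}(\inte_{\overline{\varphi}}(\gamma);z)$ for $\xi\in\{\varphi,\overline{\varphi}\}$, and both are non-vanishing by Theorem \ref{thm:logtrunc converges_tori}. Setting $\Lambda:=\inte_{\overline{\varphi}}(\gamma)$ and applying Theorem \ref{thm: extracting volume boundary for tori} to both numerator and denominator and subtracting, one obtains
\[
\left|\log\frac{Z^{\overline{\varphi}}_\text{trunc}(\Lambda;z)}{Z^{\varphi}_\text{trunc}(\Lambda;z)}-(|\Lambda^\circ_\text{even}|-|\Lambda^\circ_\text{odd}|)\bigl(f_{\overline{\varphi},C}(z)-f_{\varphi,C}(z)\bigr)\right|\le 4e^{-C3^d}|\partial\Lambda|+\frac{8|\Lambda^\circ|}{\ell_1 e^{C\ell_1}}.
\]
The bulk term vanishes because $f_{\text{even},C}(z)=f_{\text{odd},C}(z)$: the translation $v\mapsto v+(0,\dots,0,1)$ is an automorphism of $\mathbb{Z}^d_n$ that swaps even and odd while preserving surface energies and hence weights, in the manner of Remark \ref{rmk: translating contours for tori} and Lemma \ref{lem: symmetry even odd tori}, and this symmetry descends to the limit in Definition \ref{def:half free enrgy}. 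Since $\partial\Lambda$ is contained in vertices neighboring $\overline{\gamma}$ one has $|\partial\Lambda|\le 2d\,|\overline{\gamma}|$, and $C$-balancedness gives $|\Lambda^\circ|\le|\torus|\le e^{C\ell_1}$. Inserting these bounds and rearranging produces $|w(\gamma;z)|\le|z|^{\|\gamma\|}e^{5e^{-C3^d}|\overline{\gamma}|}$, as required, provided $\delta_1(d,C)$ and $\ell_1$ are chosen compatibly with these constants.

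For a large contour $\gamma$, the same symmetry-plus-estimate argument applies to $\Lambda:=\inte_{\overline{\varphi}}(\gamma)$, using that every contour in $\SetOfContours^{\xi}_\text{match}(\Lambda)$ is small and therefore already known to be $C$-stable by the previous step. The difference is that $\Lambda$ may now be a much larger subset of $\torus$, so the bulk error $8|\Lambda^\circ|/(\ell_1 e^{C\ell_1})$ can only be bounded by the absolute constant $8/\ell_1\le 8$, which is why the large-contour stability bound carries the additional multiplicative $e^{4}$ factor. The main obstacle, as in most truncated Pirogov--Sinai arguments, is the bookkeeping of these constants: one must verify that the numerical choice of $\delta_1(d,C)$ in Definition \ref{def:deltad} is tight enough that the Peierls bound $\|\gamma\|\ge\rho|\overline{\gamma}|$ together with $2d|\overline{\gamma}|e^{-C3^d}$ and the residual $8/\ell_1$ (respectively a constant of size at most $4$) together fit under the exponents $5e^{-C3^d}|\overline{\gamma}|$ and $5e^{-C3^d}|\overline{\gamma}|+4$, with uniformity over all $\torus\in\Balancedtori_d(C)$. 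The remainder of the argument is routine: once stability of every contour is established, the equalities $Z^{\varphi}_\text{match}=Z^{\varphi}_\text{trunc}$ and $Z^{\text{large}}_\text{match}=Z^{\text{large}}_\text{trunc}$ are immediate from the definitions.
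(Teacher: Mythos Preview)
Your overall inductive architecture matches the paper's, but the argument contains a genuine error at the step where you claim the bulk term vanishes. The equality $f_{\text{even},C}(z)=f_{\text{odd},C}(z)$ is \emph{not} a consequence of the translation symmetry you invoke, and in fact it is false. The translation $v\mapsto v+(0,\dots,0,1)$ does exchange even- and odd-type contours while preserving weights, but it also moves the anchoring vertex: the sum defining $f_{\varphi,C}$ in Definition~\ref{def:half free enrgy} runs over clusters whose support contains $\vec{0}$, and after the odd translation this becomes a sum over clusters containing the odd vertex $(0,\dots,0,1)$. No automorphism of $\mathbb{Z}_n^d$ that fixes $\vec{0}$ is odd, so the symmetry does not ``descend to the limit.'' Concretely, in $d=2$ the minimal even-type contour has support $N_\infty[v]$ for an even vertex $v$, containing five even and four odd vertices; the coefficient of $z$ in $f_{\text{even},C}$ therefore counts the five even $v$ with $\vec{0}\in N_\infty[v]$, while the coefficient of $z$ in $f_{\text{odd},C}$ counts the four odd ones, so the two series differ already at first order.

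The paper does not attempt to cancel the bulk term. Instead it bounds $|f_{\varphi,C}(z)-f_{\overline{\varphi},C}(z)|\le e^{-C3^d}$ from the Koteck\'y--Preiss tail (each cluster satisfying $\sum_{\gamma\in X}|\overline{\gamma}|\ge 3^d$), and separately proves the combinatorial inequality $\bigl||V^\circ_{\text{even}}|-|V^\circ_{\text{odd}}|\bigr|<|\partial V|$ by counting edges. Together these give a bulk contribution $\le e^{-C3^d}|\partial V|$, which added to the surface term $4e^{-C3^d}|\partial V|$ produces exactly the constant $5$ in the stability definition. A second, smaller gap in your argument concerns the volume error for small contours: bounding $|\Lambda^\circ|\le e^{C\ell_1}$ leaves a residual of order $1/\ell_1$ in the exponent, which is not absorbed by the small-contour stability bound (it has no additive constant). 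The paper avoids this by embedding the small-diameter set $V=\inte_{\overline{\varphi}}(\gamma)$ into $\mathbb{Z}^d_\ell$ for $\ell$ arbitrarily large, so that $|V^\circ|\cdot 4/(\ell e^{C\ell})<\epsilon$ for any $\epsilon>0$, and then lets $\epsilon\to 0$.
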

\begin{proof}
We first prove by induction on $|\Lambda|$ that all small contours of type $\varphi$ are $C$-stable. The base case follows as the weight of an empty contour is $1$. Suppose the claim holds for all $\Lambda$ with $|\Lambda| \leq k$ for some $k \geq 0$. Let $\Lambda$ be such that $|\Lambda| = k+1$ and take any small contour $\gamma$ of type $\varphi$ in $\Lambda$. We aim to bound $|w(\gamma;z)|$ by $|z|^{||\gamma||} e^{5\cdot e^{-C3^d}|\overline{\gamma}|}$, which shows $\gamma$ is $C$-stable. By the induction hypothesis we see
\[
\left|\frac{w(\gamma;z)}{z^{||\gamma||}}\right| = \bigg |\frac{Z^{\overline{\varphi}}_\text{match} (\inte_{\overline{\varphi}}(\gamma);z)}{Z^{\varphi}_\text{match} (\inte_{\overline{\varphi}}(\gamma);z)} \bigg | = \bigg |\frac{Z^{\overline{\varphi}}_{\text{trunc}} (\inte_{\overline{\varphi}}(\gamma);z)}{Z^{\varphi}_{\text{trunc}} (\inte_{\overline{\varphi}}(\gamma);z)} \bigg|.
\]
Write $V = \inte_{\overline{\varphi}}(\gamma)$. 
For any $\epsilon>0$ there exists $\ell$ large enough such that $|V^\circ| \frac{4}{\ell e^{C \ell}} < \epsilon$ and such that $V$ is isomorphic to an induced closed subgraph of $\mathbb{Z}^{d}_{\ell}$. Fix such an $\ell.$


Define
\[
h_{\varphi,C}(V;z) = |V_{\text{even}}^{\circ}| f_{\varphi,C}(z)+|V_{\text{odd}}^{\circ}| f_{\overline{\varphi},C}(z),
\]
where $f_{\varphi,C}(z)$ and $f_{\overline{\varphi},C}(z)$ denote the functions defined in Definition~\ref{def:half free enrgy}. 
We also write $g_{C}(z) = f_{\varphi,C}(z) - f_{\overline{\varphi},C}(z).$ By Theorem \ref{thm:logtrunc converges_tori} and Lemma \ref{lemma:def half free enrgy} we see for any $z \in \mathbb{C}$ with  $|z| \leq \delta_1(d,C)$ we have
\[
|g_{C}(z)| \leq |f_{\varphi,C}(z)|+|f_{\overline{\varphi},C}(z)| \leq \frac{4}{3^d} \cdot e^{-C3^d} \leq  e^{-C3^d},
\]
using the fact that any cluster $X$ with $\vec{0} \in \overline{X}$ satisfies $|\overline{X}| \geq 3^d$.

Theorem \ref{thm: extracting volume boundary for tori} applied to $V$ as a induced closed subgraph of $\mathbb{Z}_\ell^d$ now gives 
\begin{align}
   \left |\frac{Z^{\overline{\varphi}}_{\text{trunc}} (V;z)}{Z^{\varphi}_{\text{trunc}} (V;z)} \right| &
   = \left |\frac{e^{\log Z^{\overline{\varphi}}_{\text{trunc}} (V;z) - h_{\overline{\varphi},C}(V;z)}}{e^{\log Z^{\varphi}_{\text{trunc}} (V;z)-h_{\varphi,C}(V;z)}} \right| \cdot \left|\frac{e^{h_{\overline{\varphi},C}(V;z)}}{e^{h_{\varphi,C}(V;z)}}\right| 
   \leq \frac{e^{|\log Z^{\overline{\varphi}}_{\text{trunc}} (V;z) - h_{\overline{\varphi},C}(V;z)|}}{e^{-|\log Z^{\varphi}_{\text{trunc}} (V;z)-h_{\varphi,C}(V;z)|}} \cdot \left|\frac{e^{h_{\overline{\varphi},C}(V;z)}}{e^{h_{\varphi,C}(V;z)}}\right| \nonumber
   \\ 
   &<\frac{e^{2  e^{-C 3^d} |\partial V|+|V^\circ|\tfrac{4}{\ell e^{C\ell}}}}{e^{-2  e^{-C 3^d} |\partial V|-|V^\circ|\tfrac{4}{\ell e^{C\ell}}}} \left|\frac{e^{h_{\overline{\varphi},C}(V;z)}}{e^{h_{\varphi,C}(V;z)}}\right|
   \leq 
   e^{4e^{-C3^d}\cdot |\partial V|+|V^\circ \tfrac{8}{\ell e^{C\ell}}} \cdot \left|\frac{e^{h_{\overline{\varphi},C}(V;z)}}{e^{h_{\varphi,C}(V;z)}}\right|.
   \label{eq:bound ratio trunc general}
 \end{align} 
 
Using that $|V^\circ|\tfrac{4}{\ell e^{C\ell}}<\varepsilon$ and the definitions of $h_{\varphi,C}(V;z)$  and $g_{C}(z)$ we can further bound this by
   \begin{align}
  e^{4 \cdot e^{-C 3^d}|\partial V|} \cdot e^{2\epsilon} \cdot \left|e^{(|V_{\text{even}}^{\circ}|-|V_{\text{odd}}^{\circ}|)g_{C}(z)}\right| 
   \leq e^{4 \cdot e^{-C 3^d} |\partial V|} \cdot e^{2\epsilon} \cdot e^{  e^{-C3^d} \left| |V_{\text{even}}^{\circ}|-|V_{\text{odd}}^{\circ}|\right| }.\label{eq:bound ratio trunc small}
\end{align}

 We next claim that for any induced closed subgraph $V \subseteq \torus$ it holds that 
 \begin{equation}\label{eq:V even odd}
 \left||V_{\text{even}}^{\circ}|-|V_{\text{odd}}^{\circ}|\right| < |\partial V|.
 \end{equation}
 
 Indeed, define $e(A)$ for $A \subseteq V$ to be the set of edges of $\torus$  with at least one endpoint in $A$. 
 We have $|e(V)| = |e(V_{\text{even}}^{\circ})| + |e((\partial V)_{\text{even}})| = |e(V_{\text{odd}}^{\circ})| + |e((\partial V)_{\text{odd}})|$.
 As each vertex in $V^\circ$ has degree $2d$ and each vertex in $\partial V$ has degree strictly less than $2d$ we see that
 \[
\left||V_{\text{even}}^{\circ}|-|V_{\text{odd}}^{\circ}|\right| = \frac{1}{2d} \left||e(V_{\text{even}}^{\circ})| -|e(V_{\text{odd}}^{\circ})|\right|  = \frac{1}{2d} \left||e((\partial V)_{\text{odd}})|-|e((\partial V)_{\text{even}})|\right| < |\partial V|,
 \]
 proving~\eqref{eq:V even odd}.
Substituting~\eqref{eq:V even odd} into~\eqref{eq:bound ratio trunc small} we get for any $\epsilon>0$, 
\[
\left |\frac{Z^{\overline{\varphi}}_{\text{trunc}} (V;z)}{Z^{\varphi}_{\text{trunc}} (V;z)} \right| < e^{2\epsilon} \cdot e^{5\cdot e^{-C 3^d} \cdot  |\partial V|} \leq e^{2\epsilon} \cdot e^{5\cdot e^{-C 3^d} \cdot  |\overline{\gamma}|}.
\]
As $\epsilon \to 0$ we get 
\[
\left |\frac{Z^{\overline{\varphi}}_{\text{trunc}} (V;z)}{Z^{\varphi}_{\text{trunc}} (V;z)} \right|  \leq  e^{5\cdot e^{-C 3^d} \cdot  |\overline{\gamma}|}
\]
and hence we see that the small contour $\gamma$ is $C$-stable. 

Now let $\gamma$ be a large contour. As we already proved that for any induced closed subgraph $\Lambda \subseteq \torus$ all small contours are $C$-stable, we obtain
\[
\left|\frac{w(\gamma;z)}{z^{||\gamma||}}\right| = \left| \frac{Z^{\text{odd}}_\text{match} (\inte_{\text{odd}}(\gamma);z)}{Z^{\text{even}}_\text{match} (\inte_{\text{odd}}(\gamma);z)}\right| = \left| \frac{Z^{\text{odd}}_{\text{trunc}} (\inte_{\text{odd}}(\gamma);z)}{Z^{\text{even}}_{\text{trunc}} (\inte_{\text{odd}}(\gamma);z)}\right|.
\]
Again write $V = \inte_{\text{odd}}(\gamma)$, and write
\[
h_{\varphi,C}(V;z) = |V_{\text{even}}^{\circ}| f_{\varphi,C}(z)+|V_{\text{odd}}^{\circ}| f_{\overline{\varphi},C}(z),
\]
and $g_{C}(z) = f_{\varphi,C}(z) - f_{\overline{\varphi},C}(z)$. 
As above we have $|g_{C}(z)| \leq e^{-C3^d}$ for any $z\in \mathbb{C}$ with $|z| \leq \delta_1(d,C)$.

By Theorem \ref{thm: extracting volume boundary for tori}, now applied to $V$ as an induced closed subgraph of $\torus$, and thus replacing $\ell$ by $\ell_1$ in~\eqref{eq:bound ratio trunc general} we obtain,
\begin{align*}
   \left |\frac{Z^{\overline{\varphi}}_{\text{trunc}} (V;z)}{Z^{\varphi}_{\text{trunc}} (V;z)} \right| &= 
   e^{4 \cdot e^{-C 3^d} |\partial V|+|V^\circ| \frac{8}{\ell_1 e^{C \ell_1}}} \cdot \left|e^{(|V_{\text{even}}^{\circ}|-|V_{\text{odd}}^{\circ}|)g_{C}(z)}\right|\\
  & \leq e^{4 \cdot e^{-C 3^d} |\partial V|+|V^\circ| \frac{8}{\ell_1 e^{C \ell_1}}} \cdot e^{ e^{-C 3^d} |\partial V| }
   \\
   &\leq e^{5\cdot e^{-C 3^d} \cdot  |\partial V|} \cdot e^{|V^\circ| \frac{8}{\ell_1 e^{C \ell_1}}}\leq e^{5\cdot e^{-C 3^d} \cdot  |\overline{\gamma}|} \cdot e^{4},
\end{align*}
using~\eqref{eq:V even odd} and the bound on $g_{C}(z)$ for the second to last inequality and $e^{C \ell_1} \geq |\torus| \geq |V^\circ|$ and $\ell_1 \geq 2$ for the final inequality.
Therefore each large contour $\gamma$ is $C$-stable.
\end{proof}

\section{Bounded zeros for balanced tori}\label{sec:boundedzerosbalancedtori}

In this section we prove the zeros of families of balanced tori are bounded, building on the framework and results of the previous section.

Recall by Lemma \ref{lem: symmetry even odd tori} and Theorem \ref{thm:stable contours are all contours} that we have
\[
Z_{\text{match}}(\torus;z) = 2Z_{\text{trunc}}^{\text{even}}(\torus;z) + Z_{\text{trunc}}^{\text{large}}(\torus;z).
\]
Our first aim this section is to bound $|Z^{\text{large}}_\text{match}(\torus;z))|$ away from $2|Z_{\text{trunc}}^{\text{even}}(\torus;z)|$, these bounds are the final ingredient we need to prove the zeros of families of balanced tori are bounded.

To obtain bounds on $|Z^{\text{large}}_\text{match}(\torus;z))|$ we apply the Koteck\'y-Preiss
theorem to $\log(Z^{\text{even}}_\text{match}(\torus;z)+Z^{\text{large}}_\text{match}(\torus;z))$, which in turn means we need to bound the number of relevant contours.

\begin{lemma}\label{lem:large contours bound}
Let $\torus$ be an even $d$-dimensional torus. Let $L_m$ denote the set of contours $\gamma$ in $\torus$ containing $\vec{0}$ with support of size $m$ that are either large or small and of even type. Then we have 
$$
|L_m| \leq (4C_d|\torus|^{1/\ell_1})^m,
$$
where $C_d$ is the constant from Lemma \ref{lem:contours bound connected}.
\end{lemma}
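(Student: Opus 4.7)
Split $L_m = L_m^{\mathrm{sm}} \cup L_m^{\mathrm{lg}}$ into the small contours of even type whose support contains $\vec{0}$ and the large contours whose support contains $\vec{0}$. The small part is immediate: every contour in $L_m^{\mathrm{sm}}$ is in particular small with $\vec{0}$ in its support, so Lemma~\ref{lem:contours bound connected} gives $|L_m^{\mathrm{sm}}| \leq C_d^m$, which is already dominated by $(4C_d|\torus|^{1/\ell_1})^m$ since $|\torus|^{1/\ell_1} \geq 1$.

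For $L_m^{\mathrm{lg}}$ the crucial volume observation is that each connected component $A$ of the support of a large contour satisfies $\diam_{\square}(A) \geq \ell_1$, so some marginal $A_i$ has $|A_i| \geq \ell_1$, forcing $|A| \geq \ell_1$. Consequently, a large contour with support of size $m$ has at most $r \leq m/\ell_1$ connected components. I would then encode a contour in $L_m^{\mathrm{lg}}$ by the following data: a positive integer $r \leq m/\ell_1$; a composition $m = k_1 + \cdots + k_r$ with each $k_i \geq \ell_1$ and $k_1$ reserved for the size of the component containing $\vec{0}$; for that component, a choice of connected shape of size $k_1$ in $\torus$ containing $\vec{0}$ together with a feasible configuration; and for each of the remaining $r-1$ components, an anchor vertex in $\torus$ together with a connected shape of the appropriate size containing the anchor and a feasible configuration on it.

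The same lattice animal estimate underlying the proof of Lemma~\ref{lem:contours bound connected} bounds the number of (connected shape in $\torus$, feasible configuration) pairs of size $k$ containing a fixed vertex by $C_d^k$ (possibly after enlarging $C_d$ by a factor depending only on $d$ to absorb components that wrap around the torus; a spanning-tree / BFS walk encoding suffices). Summing the encoding over $r$, the composition, and the $r-1$ anchor choices yields
\[
|L_m^{\mathrm{lg}}| \;\leq\; \sum_{r=1}^{\lfloor m/\ell_1\rfloor} \binom{m-1}{r-1}\, C_d^{m}\, |\torus|^{r-1},
\]
and bounding $\binom{m-1}{r-1} \leq 2^{m-1}$ together with $|\torus|^{r-1} \leq |\torus|^{m/\ell_1}$ gives $|L_m^{\mathrm{lg}}| \leq (2C_d|\torus|^{1/\ell_1})^m$. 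Adding the small-contour contribution and absorbing constants through $2 \leq 2^m$ produces the claimed inequality with room to spare.

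The main obstacle, in my view, is not the arithmetic but the bookkeeping of the encoding: one must verify that picking a distinguished component for $\vec{0}$ and assigning arbitrary anchors to the other components is an overcount rather than an undercount, which requires noting that different anchor choices within the same component and different orderings of the non-distinguished components both inflate the count; and one must also justify that a $C_d^{k_i}$-type bound applies component-by-component in $\torus$, including wrap-around components that would lie outside the diameter range covered by Lemma~\ref{lem:contours bound connected} itself, but whose count is still controlled by a constant depending only on $d$.
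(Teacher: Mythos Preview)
Your approach is essentially identical to the paper's: split into small and large contours, use the per-component lattice-animal bound $C_d^{k_i}$, place $|\torus|$-many anchors for each of the at most $\lfloor m/\ell_1\rfloor$ non-root components, and crudely bound the number of compositions; the only differences are cosmetic (the paper writes $4^m$ for the composition count and folds the small-contour contribution in by replacing $|\torus|^{r-1}$ with $|\torus|^{r}$ rather than adding $C_d^m$ separately). Your worry about wrap-around components is not an obstacle: the paper observes (and you can too) that the number of connected vertex sets of size $k$ containing a fixed vertex in $\torus$ is bounded by the corresponding count in $\mathbb{Z}^d$, since the lattice-animal bound is a purely degree-based statement (e.g.\ via a DFS/spanning-tree encoding) and both graphs are $2d$-regular, so $C_d^{k_i}$ applies verbatim to each component regardless of diameter.
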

\begin{proof}
Let $k$ denote the number of connected components of a large contour $\gamma_{\text{large}}$ with $ |\overline{\gamma_{\text{large}}}| = m$.
Each connected component of $\gamma_\text{large}$ has size at least $\ell_1$, hence $k \leq \lfloor m/\ell_1 \rfloor$. Denote by $m_i$ the size of the $i$-th connected component for $i \in \{1, \ldots, k\}$. For each connected component of the large contour that does not contain $\vec{0}$ choose a vertex $v_i$ of $\torus$ in the component for $i \in \{1, \ldots, k-1\}$, this can be done in $|\torus|^{k-1}$ many ways.

Denote by $P_{l}$ the set of connected large contours of size $l$ incompatible with a specified vertex $v$.
The number of connected sets in $\torus$ of size $l$ containing $v$ is bounded by the number of connected sets of size $l$ containing $\vec{0}$ in $\mathbb{Z}^d$. 
As there are at most $2^{l}$ possible feasible configurations on a set of size $l$, we obtain with the same argument as in Lemma \ref{lem:contours bound connected} that $|P_l| \leq C_d^l$.
We apply this bound to each connected component and see the total number of large contours $\gamma$ in $\torus$ with support of size $m$ containing $\vec{0}$ is bounded by 
\begin{equation}\label{eq:bound large contours}
  \sum_{\substack{m_1, \ldots, m_k \\ \sum m_i = m \text{ and } m_i \geq \ell_1}}\prod_{i=1}^k C_{d}^{m_i} (|\torus|)^{k-1}  \leq  \big( \sum_{\substack{m_1, \ldots, m_k \\ \sum m_i = m }} 1\big) C_d^m |\torus|^{k-1} \leq 4^m C_d^m |\torus|^{k-1}.
\end{equation}
Accounting also for the small even contours of size $m$, we get 
\[
|L_m| \leq 4^m C_d^m (|\torus|)^{k} \leq (4C_d|\torus|^{1/\ell_1})^m.
\]
\end{proof}

We also need a tighter bound on the absolute value of $|z|$. 

\begin{definition}\label{def:delta2}
   We define for any $x>0$ the number
\[
\delta_2(d,x) = e^{-(\log(8e^x C_d)+4de^{4}+5\cdot e^{-x 3^d})/\rho(d)},
\]
where $C_d$ is the constant from Lemma \ref{lem:contours bound connected} and $\rho(d) = \frac{1}{2d\cdot 3^d}$ is the constant from lemma \ref{lem:Peierls condition}.
\end{definition}
 
Note that $\delta_1(d,x)>\delta_2(d,x)$ for all $d\in \mathbb{Z}_{\geq 2}$ and $x>0$. We apply the framework of Section \ref{sec: cluster expansion} to $\log(Z^{\text{even}}_\text{match}(\torus;z)+Z^{\text{large}}_\text{match}(\torus;z))$. In this case our polymers are contours of type even, i.e. both large and small. The weights of a contour $\gamma$ equals $w(\gamma; z)$ and the compatibility relation is torus-compatibility. Denote by $\mathcal{C}^{\text{even}}_{\text{large}}(\torus)$ the set of clusters of even and large contours.
The cluster expansion takes the form
\begin{equation}\label{eq:cluster expansion large even}
\log(Z^{\text{even}}_\text{match}(\torus;z)+Z^{\text{large}}_\text{match}(\torus;z)) = \sum_{X \in \mathcal{C}^{\text{even}}_{\text{large}}(\torus)} \Phi(X;z),
\end{equation}
where $\Phi(X;z) = \prod_{\gamma \in \Gamma} \frac{1}{n_X(\gamma)!} \psi(\gamma_1, \ldots, \gamma_n) \prod_{i=1}^n w(\gamma_i;z)$ is defined as in Section \ref{sec: cluster expansion}.

\begin{theorem}\label{thm:logtrunc converges for even plus big}
Let $C>0$ and let $\torus \in \Balancedtori_d(C)$. For any $|z| < \delta_2(d,C)$ the cluster expansion for $\log(Z^{\text{even}}_\text{match}(\torus;z)+Z^{\text{large}}_\text{match}(\torus;z))$ is convergent, where $\delta_2(d,C)$ is defined in Definition \ref{def:delta2}. Furthermore for any $v \in V(\torus)$
\[
\sum_{\substack{X \in \mathcal{C}^{\text{even}}_{\text{large}}(\torus) \\ v \in \overline{X}}} |\Phi(X;z)| \leq 4de^4.
\]
\end{theorem}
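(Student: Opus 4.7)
The approach is to apply the Kotecký-Preiss theorem (Theorem~\ref{thm:KoteckyPreiss}) to the polymer model whose polymers are all even-type contours in $\torus$ (both small and large), with weights $w(\gamma;z)$ and compatibility relation given by torus-compatibility, exactly as described in the paragraph preceding the statement. The crucial preliminary ingredient is that, by Theorem~\ref{thm:stable contours are all contours}, every such contour is $C$-stable, so for $|z|<\delta_1(d,C)$ (and in particular for $|z|<\delta_2(d,C)$) the weight bounds
\[
|w(\gamma;z)|\le |z|^{\|\gamma\|}e^{5e^{-C3^d}|\overline\gamma|}\quad\text{for small }\gamma,\qquad |w(\gamma;z)|\le |z|^{\|\gamma\|}e^{5e^{-C3^d}|\overline\gamma|}e^{4}\quad\text{for large }\gamma,
\]
are available uniformly.

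I would choose the Kotecký-Preiss functions to be $a(\gamma)=4de^{4}|\overline\gamma|$ and $b(\gamma)=0$, and introduce (as in the proof of Theorem~\ref{thm:logtrunc converges_tori}) an artificial polymer $v_\gamma$ at a fixed vertex $v$, with weight $0$ and torus-incompatible with every contour whose support contains $v$. With this choice, $a(v_\gamma)=4de^{4}$, which is exactly the constant appearing in the conclusion of the theorem. It then suffices to verify, for each real contour $\gamma$, the Kotecký-Preiss inequality
\[
\sum_{\gamma'\not\sim\gamma}|w(\gamma';z)|\,e^{a(\gamma')}\le a(\gamma).
\]
Combining the two weight bounds above with the Peierls estimate $\|\gamma'\|\ge \rho|\overline{\gamma'}|$ (Lemma~\ref{lem:Peierls condition}), the left-hand side is dominated by
\[
e^{4}\sum_{\gamma'\not\sim\gamma}|z|^{\rho|\overline{\gamma'}|}e^{(4de^{4}+5e^{-C3^d})|\overline{\gamma'}|}.
\]

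The main obstacle, and the place where the $C$-balanced assumption enters, is in counting the contours that are torus-incompatible with $\gamma$. Lemma~\ref{lem:large contours bound} gives at most $(4C_d|\torus|^{1/\ell_1})^m$ contours of size $m$ containing a fixed vertex, and for $\torus\in\Balancedtori_d(C)$ we have $|\torus|^{1/\ell_1}\le e^{C}$; the bound therefore becomes $(4e^{C}C_d)^m$. Accounting for the $|\overline\gamma|+|\partial^c\overline\gamma|\le 2d|\overline\gamma|$ vertices a torus-incompatible contour may touch, the sum above is bounded by
\[
e^{4}\cdot 2d|\overline\gamma|\sum_{m\ge 1}\bigl(4e^{C}C_d\bigr)^{m}|z|^{\rho m}e^{(4de^{4}+5e^{-C3^d})m}.
\]
The definition of $\delta_2(d,C)$ is precisely arranged so that for $|z|<\delta_2(d,C)$ the common ratio satisfies $4e^{C}C_d\cdot |z|^{\rho}\cdot e^{4de^{4}+5e^{-C3^d}}\le \tfrac12$, and hence the geometric series is bounded by $2$, giving a total bound of $4de^{4}|\overline\gamma|=a(\gamma)$. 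This verifies the hypothesis of Theorem~\ref{thm:KoteckyPreiss}, so the cluster expansion~(\ref{eq:cluster expansion large even}) converges and
\[
\sum_{\substack{X\in\mathcal{C}^{\text{even}}_{\text{large}}(\torus)\\ v\in\overline X}}|\Phi(X;z)|=\sum_{X\not\sim v_\gamma}|\Phi(X;z)|e^{b(X)}\le a(v_\gamma)=4de^{4},
\]
which is the desired estimate.
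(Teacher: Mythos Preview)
The proposal is correct and follows essentially the same argument as the paper: the same choice $a(\gamma)=4de^{4}|\overline\gamma|$, $b(\gamma)=0$, the same artificial polymer at $v$, the same use of $C$-stability (via Theorem~\ref{thm:stable contours are all contours}) and the Peierls bound, and the same counting estimate from Lemma~\ref{lem:large contours bound} combined with the $C$-balanced condition $|\torus|^{1/\ell_1}\le e^{C}$ to reduce the Kotecký--Preiss condition to a geometric series with ratio at most $\tfrac12$. Your exposition is, if anything, a bit cleaner in tracking the $e^{4}$ factor through the chain of inequalities.
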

\begin{proof}
Fix $v \in V(\torus)$. Define the artificial contour $v_\gamma$ with support $v$, weight $0$ and which is torus incompatible with each contour $\gamma$ such that $v \in V(\overline{\gamma})$.
Add $v_\gamma$ to the set of contours. With the artificial contour added, $Z^{\text{even}}_\text{match}(\torus;z)+Z^{\text{large}}_\text{match}(\torus;z)$ is still equal to the sum over torus-compatible collections of large and even contours, as the weight of $v_\gamma$ is zero.
 Throughout this proof $\sim$ denotes the relation of torus-compatibility.
We verify the condition of Theorem \ref{thm:KoteckyPreiss} with $a(\gamma) = 4de^4|\overline{\gamma}|$ and $b(\gamma) = 0$.

Theorem \ref{thm:stable contours are all contours} applies as $\delta_2(d,C) < \delta_1(d,C)$. 
Hence for any contour $\gamma$ 
\[
\sum_{\gamma'\not \sim \gamma } |w(\gamma';z)| e^{a(\gamma)+b(\gamma)} \leq \sum_{\gamma'\not \sim \gamma } |z|^{\rho |\overline{\gamma'}|}e^{(4de^{4}+5\cdot e^{-C 3^d} )|\overline{\gamma'}|} \cdot e^{4}\leq \sum_{\gamma'\not \sim \gamma } |z|^{\rho |\overline{\gamma'}|}e^{(4de^{4}+5\cdot e^{-C 3^d})|\overline{\gamma'}|} \cdot e^{4},
\]
where without loss of generality we may assume the first sum is over non-artificial contours $\gamma'$, as $w(v_\gamma;z) = 0$.
As $|z| <  \delta_2(d,C) = e^{-(\log(8e^C C_d)+4de^{4}+5\cdot e^{-C 3^d})/\rho}$, we have 
\[
\sum_{\gamma'\not \sim \gamma} |z|^{\rho|\overline{\gamma'}|} e^{(4de^{4}+5\cdot e^{-C 3^d}) |\overline{\gamma'}|} e^{4}< e^{4} \sum_{\gamma'\not \sim \gamma} (8e^CC_d)^{- |\overline{\gamma'}|}.
\]

There are at most $ (|\overline{\gamma}| + |\partial^c \overline{\gamma}|)(4|\torus|^{1/\ell_1} C_d)^m$ contours $\gamma'\not \sim \gamma$ with $|\overline{\gamma'}| = m$, where $C_d$ is the constant from Lemma \ref{lem:contours bound connected}, this can be seen by upper bounding how many ways a contour can be torus incompatible with a single vertex using Lemma \ref{lem:large contours bound} and applying this bound for each vertex of $\overline{\gamma}$. 
We also note that as $\torus$ is $C$-balanced, we have $|\torus|^{1/\ell_1} \leq e^C$.
Hence
\[
    \sum_{\gamma'\not \sim \gamma} (8e^CC_d)^{- |\overline{\gamma'}|} < e^{4}(|\overline{\gamma}| + |\partial^c \overline{\gamma}|) \cdot \sum_{m \geq 0} (4|\torus|^{1/\ell_1} C_d)^m (8e^CC_d)^{- m} \leq 2de^{4}|\overline{\gamma}| \cdot \sum_{m \geq 0} \left(\tfrac{1}{2}\right)^m = a(\gamma),
\]
 where we used $|\overline{\gamma}| + |\partial^c \overline{\gamma}| \leq 2d|\overline{\gamma}|$. 
 
This shows the condition of Theorem \ref{thm:KoteckyPreiss} holds, which implies the cluster expansion is convergent for $|z| < \delta_2(d,C)$. By Theorem \ref{thm:KoteckyPreiss} and the definition of $v_\gamma$ we have for any $v \in \torus$ we have
\[
\sum_{\substack{X \in \mathcal{C}^{\text{even}}_{\text{large}}(\torus) \\ v \in \overline{X}}} |\Phi(X;z)| e^{ \sum_{\gamma\in X}b(\gamma)} = \sum_{\substack{X \in \mathcal{C}^{\text{even}}_{\text{large}}(\torus) \\X\not \sim v_\gamma} } |\Phi(X;z)| \leq a(v_\gamma) = 4de^4,
\] 
where we can assume the clusters $X$ do not contain $v_\gamma$, as for any cluster $X$ containing $v_\gamma$ we have $\Phi(X;z) = 0$ as $w(v_\gamma;z) = 0$.
\end{proof}


\begin{lemma}\label{lem:normal}
    Let $C>0$. The family 
    \begin{equation}\label{eq:family of log}
    \Big\{ \frac{\log(Z^{\text{even}}_\text{match}(\torus;z)+Z^{\text{large}}_\text{match}(\torus;z))}{|\torus|}\Big\}_{\torus \in \Balancedtori_d(C)}
    \end{equation}
    is normal on $|z| < \delta_2(d,C)$.
\end{lemma}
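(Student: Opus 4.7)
The plan is to apply Montel's theorem after establishing a uniform bound on the family. By Theorem \ref{thm:logtrunc converges for even plus big}, for each $\torus \in \Balancedtori_d(C)$ and each $|z| < \delta_2(d,C)$, the cluster expansion
\[
\log\bigl(Z^{\text{even}}_{\text{match}}(\torus;z)+Z^{\text{large}}_{\text{match}}(\torus;z)\bigr) = \sum_{X \in \mathcal{C}^{\text{even}}_{\text{large}}(\torus)} \Phi(X;z)
\]
is absolutely convergent, and hence the logarithm is well-defined and analytic on the open disk $\{|z|<\delta_2(d,C)\}$ (using the branch fixed at $z=0$, where the sum equals $1$ since $Z^{\text{large}}_{\text{match}}(\torus;0)=0$ and $Z^{\text{even}}_{\text{match}}(\torus;0)=1$). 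So each member of the family is holomorphic on this disk.

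Next I would produce a uniform pointwise bound. Summing absolute values and re-indexing by a vertex in the support gives
\[
\Big|\sum_{X \in \mathcal{C}^{\text{even}}_{\text{large}}(\torus)} \Phi(X;z)\Big| \le \sum_{X} |\Phi(X;z)| = \sum_{X} \sum_{v \in \overline{X}} \frac{|\Phi(X;z)|}{|\overline{X}|} \le \sum_{v \in \torus}\, \sum_{\substack{X \in \mathcal{C}^{\text{even}}_{\text{large}}(\torus)\\ v \in \overline{X}}} |\Phi(X;z)|.
\]
By the second conclusion of Theorem \ref{thm:logtrunc converges for even plus big}, the inner sum is at most $4de^4$ for every $v$ and every $|z|<\delta_2(d,C)$, so the whole expression is bounded by $4de^4 \cdot |\torus|$. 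Dividing by $|\torus|$ yields
\[
\left|\frac{\log\bigl(Z^{\text{even}}_{\text{match}}(\torus;z)+Z^{\text{large}}_{\text{match}}(\torus;z)\bigr)}{|\torus|}\right| \le 4de^4
\]
uniformly in $\torus \in \Balancedtori_d(C)$ and in $z$ with $|z| < \delta_2(d,C)$.

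Finally, Montel's theorem applied to this locally uniformly bounded family of holomorphic functions on the disk $\{|z|<\delta_2(d,C)\}$ gives normality. There is no real obstacle here; everything reduces to the Kotecký--Preiss-type bound already established in Theorem \ref{thm:logtrunc converges for even plus big}. The only subtlety is ensuring the branch of the logarithm is consistently defined across the family, which is guaranteed by the convention in Section \ref{sec: cluster expansion} together with the normalization $(Z^{\text{even}}_{\text{match}}+Z^{\text{large}}_{\text{match}})(\torus;0)=1$.
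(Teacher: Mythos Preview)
Your proof is correct and follows essentially the same approach as the paper: both use the convergent cluster expansion from Theorem~\ref{thm:logtrunc converges for even plus big}, reindex the sum over clusters by a vertex in the support, apply the per-vertex bound $4de^4$, and conclude normality via Montel's theorem. The only cosmetic difference is that the paper keeps the factor $1/|\overline{X}|$ and bounds by a maximum over vertices, whereas you drop that factor and bound by a sum over vertices divided by $|\torus|$; both routes yield the same uniform bound $4de^4$.
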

\begin{proof}
   For any $\torus \in \Balancedtori_d(C)$ and any $z$ such that $|z| < \delta_2(d,C)$ we have
   \begin{align*}
    \Big|\frac{\log(Z^{\text{even}}_\text{match}(\torus;z)+Z^{\text{large}}_\text{match}(\torus;z))}{|\torus|}\Big| & = \Bigg| \frac{1}{|\torus|} \sum_{v \in V(\torus)} \sum_{\substack{X \in \mathcal{C}^{\text{even}}_{\text{large}}(\torus) \\ v \in \overline{X}}} \frac{\Phi(X;z)}{|\overline{X}|}\Bigg|\\
    &\leq \max_{v \in V(\torus)} \sum_{\substack{X \in \mathcal{C}^{\text{even}}_{\text{large}}(\torus) \\ v \in \overline{X}}} \frac{|\Phi(X;z)|}{|\overline{X}|} \leq 4de^4,
   \end{align*}
   where the last inequality follows from Theorem \ref{thm:stable contours are all contours}, Theorem \ref{thm:logtrunc converges for even plus big}. 
   Therefore the family defined in~\eqref{eq:family of log} is normal by Montel's theorem.

\end{proof}

To bound $|Z^{\text{large}}(\torus;z))|$, we show the influence of adding large contours to the even contours is negligible, for small enough $z$ as the sizes of the tori tend to infinity.

\begin{lemma}\label{lem:limit free energy same with or without large contours}
    For any $C>0$ and any $|z| < \delta_1(d,C)$ the function
    \[
f_C(z) = \lim_{\substack{|\torus| \to \infty\\ \torus \in \Balancedtori_d(C)}} \frac{\log (Z^{\text{even}}_\text{match}(\torus;z))}{|\torus|}
    \]
   is well-defined and $f_C(z) = \frac{1}{2} f_{\text{even},C}(z) + \frac{1}{2} f_{\text{odd},C}(z)$. For any $|z| < \delta_2(d,C)$ the function
     \[
g_{C}(z) = \lim_{\substack{|\torus| \to \infty\\ \torus \in \Balancedtori_d(C)}} \frac{\log(Z^{\text{even}}_\text{match}(\torus;z)+Z^{\text{large}}_\text{match}(\torus;z))}{|\torus|}
    \]
   is well-defined and $g_{C}(z) = f_C(z)$.
\end{lemma}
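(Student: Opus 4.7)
The plan is to deduce both limits as essentially straightforward consequences of Theorem~\ref{thm: extracting volume boundary for tori} applied to $\Lambda=\torus$ together with the cluster-expansion bound of Theorem~\ref{thm:logtrunc converges for even plus big}. The key observation is that for any torus $\torus$ we have $\partial\torus=\emptyset$ and hence $\torus^\circ=\torus$, so the boundary term in Theorem~\ref{thm: extracting volume boundary for tori} vanishes, while since all side lengths are even we have $|\torus_{\text{even}}^\circ|=|\torus_{\text{odd}}^\circ|=|\torus|/2$. The remaining error term is $|\torus|\cdot\tfrac{4}{\ell_1 e^{C\ell_1}}$, which tends to $0$ after dividing by $|\torus|$ because $\torus\in\Balancedtori_d(C)$ forces $\ell_1\ge \log|\torus|/C\to\infty$ as $|\torus|\to\infty$.

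For the first assertion, since $|z|<\delta_1(d,C)$, Theorem~\ref{thm:stable contours are all contours} gives $Z^{\text{even}}_\text{match}(\torus;z)=Z^{\text{even}}_\text{trunc}(\torus;z)$. Plugging into Theorem~\ref{thm: extracting volume boundary for tori} with $\Lambda=\torus$ and $\varphi=\text{even}$ yields
\[
\Bigl|\log Z^{\text{even}}_\text{match}(\torus;z)-\tfrac{|\torus|}{2}\bigl(f_{\text{even},C}(z)+f_{\text{odd},C}(z)\bigr)\Bigr|\le \tfrac{4|\torus|}{\ell_1 e^{C\ell_1}}.
\]
Dividing by $|\torus|$ and sending $|\torus|\to\infty$ (so $\ell_1\to\infty$) shows that $f_C(z)$ exists and equals $\tfrac12(f_{\text{even},C}(z)+f_{\text{odd},C}(z))$.

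For the second assertion, since $\delta_2(d,C)<\delta_1(d,C)$, for $|z|<\delta_2(d,C)$ both cluster expansions~\eqref{eq:cluster expansion trunc} (for $\log Z^{\text{even}}_\text{trunc}(\torus;z)$, with only small even contours of $\torus$) and~\eqref{eq:cluster expansion large even} (for $\log(Z^{\text{even}}_\text{match}(\torus;z)+Z^{\text{large}}_\text{match}(\torus;z))$, with both small even and large contours) are absolutely convergent. Since the clusters of small even contours appearing in the first expansion are exactly the clusters in $\mathcal{C}^{\text{even}}_\text{large}(\torus)$ not containing any large contour, subtracting gives
\[
\log\bigl(Z^{\text{even}}_\text{match}(\torus;z)+Z^{\text{large}}_\text{match}(\torus;z)\bigr)-\log Z^{\text{even}}_\text{match}(\torus;z)=\!\!\!\!\sum_{\substack{X\in\mathcal{C}^{\text{even}}_\text{large}(\torus)\\ X\text{ contains a large contour}}}\!\!\!\Phi(X;z).
\]
Any large contour has a connected component of box-diameter $\ge \ell_1$, hence support of size $\ge \ell_1$, so for any such $X$ we have $|\overline{X}|\ge\ell_1$. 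Combining this with the per-vertex bound $\sum_{X:v\in\overline X}|\Phi(X;z)|\le 4de^4$ from Theorem~\ref{thm:logtrunc converges for even plus big}, we estimate
\[
\Bigl|\sum_{X\text{ contains large}}\Phi(X;z)\Bigr|\le\sum_{v\in\torus}\sum_{\substack{X:v\in\overline X\\ X\text{ contains large}}}\frac{|\Phi(X;z)|}{|\overline X|}\le\frac{1}{\ell_1}\sum_{v\in\torus}\sum_{X:v\in\overline X}|\Phi(X;z)|\le\frac{4de^4|\torus|}{\ell_1}.
\]
Dividing by $|\torus|$ and letting $|\torus|\to\infty$ (so $\ell_1\to\infty$) gives $g_C(z)-f_C(z)=0$.

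The only real obstacle is the combinatorial observation that large contours contribute clusters whose support size grows with $\ell_1$; once that is noted, both parts reduce to applying the two main estimates of the previous subsection, with the balanced condition $|\torus|\le e^{C\ell_1}$ only used to force $\ell_1\to\infty$ along the chosen sequence. No independent estimates on $Z^{\text{large}}_\text{match}$ itself are needed.
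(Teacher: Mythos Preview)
Your first assertion is handled exactly as in the paper: apply Theorem~\ref{thm: extracting volume boundary for tori} with $\Lambda=\torus$, use $\partial\torus=\emptyset$ and $|\torus_{\text{even}}|=|\torus_{\text{odd}}|=|\torus|/2$, and invoke Theorem~\ref{thm:stable contours are all contours} to replace $Z^{\text{even}}_\text{trunc}$ by $Z^{\text{even}}_\text{match}$.

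For the second assertion your argument is correct but genuinely different from the paper's. The paper does not subtract the two cluster expansions directly; instead it observes that the first $\rho\ell_1$ Taylor coefficients of $\log Z^{\text{even}}_\text{match}(\torus;z)$ and $\log\bigl(Z^{\text{even}}_\text{match}(\torus;z)+Z^{\text{large}}_\text{match}(\torus;z)\bigr)$ agree (since every large contour contributes a factor $z^{\|\gamma\|}$ with $\|\gamma\|\ge\rho\ell_1$), and then uses the normality statement of Lemma~\ref{lem:normal} to conclude that the two normalized logarithms have the same limit. Your route bypasses the normal-families machinery entirely: you write the difference of the two expansions as an absolutely convergent sum over clusters containing a large contour, bound each such cluster's support from below by $\ell_1$, and feed this into the per-vertex estimate of Theorem~\ref{thm:logtrunc converges for even plus big} to get an explicit $O(1/\ell_1)$ bound on the difference of the normalized logarithms. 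This is more elementary and gives a quantitative rate; the paper's argument, on the other hand, isolates the structural fact that the Taylor coefficients stabilize, which is what is reused later (in the proof of Theorem~\ref{thm:main zero-freeness result contour version} via Lemma~\ref{lem: useful Han's lemma}).
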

\begin{proof}
Take any torus $\torus \in \Balancedtori_d(C)$ and let $\ell_1$ denote the minimal side length of $\torus$. 
From Theorem \ref{thm:stable contours are all contours} and Theorem \ref{thm: extracting volume boundary for tori} we obtain for all $|z| < \delta_1(d,C)$ 
\[
\left| \frac{\log Z^{\text{even}}_\text{match}(\torus;z)}{|\torus|} - \frac{1}{2} f_{\text{even},C}(z) - \frac{1}{2} f_{\text{odd},C}(z) \right| < \frac{2}{|\torus|},
\]
where in the last equality we used $\ell_1 e^{C \ell_1} \geq 2 |\torus|$, as $\ell_1 \geq 2$ and $e^{C \ell_1} \geq  |\torus|$.
This implies $f_C(z)$ exists and $f_C(z) =  \frac{1}{2} f_{\text{even},C}(z) + \frac{1}{2} f_{\text{odd},C}(z)$.

The first $\rho \ell_1$ terms of the power series $\log Z^{\text{even}}_\text{match}(\torus;z)$ and $\log(Z^{\text{even}}_\text{match}(\torus;z)+Z^{\text{large}}_\text{match}(\torus;z))$ are equal, where $\rho$ is the constant from Lemma \ref{lem:Peierls condition} as each large contour contributes at least $z^{\rho \ell_1}$ in each cluster $X$ containing the large contour. Therefore, as $|\torus| \to \infty$, and hence $\ell_1 \to \infty$, the first $\rho \ell_1$ coefficients of \[\frac{\log(Z^{\text{even}}_\text{match}(\torus;z)+Z^{\text{large}}_\text{match}(\torus;z))}{|\torus|}\] converge to the first $\rho \ell_1$ coefficients of $f_C(z)$.
Lemma~\ref{lem:normal} now implies that $g_{C}(z)$ is well-defined on $B_{\delta_2(d,C)}(0)$ and moreover satisfies $g_{C}(z) = f_C(z)$.
\end{proof}

\begin{remark}
The function $f_C(z)$ is the free energy per site for the polymer model with polymers the small even contours in $\torus$. It is related to the free energy per site for the independence polynomial defined in the introduction, which we denote by $\rho(\lambda)$.
For $\lambda \in \mathbb{R}_{\geq 0}$ and $|\lambda| > 1/\delta_1(d,C)$
     both functions are well-defined and satisfy
     $\rho(\lambda) = \frac{\lambda}{2} + f_C(\frac{1}{\lambda})$.
\end{remark}

The following lemma provides sufficient conditions to bound $|Z^{\text{large}}_\text{match}(\torus;z)|$ away from $2|Z^{\text{even}}_\text{match}(\torus;z)|$, which is the final ingredient to prove zeros are bounded for $C$ balanced tori.

\begin{lemma}\label{lem: useful Han's lemma}
    Suppose there exists $\delta>0$ and for each $n\in \mathbb{N}$ there are holomorphic functions  $f_n,g_n: B_\delta(0) \rightarrow \mathbb{C}$ and functions $a,b: \mathbb{N} \to \mathbb{N}$ such that for all $z \in B_\delta(0) $ we have
    \begin{enumerate}
        \item the functions $f_n(z)+z^{a(n)} g_n(z)$ and $f_n(z)$ are nonzero for each $n$,
        \item $\lim_{n \to \infty} \frac{1}{b(n)} \cdot (\log(f_n(z)+z^{a(n)} g_n(z)) - \log f_n(z)) =0$,
        \item there is a constant $\kappa>0$ such that for all $n$ we have $a(n) > \kappa \log b(n)$,
    \end{enumerate}
    then there is a constant $N>0$ such that for $|z|< \frac{\delta}{e^{1/\kappa}}$ and $n \geq N$ we have 
    $$
    |z^{a(n)} g_n(z)| < |f_n(z)|.
    $$
    Furthermore, the inequality in (3) is necessary. 
\end{lemma}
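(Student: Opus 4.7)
The plan is to reduce the desired inequality $|z^{a(n)} g_n(z)| < |f_n(z)|$ to a uniform smallness estimate on $h_n(z) := \log(f_n(z) + z^{a(n)}g_n(z)) - \log f_n(z) = \log\bigl(1 + z^{a(n)}g_n(z)/f_n(z)\bigr)$, which is well-defined and holomorphic on $B_\delta(0)$ by condition (1). Writing $\phi_n := z^{a(n)}g_n/f_n$, so that $1+\phi_n = e^{h_n}$, one has $|\phi_n(z)| = |e^{h_n(z)}-1| \le e^{|h_n(z)|}-1$, and hence $|h_n(z)|<\log 2$ immediately yields $|\phi_n(z)|<1$, which is the conclusion. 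The key structural observation is that $h_n$ has a zero of order at least $a(n)$ at the origin: this follows by expanding $\log(1+w) = w - w^2/2 + \cdots$ and noting that $\phi_n$ is itself divisible by $z^{a(n)}$. Consequently $h_n(z)/z^{a(n)}$ is holomorphic on $B_\delta(0)$, and the maximum modulus principle yields for any $r<\delta$ and $|z|\le r$ the Schwarz-type estimate
\[
|h_n(z)| \le (|z|/r)^{a(n)}\cdot\max_{|\zeta|=r}|h_n(\zeta)|.
\]

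Next I would control the right-hand side using (2), interpreted as uniform convergence on compact subsets of $B_\delta(0)$; in the intended application this upgrade is immediate because $\{h_n/b(n)\}$ is locally uniformly bounded by Lemma~\ref{lem:normal}, so Montel together with Vitali promotes pointwise convergence to uniform convergence on compacts. Under this upgrade, for every $\epsilon>0$ and $n$ sufficiently large one has $\max_{|\zeta|=r}|h_n(\zeta)|\le \epsilon\,b(n)$. Fix $r_0 < \delta/e^{1/\kappa}$ and choose $r$ with $r_0 e^{1/\kappa} < r < \delta$, which is possible precisely because $r_0 e^{1/\kappa}<\delta$. Setting $c := \kappa\log(r/r_0) - 1 > 0$ and invoking (3), for $|z|\le r_0$,
\[
|h_n(z)| \le \epsilon\,b(n)(r_0/r)^{a(n)} = \epsilon\,\exp\bigl(\log b(n) - a(n)\log(r/r_0)\bigr) \le \epsilon\,b(n)^{1-\kappa\log(r/r_0)} = \epsilon\,b(n)^{-c} \to 0
\]
as $n\to\infty$. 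Hence $|h_n(z)|<\log 2$ uniformly on $|z|\le r_0$ for all sufficiently large $n$, which proves the claim.

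For the necessity of (3), a model example suffices: take $f_n\equiv 1$ and $g_n\equiv b(n)$, so that the zeros of $f_n + z^{a(n)}g_n$ lie on the circle of radius $b(n)^{-1/a(n)}$. As $n\to\infty$ this radius tends to $e^{-1/\kappa}$ exactly when $a(n)/\log b(n)\to\kappa$, and collapses to $0$ as soon as $a(n)=o(\log b(n))$, preventing (1) from holding with any fixed positive $\delta$; this pins the logarithmic growth rate in (3) as essential. The main obstacle throughout the argument is the upgrade from pointwise to uniform convergence in (2): the Schwarz-type estimate genuinely requires boundary bounds at the radius $r$, and without a normality input --- supplied externally by Lemma~\ref{lem:normal} in the application --- the pointwise hypothesis as literally stated is not by itself sufficient.
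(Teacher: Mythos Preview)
Your main argument is correct and follows essentially the same route as the paper: write $h_n=\log(1+z^{a(n)}g_n/f_n)$, observe it vanishes to order $\geq a(n)$ at the origin, apply the Schwarz/maximum-modulus estimate $|h_n(z)|\le (|z|/r)^{a(n)}\max_{|\zeta|=r}|h_n(\zeta)|$, and then use (3) to kill the factor $b(n)$. You are in fact more careful than the paper about the upgrade from pointwise convergence in (2) to uniform convergence on compacta; the paper applies the maximum principle as if (2) were already uniform. One cosmetic point: your conclusion ``$\epsilon\,b(n)^{-c}\to 0$'' tacitly assumes $b(n)\to\infty$, which is not hypothesised; but since $b(n)\ge 1$ you already have $|h_n(z)|\le\epsilon$, and that is all you need.

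Your necessity example, however, does not work. The ``furthermore'' clause asks for $f_n,g_n$ satisfying (1) and (2) but not (3), for which the conclusion fails on every neighbourhood of $0$. With $f_n\equiv 1$, $g_n\equiv b(n)$, the zeros of $f_n+z^{a(n)}g_n$ sit on $|z|=b(n)^{-1/a(n)}$; if (3) fails these zeros enter any fixed disk and it is hypothesis (1) that breaks, so you have not produced a counterexample to the implication at all. (Conversely, if you insist that (1) hold on $B_\delta$, then $b(n)\le\delta^{-a(n)}$ forces $a(n)\ge \log b(n)/\log(1/\delta)$, i.e.\ (3) holds automatically.) The paper instead takes $f_n(z)=e^{b(n)h(z)}$ and $z^{a(n)}g_n(z)=f_n(z)\bigl(e^{z^{a(n)}b(n)}-1\bigr)$, so that $f_n+z^{a(n)}g_n=e^{b(n)(h(z)+z^{a(n)})}$ is zero-free and $(1/b(n))\cdot h_n(z)=z^{a(n)}\to 0$ pointwise, i.e.\ (1) and (2) hold; yet whenever (3) fails one finds, for any $r>0$, infinitely many $n$ with $r^{a(n)}b(n)>1$, whence $|z^{a(n)}g_n/f_n|=e^{r^{a(n)}b(n)}-1>1$ at $z=r$.
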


\begin{proof}
 We have 
 \[
 \log(f_n(z) + z^{a(n)} g_n(z)) - \log f_n(z) = \log \Big(1+ \frac{z^{a(n)} g_n(z)}{f_n(z)}\Big) = z^{a(n)} h_n(z)
 \]
 for some convergent power series $h_n(z)$, using item (1). By item (2) we see  $\lim_{n \to \infty} \frac{z^{a(n)} h_n(z)}{b(n)} = 0$ for $|z| < \delta$. Hence for any $\epsilon>0$ and large enough $n$ we have $\left| \frac{z^{a(n)} h_n(z)}{b(n)} \right| < \epsilon$. By the maximum principle we see $\left| \frac{ h_n(z)}{b(n)} \right| < \frac{\epsilon}{\delta^{a(n)}}$. Now take $|z| < \frac{\delta}{e^{1/\kappa}}$ where $\kappa>0$ is the constant from item (3), then $|z|^{a(n)}\left| \frac{ h_n(z)}{b(n)} \right| < \frac{\epsilon}{e^{a(n)/\kappa}} $ therefore
$|z^{a(n)}| |h_n(z)| < \frac{b(n)}{e^{a(n)/\kappa}} \epsilon < \epsilon$, by item (3) of the assumptions. Therefore 
\[
\left|\log \Big(1+ \frac{z^{a(n)} g_n(z)}{f_n(z)}\Big)\right| = |z^{a(n)}| |h_n(z)| <\epsilon,
\]
for large $n$ and $|z| < \frac{\delta}{e^{1/\kappa}}$. From this we conclude $|z^{a(n)} g_n(z)| < \epsilon |f_n(z)| < |f_n(z)| $, which finishes the first part of the proof. 

To prove the estimate in (3) is sharp, let $\delta=1$ and $a(n) = n$, choose any holomorphic map $h: \mathbb{D} \to \mathbb{C}$ and define $f_n(z) = e^{b(n) h(z)}$ and $z^n g_n(z) := f_n(z) (e^{z^n b(n)}-1)$ so that items (1) and (2) hold. Now suppose for any $\kappa > 0$ there is an $n\geq 1$ such that $\kappa \log b(n) > n$, i.e. $b(n) > (e^{1/\kappa})^n$. We have
\[
\frac{z^n g_n(z)}{f_n(z)} = e^{z^n b(n)}-1,
\]
which does not converge to $0$ on any disc $B_r(0)$. In fact, by the assumption on $b(n)$, we see that for any $r>0$ there exist infinitely many $n \geq 1$ such that $b(n)> (1/r)^n$. It follows for $z=r$ that $z^n b(n) > 1$, from which we see $e^{r^n b(n)}-1 > e-1>1$. Hence we do not have $|z^{a(n)} g_n(z)| < |f_n(z)|$ for all $z$ small enough and $n$ large enough.
\end{proof}

\begin{restatable}{thm}{zerofreecontourpartitionfunction}\label{thm:main zero-freeness result contour version}
 Let $C>0$. There exists $\delta = \delta(d,C) >0$ such that for all $z \in \mathbb{C}$ with $|z|< \delta$ and for all tori $\torus \in \Balancedtori_d(C)$ we have
\[
Z_{\text{match}}(\torus;z) \neq 0.
\]   
\end{restatable}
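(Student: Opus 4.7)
The strategy is to exploit the decomposition $Z_{\text{match}}(\torus;z) = 2Z^{\text{even}}_{\text{match}}(\torus;z) + Z^{\text{large}}_{\text{match}}(\torus;z)$ from Lemma~\ref{lem: symmetry even odd tori}, and to apply Lemma~\ref{lem: useful Han's lemma} to show that the large-contour partition function is strictly dominated by the even one: for $|z|$ sufficiently small and $\torus \in \Balancedtori_d(C)$,
\[
|Z^{\text{large}}_{\text{match}}(\torus;z)| < |Z^{\text{even}}_{\text{match}}(\torus;z)|.
\]
Once this is established, the triangle inequality yields $|Z_{\text{match}}(\torus;z)| \geq 2|Z^{\text{even}}_{\text{match}}| - |Z^{\text{large}}_{\text{match}}| > |Z^{\text{even}}_{\text{match}}| > 0$.

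To set up Lemma~\ref{lem: useful Han's lemma}, enumerate $\Balancedtori_d(C)$ as $\torus_1, \torus_2, \ldots$ in increasing volume, and take $f_n(z) := Z^{\text{even}}_{\text{match}}(\torus_n;z)$, $z^{a(n)} g_n(z) := Z^{\text{large}}_{\text{match}}(\torus_n;z)$, $b(n) := |\torus_n|$, and $\delta := \delta_2(d,C)$. Hypothesis~(1) holds on $B_{\delta_2(d,C)}(0) \subseteq B_{\delta_1(d,C)}(0)$: by Theorem~\ref{thm:stable contours are all contours}, $Z^{\text{even}}_{\text{match}} = Z^{\text{even}}_{\text{trunc}}$ and $Z^{\text{even}}_{\text{match}} + Z^{\text{large}}_{\text{match}} = Z^{\text{even}}_{\text{trunc}} + Z^{\text{large}}_{\text{trunc}}$, and both truncated partition functions are non-vanishing there since their cluster expansions converge (Theorems~\ref{thm:logtrunc converges_tori} and~\ref{thm:logtrunc converges for even plus big}). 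Hypothesis~(2) follows from Lemma~\ref{lem:limit free energy same with or without large contours}: both $\frac{1}{|\torus_n|}\log Z^{\text{even}}_{\text{match}}(\torus_n;z)$ and $\frac{1}{|\torus_n|}\log(Z^{\text{even}}_{\text{match}} + Z^{\text{large}}_{\text{match}})(\torus_n;z)$ converge on $B_{\delta_2(d,C)}(0)$ to the same limit $f_C(z)$.

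Hypothesis~(3) is the step where balancedness is used essentially. Each large contour $\gamma$ has a connected component of box-diameter at least $\ell_1$, the minimal side length of $\torus_n$; hence $|\overline{\gamma}| \geq \ell_1$, and the Peierls condition (Lemma~\ref{lem:Peierls condition}) gives $\|\gamma\| \geq \rho \ell_1$ with $\rho = \rho(d)$. Since every matching set of contours contributing to $Z^{\text{large}}_{\text{match}}$ contains at least one large contour, the minimal $z$-degree of $Z^{\text{large}}_{\text{match}}(\torus_n;z)$ satisfies $a(n) \geq \rho \ell_1$. By $C$-balancedness, $|\torus_n| \leq e^{C\ell_1}$, so $\log b(n) \leq C\ell_1 \leq (C/\rho)\, a(n)$, meaning condition~(3) holds with $\kappa = \rho/C$.

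Lemma~\ref{lem: useful Han's lemma} then produces $N$ such that for all $n \geq N$ and $|z| < \delta_2(d,C)/e^{C/\rho}$ the required domination holds. For the finitely many remaining tori $\torus_1, \ldots, \torus_{N-1}$, each $Z_{\text{match}}(\torus_i;\cdot)$ is a polynomial with $Z_{\text{match}}(\torus_i;0) = 2 \neq 0$, and thus admits a zero-free disk around $0$; taking the intersection with the disk produced above yields a uniform $\delta = \delta(d,C) > 0$ as required. The main technical difficulty is precisely the verification of condition~(3): balancedness is exactly the hypothesis that forces the minimal $z$-degree $\rho \ell_1$ of $Z^{\text{large}}_{\text{match}}$ to outgrow $\log|\torus|$, and this balance between perimeter and volume is the feature that must eventually break in the unbalanced regime.
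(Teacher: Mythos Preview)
Your proof is correct and follows essentially the same route as the paper: decompose $Z_{\text{match}} = 2Z^{\text{even}}_{\text{match}} + Z^{\text{large}}_{\text{match}}$, enumerate $\Balancedtori_d(C)$ by volume, apply Lemma~\ref{lem: useful Han's lemma} with $f_n = Z^{\text{even}}_{\text{match}}$ and $z^{a(n)}g_n = Z^{\text{large}}_{\text{match}}$, verify the three hypotheses via Theorems~\ref{thm:stable contours are all contours}, \ref{thm:logtrunc converges_tori}, \ref{thm:logtrunc converges for even plus big} and Lemma~\ref{lem:limit free energy same with or without large contours}, and handle the finitely many small tori by continuity at $z=0$. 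Your choice $\kappa = \rho/C$ (from $a(n) \geq \rho\ell_1$ and $\log b(n) \leq C\ell_1$) is in fact the accurate one; the paper writes $\kappa = 1/C$, which drops the Peierls constant $\rho$ from $a(n) = \lfloor \rho\ell_1\rfloor$ and is a minor slip there.
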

\begin{proof}
We claim there exists an $N>0$ such that for any $z \in \mathbb{C}$ with $|z|<\frac{\delta_2(d,C)}{e^{C}}$ and any torus $\torus \in \Balancedtori_d(C)$ with $|\torus| \geq N$ we have $Z_\text{match}(\torus;z) \neq 0$.
Since there are finitely many tori $\torus \in \Balancedtori_d(C)$ with $|\torus|<N$, by choosing $M$ larger if necessary the theorem follows.

To prove the claim, note $Z_\text{match}(\torus;z) = 2Z^{\text{even}}_\text{match}(\torus;z)+Z^{\text{large}}_\text{match}(\torus;z)$, by Lemma  \ref{lem:contourpartitionfunction_usingtoruscomptaibility} and Lemma \ref{lem: symmetry even odd tori}.
Given $d \in \mathbb{N}_{\geq 2}$ and $C>0$ the set $\Balancedtori_d(C)$ is countable and we can choose a bijection $h:\mathbb{N} \to \Balancedtori_d(C)$ such that $n > m$ implies  $|h(n)| \geq |h(m)|$.

We define maps $\tilde{a},\tilde{b}:\Balancedtori_d(C) \to \mathbb{N}$ as follows.
For $\torus \in \Balancedtori_d(C)$ with shortest side length $\ell_1$ we define $\tilde{a}(\torus) = \lfloor \rho \ell_1 \rfloor$, where $\rho$ is the constant from Lemma \ref{lem:Peierls condition}.
Furthermore we define $\tilde{b}(\torus) = |\torus|$. Define maps $a,b:\mathbb{N} \to \mathbb{N}$ as $a = \tilde{a} \circ h$ and $b = \tilde{b} \circ h$.

For $n \in \mathbb{N}$ the function $Z^{\text{large}}_\text{match}(h(n);z)/(z^{a(n)})$ is a polynomial in $z$ which we denote by $g_{n}(z)$. Write $f_{n}(z) = Z^{\text{even}}_\text{match}(h(n);z)$, thus $Z(h(n);z) = 2 f_{n}(z)+ z^{a(n)} g_{n}(z)$. We check the conditions of Lemma \ref{lem: useful Han's lemma} for functions $f_{n},g_{n}, a(n)$ and $b(n)$ as above with $\delta=\delta_2(d,C)$. Assumption (1) of Lemma \ref{lem: useful Han's lemma} holds by Theorems \ref{thm:stable contours are all contours}, \ref{thm:logtrunc converges_tori} and \ref{thm:logtrunc converges for even plus big}. Assumption (2) of Lemma \ref{lem: useful Han's lemma} holds by choice of the bijection $h$ and Lemma \ref{lem:limit free energy same with or without large contours}.  Assumption (3) of Lemma \ref{lem: useful Han's lemma} also holds with $\kappa = 1/C$ by definition of $\Balancedtori_d(C)$ and the functions $a$ and $b$. It follows from Lemma \ref{lem: useful Han's lemma} there is a constant $M>0$ such that for $|z|< \frac{\delta}{e^{C}}$ and $n \geq M$ we have $|z^{a(n)} g_{n}(z)| < |f_{n}(z)| < 2|f_{n}(z)|$.
Hence $|Z(h(n);z)| = |2 f_{n}(z)+ z^{a(n)} g_{n}(z)| \geq |2 |f_{n}(z)| - |z^{a(n)} g_{n}(z)| | > 0$. As for $n>m$ we have $|h(n)| \geq |h(m)|$, it follows for $N=|h(M)|$, any $z \in \mathbb{C}$ with $|z|<\frac{\delta_2(d,C)}{e^{C}}$ and any torus $\torus \in \Balancedtori_d(C)$ with $|\torus| \geq N$ we have $Z_\text{match}(\torus;z) \neq 0$. This proves the claim, completing the proof of the theorem.
\end{proof}

\begin{remark}
Let $C>0$. For $|z| < \frac{\delta_2(d,C)}{e^{C}}$ the limit exists 
\[
\lim_{\substack{|\torus| \to \infty\\ \torus \in \Balancedtori_d(C)}} \frac{\log Z_\text{match}(\torus;z)}{|\torus|}
\]
and converges to the function $f_C(z)$ defined in Lemma \ref{lem:limit free energy same with or without large contours}. 
For any two constants $C_1>C_2>0$ and any $z \in \mathbb{C}$ with $|z| < \min(\frac{\delta_2(d,C_1)}{e^{C_1}}, \frac{\delta_2(d,C_2)}{e^{C_2}})$ we have $f_{C_1}(z) = f_{C_2}(z)$, hence the function $f$ does not depend on $C$.
This justifies referring to $f$ as the limit free energy of balanced tori around infinity.
\end{remark}

From Theorem \ref{thm:main zero-freeness result contour version}, we immediately obtain the first part of the main theorem. 

\begin{theorem*}[First part of Main Theorem]\label{thm:main_balanced tori}
Let $\mathcal{F}$ be a family of even $d$-dimensional tori. If $\mathcal{F}$ is balanced, then the zeros of the independence polynomials $\{Z_{\mathcal{T}}:\mathcal{T} \in \mathcal{F}\}$ are uniformly bounded.
\end{theorem*}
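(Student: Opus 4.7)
The plan is to derive this as a direct consequence of Theorem~\ref{thm:main zero-freeness result contour version}, which establishes zero-freeness of the matching contour partition function on a disk around $z=0$, uniformly over all tori in $\Balancedtori_d(C)$ for any fixed $C>0$. The bridge to the independence polynomial is Corollary~\ref{cor:Zmatch = Zind}, which gives the substitution $Z_{\text{ind}}(\torus;\lambda) = \lambda^{|\torus|/2}\, Z_{\text{match}}(\torus; 1/\lambda)$; in these coordinates the region $|z| < \delta$ corresponds to $|\lambda| > 1/\delta$, and zero-freeness near the origin in $z$ translates into a uniform upper bound on the modulus of zeros of $Z_{\text{ind}}$.

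First I would unpack the hypothesis that $\mathcal{F}$ is balanced: by the definition given in the introduction, this furnishes a single constant $C > 0$ such that every torus $\torus \in \mathcal{F}$ satisfies $\ell_d \leq e^{C \ell_1}$, which (after a harmless adjustment of the constant, using $|\torus| \leq \ell_d^{\,d}$) gives $\mathcal{F} \subseteq \Balancedtori_d(C')$ for some $C' > 0$. Next I would invoke Theorem~\ref{thm:main zero-freeness result contour version} with this $C'$ to obtain a $\delta = \delta(d,C') > 0$ such that $Z_{\text{match}}(\torus;z) \neq 0$ for every $z \in \mathbb{C}$ with $|z| < \delta$ and every $\torus \in \Balancedtori_d(C')$.

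Finally, I would use Corollary~\ref{cor:Zmatch = Zind}. Suppose $\lambda_0 \in \mathbb{C}$ is a zero of $Z_{\text{ind}}(\torus;\cdot)$ for some $\torus \in \mathcal{F}$. Since $Z_{\text{ind}}(\torus;0) = 1$, we have $\lambda_0 \neq 0$, so $\lambda_0^{|\torus|/2} \neq 0$ and the factorization $Z_{\text{ind}}(\torus;\lambda_0) = \lambda_0^{|\torus|/2} Z_{\text{match}}(\torus;1/\lambda_0)$ forces $Z_{\text{match}}(\torus; 1/\lambda_0) = 0$. By the zero-freeness statement above this implies $|1/\lambda_0| \geq \delta$, i.e., $|\lambda_0| \leq 1/\delta$. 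Taking $\Lambda := 1/\delta$ yields a uniform bound on the moduli of zeros, independent of the choice of $\torus \in \mathcal{F}$.

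There is essentially no obstacle at this stage: all the work has been carried out in Theorem~\ref{thm:main zero-freeness result contour version} and in the contour/polymer machinery that precedes it, and the only remaining task is the change of variable $z = 1/\lambda$ together with the observation that $\lambda = 0$ is never a zero of an independence polynomial.
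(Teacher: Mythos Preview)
Your proposal is correct and follows essentially the same approach as the paper: reduce to $\mathcal{F}\subseteq\Balancedtori_d(C)$ for some $C>0$, invoke Theorem~\ref{thm:main zero-freeness result contour version} for uniform zero-freeness of $Z_{\text{match}}$ on $|z|<\delta$, and translate via Corollary~\ref{cor:Zmatch = Zind} to get $|\lambda_0|\le 1/\delta$ for every zero $\lambda_0$. The only difference is that you spell out explicitly why $\lambda_0\neq 0$ and how the constant adjusts when passing from the introduction's definition of balanced to the $C$-balanced condition, details the paper leaves implicit (the latter having been noted right after the definition of $\Balancedtori_d(C)$).
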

\begin{proof}
   The family $\mathcal{F}$ is balanced if and only if there is a $C>0$ such that $\mathcal{F} \subset \Balancedtori_d(C)$.
    By Corollary \ref{cor:Zmatch = Zind} and Theorem \ref{thm:main zero-freeness result contour version}, we see there exists a uniform $\Lambda(d,C) = 1/\delta(d,C)$ such that for any $\lambda \in \mathbb{C}$ with $|\lambda| > \Lambda(d,C)$ and any $\torus \in \Balancedtori_d(C)$  we have $Z_{\text{ind}} (\mathcal{T}; \lambda) \neq 0$.
\end{proof}


\section{Unbounded zeros of highly unbalanced tori}\label{sec:unboundedzeros}

In this section we will prove that the independence polynomials of highly unbalanced tori have unbounded zeros. First we will consider tori for which all dimensions except one are constant. The fact that zeros are unbounded when the last dimension diverges will immediately imply that for sufficiently unbalanced sequences of tori the zeros are unbounded. A more careful analysis then provides explicit bounds on the required relative dimensions of the tori. The proofs in this section rely on an analysis of the corresponding transfer-matrices.

For positive integers $n$ we will let $C_n$ denote the cycle graph on $n$ vertices. We let $G_1\square G_2$ denote the cartesian product of two graphs $G_1,G_2$, i.e. the graph with vertex set $V(G_1) \times V(G_2)$ and $(v_1,u_1) \sim (v_2, u_2)$ iff either $v_1 = v_2$ and $u_1 \sim u_2$ in $G_2$ or $u_1 = u_2$ and $v_1 \sim v_2$ in $G_1$. What was previously denoted by $\mathbb{Z}_{n_1} \times \cdots \times \mathbb{Z}_{n_d}$ shall in this section be denoted by $C_{n_1} \square \cdots \square C_{n_d}$.

There will be no other partition function than the independence polynomial which, for a graph $G$ and parameter $\lambda$, we denote by $Z(G;\lambda)$.

\subsection{Transfer-matrix method}
\label{sec: transfer-matrix method}
Fix $G$ to be a finite graph and let $\mathcal{I}$ denote the set of its independent sets. Two independent sets $S,T \in \mathcal{I}$ are said to be \emph{compatible} if $S \cap T = \emptyset$ and we write $S \sim T$. We let $A$ denote the adjacency matrix of the compatibility graph, i.e. the rows and columns of $A$ are indexed by elements of $\mathcal{I}$ and $A_{S,T} = 1$ if $S \sim T$ and $A_{S,T} = 0$ otherwise. Furthermore, for a variable $\lambda$, we let $D_\lambda$ denote the diagonal matrix with $(D_\lambda)_{S,S} = \lambda^{|S|}$. 

\begin{theorem}[Transfer-matrix method]
    For any $n \in \mathbb{Z}_{\geq 1}$ 
    \[
        Z(C_n \square G;\lambda) = \Tr\left[(D_\lambda A)^n\right].
    \]
\end{theorem}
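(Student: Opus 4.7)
The plan is to set up a bijection between independent sets of $C_n \square G$ and $n$-tuples of pairwise compatible (in the cyclic sense) independent sets of $G$, then expand the trace on the right-hand side as a sum over such tuples.

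First I would label the vertices of $C_n$ as $0, 1, \ldots, n-1$ with edges $\{i, i+1 \bmod n\}$. For any independent set $I$ of $C_n \square G$ and any $i \in \{0, \ldots, n-1\}$, define the $i$-th slice $I_i = \{v \in V(G) : (i, v) \in I\} \subseteq V(G)$. Since the induced subgraph of $C_n \square G$ on $\{i\} \times V(G)$ is isomorphic to $G$, each $I_i$ is an independent set of $G$. Moreover, since $(i,v)$ is adjacent to $(i+1,v)$ in $C_n \square G$ for every $v$, the sets $I_i$ and $I_{i+1 \bmod n}$ must be disjoint, i.e. $I_i \sim I_{i+1 \bmod n}$ in the notation of the section. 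Conversely, any tuple $(I_0, \ldots, I_{n-1}) \in \mathcal{I}^n$ with $I_i \sim I_{i+1 \bmod n}$ for all $i$ uniquely reconstructs an independent set of $C_n \square G$. Since $|I| = \sum_{i=0}^{n-1} |I_i|$, this gives
\[
    Z(C_n \square G; \lambda) = \sum_{(I_0, \ldots, I_{n-1})} \prod_{i=0}^{n-1} \lambda^{|I_i|} \prod_{i=0}^{n-1} A_{I_i, I_{i+1 \bmod n}},
\]
where the outer sum ranges over all $n$-tuples of independent sets of $G$.

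Next I would expand the right-hand side of the theorem. By the definitions of $D_\lambda$ and $A$, the entries of the product matrix satisfy $(D_\lambda A)_{S,T} = \lambda^{|S|} A_{S,T}$. Iterating gives
\[
    \bigl((D_\lambda A)^n\bigr)_{I_0, I_0} = \sum_{I_1, \ldots, I_{n-1} \in \mathcal{I}} \prod_{i=0}^{n-1} \lambda^{|I_i|} A_{I_i, I_{i+1 \bmod n}}.
\]
Summing over $I_0 \in \mathcal{I}$ to take the trace yields exactly the same sum derived above for $Z(C_n \square G; \lambda)$, completing the proof.

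The argument is essentially bookkeeping, and no step is genuinely difficult; the only thing to verify with a little care is that the cyclic structure of $C_n$ matches the cyclic structure of the trace (the factor $A_{I_{n-1}, I_0}$ corresponding to the edge of $C_n$ wrapping from $n-1$ back to $0$). I would make sure to state the bijection between independent sets and compatible tuples with this cyclic boundary condition explicit, since this is the only place the structure of $C_n$ (as opposed to the path $P_n$) enters.
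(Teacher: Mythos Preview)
Your proof is correct and follows essentially the same approach as the paper: both set up the bijection between independent sets of $C_n \square G$ and cyclically compatible $n$-tuples in $\mathcal{I}^n$, then expand the trace as a sum over such tuples. Your write-up is in fact somewhat more explicit about the bijection than the paper's, which simply asserts the one-to-one correspondence.
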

\begin{proof}
Let $\mathcal{P} \subseteq \mathcal{I}^n$ denote those tuples $(S_1, \dots, S_n)$ for which $S_i \sim S_{i+1}$ for all $i = 1, \dots, n$, reducing the index modulo $n$. The independent sets of $C_n \square G$ correspond one to one with the elements of $\mathcal{P}$. We therefore find that 
\[
    \Tr\left[(D_\lambda A)^n\right] = \sum_{(S_1, \dots, S_n) \in \mathcal{I}^n} \prod_{i=1}^n (D_\lambda A)_{S_i S_{i+1}} = \sum_{(S_1, \dots, S_n) \in \mathcal{P}}\lambda^{\sum_{i=1}^n |S_i|}
    = Z(C_n \square G;\lambda).
\]
\end{proof}

Throughout this section we will frequently use that for any complex valued square matrix $M$ and integer $n \geq 1$
\[
    \Tr(M^n) = \hspace{-0.75 cm} \sum_{s \text{ eigenvalue of } M} \hspace{-0.75 cm}s^n. 
\]
This observation reveals the strength of the transfer-matrix method. It shows that $Z(C_n \square G;\lambda)$ can be written as a simple expression in $n$ and a fixed set of values. This motivates the study of the eigenvalues of the transfer-matrix. 

\begin{lemma}
\label{lem: real_large_eigenvalue}
Let $\lambda \in \mathbb{R}_{\geq 0}$. The eigenvalues of $D_{\lambda} A$ are real and there is a simple positive eigenvalue $r$ such that $r > |s|$ for all other eigenvalues $s$.
\end{lemma}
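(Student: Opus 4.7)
The plan is to reduce both claims to standard facts via two symmetry arguments. First I would dispose of the trivial case $\lambda = 0$: then $D_0$ vanishes on every diagonal entry indexed by a non-empty independent set, so $D_0 A$ has a single nonzero row (the $\emptyset$-row, whose entries are all $1$ because $\emptyset$ is compatible with every member of $\mathcal{I}$). This rank-one matrix has spectrum $\{1, 0, \dots, 0\}$, which already satisfies the conclusion. For the remainder of the argument assume $\lambda > 0$.

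To show the eigenvalues of $D_\lambda A$ are real, I would exploit that $A$ is symmetric (being the adjacency matrix of the compatibility relation on $\mathcal{I}$) and that $D_\lambda$ has strictly positive diagonal, so its positive square root $D_\lambda^{1/2}$ is well-defined and invertible. The similarity
\[
D_\lambda^{-1/2}(D_\lambda A)D_\lambda^{1/2} = D_\lambda^{1/2} A D_\lambda^{1/2}
\]
presents $D_\lambda A$ as conjugate to a real symmetric matrix, so its spectrum is real.

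For the dominant-eigenvalue claim I would invoke the Perron--Frobenius theorem for primitive nonnegative matrices. Since $D_\lambda A$ already has nonnegative entries, it suffices to verify primitivity, that is, that some power is entrywise positive. The key observation is again that $\emptyset \in \mathcal{I}$ is compatible with every independent set, which forces
\[
(D_\lambda A)_{S,\emptyset} = \lambda^{|S|} > 0 \qquad \text{and} \qquad (D_\lambda A)_{\emptyset, T} = 1 > 0
\]
for every $S, T \in \mathcal{I}$. Consequently
\[
\bigl((D_\lambda A)^2\bigr)_{S,T} \;\geq\; (D_\lambda A)_{S,\emptyset}\,(D_\lambda A)_{\emptyset, T} \;>\; 0,
\]
so $(D_\lambda A)^2$ is entrywise positive and $D_\lambda A$ is primitive. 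Perron--Frobenius then delivers a simple eigenvalue $r$ equal to the spectral radius of $D_\lambda A$ that strictly dominates every other eigenvalue in modulus; positivity of $r$ follows since $D_\lambda A$ is not nilpotent (its $(\emptyset,\emptyset)$ entry equals $1$), and reality of $r$ is already guaranteed by the previous paragraph. No step looks particularly difficult; the only mild subtlety is remembering to isolate the degenerate case $\lambda = 0$, where $D_\lambda$ fails to be invertible and the symmetrization trick breaks down.
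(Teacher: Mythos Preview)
Your proof is correct and follows essentially the same route as the paper: handle $\lambda=0$ separately as a rank-one computation, then for $\lambda>0$ symmetrize via $D_\lambda^{1/2}AD_\lambda^{1/2}$ to get real spectrum and apply Perron--Frobenius. The only cosmetic difference is that you verify primitivity by showing $(D_\lambda A)^2$ is entrywise positive directly through the $\emptyset$-column/row, whereas the paper instead checks irreducibility (connectedness via $\emptyset$) together with aperiodicity (the nonzero diagonal entry $(D_\lambda A)_{\emptyset,\emptyset}=1$); these are equivalent standard checks.
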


\begin{proof}
    We first consider $\lambda = 0$. The only non-zero entry of the diagonal matrix $D_0$ is $(D_0)_{\emptyset,\emptyset}$. Therefore the matrix $D_0A$ has rank at most $1$ and thus the eigenvalue $0$ appears with multiplicity at least $|\mathcal{I}|-1$. Observe that $D_0 A e_{\emptyset} = e_{\emptyset}$ and thus $1$ is an eigenvalue, which must necessarily be simple.

    Now assume $\lambda > 0$. The matrix $D_\lambda A$ is conjugate to the real symmetric matrix $D_{\lambda^{1/2}} A D_{\lambda^{1/2}}$ and thus all its eigenvalues are real. The entries of $D_{\lambda} A$ are all non-negative and its support matrix is $A$. The matrix $A$ is the adjacency matrix of a connected graph because $S \sim \emptyset$ for every $S \in \mathcal{I}$. The diagonal entry $(D_\lambda A)_{\emptyset,\emptyset} = 1$ is non-zero. These facts allow us to conclude that $D_\lambda A$ is an aperiodic irreducible matrix. The Perron–Frobenius theorem states that we can conclude that the eigenvalue of maximal norm of $D_{\lambda}A$ is simple and positive real. 
\end{proof}

\begin{corollary}
    \label{cor: no_acc_on_R}
    Let $\lambda_0 \in \mathbb{R}_{\geq 0}$. The zeros of the polynomials $\{Z(C_n \square G;\lambda)\}_{n \geq 1}$ do not accumulate on $\lambda_0$.
\end{corollary}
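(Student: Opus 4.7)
The plan is to combine the transfer-matrix representation of $Z(C_n \square G;\lambda)$ with the fact that a simple eigenvalue of a matrix depending holomorphically on a parameter is itself a holomorphic function of that parameter. By Lemma \ref{lem: real_large_eigenvalue}, at $\lambda = \lambda_0$ the matrix $D_{\lambda_0} A$ has a simple positive eigenvalue $r_0$ strictly dominating every other eigenvalue in modulus. Since $r_0$ is a simple root of the characteristic polynomial of $D_\lambda A$, and this polynomial depends polynomially on $\lambda$, the implicit function theorem produces a neighborhood $U \subseteq \mathbb{C}$ of $\lambda_0$ on which a holomorphic branch $r(\lambda)$ of this eigenvalue is defined, with $r(\lambda_0) = r_0$. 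Continuity of the full spectrum (as an unordered multiset of roots of a holomorphically varying polynomial) then lets me shrink $U$ so that there exists $\rho > 1$ with $|r(\lambda)| \geq \rho\,|s|$ for every other eigenvalue $s$ of $D_\lambda A$ and every $\lambda \in U$; in particular $r(\lambda) \neq 0$ on $U$.

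With this spectral gap established, I would write
\[
    Z(C_n \square G;\lambda) = \Tr\bigl[(D_\lambda A)^n\bigr] = r(\lambda)^n\bigl(1 + \eta_n(\lambda)\bigr), \qquad \eta_n(\lambda) := \sum_{s \neq r(\lambda)} \frac{s^n}{r(\lambda)^n},
\]
where the sum ranges over the remaining $|\mathcal{I}| - 1$ eigenvalues of $D_\lambda A$ counted with multiplicity. The uniform bound $|s/r(\lambda)| \leq \rho^{-1}$ on $U$ yields $|\eta_n(\lambda)| \leq (|\mathcal{I}|-1)\,\rho^{-n}$, which tends to $0$ uniformly on $U$. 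Choose $N$ so large that $|\eta_n(\lambda)| < 1$ for all $n \geq N$ and $\lambda \in U$; then $Z(C_n \square G;\lambda) \neq 0$ throughout $U$ for every $n \geq N$.

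For each index $n < N$, the polynomial $Z(C_n \square G;\lambda)$ has only finitely many zeros, and $\lambda_0$ is not one of them: when $\lambda_0 \geq 0$ every monomial in the independence polynomial is non-negative and the empty independent set contributes $1$, so $Z(C_n \square G;\lambda_0) \geq 1 > 0$. Shrinking $U$ to exclude the finitely many zeros coming from these finitely many polynomials leaves a neighborhood of $\lambda_0$ on which none of the $Z(C_n \square G;\cdot)$ vanishes, which is exactly the statement that the zeros do not accumulate at $\lambda_0$.

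The main obstacle I anticipate is making the simultaneous control of all eigenvalues rigorous: extracting $r(\lambda)$ holomorphically is routine because it is a simple root, but the non-dominant eigenvalues need not individually be holomorphic (they could collide or be algebraic branch points in $U$). The cleanest way around this is to argue via Hausdorff continuity of the unordered multiset of roots, or equivalently by observing that $\det(tI - D_\lambda A)/(t - r(\lambda))$ is a polynomial in $t$ whose coefficients are holomorphic on $U$ and whose roots at $\lambda_0$ are strictly smaller than $r_0$ in modulus, so a Rouch\'e-type argument on a circle of radius $r_0/\sqrt{\rho}$ confines all of them inside a disk of that radius uniformly on a smaller $U$.
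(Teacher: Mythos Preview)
Your proposal is correct and follows essentially the same route as the paper: extend the simple dominant eigenvalue $r(\lambda_0)$ holomorphically, use continuity of the spectrum to preserve a uniform spectral gap on a neighborhood, and deduce that $Z(C_n\square G;\lambda)/r(\lambda)^n\to 1$ uniformly so that all but finitely many polynomials are zero-free there. Your explicit treatment of the indices $n<N$ and your remarks on handling the possibly non-holomorphic subordinate eigenvalues via Hausdorff continuity or Rouch\'e are a bit more careful than the paper's version, but the argument is the same.
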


\begin{proof}
    According to Lemma~\ref{lem: real_large_eigenvalue} the matrix $D_{\lambda_0}A$ has a unique eigenvalue of maximal norm, which we denote by $r(\lambda_0)$. Because $r(\lambda_0)$ is simple there exists a neighborhood $U \subseteq \mathbb{C}$ of $\lambda_0$ such that $r: U \to \mathbb{C}$ is the analytic continuation of this eigenvalue, i.e. $r$ holomorphic and $r(\lambda)$ is an eigenvalue of $D_\lambda A$ for all $\lambda \in U$. 

    Because the set of eigenvalues of $D_{\lambda} A$ moves continuously with $\lambda$ there is a radius $R > 0$ and a constant $\zeta < 1$ such that $\zeta \cdot |r(\lambda)| > |s|$ for all other eigenvalues $s$ of $D_{\lambda} A$ for all $\lambda$ with $|\lambda-\lambda_0| \leq R$. For these $\lambda$ we have 
    \[
        \left|\frac{Z(C_n \square G;\lambda)}{r(\lambda)^n} - 1 \right| = \left|\frac{Z(C_n \square G;\lambda) - r(\lambda)^n}{r(\lambda)^n} \right| = \sum_{s \neq r(\lambda)} \left(\frac{s}{r(\lambda)}\right)^n < \zeta^n \cdot \left(|\mathcal{I}| - 1\right),
    \]
    where the sum runs over eigenvalues of $D_\lambda A$ not equal to $r(\lambda)$.
    For $n$ sufficiently large the quantity on the right-hand side is strictly less than $1$, which implies that $Z(C_n \square G;\lambda)$ cannot be zero. The disk of radius $R$ around $\lambda_0$ can therefore only contain finitely many zeros.
\end{proof}

We can deduce that the sequence $\{C_n \square G\}_{n \geq 1}$ undergoes no phase-transition. Indeed, the free energy per site converges:
\[
    \lim_{n \to \infty} \frac{\log(Z(C_n \square G;\lambda))}{n |V(G)|} = \frac{\log(r(\lambda))}{|V(G)|},
\]
where $r(\lambda)$ is the largest eigenvalue of $D_\lambda A$. This is an analytic function of $\lambda$ on $[0,\infty)$.

\subsection{Constant width tori}
\label{sec: cons_width_tori}
We now move from general graphs to tori. Let $\mathcal{T}$ be a fixed even torus (we allow $\mathcal{T}$ to be an even cycle). We again let $\mathcal{I}$ denote the collection of independent sets of $\mathcal{T}$. We will show that the zeros of the tori $C_n \square \mathcal{T}$ are unbounded or, in other words, accumulate at $\infty$.

Define $\alpha = \frac{1}{2} |V(\mathcal{T})|$.  There are two maximum independent sets, namely
\[
    S_{\text{even}} = \{v \in \mathcal{T}: \text{$v$ is even}\}
    \quad
    \text{ and }
    \quad
    S_{\text{odd}} = \{v \in \mathcal{T}: \text{$v$ is odd}\}.
\]
For any $S \in \mathcal{I}$ define 
\[
    \|S\| = \alpha - |S|.
\]
Although related, this definition should not be confused with the surface energy of a contour $\|\gamma\|$. We observe that $\|\Seven\| = \|\Sodd\| = 0$ and $\|S\| > 0$ for all other $S \in \mathcal{I}$. 

We write $z = 1/\lambda$. Define the diagonal matrix $\hat{D}_z$ by $(\hat{D}_z)_{S,S} = z^{\|S\|}$ and recall that $A$ denotes the compatibility matrix of the independent sets. We observe that 
\[
    z^\alpha D_{1/z} = \hat{D}_z 
    \quad
    \text{ and }
    \quad 
    \Tr\left[(\hat{D}_z A)^n\right] = z^{n\alpha} \cdot Z(C_n \square \mathcal{T}; 1/z).
\]

From now on we let $M_z = \hat{D}_z A$. For any $S\in\mathcal{I}$ we let $e_S$ denote the $|\mathcal{I}|$-dimensional unit vector belonging to index $S$. We turn our attention to the eigenvalues of $M_z$ in a neighbourhood of $z=0$.

\begin{lemma}
    \label{lem: q definition}
    There is a neighbourhood $U$ of $0$ and holomorphic functions $q^+$ and $q^{-}$ defined on $U$ such that
    \begin{itemize}
        \item 
        $q^{+}(z)$ and $q^{-}(z)$ are eigenvalues of $M_z$ for all $z \in U$ and
        \item 
        $q^{+}(0) = 1$ and $q^{-}(0)=-1$ are the only non-zero eigenvalues of $M_0$. 
    \end{itemize}
\end{lemma}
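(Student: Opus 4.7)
The plan is to analyze the matrix $M_z=\hat{D}_z A$ at $z=0$, identify its non-zero spectrum explicitly, and then invoke analytic perturbation theory to extend the two simple non-zero eigenvalues to holomorphic functions in a neighbourhood of $0$.

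First I would compute $M_0$. Since $\|S\|=0$ if and only if $S\in\{\Seven,\Sodd\}$, the diagonal matrix $\hat{D}_0$ has exactly two non-zero entries, both equal to $1$, at positions $(\Seven,\Seven)$ and $(\Sodd,\Sodd)$. Consequently the image of $M_0=\hat{D}_0 A$ is contained in the two-dimensional subspace $W:=\mathrm{span}(e_{\Seven},e_{\Sodd})$, so $M_0$ has rank at most $2$ and therefore at most two non-zero eigenvalues. To identify them, I would compute $M_0 e_{\Seven}$ and $M_0 e_{\Sodd}$ using the fact that $A_{S,T}=\mathbf{1}[S\cap T=\emptyset]$. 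Since $\Seven\cap\Sodd=\emptyset$ but $\Seven\cap\Seven\ne\emptyset$ (the torus has at least one even vertex), one gets $M_0 e_{\Seven}=e_{\Sodd}$ and similarly $M_0 e_{\Sodd}=e_{\Seven}$. Hence $M_0|_{W}$ is represented by the matrix $\bigl(\begin{smallmatrix}0&1\\1&0\end{smallmatrix}\bigr)$, whose eigenvalues are $\pm 1$, with eigenvectors $e_{\Seven}\pm e_{\Sodd}$. Combined with the $(|\mathcal{I}|-2)$-fold eigenvalue $0$ on $\ker(\hat{D}_0)$ (or rather on a suitable complement), we conclude that $1$ and $-1$ are simple eigenvalues of $M_0$ and that they are the only non-zero ones.

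Next I would invoke analytic perturbation theory. The entries of $M_z$ are polynomials in $z$, so $z\mapsto M_z$ is a holomorphic family of matrices on all of $\mathbb{C}$. A standard result (e.g.\ Kato's perturbation theory, or simply the implicit function theorem applied to the characteristic polynomial $\det(\mu I-M_z)=0$ at the simple roots $\mu=\pm 1$) then yields holomorphic functions $q^+$ and $q^-$ defined on some open neighbourhood $U$ of $0$ such that $q^{\pm}(z)$ is an eigenvalue of $M_z$ for each $z\in U$ and $q^{\pm}(0)=\pm 1$. Shrinking $U$ if necessary so that $q^{\pm}(z)$ stay well-separated from the other eigenvalues of $M_z$ (which emerge from the $0$-eigenspace of $M_0$), we obtain the desired statement.

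I do not anticipate any genuine obstacle: the only content is the explicit calculation of $M_0|_W$, which is a two-by-two matrix with eigenvalues $\pm 1$, and then a standard appeal to the holomorphic dependence of simple eigenvalues on a holomorphic matrix family. The symmetry eigenvectors $e_{\Seven}\pm e_{\Sodd}$ will be useful later when comparing $q^+$ and $q^-$ in magnitude, so I would record them explicitly as part of the proof.
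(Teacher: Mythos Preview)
Your proposal is correct and follows essentially the same approach as the paper: both arguments observe that $M_0$ has rank two with image spanned by $e_{\Seven}$ and $e_{\Sodd}$, identify the eigenpairs $(\pm 1,\, e_{\Seven}\pm e_{\Sodd})$, and then invoke the implicit function theorem on the characteristic polynomial at these simple eigenvalues to obtain the holomorphic extensions $q^{\pm}$. The only cosmetic difference is that the paper verifies the eigenpairs directly by computing $M_0(e_{\Seven}\pm e_{\Sodd})$, whereas you compute $M_0$ on the basis $e_{\Seven},e_{\Sodd}$ and diagonalize the resulting $2\times 2$ block.
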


\begin{proof}
    We can write 
    \[
        M_0 = e_{\Seven} \sum_{\substack{S \in \mathcal{I}\\ S \sim \Seven}} e_{S}^T + e_{\Sodd} \sum_{\substack{S \in \mathcal{I}\\ S \sim \Sodd}} e_{S}^T.
    \]
    We see that $M_0$ has rank two, $M(e_{\Seven} + e_{\Sodd}) = e_{\Seven} + e_{\Sodd}$ and $M(e_{\Seven} - e_{\Sodd}) = -(e_{\Seven} - e_{\Sodd})$. Therefore $q^+(0) =1$ and $q^-(0)=-1$ are the only two non-zero eigenvalues of $M_0$ and they are both simple. By the implicit function theorem these can be analytically extended to eigenvalues of $M_z$ on a neighborhood of $z=0$. 
\end{proof}

We will keep referring to $q^+$ and $q^-$ as they are defined in Lemma~\ref{lem: q definition}. We can now give a reasonably short proof that the zeros of $C_n \square \mathcal{T}$ accumulate at $\infty$ using Montel's theorem as a black box.
\begin{lemma}
    \label{lem: zer_acc_infty_basic}
    Let $R>0$. There are only finitely many $n$ such that all zeros of $Z(C_n \square \mathcal{T};\lambda)$ are less than $R$ in norm.
\end{lemma}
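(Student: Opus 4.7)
The plan is to exhibit zeros of $Z(C_n\square\mathcal{T};\lambda)$ of arbitrarily large modulus by locating zeros of the reciprocal polynomial $f_n(z) := z^{n\alpha}\cdot Z(C_n\square\mathcal{T};1/z)= \Tr(M_z^n)$ that accumulate at $z=0$. By Lemma~\ref{lem: q definition} together with continuous dependence of eigenvalues on parameters, on a small disk $D=\{|z|<\delta\}$ we may decompose
\[
    f_n(z) = q^+(z)^n + q^-(z)^n + R_n(z),
\]
where $R_n(z)$ is the sum over the remaining eigenvalues $s_j(z)$ of $M_z$, each vanishing at $z=0$. Shrinking $\delta$, fix constants $0<c<c'<1$ with $|q^\pm(z)|\geq c'$ and $|s_j(z)|\leq c$ on $D$, so $|R_n(z)|\leq |\mathcal{I}|\,c^n$ while $|q^\pm(z)|^n\geq (c')^n$.

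The key step is to locate zeros of the dominant part $q^+(z)^n+q^-(z)^n$ close to $0$. Set $r(z):=q^-(z)/q^+(z)$, holomorphic on $D$ with $r(0)=-1$, and assume $r$ is nonconstant. By the open mapping theorem, $r(D)$ is an open neighbourhood of $-1$, so for all $n$ large enough there is an $n$-th root of $-1$, call it $\omega_n$, with $|\omega_n+1|=O(1/n)$, and a preimage $z_n\in D$ with $r(z_n)=\omega_n$ and $z_n\to 0$. Since $r(z_n)^n=-1$, we have $q^+(z_n)^n+q^-(z_n)^n=0$ exactly. A Rouch\'e argument on a circle $\Gamma_n=\{|z-z_n|=\rho/n\}$ then promotes this into a genuine zero of $f_n$: expanding $r(z)^n\approx -\exp\bigl(n\cdot (r'(z_n)/\omega_n)(z-z_n)\bigr)$ one checks $|1+r(z)^n|$ is bounded below by a positive constant on $\Gamma_n$, whence $|q^+(z)^n+q^-(z)^n|\geq C(c')^n$ there, whereas $|R_n(z)|\leq |\mathcal{I}|\,c^n$ is negligible for large $n$. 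Rouch\'e then yields a zero of $f_n$ within $O(1/n)$ of $z_n$, and these zeros accumulate at $0$, contradicting the assumption that the zeros of $Z(C_n\square\mathcal{T};\lambda)$ stay bounded.

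The main technical obstacle is justifying the nonconstancy of $r$. If $r\equiv -1$ then $q^+\equiv-q^-$, the dominant part vanishes identically for odd $n$, and zeros would have to be located from the subdominant eigenvalues. This degeneracy should be excluded via the $\mathbb{Z}/2$-symmetry of $M_z$ coming from swapping even and odd ground states of $\mathcal{T}$: $q^+$ and $q^-$ arise from complementary invariant subspaces of this involution, so a coincidence $q^+\equiv -q^-$ imposes strong extra constraints that fail in concrete small examples. An elegant alternative bypassing this dichotomy is to invoke Sokal's Theorem~1.5 from \cite{Sokaldense}, which handles the ``multiple dominant eigenvalue'' setting directly and concludes that the zeros of $f_n$ accumulate on the equimodular locus $\{|q^+(z)|=|q^-(z)|\}$, which necessarily contains $z=0$.
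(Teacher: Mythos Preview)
Your overall strategy—decompose $f_n(z)=\Tr(M_z^n)$ into the two dominant terms plus a small remainder, locate zeros of $(q^+)^n+(q^-)^n$ near $z=0$ via preimages of $n$-th roots of $-1$ under $r=q^-/q^+$, then transfer them to $f_n$ by Rouch\'e—is correct and is essentially the route the paper takes in the quantitative Section~5.3 (Theorem~\ref{thm: zeros_at_inf}). For the present qualitative lemma the paper instead uses a Montel normality argument, but either method works once the key hypothesis is in place.

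The genuine gap is precisely the point you flag: you never establish that $r$ is nonconstant, and neither of your proposed fixes does the job. The symmetry remark is not an argument. The appeal to Sokal's theorem has the same missing hypothesis: Beraha--Kahane--Weiss/Sokal require that no two dominant functions are \emph{persistently} equimodular, and if $r$ were holomorphic with $|r|\equiv 1$ it would be constant, hence $r\equiv -1$, which is exactly the degenerate case you are trying to exclude. In that case the equimodular locus is the whole disk and the theorem gives nothing.

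The paper resolves this cleanly with Perron--Frobenius (Lemma~\ref{lem: real_large_eigenvalue}): for real $z>0$ the matrix $M_z$ is a positive scalar multiple of $D_{1/z}A$, which has a \emph{unique} eigenvalue of maximal modulus, necessarily positive. Since $q^+(z)\to 1$ and $q^-(z)\to -1$ as $z\to 0^+$, this unique dominant eigenvalue is $q^+(z)$, so $|q^+(z)|>|q^-(z)|$ for small real $z>0$, whence $|r(z)|<1$ there while $r(0)=-1$. Thus $r$ is nonconstant. Once you have this, your Rouch\'e argument (with a little extra care if $r'(0)=0$, handled by working with the leading nonzero Taylor coefficient of $r+1$) goes through.
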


\begin{proof}
    Let $U$ be a connected neighborhood of $z=0$ such that there is a $\zeta < 1$ for which $|s| < \zeta \cdot \min\{|q^+(z)|,|q^-(z)|\}$ for all other eigenvalues $s$ of $M_z$ for every $z \in U$. We can assume that $q^+$ and $q^-$ are defined on $U$ and that $U$ is contained in a ball of radius $1/R$. Let $N_0$ be such that $\zeta^{N_0}(|\mathcal{I}|-2) \leq 1/2$.  Let $I \subseteq \mathbb{Z}_{\geq N_0}$ be the set of indices such that for $n \in I$ the polynomial $z^{n\alpha} \cdot Z(C_n \square \mathcal{T}; 1/z)$ has no zeros in $U\setminus \{0\}$. We will show that the family of functions
    \[
      \mathcal{F} = \left\{\frac{z^{n\alpha} \cdot Z(C_n \square \mathcal{T}; 1/z)}{q^+(z)^n}\right\}_{n \in I}
    \]
    is a normal family on $U\setminus\{0\}$. We will do this by applying the strong version of Montel's theorem, i.e. we show that $\mathcal{F}$ avoids three values in the Riemann-sphere. 

    Because $z^{n\alpha} \cdot Z(C_n \square \mathcal{T}; 1/z)$ is a polynomial $f(z) \neq \infty$ for every $f\in \mathcal{F}$ and $z \in U$. By definition of $I$ we see that $f(z) \neq 0$ for every $f\in \mathcal{F}$ and nonzero $z \in U$. We also claim that $\mathcal{F}$ avoids $1$. To prove it we assume that there is a $z \in U$ and an index $n \in I$ that show otherwise. Then 
    \begin{align*}
        0 = \left|\frac{z^{n\alpha} \cdot Z(C_n \square \mathcal{T}; 1/z) - q^+(z)^n}{q^-(z)^n}\right| = \left|\frac{\Tr\left[(\hat{D}_z A)^n\right] - q^+(z)^n}{q^-(z)^n}\right|
        &=\left|1 + \sum_{s \neq q^{\pm}(z)} \left(\frac{s}{q^-(z)}\right)^n \right|\\ &\geq 1 - \zeta^n(|\mathcal{I}|-2) \geq 1/2,
    \end{align*}
    where the sum runs over the eigenvalues of $M_z$ not equal to $q^{\pm}(z)$. This is a contradiction and we can thus conclude that $\mathcal{F}$ is a normal family. We will now show that this implies that $\mathcal{F}$ is finite.

    Define $\beta(z) = q^+(z)/q^-(z)$. We observe that $\beta(0)= -1$ and, by Lemma~\ref{lem: real_large_eigenvalue}, $|\beta(z)| > 1$ for $z > 0$. The map $\beta$ is holomorphic and non-constant and thus an open map. Let $U^+ = \{z \in U: |\beta(z)| >1\}$ and $U^- = \{z \in U: |\beta(z)| <1\}$. These are both open non-empty subsets of $U\setminus\{0\}$. 
    For $z \in U^+$ we have that 
    \[
        \lim_{n \to \infty} \frac{z^{n\alpha} \cdot Z(C_n \square \mathcal{T}; 1/z)}{q^+(z)^n} = \lim_{n \to \infty} \left[\beta(z)^n + 1 + \sum_{s \neq q^{\pm}(z)} \left(\frac{s}{q^+(z)}\right)^n\right] = \infty,  
    \]
    while for $z \in U^-$ this limit is equal to $1$. If $\mathcal{F}$ were to have a sequence of elements whose indices converge to $\infty$, it should have a subsequence that converges to a holomorphic function that is constant $\infty$ on $U^+$ and constant $1$ on $U^-$. Because $U\setminus\{0\}$ is connected, such a function does not exist. 
    
    This shows that the index-set $I$ is finite. It follows that there is an $N_1$ such that for all $n\geq N_1$ the polynomial $z^{n\alpha} \cdot Z(C_n \square \mathcal{T}; 1/z)$ has a zero $z_0 \neq 0$ in $U$. Therefore $\lambda_0 = 1/z_0$ is a zero of $Z(C_n \square \mathcal{T}; \lambda)$ with $|\lambda_0| > R$.    
\end{proof}

\begin{remark}
    The proof of Corollary~\ref{cor: no_acc_on_R} works just as well to show that zeros of $Z(C_n \square G; \lambda)$ cannot accumulate on any $\lambda_0$ for which $D_{\lambda_0}A$ has a unique largest (in norm) eigenvalue. Similarly, the proof of Lemma~\ref{lem: zer_acc_infty_basic} works to show that zeros accumulate on any parameter $\lambda_0$ for which $D_{\lambda_0}A$ has two or more simple eigenvalues $\{r_1(\lambda_0), \dots, r_k(\lambda_0)\}$ of the same norm that are larger than all the eigenvalues if no pair of such eigenvalues persistently has the same norm. That is, if there is no distinct pair $i,j$ and neighborhood $U$ of $\lambda_0$ for which the analytic continuations $r_i,r_j$ satisfy $|r_i(\lambda)| = |r_{j}(\lambda)|$ for all $\lambda \in U$.

    This shows that, in the case that there are no eigenvalues that persistently have the same norm, the accumulation points of the zeros of $Z(C_n \square G; \lambda)$ are exactly those parameters $\lambda_0$ for which $D_{\lambda_0}A$ has two or more maximal eigenvalues of the same norm; a special case of \cite[Theorem 1.5]{Sokaldense}. It then follows that the set of accumulation points is a union of real algebraic curves; see Figure~\ref{fig: acc_par} for two examples.
\end{remark}

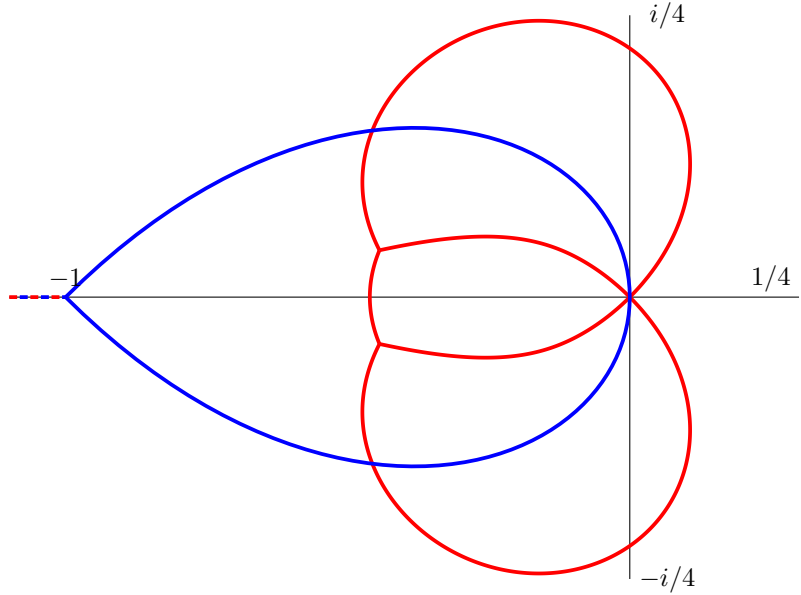
\begin{figure}
\center
\begin{tikzpicture}
\draw[] (-8.25,0) -- (2.25,0);
\draw[] (0,-3.75) -- (0,3.75);

\draw[c0,line width=0.5mm] (-3.32761,0.62329) -- (-3.33614,0.64) -- (-3.34833,0.66454) -- (-3.36034,0.68948) -- (-3.37215,0.71482) -- (-3.38375,0.74057) -- (-3.39512,0.76672) -- (-3.40625,0.79329) -- (-3.41713,0.82028) -- (-3.42773,0.84768) -- (-3.43804,0.8755) -- (-3.44805,0.90373) -- (-3.45773,0.93239) -- (-3.46708,0.96146) -- (-3.47606,0.99095) -- (-3.48466,1.02085) -- (-3.49287,1.05116) -- (-3.50066,1.08187) -- (-3.50801,1.11298) -- (-3.5149,1.14448) -- (-3.52131,1.17636) -- (-3.52723,1.2086) -- (-3.53263,1.24121) -- (-3.53749,1.27415) -- (-3.54179,1.30743) -- (-3.54552,1.34102) -- (-3.54866,1.37491) -- (-3.55119,1.40907) -- (-3.55309,1.44349) -- (-3.55435,1.47815) -- (-3.55496,1.51302) -- (-3.5549,1.54808) -- (-3.55416,1.58331) -- (-3.55274,1.61868) -- (-3.55063,1.65417) -- (-3.54781,1.68974) -- (-3.5443,1.72538) -- (-3.54007,1.76106) -- (-3.53514,1.79675) -- (-3.5295,1.83243) -- (-3.52315,1.86807) -- (-3.5161,1.90364) -- (-3.50836,1.93912) -- (-3.49993,1.97448) -- (-3.49081,2.00971) -- (-3.48102,2.04478) -- (-3.47056,2.07966) -- (-3.45946,2.11434) -- (-3.44771,2.1488) -- (-3.43533,2.18302) -- (-3.42233,2.21698) -- (-3.40874,2.25067) -- (-3.39455,2.28406) -- (-3.37979,2.31715) -- (-3.36447,2.34993) -- (-3.34861,2.38237) -- (-3.33222,2.41447) -- (-3.31531,2.44622) -- (-3.29791,2.4776) -- (-3.28002,2.50862) -- (-3.26166,2.53926) -- (-3.24284,2.56951) -- (-3.22359,2.59936) -- (-3.2039,2.62883) -- (-3.18381,2.65788) -- (-3.16332,2.68653) -- (-3.14244,2.71477) -- (-3.1212,2.74259) -- (-3.09959,2.77) -- (-3.07764,2.79698) -- (-3.05535,2.82355) -- (-3.03275,2.84968) -- (-3.00983,2.87539) -- (-2.98662,2.90068) -- (-2.96312,2.92553) -- (-2.93934,2.94996) -- (-2.9153,2.97396) -- (-2.891,2.99753) -- (-2.86646,3.02067) -- (-2.84168,3.04339) -- (-2.81668,3.06567) -- (-2.79146,3.08753) -- (-2.76603,3.10896) -- (-2.7404,3.12997) -- (-2.71459,3.15055) -- (-2.68859,3.17071) -- (-2.66242,3.19044) -- (-2.63608,3.20976) -- (-2.60958,3.22865) -- (-2.58293,3.24713) -- (-2.55614,3.26519) -- (-2.52921,3.28283) -- (-2.50215,3.30007) -- (-2.47497,3.31689) -- (-2.44767,3.3333) -- (-2.42026,3.3493) -- (-2.39275,3.36489) -- (-2.36514,3.38008) -- (-2.33743,3.39487) -- (-2.30964,3.40925) -- (-2.28177,3.42324) -- (-2.25383,3.43683) -- (-2.22582,3.45002) -- (-2.19774,3.46282) -- (-2.1696,3.47522) -- (-2.14141,3.48724) -- (-2.11317,3.49886) -- (-2.08489,3.5101) -- (-2.05657,3.52096) -- (-2.02821,3.53143) -- (-1.99983,3.54152) -- (-1.97142,3.55123) -- (-1.94299,3.56056) -- (-1.91454,3.56952) -- (-1.88608,3.57811) -- (-1.85762,3.58632) -- (-1.82915,3.59416) -- (-1.80068,3.60164) -- (-1.77222,3.60875) -- (-1.74376,3.61549) -- (-1.71532,3.62187) -- (-1.68689,3.62789) -- (-1.65849,3.63356) -- (-1.63011,3.63886) -- (-1.60176,3.64381) -- (-1.57344,3.64841) -- (-1.54515,3.65265) -- (-1.5169,3.65655) -- (-1.4887,3.66009) -- (-1.46054,3.66329) -- (-1.43243,3.66615) -- (-1.40437,3.66866) -- (-1.37637,3.67083) -- (-1.34843,3.67266) -- (-1.32055,3.67416) -- (-1.29273,3.67532) -- (-1.26499,3.67614) -- (-1.23731,3.67663) -- (-1.20972,3.6768) -- (-1.1822,3.67663) -- (-1.15476,3.67614) -- (-1.1274,3.67532) -- (-1.10013,3.67417) -- (-1.07296,3.67271) -- (-1.04587,3.67092) -- (-1.01888,3.66882) -- (-0.99199,3.6664) -- (-0.96521,3.66366) -- (-0.93852,3.66061) -- (-0.91195,3.65725) -- (-0.88548,3.65358) -- (-0.85913,3.6496) -- (-0.83289,3.64532) -- (-0.80677,3.64073) -- (-0.78077,3.63583) -- (-0.7549,3.63064) -- (-0.72915,3.62514) -- (-0.70353,3.61934) -- (-0.67804,3.61325) -- (-0.65269,3.60687) -- (-0.62747,3.60019) -- (-0.60239,3.59322) -- (-0.57746,3.58596) -- (-0.55266,3.57841) -- (-0.52802,3.57057) -- (-0.50352,3.56245) -- (-0.47918,3.55404) -- (-0.45499,3.54536) -- (-0.43095,3.53639) -- (-0.40708,3.52714) -- (-0.38336,3.51762) -- (-0.35981,3.50782) -- (-0.33643,3.49775) -- (-0.31321,3.48741) -- (-0.29017,3.47679) -- (-0.26729,3.46591) -- (-0.2446,3.45476) -- (-0.22208,3.44334) -- (-0.19974,3.43166) -- (-0.17758,3.41972) -- (-0.15561,3.40751) -- (-0.13383,3.39505) -- (-0.11223,3.38233) -- (-0.09083,3.36935) -- (-0.06961,3.35612) -- (-0.0486,3.34263) -- (-0.02778,3.3289) -- (-0.00717,3.31491) -- (0.01325,3.30067) -- (0.03346,3.28619) -- (0.05346,3.27147) -- (0.07325,3.25649) -- (0.09284,3.24128) -- (0.1122,3.22583) -- (0.13136,3.21014) -- (0.15029,3.1942) -- (0.16901,3.17804) -- (0.1875,3.16164) -- (0.20577,3.145) -- (0.22381,3.12814) -- (0.24162,3.11104) -- (0.2592,3.09371) -- (0.27655,3.07616) -- (0.29366,3.05838) -- (0.31054,3.04038) -- (0.32717,3.02215) -- (0.34357,3.0037) -- (0.35971,2.98504) -- (0.37562,2.96615) -- (0.39127,2.94704) -- (0.40668,2.92772) -- (0.42183,2.90819) -- (0.43672,2.88844) -- (0.45136,2.86848) -- (0.46574,2.84831) -- (0.47985,2.82792) -- (0.49371,2.80733) -- (0.50729,2.78654) -- (0.52061,2.76553) -- (0.53365,2.74432) -- (0.54643,2.72291) -- (0.55892,2.7013) -- (0.57114,2.67948) -- (0.58308,2.65746) -- (0.59473,2.63525) -- (0.6061,2.61283) -- (0.61718,2.59022) -- (0.62796,2.56742) -- (0.63846,2.54441) -- (0.64866,2.52122) -- (0.65856,2.49782) -- (0.66816,2.47424) -- (0.67745,2.45046) -- (0.68644,2.4265) -- (0.69512,2.40234) -- (0.70349,2.37799) -- (0.71154,2.35345) -- (0.71927,2.32872) -- (0.72668,2.30381) -- (0.73377,2.2787) -- (0.74052,2.25341) -- (0.74695,2.22793) -- (0.75304,2.20226) -- (0.7588,2.1764) -- (0.76421,2.15035) -- (0.76927,2.12412) -- (0.77399,2.0977) -- (0.77836,2.07109) -- (0.78236,2.04429) -- (0.78601,2.0173) -- (0.78929,1.99012) -- (0.79219,1.96275) -- (0.79473,1.93519) -- (0.79688,1.90743) -- (0.79865,1.87947) -- (0.80003,1.85132) -- (0.80101,1.82298) -- (0.80159,1.79442) -- (0.80176,1.76567) -- (0.80151,1.73671) -- (0.80085,1.70754) -- (0.79975,1.67816) -- (0.79823,1.64857) -- (0.79625,1.61875) -- (0.79383,1.58871) -- (0.79094,1.55844) -- (0.78758,1.52793) -- (0.78374,1.49719) -- (0.7794,1.46619) -- (0.77457,1.43494) -- (0.76921,1.40343) -- (0.76333,1.37164) -- (0.7569,1.33958) -- (0.7499,1.30721) -- (0.74233,1.27455) -- (0.73416,1.24155) -- (0.72537,1.20823) -- (0.71594,1.17455) -- (0.70584,1.14049) -- (0.69505,1.10604) -- (0.68353,1.07116) -- (0.67124,1.03583) -- (0.65815,1.00001) -- (0.64421,0.96367) -- (0.62936,0.92675) -- (0.61355,0.88921) -- (0.59671,0.85099) -- (0.57875,0.812) -- (0.55957,0.77216) -- (0.53905,0.73136) -- (0.51705,0.68946) -- (0.49336,0.64628) -- (0.46776,0.60161) -- (0.43993,0.55514) -- (0.40944,0.50646) -- (0.37565,0.45497) -- (0.33761,0.39975) -- (0.29372,0.33923) -- (0.24086,0.27037) -- (0.171,0.18523) -- (0.171,0.18523) -- (0.16229,0.17504) -- (0.15307,0.16435) -- (0.14323,0.15306) -- (0.13266,0.14103) -- (0.12114,0.12808) -- (0.10839,0.11391) -- (0.0939,0.09801) -- (0.0767,0.07941) -- (0.05425,0.0556) -- (0.,0.) -- (-0.05425,-0.05298) -- (-0.07671,-0.07418) -- (-0.09392,-0.09016) -- (-0.10842,-0.10344) -- (-0.12118,-0.11499) -- (-0.13272,-0.12533) -- (-0.14331,-0.13473) -- (-0.15317,-0.1434) -- (-0.16242,-0.15148) -- (-0.17117,-0.15905) -- (-0.17117,-0.15905) -- (-0.24152,-0.21801) -- (-0.29521,-0.2607) -- (-0.34025,-0.29505) -- (-0.37977,-0.32411) -- (-0.41538,-0.34945) -- (-0.44802,-0.37199) -- (-0.47833,-0.39234) -- (-0.50673,-0.4109) -- (-0.53355,-0.42798) -- (-0.55903,-0.44381) -- (-0.58334,-0.45856) -- (-0.60664,-0.47237) -- (-0.62906,-0.48535) -- (-0.65069,-0.4976) -- (-0.67161,-0.50919) -- (-0.6919,-0.52019) -- (-0.71161,-0.53065) -- (-0.73081,-0.54062) -- (-0.74952,-0.55013) -- (-0.76781,-0.55924) -- (-0.78569,-0.56796) -- (-0.80321,-0.57632) -- (-0.82038,-0.58435) -- (-0.83725,-0.59208) -- (-0.85382,-0.59952) -- (-0.87012,-0.60668) -- (-0.88617,-0.61359) -- (-0.90199,-0.62026) -- (-0.91759,-0.62671) -- (-0.93298,-0.63293) -- (-0.94819,-0.63896) -- (-0.96322,-0.64479) -- (-0.97808,-0.65043) -- (-0.99279,-0.6559) -- (-1.00735,-0.6612) -- (-1.02178,-0.66634) -- (-1.03608,-0.67133) -- (-1.05026,-0.67617) -- (-1.06433,-0.68086) -- (-1.0783,-0.68542) -- (-1.09217,-0.68985) -- (-1.10595,-0.69416) -- (-1.11965,-0.69834) -- (-1.13327,-0.7024) -- (-1.14682,-0.70635) -- (-1.1603,-0.71019) -- (-1.17372,-0.71393) -- (-1.18709,-0.71756) -- (-1.2004,-0.72109) -- (-1.21366,-0.72452) -- (-1.22688,-0.72786) -- (-1.24006,-0.7311) -- (-1.2532,-0.73426) -- (-1.26632,-0.73733) -- (-1.2794,-0.74031) -- (-1.29246,-0.74321) -- (-1.3055,-0.74603) -- (-1.31852,-0.74877) -- (-1.33153,-0.75143) -- (-1.34453,-0.75402) -- (-1.35751,-0.75653) -- (-1.37049,-0.75897) -- (-1.38347,-0.76133) -- (-1.39645,-0.76363) -- (-1.40943,-0.76585) -- (-1.42242,-0.76801) -- (-1.43541,-0.7701) -- (-1.44842,-0.77212) -- (-1.46143,-0.77408) -- (-1.47447,-0.77597) -- (-1.48752,-0.7778) -- (-1.50059,-0.77957) -- (-1.51368,-0.78127) -- (-1.5268,-0.78292) -- (-1.53995,-0.7845) -- (-1.55312,-0.78602) -- (-1.56632,-0.78748) -- (-1.57956,-0.78889) -- (-1.59284,-0.79023) -- (-1.60615,-0.79152) -- (-1.6195,-0.79275) -- (-1.63289,-0.79392) -- (-1.64633,-0.79503) -- (-1.65982,-0.79609) -- (-1.67335,-0.79709) -- (-1.68693,-0.79804) -- (-1.70056,-0.79893) -- (-1.71425,-0.79976) -- (-1.72799,-0.80053) -- (-1.74179,-0.80126) -- (-1.75565,-0.80192) -- (-1.76957,-0.80253) -- (-1.78356,-0.80308) -- (-1.79761,-0.80358) -- (-1.81173,-0.80403) -- (-1.82592,-0.80441) -- (-1.84018,-0.80474) -- (-1.85452,-0.80502) -- (-1.86893,-0.80524) -- (-1.88342,-0.8054) -- (-1.89799,-0.80551) -- (-1.91264,-0.80556) -- (-1.92737,-0.80555) -- (-1.94219,-0.80549) -- (-1.9571,-0.80537) -- (-1.9721,-0.80519) -- (-1.98719,-0.80495) -- (-2.00238,-0.80465) -- (-2.01766,-0.8043) -- (-2.03304,-0.80389) -- (-2.04852,-0.80341) -- (-2.06411,-0.80288) -- (-2.0798,-0.80228) -- (-2.0956,-0.80163) -- (-2.1115,-0.80091) -- (-2.12752,-0.80013) -- (-2.14366,-0.79929) -- (-2.15991,-0.79839) -- (-2.17628,-0.79742) -- (-2.19278,-0.79639) -- (-2.2094,-0.79529) -- (-2.22614,-0.79412) -- (-2.24302,-0.79289) -- (-2.26002,-0.79159) -- (-2.27717,-0.79023) -- (-2.29445,-0.78879) -- (-2.31187,-0.78729) -- (-2.32943,-0.78571) -- (-2.34714,-0.78407) -- (-2.365,-0.78235) -- (-2.38301,-0.78055) -- (-2.40117,-0.77869) -- (-2.4195,-0.77675) -- (-2.43798,-0.77473) -- (-2.45663,-0.77263) -- (-2.47544,-0.77046) -- (-2.49443,-0.76821) -- (-2.51359,-0.76588) -- (-2.53293,-0.76346) -- (-2.55245,-0.76097) -- (-2.57215,-0.75839) -- (-2.59205,-0.75572) -- (-2.61213,-0.75297) -- (-2.63241,-0.75013) -- (-2.65289,-0.7472) -- (-2.67358,-0.74419) -- (-2.69447,-0.74108) -- (-2.71558,-0.73788) -- (-2.7369,-0.73458) -- (-2.75844,-0.73119) -- (-2.78021,-0.72771) -- (-2.80221,-0.72412) -- (-2.82444,-0.72044) -- (-2.84692,-0.71665) -- (-2.86964,-0.71276) -- (-2.8926,-0.70877) -- (-2.91583,-0.70467) -- (-2.93931,-0.70047) -- (-2.96306,-0.69615) -- (-2.98708,-0.69173) -- (-3.01138,-0.68719) -- (-3.03597,-0.68254) -- (-3.06084,-0.67778) -- (-3.08601,-0.6729) -- (-3.11148,-0.6679) -- (-3.13726,-0.66277) -- (-3.16335,-0.65753) -- (-3.18977,-0.65217) -- (-3.21652,-0.64668) -- (-3.2436,-0.64106) -- (-3.27104,-0.63531) -- (-3.29882,-0.62944) -- (-3.32696,-0.62343) -- (-3.32761,-0.62329);

\draw[c0,line width=0.5mm] (-3.32761,-0.62329) -- (-3.33614,-0.64) -- (-3.34833,-0.66454) -- (-3.36034,-0.68948) -- (-3.37215,-0.71482) -- (-3.38375,-0.74057) -- (-3.39512,-0.76672) -- (-3.40625,-0.79329) -- (-3.41713,-0.82028) -- (-3.42773,-0.84768) -- (-3.43804,-0.8755) -- (-3.44805,-0.90373) -- (-3.45773,-0.93239) -- (-3.46708,-0.96146) -- (-3.47606,-0.99095) -- (-3.48466,-1.02085) -- (-3.49287,-1.05116) -- (-3.50066,-1.08187) -- (-3.50801,-1.11298) -- (-3.5149,-1.14448) -- (-3.52131,-1.17636) -- (-3.52723,-1.2086) -- (-3.53263,-1.24121) -- (-3.53749,-1.27415) -- (-3.54179,-1.30743) -- (-3.54552,-1.34102) -- (-3.54866,-1.37491) -- (-3.55119,-1.40907) -- (-3.55309,-1.44349) -- (-3.55435,-1.47815) -- (-3.55496,-1.51302) -- (-3.5549,-1.54808) -- (-3.55416,-1.58331) -- (-3.55274,-1.61868) -- (-3.55063,-1.65417) -- (-3.54781,-1.68974) -- (-3.5443,-1.72538) -- (-3.54007,-1.76106) -- (-3.53514,-1.79675) -- (-3.5295,-1.83243) -- (-3.52315,-1.86807) -- (-3.5161,-1.90364) -- (-3.50836,-1.93912) -- (-3.49993,-1.97448) -- (-3.49081,-2.00971) -- (-3.48102,-2.04478) -- (-3.47056,-2.07966) -- (-3.45946,-2.11434) -- (-3.44771,-2.1488) -- (-3.43533,-2.18302) -- (-3.42233,-2.21698) -- (-3.40874,-2.25067) -- (-3.39455,-2.28406) -- (-3.37979,-2.31715) -- (-3.36447,-2.34993) -- (-3.34861,-2.38237) -- (-3.33222,-2.41447) -- (-3.31531,-2.44622) -- (-3.29791,-2.4776) -- (-3.28002,-2.50862) -- (-3.26166,-2.53926) -- (-3.24284,-2.56951) -- (-3.22359,-2.59936) -- (-3.2039,-2.62883) -- (-3.18381,-2.65788) -- (-3.16332,-2.68653) -- (-3.14244,-2.71477) -- (-3.1212,-2.74259) -- (-3.09959,-2.77) -- (-3.07764,-2.79698) -- (-3.05535,-2.82355) -- (-3.03275,-2.84968) -- (-3.00983,-2.87539) -- (-2.98662,-2.90068) -- (-2.96312,-2.92553) -- (-2.93934,-2.94996) -- (-2.9153,-2.97396) -- (-2.891,-2.99753) -- (-2.86646,-3.02067) -- (-2.84168,-3.04339) -- (-2.81668,-3.06567) -- (-2.79146,-3.08753) -- (-2.76603,-3.10896) -- (-2.7404,-3.12997) -- (-2.71459,-3.15055) -- (-2.68859,-3.17071) -- (-2.66242,-3.19044) -- (-2.63608,-3.20976) -- (-2.60958,-3.22865) -- (-2.58293,-3.24713) -- (-2.55614,-3.26519) -- (-2.52921,-3.28283) -- (-2.50215,-3.30007) -- (-2.47497,-3.31689) -- (-2.44767,-3.3333) -- (-2.42026,-3.3493) -- (-2.39275,-3.36489) -- (-2.36514,-3.38008) -- (-2.33743,-3.39487) -- (-2.30964,-3.40925) -- (-2.28177,-3.42324) -- (-2.25383,-3.43683) -- (-2.22582,-3.45002) -- (-2.19774,-3.46282) -- (-2.1696,-3.47522) -- (-2.14141,-3.48724) -- (-2.11317,-3.49886) -- (-2.08489,-3.5101) -- (-2.05657,-3.52096) -- (-2.02821,-3.53143) -- (-1.99983,-3.54152) -- (-1.97142,-3.55123) -- (-1.94299,-3.56056) -- (-1.91454,-3.56952) -- (-1.88608,-3.57811) -- (-1.85762,-3.58632) -- (-1.82915,-3.59416) -- (-1.80068,-3.60164) -- (-1.77222,-3.60875) -- (-1.74376,-3.61549) -- (-1.71532,-3.62187) -- (-1.68689,-3.62789) -- (-1.65849,-3.63356) -- (-1.63011,-3.63886) -- (-1.60176,-3.64381) -- (-1.57344,-3.64841) -- (-1.54515,-3.65265) -- (-1.5169,-3.65655) -- (-1.4887,-3.66009) -- (-1.46054,-3.66329) -- (-1.43243,-3.66615) -- (-1.40437,-3.66866) -- (-1.37637,-3.67083) -- (-1.34843,-3.67266) -- (-1.32055,-3.67416) -- (-1.29273,-3.67532) -- (-1.26499,-3.67614) -- (-1.23731,-3.67663) -- (-1.20972,-3.6768) -- (-1.1822,-3.67663) -- (-1.15476,-3.67614) -- (-1.1274,-3.67532) -- (-1.10013,-3.67417) -- (-1.07296,-3.67271) -- (-1.04587,-3.67092) -- (-1.01888,-3.66882) -- (-0.99199,-3.6664) -- (-0.96521,-3.66366) -- (-0.93852,-3.66061) -- (-0.91195,-3.65725) -- (-0.88548,-3.65358) -- (-0.85913,-3.6496) -- (-0.83289,-3.64532) -- (-0.80677,-3.64073) -- (-0.78077,-3.63583) -- (-0.7549,-3.63064) -- (-0.72915,-3.62514) -- (-0.70353,-3.61934) -- (-0.67804,-3.61325) -- (-0.65269,-3.60687) -- (-0.62747,-3.60019) -- (-0.60239,-3.59322) -- (-0.57746,-3.58596) -- (-0.55266,-3.57841) -- (-0.52802,-3.57057) -- (-0.50352,-3.56245) -- (-0.47918,-3.55404) -- (-0.45499,-3.54536) -- (-0.43095,-3.53639) -- (-0.40708,-3.52714) -- (-0.38336,-3.51762) -- (-0.35981,-3.50782) -- (-0.33643,-3.49775) -- (-0.31321,-3.48741) -- (-0.29017,-3.47679) -- (-0.26729,-3.46591) -- (-0.2446,-3.45476) -- (-0.22208,-3.44334) -- (-0.19974,-3.43166) -- (-0.17758,-3.41972) -- (-0.15561,-3.40751) -- (-0.13383,-3.39505) -- (-0.11223,-3.38233) -- (-0.09083,-3.36935) -- (-0.06961,-3.35612) -- (-0.0486,-3.34263) -- (-0.02778,-3.3289) -- (-0.00717,-3.31491) -- (0.01325,-3.30067) -- (0.03346,-3.28619) -- (0.05346,-3.27147) -- (0.07325,-3.25649) -- (0.09284,-3.24128) -- (0.1122,-3.22583) -- (0.13136,-3.21014) -- (0.15029,-3.1942) -- (0.16901,-3.17804) -- (0.1875,-3.16164) -- (0.20577,-3.145) -- (0.22381,-3.12814) -- (0.24162,-3.11104) -- (0.2592,-3.09371) -- (0.27655,-3.07616) -- (0.29366,-3.05838) -- (0.31054,-3.04038) -- (0.32717,-3.02215) -- (0.34357,-3.0037) -- (0.35971,-2.98504) -- (0.37562,-2.96615) -- (0.39127,-2.94704) -- (0.40668,-2.92772) -- (0.42183,-2.90819) -- (0.43672,-2.88844) -- (0.45136,-2.86848) -- (0.46574,-2.84831) -- (0.47985,-2.82792) -- (0.49371,-2.80733) -- (0.50729,-2.78654) -- (0.52061,-2.76553) -- (0.53365,-2.74432) -- (0.54643,-2.72291) -- (0.55892,-2.7013) -- (0.57114,-2.67948) -- (0.58308,-2.65746) -- (0.59473,-2.63525) -- (0.6061,-2.61283) -- (0.61718,-2.59022) -- (0.62796,-2.56742) -- (0.63846,-2.54441) -- (0.64866,-2.52122) -- (0.65856,-2.49782) -- (0.66816,-2.47424) -- (0.67745,-2.45046) -- (0.68644,-2.4265) -- (0.69512,-2.40234) -- (0.70349,-2.37799) -- (0.71154,-2.35345) -- (0.71927,-2.32872) -- (0.72668,-2.30381) -- (0.73377,-2.2787) -- (0.74052,-2.25341) -- (0.74695,-2.22793) -- (0.75304,-2.20226) -- (0.7588,-2.1764) -- (0.76421,-2.15035) -- (0.76927,-2.12412) -- (0.77399,-2.0977) -- (0.77836,-2.07109) -- (0.78236,-2.04429) -- (0.78601,-2.0173) -- (0.78929,-1.99012) -- (0.79219,-1.96275) -- (0.79473,-1.93519) -- (0.79688,-1.90743) -- (0.79865,-1.87947) -- (0.80003,-1.85132) -- (0.80101,-1.82298) -- (0.80159,-1.79442) -- (0.80176,-1.76567) -- (0.80151,-1.73671) -- (0.80085,-1.70754) -- (0.79975,-1.67816) -- (0.79823,-1.64857) -- (0.79625,-1.61875) -- (0.79383,-1.58871) -- (0.79094,-1.55844) -- (0.78758,-1.52793) -- (0.78374,-1.49719) -- (0.7794,-1.46619) -- (0.77457,-1.43494) -- (0.76921,-1.40343) -- (0.76333,-1.37164) -- (0.7569,-1.33958) -- (0.7499,-1.30721) -- (0.74233,-1.27455) -- (0.73416,-1.24155) -- (0.72537,-1.20823) -- (0.71594,-1.17455) -- (0.70584,-1.14049) -- (0.69505,-1.10604) -- (0.68353,-1.07116) -- (0.67124,-1.03583) -- (0.65815,-1.00001) -- (0.64421,-0.96367) -- (0.62936,-0.92675) -- (0.61355,-0.88921) -- (0.59671,-0.85099) -- (0.57875,-0.812) -- (0.55957,-0.77216) -- (0.53905,-0.73136) -- (0.51705,-0.68946) -- (0.49336,-0.64628) -- (0.46776,-0.60161) -- (0.43993,-0.55514) -- (0.40944,-0.50646) -- (0.37565,-0.45497) -- (0.33761,-0.39975) -- (0.29372,-0.33923) -- (0.24086,-0.27037) -- (0.171,-0.18523) -- (0.171,-0.18523) -- (0.16229,-0.17504) -- (0.15307,-0.16435) -- (0.14323,-0.15306) -- (0.13266,-0.14103) -- (0.12114,-0.12808) -- (0.10839,-0.11391) -- (0.0939,-0.09801) -- (0.0767,-0.07941) -- (0.05425,-0.0556) -- (0.,0.) -- (-0.05425,0.05298) -- (-0.07671,0.07418) -- (-0.09392,0.09016) -- (-0.10842,0.10344) -- (-0.12118,0.11499) -- (-0.13272,0.12533) -- (-0.14331,0.13473) -- (-0.15317,0.1434) -- (-0.16242,0.15148) -- (-0.17117,0.15905) -- (-0.17117,0.15905) -- (-0.24152,0.21801) -- (-0.29521,0.2607) -- (-0.34025,0.29505) -- (-0.37977,0.32411) -- (-0.41538,0.34945) -- (-0.44802,0.37199) -- (-0.47833,0.39234) -- (-0.50673,0.4109) -- (-0.53355,0.42798) -- (-0.55903,0.44381) -- (-0.58334,0.45856) -- (-0.60664,0.47237) -- (-0.62906,0.48535) -- (-0.65069,0.4976) -- (-0.67161,0.50919) -- (-0.6919,0.52019) -- (-0.71161,0.53065) -- (-0.73081,0.54062) -- (-0.74952,0.55013) -- (-0.76781,0.55924) -- (-0.78569,0.56796) -- (-0.80321,0.57632) -- (-0.82038,0.58435) -- (-0.83725,0.59208) -- (-0.85382,0.59952) -- (-0.87012,0.60668) -- (-0.88617,0.61359) -- (-0.90199,0.62026) -- (-0.91759,0.62671) -- (-0.93298,0.63293) -- (-0.94819,0.63896) -- (-0.96322,0.64479) -- (-0.97808,0.65043) -- (-0.99279,0.6559) -- (-1.00735,0.6612) -- (-1.02178,0.66634) -- (-1.03608,0.67133) -- (-1.05026,0.67617) -- (-1.06433,0.68086) -- (-1.0783,0.68542) -- (-1.09217,0.68985) -- (-1.10595,0.69416) -- (-1.11965,0.69834) -- (-1.13327,0.7024) -- (-1.14682,0.70635) -- (-1.1603,0.71019) -- (-1.17372,0.71393) -- (-1.18709,0.71756) -- (-1.2004,0.72109) -- (-1.21366,0.72452) -- (-1.22688,0.72786) -- (-1.24006,0.7311) -- (-1.2532,0.73426) -- (-1.26632,0.73733) -- (-1.2794,0.74031) -- (-1.29246,0.74321) -- (-1.3055,0.74603) -- (-1.31852,0.74877) -- (-1.33153,0.75143) -- (-1.34453,0.75402) -- (-1.35751,0.75653) -- (-1.37049,0.75897) -- (-1.38347,0.76133) -- (-1.39645,0.76363) -- (-1.40943,0.76585) -- (-1.42242,0.76801) -- (-1.43541,0.7701) -- (-1.44842,0.77212) -- (-1.46143,0.77408) -- (-1.47447,0.77597) -- (-1.48752,0.7778) -- (-1.50059,0.77957) -- (-1.51368,0.78127) -- (-1.5268,0.78292) -- (-1.53995,0.7845) -- (-1.55312,0.78602) -- (-1.56632,0.78748) -- (-1.57956,0.78889) -- (-1.59284,0.79023) -- (-1.60615,0.79152) -- (-1.6195,0.79275) -- (-1.63289,0.79392) -- (-1.64633,0.79503) -- (-1.65982,0.79609) -- (-1.67335,0.79709) -- (-1.68693,0.79804) -- (-1.70056,0.79893) -- (-1.71425,0.79976) -- (-1.72799,0.80053) -- (-1.74179,0.80126) -- (-1.75565,0.80192) -- (-1.76957,0.80253) -- (-1.78356,0.80308) -- (-1.79761,0.80358) -- (-1.81173,0.80403) -- (-1.82592,0.80441) -- (-1.84018,0.80474) -- (-1.85452,0.80502) -- (-1.86893,0.80524) -- (-1.88342,0.8054) -- (-1.89799,0.80551) -- (-1.91264,0.80556) -- (-1.92737,0.80555) -- (-1.94219,0.80549) -- (-1.9571,0.80537) -- (-1.9721,0.80519) -- (-1.98719,0.80495) -- (-2.00238,0.80465) -- (-2.01766,0.8043) -- (-2.03304,0.80389) -- (-2.04852,0.80341) -- (-2.06411,0.80288) -- (-2.0798,0.80228) -- (-2.0956,0.80163) -- (-2.1115,0.80091) -- (-2.12752,0.80013) -- (-2.14366,0.79929) -- (-2.15991,0.79839) -- (-2.17628,0.79742) -- (-2.19278,0.79639) -- (-2.2094,0.79529) -- (-2.22614,0.79412) -- (-2.24302,0.79289) -- (-2.26002,0.79159) -- (-2.27717,0.79023) -- (-2.29445,0.78879) -- (-2.31187,0.78729) -- (-2.32943,0.78571) -- (-2.34714,0.78407) -- (-2.365,0.78235) -- (-2.38301,0.78055) -- (-2.40117,0.77869) -- (-2.4195,0.77675) -- (-2.43798,0.77473) -- (-2.45663,0.77263) -- (-2.47544,0.77046) -- (-2.49443,0.76821) -- (-2.51359,0.76588) -- (-2.53293,0.76346) -- (-2.55245,0.76097) -- (-2.57215,0.75839) -- (-2.59205,0.75572) -- (-2.61213,0.75297) -- (-2.63241,0.75013) -- (-2.65289,0.7472) -- (-2.67358,0.74419) -- (-2.69447,0.74108) -- (-2.71558,0.73788) -- (-2.7369,0.73458) -- (-2.75844,0.73119) -- (-2.78021,0.72771) -- (-2.80221,0.72412) -- (-2.82444,0.72044) -- (-2.84692,0.71665) -- (-2.86964,0.71276) -- (-2.8926,0.70877) -- (-2.91583,0.70467) -- (-2.93931,0.70047) -- (-2.96306,0.69615) -- (-2.98708,0.69173) -- (-3.01138,0.68719) -- (-3.03597,0.68254) -- (-3.06084,0.67778) -- (-3.08601,0.6729) -- (-3.11148,0.6679) -- (-3.13726,0.66277) -- (-3.16335,0.65753) -- (-3.18977,0.65217) -- (-3.21652,0.64668) -- (-3.2436,0.64106) -- (-3.27104,0.63531) -- (-3.29882,0.62944) -- (-3.32696,0.62343) -- (-3.32761,0.62329);

\draw[c0,line width=0.5mm] (-3.32761,0.62329) -- (-3.33016,0.6172) -- (-3.33889,0.59579) -- (-3.34738,0.57415) -- (-3.35563,0.55227) -- (-3.36361,0.53016) -- (-3.37133,0.50782) -- (-3.37877,0.48527) -- (-3.38593,0.4625) -- (-3.39279,0.43951) -- (-3.39934,0.41633) -- (-3.40558,0.39295) -- (-3.41149,0.36937) -- (-3.41707,0.34562) -- (-3.42231,0.3217) -- (-3.4272,0.29761) -- (-3.43172,0.27337) -- (-3.43589,0.24899) -- (-3.43968,0.22448) -- (-3.44309,0.19985) -- (-3.44611,0.17511) -- (-3.44874,0.15027) -- (-3.45097,0.12535) -- (-3.45281,0.10037) -- (-3.45424,0.07532) -- (-3.45526,0.05024) -- (-3.45588,0.02513) -- (-3.45588,-0.02513) -- (-3.45526,-0.05024) -- (-3.45424,-0.07532) -- (-3.45281,-0.10037) -- (-3.45097,-0.12535) -- (-3.44874,-0.15027) -- (-3.44611,-0.17511) -- (-3.44309,-0.19985) -- (-3.43968,-0.22448) -- (-3.43589,-0.24899) -- (-3.43172,-0.27337) -- (-3.4272,-0.29761) -- (-3.42231,-0.3217) -- (-3.41707,-0.34562) -- (-3.41149,-0.36937) -- (-3.40558,-0.39295) -- (-3.39934,-0.41633) -- (-3.39279,-0.43951) -- (-3.38593,-0.4625) -- (-3.37877,-0.48527) -- (-3.37133,-0.50782) -- (-3.36361,-0.53016) -- (-3.35563,-0.55227) -- (-3.34738,-0.57415) -- (-3.33889,-0.59579) -- (-3.33016,-0.6172) -- (-3.32761,-0.62329);

\draw[c1,line width=0.5mm] (-7.5,0.) -- (-7.26442,0.22829) -- (-7.02907,0.44223) -- (-6.79419,0.64224) -- (-6.56,0.82872) -- (-6.32674,1.00206) -- (-6.09464,1.16261) -- (-5.86393,1.31074) -- (-5.63483,1.44678) -- (-5.40757,1.57104) -- (-5.18237,1.68385) -- (-4.95947,1.78552) -- (-4.73907,1.87633) -- (-4.52139,1.95658) -- (-4.30666,2.02656) -- (-4.09507,2.08654) -- (-3.88685,2.13681) -- (-3.68219,2.17764) -- (-3.4813,2.2093) -- (-3.28437,2.23206) -- (-3.09161,2.24619) -- (-2.9032,2.25195) -- (-2.71932,2.24962) -- (-2.54016,2.23945) -- (-2.3659,2.22172) -- (-2.1967,2.1967) -- (-2.03274,2.16464) -- (-1.87417,2.12583) -- (-1.72115,2.08051) -- (-1.57384,2.02898) -- (-1.43237,1.97149) -- (-1.2969,1.90832) -- (-1.16754,1.83975) -- (-1.04443,1.76604) -- (-0.9277,1.68748) -- (-0.81745,1.60434) -- (-0.7138,1.5169) -- (-0.61684,1.42543) -- (-0.52668,1.33023) -- (-0.44339,1.23157) -- (-0.36708,1.12974) -- (-0.2978,1.02503) -- (-0.23563,0.9177) -- (-0.18062,0.80807) -- (-0.13285,0.6964) -- (-0.09234,0.583) -- (-0.05914,0.46814) -- (-0.03329,0.35212) -- (-0.0148,0.23523) -- (-0.0037,0.11776) -- (0.,0.) -- (-0.0037,-0.11776) -- (-0.0148,-0.23523) -- (-0.03329,-0.35212) -- (-0.05914,-0.46814) -- (-0.09234,-0.583) -- (-0.13285,-0.6964) -- (-0.18062,-0.80807) -- (-0.23563,-0.9177) -- (-0.2978,-1.02503) -- (-0.36708,-1.12974) -- (-0.44339,-1.23157) -- (-0.52668,-1.33023) -- (-0.61684,-1.42543) -- (-0.7138,-1.5169) -- (-0.81745,-1.60434) -- (-0.9277,-1.68748) -- (-1.04443,-1.76604) -- (-1.16754,-1.83975) -- (-1.2969,-1.90832) -- (-1.43237,-1.97149) -- (-1.57384,-2.02898) -- (-1.72115,-2.08051) -- (-1.87417,-2.12583) -- (-2.03274,-2.16464) -- (-2.1967,-2.1967) -- (-2.3659,-2.22172) -- (-2.54016,-2.23945) -- (-2.71932,-2.24962) -- (-2.9032,-2.25195) -- (-3.09161,-2.24619) -- (-3.28437,-2.23206) -- (-3.4813,-2.2093) -- (-3.68219,-2.17764) -- (-3.88685,-2.13681) -- (-4.09507,-2.08654) -- (-4.30666,-2.02656) -- (-4.52139,-1.95658) -- (-4.73907,-1.87633) -- (-4.95947,-1.78552) -- (-5.18237,-1.68385) -- (-5.40757,-1.57104) -- (-5.63483,-1.44678) -- (-5.86393,-1.31074) -- (-6.09464,-1.16261) -- (-6.32674,-1.00206) -- (-6.56,-0.82872) -- (-6.79419,-0.64224) -- (-7.02907,-0.44223) -- (-7.26442,-0.22829) -- (-7.5,0.);

\draw[c0,line width=0.5mm,dash pattern= on 3pt off 5pt] (-8.25,0) -- (-7.5,0);
\draw[c1,line width=0.5mm,dash pattern= on 3pt off 5pt,dash phase=4pt] (-8.25,0) -- (-7.5,0);

\node[black] at (-7.5 ,0.25){$-1$};
\node[black] at (1.875 ,0.25){$1/4$};
\node[black] at (0.5,3.75){$i/4$};
\node[black] at (0.5,-3.75){$-i/4$};

\end{tikzpicture}
\caption{Parameters $z = 1/\lambda$ for which the transfer matrix has two maximal eigenvalues (non-persistently) of the same norm for $C_2$ in blue and $C_4$ in red. These curves are accumulation points of the zeros of the polynomials $\{Z(C_n\square C_2;\lambda)\}_{n\geq 1}$ and $\{Z(C_n\square C_4;\lambda)\}_{n\geq 1}$ respectively. The other accumulation points in $\lambda$ coordinates are given by the real intervals with approximate bounds $[-1, -0.172]$ and $[-1, -0.126]$ respectively.}
\label{fig: acc_par}
\end{figure}

Let $\mathcal{T}_m = \mathbb{Z}_{m}^{d-1}$ and let $a_m \geq 2m$ be an even integer such that $Z(C_{a_m} \square \mathcal{T}_{2m};\lambda)$ has a zero with norm at least $m$. Such an $a_m$ exists by Lemma~\ref{lem: zer_acc_infty_basic}. Now $\{C_{a_m}\square\mathcal{T}_{2m}\}_{m \geq 1}$ is a sequence of tori whose sidelengths all converge to $\infty$ and whose zeros are unbounded. The first part of the main theorem, proved in the previous section, shows that for every $C > 0$ there are only finitely many $m$ such that $a_m \leq e^{C m}$, i.e. $\log(a_m) = \omega(m)$.
In the next section we will show that $\log(a_m)$ can be chosen to not grow faster than $m^{3(d-1)}$.

\subsection{Explicit bounds}

The remainder of this section is dedicated to proving a more quantitative version of Lemma~\ref{lem: zer_acc_infty_basic}. Let $\mathcal{T}$ be an even torus, $\alpha = |V(\mathcal{T})|/2$ and $N=|\mathcal{I}(\mathcal{T})|$. We shall prove the following.
\begin{restatable}{theorem}{explicitBounds}
    \label{thm: zeros_at_inf}
    Let $R > (6N^2)^{\alpha+2}$ and $n \geq 80 \cdot R^\alpha$ then $Z(C_n \square \mathcal{T}; \lambda)$ has at least $\frac{1}{16}n R^{-\alpha}$ distinct zeros with magnitude at least $R$.
\end{restatable}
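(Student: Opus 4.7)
The plan is to leverage the transfer-matrix identity
\[
\Tr(M_z^n) = z^{n\alpha}\,Z(C_n \square \mathcal{T};\,1/z),
\]
which reduces finding zeros of $Z(C_n\square\mathcal{T};\lambda)$ with $|\lambda|\geq R$ to locating non-zero zeros of $z\mapsto\Tr(M_z^n)$ in the disk $D_\rho := \{|z|\leq \rho\}$, where $\rho = 1/R$. Using the spectral decomposition from Lemma~\ref{lem: q definition}, write
\[
\Tr(M_z^n) = f_n(z) + E_n(z), \qquad f_n(z) := q^+(z)^n + q^-(z)^n,\qquad E_n(z) := \sum_{i\geq 3} s_i(z)^n.
\]
The strategy is to count zeros of $f_n$ geometrically, bound $E_n$ uniformly, and transfer the count via Rouch\'e's theorem, in contrast to the non-quantitative normality argument used in Lemma~\ref{lem: zer_acc_infty_basic}.

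For the first effective step I would use that $|(M_z-M_0)_{S,T}| \leq |z|$ for every $S,T$ (since $\|S\|\geq 1$ whenever $S\notin \{\Seven,\Sodd\}$), so that $\|M_z - M_0\| \leq N|z|$. Together with the spectral gap of $M_0$ (two simple eigenvalues $\pm 1$, the rest equal to $0$), standard analytic perturbation theory yields on $D_\rho$ the estimates $|q^\pm(z)\mp 1|\leq \mu$ and $|s_i(z)|\leq \mu$ for all $i\geq 3$, with an explicit $\mu = O(N^2 \rho)$. The hypothesis $R>(6N^2)^{\alpha+2}$ is calibrated precisely to make $\mu$ comfortably smaller than $1/2$, so that in particular $|q^-(z)|\geq 1/2$ and $|E_n(z)|\leq N\mu^n$ is exponentially small in $n$.

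The heart of the proof is counting zeros of $f_n$. Introducing $\beta(z) := q^+(z)/q^-(z)$, one has $\beta(0)=-1$ and $f_n(z) = q^-(z)^n\bigl(\beta(z)^n + 1\bigr)$, so zeros of $f_n$ in $D_\rho$ are exactly preimages under $\beta$ of the $n$-th roots of $-1$. The quantitative claim I would prove is
\[
\beta(D_\rho)\supseteq B(-1,\eta),\qquad \eta\geq c\rho^\alpha
\]
for an absolute constant $c$. The exponent $\alpha$ enters through the identity
\[
\Tr(M_z) = \sum_{S\in\mathcal{I}} z^{\|S\|}\,A_{S,S} = z^\alpha,
\]
since the only self-compatible independent set is $\emptyset$ and $\|\emptyset\|=\alpha$; this forces $q^+(z)+q^-(z) = z^\alpha - \sum_{i\geq 3}s_i(z)$. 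A direct first-order perturbation computation of $q^{+\prime}(0)$ and $q^{-\prime}(0)$ using the explicit left/right eigenvectors of $M_0$ on the span of $\Seven,\Sodd$ shows these linear contributions cancel (for $\alpha\geq 2$), and more generally the combined bounds from the first step ensure $|q^+(z)+q^-(z)|\geq \tfrac{1}{2}|z|^\alpha$ on an appropriate circle inside $|z|=\rho$. Hence $|\beta(z)+1|\gtrsim |z|^\alpha$ there, yielding the claimed disk in the image. At least $n\eta/\pi$ of the $n$-th roots of $-1$ lie in $B(-1,\eta)$, and preimages of distinct roots under $\beta$ are distinct, giving $\geq \tfrac{n}{16R^\alpha}$ distinct zeros of $f_n$. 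A Rouch\'e argument on small disks around each preimage, where the lower bound on $|f_n|$ beats $N\mu^n$ precisely when $n\geq 80 R^\alpha$, transfers this count to $\Tr(M_z^n)$ without loss.

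The main obstacle is the quantitative open-mapping estimate $\beta(D_\rho)\supseteq B(-1,c\rho^\alpha)$ in Step three: one has to control $\sum_{i\geq 3} s_i(z)$ sharply enough that it cannot cancel the $z^\alpha$-term of the trace identity on the chosen circle, and simultaneously verify that the leading-order perturbative calculation on $q^\pm$ does not accidentally raise the order of vanishing of $q^++q^-$ beyond $\alpha$. Getting the constants explicit enough to produce the thresholds $(6N^2)^{\alpha+2}$ and $80R^\alpha$ is the most delicate arithmetic.
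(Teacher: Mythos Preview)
Your overall architecture matches the paper's: write $\Tr(M_z^n) = q^+(z)^n + q^-(z)^n + E_n(z)$, study $\beta = q^\pm/q^\mp$, show the image of a small disk under $\beta$ contains a disk of radius $\gtrsim \rho^\alpha$ around $-1$, collect preimages of suitable roots of unity, and transfer via Rouch\'e using the crude bound on $E_n$ from Lemma~\ref{lem: bound rest eigenvalues}. Your trace identity $\Tr(M_z) = z^\alpha$ (since $\emptyset$ is the unique self-compatible independent set and $\|\emptyset\|=\alpha$) is a genuinely nice observation not appearing in the paper.

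However, there is a real gap at the step you yourself flag as the main obstacle, and it is fatal as written. From the trace identity you get $q^+(z)+q^-(z) = z^\alpha - \sum_{i\geq 3} s_i(z)$, and you need $|\sum_{i\geq 3} s_i(z)| \leq \tfrac12 |z|^\alpha$ on some circle $|z|=\rho$. But standard perturbation off the semisimple eigenvalue $0$ of $M_0$ only gives $|s_i(z)| = O(N^2|z|)$, hence $|\sum_{i\geq 3} s_i(z)| = O(N^3|z|)$, which is \emph{much larger} than $|z|^\alpha$ for small $|z|$ whenever $\alpha\geq 2$. Your proposed ``first-order perturbation computation'' of $q^{\pm\prime}(0)$ can at best kill the linear term; for general $\alpha$ (and $\alpha$ is typically large, since $\alpha = \tfrac12|V(\mathcal T)|$) you would need to push the computation to order $\alpha-1$, which is precisely the hard part. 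The cancellation is real---a posteriori, $\sum_{i\geq 3}s_i(z)$ does vanish to order $\alpha$---but establishing it via the trace identity is circular without an independent argument for either $q^++q^-$ or $\sum s_i$.

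The paper bypasses this entirely: instead of bounding the small eigenvalues, it constructs the power series of $q^\pm$ directly via the recursion~(\ref{eq: recusion eigenvectors}) and proves combinatorially (Lemmas~\ref{lem: even comp -> length >= alpha} and~\ref{lem: eigenvalue sum coefficients}) that the coefficients $q_n^+ + q_n^-$ vanish for $n<\alpha$ and satisfy $q_\alpha^+ + q_\alpha^- \geq 2$. The key combinatorial fact is that any compatible path of independent sets from $\Sodd$ to something compatible with $\Seven$ must ``lose'' at least $\alpha$ vertices in total (Lemma~\ref{lem: even comp -> length >= alpha}). This is the missing idea in your proposal; once it is in place, Lemma~\ref{lem: q bound} supplies the explicit coefficient bounds that yield the constants $(6N^2)^{\alpha+2}$ and $80R^\alpha$.
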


Once we have proved the above, we quickly obtain a proof of the second part of the main  theorem:

\begin{theorem*}[Second part of Main Theorem]
Let $\mathcal{F}$ be a highly unbalanced family of even tori. The zeros of the independence polynomials $\{Z(\mathcal{T};\lambda): \mathcal{T} \in \mathcal{F}\}$ are not uniformly bounded.
\end{theorem*}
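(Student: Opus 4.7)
The plan is to argue by contradiction: suppose there exists $B>0$ such that every zero of $Z(\mathcal{T};\lambda)$ with $\mathcal{T}\in\mathcal{F}$ satisfies $|\lambda|\le B$, and derive a contradiction by producing zeros of large modulus via Theorem~\ref{thm: zeros_at_inf}. For $\mathcal{T}\in\mathcal{F}$ with side lengths $\ell_1\le\cdots\le\ell_d$, I would write $\mathcal{T}=C_{\ell_d}\square\mathcal{T}'$ where $\mathcal{T}'=C_{\ell_1}\square\cdots\square C_{\ell_{d-1}}$ is itself an even torus (or an even cycle when $d=2$). Setting $P=\ell_1\cdots\ell_{d-1}$, $\alpha=|V(\mathcal{T}')|/2=P/2$ and $N=|\mathcal{I}(\mathcal{T}')|$, the crude estimate $N\le 2^{2\alpha}$ lets me control all thresholds in Theorem~\ref{thm: zeros_at_inf} purely in terms of $\alpha$.

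Exploiting the highly unbalanced hypothesis, I would extract for each $k\in\mathbb{N}$ a torus $\mathcal{T}_k\in\mathcal{F}$ satisfying $\ln\ell_d^{(k)}>k\cdot P_k^3$. If the sequence $P_k$ stays bounded, only finitely many tuples $(\ell_1^{(k)},\ldots,\ell_{d-1}^{(k)})$ can occur; the pigeonhole principle then extracts a subsequence with a common cross-section $\mathcal{T}^*$ along which $\ell_d^{(k)}\to\infty$, and Lemma~\ref{lem: zer_acc_infty_basic} applied to the fixed graph $\mathcal{T}^*$ directly yields zeros of $Z(C_{\ell_d^{(k)}}\square\mathcal{T}^*;\lambda)$ of arbitrarily large modulus, contradicting $B$.

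If instead $P_k\to\infty$ along a subsequence, I would set $R_k:=B+(6N_k^2)^{\alpha_k+2}+1$, which simultaneously satisfies $R_k>B$ and the lower bound on $R$ required by Theorem~\ref{thm: zeros_at_inf}. A straightforward computation using $N_k\le 2^{2\alpha_k}$ gives $(6N_k^2)^{\alpha_k+2}\le 2^{O(\alpha_k^2)}$, so $80R_k^{\alpha_k}\le\exp(K\,P_k^3)$ for some absolute constant $K$, provided $\alpha_k$ exceeds a constant depending only on $B$. The hypothesis $\ln\ell_d^{(k)}>kP_k^3$ then dominates $\ln(80R_k^{\alpha_k})$ for all sufficiently large $k$, so Theorem~\ref{thm: zeros_at_inf} applies and produces at least $\tfrac{1}{16}\ell_d^{(k)}R_k^{-\alpha_k}\ge 5$ distinct zeros of $Z(\mathcal{T}_k;\lambda)$ of modulus at least $R_k>B$, yielding the desired contradiction.

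No step poses a serious conceptual obstacle; the argument is essentially a bookkeeping exercise. The cube on $\ell_1\cdots\ell_{d-1}$ in the definition of \emph{highly unbalanced} is calibrated precisely to absorb the $\alpha^3$ growth appearing in $\ln(80R^\alpha)$: one factor of $\alpha^2$ comes from the unavoidable dependence of the threshold $R$ on $N=|\mathcal{I}(\mathcal{T}')|$, and the extra factor of $\alpha$ enters from raising $R$ to the power $\alpha$. The only mild subtlety is that this quantitative matching requires $\alpha$ to be large, which is why the case of bounded $P_k$ must be peeled off and handled via the softer normality argument behind Lemma~\ref{lem: zer_acc_infty_basic}.
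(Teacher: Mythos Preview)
Your proposal is correct and follows essentially the same route as the paper: decompose each torus as $C_{\ell_d}\square\mathcal{T}'$, split into the case where the cross-sections $\mathcal{T}'$ range over a finite set (handled by Lemma~\ref{lem: zer_acc_infty_basic}) versus the case where $|V(\mathcal{T}')|\to\infty$ (handled by Theorem~\ref{thm: zeros_at_inf} with the bound $N\le 2^{2\alpha}$ and the resulting $\exp(O(\alpha^3))$ estimate). The only cosmetic difference is that you phrase it as a contradiction with a hypothetical uniform bound $B$ and choose $R_k$ depending on $B$, whereas the paper proceeds directly by exhibiting an explicit $R_{\mathcal{T}}$ depending only on $|\mathcal{R}(\mathcal{T})|$ and showing it tends to infinity.
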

\begin{proof}
    For every $\mathcal{T} \in \mathcal{F}$ write $\ell(\mathcal{T})$ for the longest side length of $\mathcal{T}$. Furthermore, let $\mathcal{R}(\mathcal{T})$ be the torus for which $\mathcal{T} \cong C_{\ell(\mathcal{T})} \square \mathcal{R}(\mathcal{T}) $. Now define 
    \[
        \mathcal{F}' = \{\mathcal{T} \in \mathcal{F}: \ell(\mathcal{T}) \geq 80 \cdot 6^{3|\mathcal{R}(\mathcal{T})|^2} \cdot 2^{6|\mathcal{R}(\mathcal{T})|^3}\}.
    \]
    Because $\mathcal{F}$ is highly unbalanced $\mathcal{F}'$ contains infinitely many elements. We distinguish between the case where $\{\mathcal{R}(\mathcal{T}): \mathcal{T} \in \mathcal{F}\}$ is finite or infinite.

    In the former case there is a fixed torus $\mathcal{T}$ such that $\mathcal{F}$ contains infinitely many elements of the form $C_n \square \mathcal{T} $. Their zeros are unbounded according to Lemma~\ref{lem: zer_acc_infty_basic}.

    In the latter case let $\mathcal{T}_n \in \mathcal{F}'$ be a sequence for which $|\mathcal{R}(\mathcal{T}_n)|$ tends to infinity. Let $R_\mathcal{T} = 6^{3|\mathcal{R}(\mathcal{T})|}\cdot 2^{6|\mathcal{R}(\mathcal{T})|}$. Because $|\mathcal{I}(\mathcal{R}(\mathcal{T}))|< 2^{|\mathcal{R}(\mathcal{T})|}$ we can apply Theorem~\ref{thm: zeros_at_inf} to see that $Z(\mathcal{T};\lambda) = Z(C_{\ell(\mathcal{T})} \square \mathcal{R}(\mathcal{T});\lambda)$ has at least one zero with magnitude at least $R_\mathcal{T}$ for any $\mathcal{T} \in \mathcal{F}'$. The theorem now follows from the fact that $R_{\mathcal{T}_n}$ tends to infinity. 
\end{proof}

The remainder of this section focuses on proving Theorem \ref{thm: zeros_at_inf}.

\subsubsection{\texorpdfstring{The eigenvalues $q^+$ and $q^-$}{The eigenvalues q+ and q-}}

We again let $\mathcal{T}$ be a fixed torus whose sidelengths are all even. We recall that we defined the rescaled transfer-matrix $M_z$ with eigenvalues $q^+, q^-$ holomorphic in a neighborhood of $z=0$. We also recall the two independent sets $\Seven$ and $\Sodd$ of size $\alpha$. In this section we will investigate the series expansion of $q^{\pm}$. For example when $\mathcal{T} = C_8$ we have 
\[
 q^{+}(z) = 1 + 4 z + 6 z^2 + 8 z^3 + 44 z^4 + \mathcal{O}(z^5)
 \quad
 \text{ and }
 \quad
 q^{-}(z) = -1 - 4 z - 6 z^2 - 8 z^3 + 26 z^4 + \mathcal{O}(z^5).
\]
We will show that the coefficient of $z^m$ of $q^+$ is minus that of $q^{-}$ for $m = 0, \dots, \alpha-1$, while the coefficients of $z^\alpha$ differ in magnitude. This is done so that in the end we can get a handle on the map $\beta(z) = q^+(z)/q^{-}(z)$ and the branches of its inverse.

For any $k \in \{0, \dots, \alpha\}$ we define $Q_{k}$ as the projection of a vector on the subspace spanned by $\{e_S\}_{\|S\| = k}$, i.e. 
\[
    Q_{k} = \sum_{\substack{S \in \mathcal{I} \\ \|S\| = k}} e_{S}e_S^T.
\]
Observe that $Q_0 + Q_1 + \cdots + Q_\alpha = I_{|\mathcal{I}|}$. 

We define $v_0^{+} = e_{\Seven} + e_{\Sodd}$ and $v_0^{-} = e_{\Seven} - e_{\Sodd}$. We also define $q_0^{+} = 1$ and $q_{0}^{-} = -1$. For $n \geq 1$ recursively define the sequences of vectors $v_n^{\pm}$ and of integers $q_{n}^{\pm}$ by 
\begin{equation}
    \label{eq: recusion eigenvectors}
    v_n^{\pm} = q_0^{\pm}\Big(\sum_{k=1}^{\min{(n,\alpha)}} Q_{k}Av_{n-k}^{\pm} -  \sum_{i=1}^{n-1} q_i^{\pm} v_{n-i}^{\pm}\Big)
    \quad 
    \text{ and }
    \quad 
    q_{n}^{\pm} = e_{\Seven}^T A v_n^{\pm}.
\end{equation}
Observe that $q_{n}^{\pm} = e_{\Seven}^T A v_n^{\pm}$ also holds for $n=0$. We furthermore define the (formal) power series
\begin{equation}
    \label{eq: power_series_eigensystem}
    v^{\pm}(z) = \sum_{n=0}^\infty v_n^{\pm} z^n
    \quad 
    \text{ and }
    \quad 
    q^{\pm}(z) = \sum_{n=0}^\infty q_n^{\pm} z^n.
\end{equation}
We will show that $(q^{\pm}, v^{\pm})$ form two eigenvalue-eigenvector pairs corresponding to $q^{\pm}$ as defined in Lemma~\ref{lem: q definition}. This is will technically be an equality of formal power series until we prove that $q^{\pm}$ and and the entries of $v^{\pm}$ are analytic around $0$, which we will subsequently do. We first identify a certain symmetry in the entries of $v_n$. 

Let $\sigma \in \mathrm{Aut}(\mathcal{T})$. For any $S \in \mathcal{I}$ we define
\[
  S^{\sigma} = \{\sigma(v): v \in S\}.  
\]
The map $\epsilon: \mathrm{Aut}(\mathcal{T}) \to \{\pm 1\}$ given by $\epsilon(\sigma) = 1$ if $\Seven^\sigma = \Seven$ and $\epsilon(\sigma) = -1$ if $\Seven^\sigma = \Sodd$ is a group homomorphism. An autormorphism $\sigma \in \mathrm{Aut}(\mathcal{T})$ is called even or odd according to whether $\epsilon(\sigma) = 1$ or $\epsilon(\sigma) = -1$ respectively. We define the permutation matrix $P_\sigma$ by $P_\sigma e_S = e_{S^\sigma}$ and we observe that $P_\sigma Q_k = Q_k P_\sigma$ and $P_\sigma A = A P_{\sigma}$.

\begin{lemma}
    \label{lem: symmetry eigenvectors}
    Let $n \in \mathbb{Z}_{\geq 0}$ and $\sigma \in \mathrm{Aut}(\mathcal{T})$. If $\sigma$ is even then $P_\sigma v_{n}^{\pm} = v_n$, while if $\sigma$ is odd then $P_\sigma v_n^{\pm} = \pm v_n^{\pm}$.
\end{lemma}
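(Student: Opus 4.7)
The proof proceeds by induction on $n$, exploiting the fact that $P_\sigma$ commutes with both $A$ and $Q_k$, so that the recursion \eqref{eq: recusion eigenvectors} essentially carries the symmetry from step $n-1$ to step $n$.

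For the base case $n=0$, we have $v_0^{\pm}=e_{\Seven}\pm e_{\Sodd}$. If $\sigma$ is even then $\Seven^\sigma=\Seven$ and $\Sodd^\sigma=\Sodd$, so $P_\sigma v_0^{\pm}=e_{\Seven}\pm e_{\Sodd}=v_0^{\pm}$. If $\sigma$ is odd then $\Seven^\sigma=\Sodd$ and $\Sodd^\sigma=\Seven$, giving $P_\sigma v_0^{+}=e_{\Sodd}+e_{\Seven}=v_0^{+}$ and $P_\sigma v_0^{-}=e_{\Sodd}-e_{\Seven}=-v_0^{-}$. This matches the claimed formula.

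For the inductive step, assume the statement holds for all indices $0,1,\dots,n-1$. The scalars $q_i^{\pm}$ are of course fixed by $P_\sigma$ (they are just numbers). Applying $P_\sigma$ to the recursive formula \eqref{eq: recusion eigenvectors} and using the commutation relations $P_\sigma Q_k=Q_k P_\sigma$ and $P_\sigma A=A P_\sigma$, we obtain
\[
P_\sigma v_n^{\pm} \;=\; q_0^{\pm}\Big(\sum_{k=1}^{\min(n,\alpha)} Q_k A \, P_\sigma v_{n-k}^{\pm} \;-\; \sum_{i=1}^{n-1} q_i^{\pm}\, P_\sigma v_{n-i}^{\pm}\Big).
\]
If $\sigma$ is even, the induction hypothesis gives $P_\sigma v_{n-k}^{\pm}=v_{n-k}^{\pm}$ for all $k\geq 1$, so the right-hand side equals $v_n^{\pm}$. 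If $\sigma$ is odd, the induction hypothesis gives $P_\sigma v_{n-k}^{+}=v_{n-k}^{+}$, producing $v_n^{+}$ on the right; and $P_\sigma v_{n-k}^{-}=-v_{n-k}^{-}$, so every term in the bracket for the ``$-$'' case picks up a factor of $-1$, yielding $-v_n^{-}$.

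No step here looks like a real obstacle; the whole argument is essentially bookkeeping once one observes that the two recursions in \eqref{eq: recusion eigenvectors} are expressed purely in terms of $A$, the projectors $Q_k$, and the scalars $q_i^{\pm}$, all of which are invariant under conjugation by $P_\sigma$. The only point that deserves a quick sanity check is that the scalars $q_i^{\pm}=e_{\Seven}^T A v_i^{\pm}$ are genuinely well-defined without reference to a choice of even versus odd ground state; this is immediate from the fact that they are complex numbers, so that the two sides of the recursion transform in the same way under $P_\sigma$.
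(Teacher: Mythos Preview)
Your proof is correct and follows essentially the same inductive approach as the paper: verify the base case $n=0$ directly, then apply $P_\sigma$ to the recursion \eqref{eq: recusion eigenvectors} using $P_\sigma Q_k = Q_k P_\sigma$ and $P_\sigma A = A P_\sigma$ to pull the sign through. The paper's own proof is just a terser version of yours.
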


\begin{proof}
    For $n=0$ the statement follows directly from the definitions. For $n \geq 1$ we have 
    \[
        P_\sigma v_n^\pm = 
        q_0^{\pm}\Big(\sum_{k=1}^{\min{(n,\alpha)}} Q_{k}AP_{\sigma}v_{n-k}^{\pm} -  \sum_{i=1}^{n-1} q_i^{\pm} P_\sigma v_{n-i}^{\pm}\Big).
    \]
    The statement follows inductively.
\end{proof}

We now prove that $(q^{\pm},v^{\pm})$ indeed form two eigenvalue-eigenvector pairs.

\begin{lemma}
    As power series in $z$ we have $M_z v^\pm(z) = q^\pm(z) v^\pm(z)$.
\end{lemma}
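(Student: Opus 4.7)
My plan is to verify the identity coefficient-by-coefficient in $z$ on both sides. First I would write the rescaled diagonal matrix as $\hat{D}_z = \sum_{k=0}^{\alpha} z^k Q_k$, so that $M_z v^{\pm}(z) = \sum_{k=0}^{\alpha} z^k Q_k A \, v^{\pm}(z)$. Expanding the Cauchy product, the coefficient of $z^n$ on the left-hand side is
\[
[z^n]\, M_z v^{\pm}(z) \;=\; \sum_{k=0}^{\min(n,\alpha)} Q_k A\, v_{n-k}^{\pm},
\]
while the coefficient of $z^n$ on the right-hand side is $\sum_{i=0}^{n} q_i^{\pm}\, v_{n-i}^{\pm}$. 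So the task reduces to matching these two expressions using the recursive definition \eqref{eq: recusion eigenvectors}.

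The recursion \eqref{eq: recusion eigenvectors}, multiplied through by $q_0^{\pm}$ (and using $(q_0^{\pm})^2 = 1$), rearranges to
\[
\sum_{k=1}^{\min(n,\alpha)} Q_k A\, v_{n-k}^{\pm} \;=\; q_0^{\pm}\, v_n^{\pm} + \sum_{i=1}^{n-1} q_i^{\pm}\, v_{n-i}^{\pm}
\]
for $n \geq 1$. Thus once I split off the $k=0$ term, the problem reduces to identifying the missing piece $Q_0 A v_n^{\pm}$ with $q_n^{\pm} v_0^{\pm}$ (so that the sum completes from $i=1,\dots,n-1$ to $i=0,\dots,n$).

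The main step, and the only nontrivial input, is therefore the identity
\[
Q_0 A\, v_m^{\pm} \;=\; q_m^{\pm}\, v_0^{\pm} \qquad (m \geq 0).
\]
Since $Q_0 = e_{\Seven} e_{\Seven}^T + e_{\Sodd} e_{\Sodd}^T$, this amounts to showing $e_{\Sodd}^T A v_m^{+} = e_{\Seven}^T A v_m^{+}$ and $e_{\Sodd}^T A v_m^{-} = -\, e_{\Seven}^T A v_m^{-}$. For this I would invoke Lemma~\ref{lem: symmetry eigenvectors} applied to any odd automorphism $\sigma$ of $\torus$ (for instance the unit translation used in Lemma~\ref{lem: symmetry even odd tori}), which satisfies $P_\sigma A = A P_\sigma$, $\Seven^\sigma = \Sodd$, and $P_\sigma v_m^{\pm} = \pm v_m^{\pm}$; applying $e_{\Seven}^T$ to both sides of $P_\sigma A v_m^{\pm} = \pm A v_m^{\pm}$ produces exactly the desired coefficient relations. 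Once $Q_0 A v_m^{\pm} = q_m^{\pm} v_0^{\pm}$ is in hand, the case $n=0$ is immediate from $M_0 = Q_0 A$ together with $q_0^{\pm} v_0^{\pm} = \pm v_0^{\pm}$, and for $n\geq 1$ the computation above assembles into
\[
[z^n]\, M_z v^{\pm}(z) \;=\; Q_0 A v_n^{\pm} + \sum_{k=1}^{\min(n,\alpha)} Q_k A v_{n-k}^{\pm} \;=\; q_n^{\pm} v_0^{\pm} + q_0^{\pm} v_n^{\pm} + \sum_{i=1}^{n-1} q_i^{\pm} v_{n-i}^{\pm} \;=\; \sum_{i=0}^{n} q_i^{\pm} v_{n-i}^{\pm},
\]
which is $[z^n] q^{\pm}(z) v^{\pm}(z)$, completing the proof as an identity of formal power series. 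No obstacle is expected beyond correctly pinning down the symmetry identity for $Q_0 A v_m^{\pm}$; the rest is bookkeeping in the Cauchy product.
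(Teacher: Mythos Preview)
Your proposal is correct and follows essentially the same approach as the paper: both isolate the key identity $Q_0 A v_m^{\pm} = q_m^{\pm} v_0^{\pm}$ via the symmetry Lemma~\ref{lem: symmetry eigenvectors} applied to an odd automorphism, and then match the Cauchy-product coefficients using the recursion~\eqref{eq: recusion eigenvectors} together with $(q_0^{\pm})^2 = 1$. The only cosmetic difference is that the paper verifies $\sum_{i=0}^n q_i^{\pm} v_{n-i}^{\pm} = \sum_{k=0}^{\min(n,\alpha)} Q_k A v_{n-k}^{\pm}$ by expanding the right-hand side of the eigenvalue equation, whereas you expand the left-hand side first; the algebra is identical.
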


\begin{proof}
    We first claim that for any $n \in \mathbb{Z}_{\geq 0}$ we have $Q_0 A v_n^\pm = q_n^\pm v_0^\pm$. Let $\sigma \in \mathrm{Aut}(\mathcal{T})$ be an odd permutation. Then
    \begin{align*}
        Q_0 A v_n^\pm &= e_{\Seven} e_{\Seven}^T A v_n^{\pm} + e_{\Sodd} e_{\Sodd}^T A v_n^{\pm}\\
        &= e_{\Seven} e_{\Seven}^T A v_n^{\pm} + e_{\Sodd} e_{\Sodd}^T P_\sigma A P_{\sigma^{-1}} v_n^{\pm}\\
        &= (e_{\Seven}\pm e_{\Sodd}) e_{\Seven}^T A v_{n}^{\pm}\\
        &= q_n^\pm v_0^\pm,
    \end{align*}
    where we have used Lemma~\ref{lem: symmetry eigenvectors} to equate $P_{\sigma^{-1}} v_n^\pm$ with $\pm v_n$.
    
    We now prove the statement in the lemma. Observe that 
    \[  
        M_z v^{\pm}(z)
        = 
        \Big(\sum_{k=0}^\alpha Q_k A z^k\Big) \Big(\sum_{n=0}^{\infty} v_n^\pm z^n\Big) = \sum_{n=0}^\infty \Big[\sum_{k=0}^{\min(n,\alpha)} Q_k A v_{n-k}^\pm\Big]z^n.
    \]
    Moreover,
    \[
        q^\pm(z) v^\pm(z)
        = \Big(\sum_{n=0}^\infty q_n^{\pm} z^n\Big) \Big(\sum_{n=0}^{\infty} v_n^\pm z^n\Big)
        =
        \sum_{n=0}^\infty \Big[\sum_{i=0}^n q_i^\pm v_{n-i}^\pm\Big]z^n.
    \]
    It is thus sufficient to prove that for all $n$
    \[
        \sum_{i=0}^n q_i^\pm v_{n-i}^\pm =  \sum_{k=0}^{\min(n,\alpha)} Q_k A v_{n-k}^\pm.
    \]
    For $n=0$ the statement reads $q_0^\pm v_0^\pm = Q_0 A v_0^\pm$, which is equivalent to the claim above for $n=0$. For $n \geq 1$ we reason inductively as follows. 
    \begin{align*}
        \sum_{i=0}^n q_i^\pm v_{n-i}^\pm &= 
        \sum_{i=1}^{n-1} q_i^\pm v_{n-i}^\pm + q_0^\pm v_n^{\pm} + q_n^\pm v_0^{\pm}\\
        &= \sum_{i=1}^{n-1} q_i^\pm v_{n-i}^\pm + 
        \left(q_0^{\pm}\right)^2\Big(\sum_{k=1}^{\min{(n,\alpha)}} Q_{k}Av_{n-k}^{\pm} -  \sum_{i=1}^{n-1} q_i^{\pm} v_{n-i}^{\pm}\Big)
        + Q_0 A v_n^\pm\\
        &= \sum_{k=0}^{\min(n,\alpha)} Q_k A v_{n-k}^\pm.
    \end{align*}
    This concludes the proof of the lemma.
\end{proof}

Now we will prove that both $q^{\pm}$ and the entries of $v^{\pm}$ are indeed analytic around $z=0$. In what follows we let $N = |\mathcal{I}|$ so that the vectors $v_{n}^{\pm}$ are $N$-dimensional. We first prove an elementary lemma on a certain sequence of integers that will serve as an upper bound for the entries of $v_n$ and $q_n$.

\begin{lemma}
    \label{lem: sequence bound}
    Define the sequence $\{x_n\}_{n \geq 0}$ by $x_0 = 1$ and for $n \geq 1$
    \[
        x_n = N \cdot \Big(x_{n-1} + \sum_{i=1}^{n-1} x_i x_{n-i}\Big).
    \]
    Then $x_n \leq (6N^2)^n$.
\end{lemma}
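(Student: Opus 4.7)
My approach is to analyze the generating function $X(z) = \sum_{n \geq 0} x_n z^n$ associated to the recursion. Setting $Y(z) = X(z) - 1$ and multiplying the recursion by $z^n$ before summing over $n \geq 1$, the linear term yields $Nz \cdot X(z)$ while the convolution yields $N\cdot Y(z)^2$. Hence $Y$ satisfies the quadratic functional equation
\[
    NY^2 + (Nz-1)Y + Nz = 0,
\]
and the branch selected by $Y(0) = 0$ is
\[
    Y(z) = \frac{(1-Nz) - \sqrt{(1-Nz)^2 - 4N^2 z}}{2N}.
\]

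The next step is to show that $Y$ extends analytically across the point $z = 1/(6N^2)$, so that the Taylor series of $Y$ at the origin converges there. This reduces to checking that the discriminant $(1-Nz)^2 - 4N^2 z$ is positive at $z = 1/(6N^2)$. A direct computation gives
\[
    \left. (1-Nz)^2 - 4N^2 z \right|_{z = 1/(6N^2)} = \frac{(6N-1)^2}{36N^2} - \frac{2}{3} = \frac{12N^2 - 12N + 1}{36N^2},
\]
which is strictly positive for every $N \geq 1$. Consequently $\sum_{n \geq 1} x_n (6N^2)^{-n} = Y(1/(6N^2))$.

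Finally I evaluate this sum. Rationalizing the numerator gives
\[
    Y(1/(6N^2)) = \frac{(6N-1) - \sqrt{12N^2 - 12N + 1}}{12N^2} = \frac{2}{(6N - 1) + \sqrt{12N^2 - 12N + 1}} \leq \frac{2}{6N-1} \leq \frac{2}{5} < 1
\]
for all $N \geq 1$. Since each $x_n$ is a non-negative integer (an immediate induction from the recursion), every single term of the convergent sum of non-negative reals is bounded by the total, i.e.
\[
    \frac{x_n}{(6N^2)^n} \leq \sum_{k \geq 1} \frac{x_k}{(6N^2)^k} = Y(1/(6N^2)) < 1
\]
for every $n \geq 1$; the case $n = 0$ is trivial. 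This yields $x_n \leq (6N^2)^n$ as required.

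The argument is essentially algebraic and presents no real obstacle, but one conceptual point is worth highlighting: a direct induction proving $x_n \leq (6N^2)^n$ on the recursion fails, since the convolution term produces an unwanted factor of $n - 1$. The generating function circumvents this by letting us bound a single global quantity ($Y$ at a specific point) rather than fight the term-by-term blow-up, which is loose enough to succeed despite being weaker than the Catalan-type $n^{-3/2}$ asymptotics one would need for a direct coefficient analysis.
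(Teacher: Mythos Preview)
Your proof is correct and takes a genuinely different route from the paper's. The paper first reduces to $N=1$ via the substitution $y_n(N) = x_n/N^{2n}$, observing that $y_n(N)$ is a polynomial in $1/N$ with nonnegative coefficients and zero constant term, hence maximized at $N=1$. It then proves the sharper inequality $y_n(1) \le 6^n/(n+1)^2$ by brute-force computer verification for $n \le 199$ followed by an inductive splitting argument for $n \ge 200$. Your generating-function approach is cleaner: it replaces both the computer check and the delicate tail-splitting induction by a single evaluation of an explicit algebraic function at $z=1/(6N^2)$. The paper's route does yield the extra $(n+1)^{-2}$ decay, but that refinement is never used anywhere.

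One step deserves a sentence of tightening. You write that $Y$ ``extends analytically across the point $z = 1/(6N^2)$, so that the Taylor series of $Y$ at the origin converges there.'' Analyticity at a single real point does not by itself force convergence of the Taylor series; you need the radius of convergence to exceed $1/(6N^2)$, i.e.\ that the disk $|z|\le 1/(6N^2)$ contains no zero of the discriminant $D(z)=N^2z^2-(2N+4N^2)z+1$. This follows at once: $D(0)=1>0$ and you computed $D(1/(6N^2))>0$, so if $D$ vanished in $(0,1/(6N^2))$ it would have to vanish twice there, forcing the sum of the roots below $1/(3N^2)$; but that sum equals $2/N+4\ge 4$. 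Hence both (real, positive) roots exceed $1/(6N^2)$, $\sqrt{D}$ is analytic on the whole disk, and the series converges as claimed. Alternatively, since the $x_n$ are nonnegative, Pringsheim's theorem places the nearest singularity on the positive axis, where you have already checked $D>0$.
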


\begin{proof}
    Let $y_n(N) = x_n/N^{2n}$. We observe that $y_0(N) = 1$ and
    \[
        y_n(N) = \frac{1}{N} y_{n-1}(N) + N \sum_{i=1}^{n-1} y_i(N)y_{n-i}(N).
    \]
    It follows that $y_1(N)=1/N$ and inductively $y_{n}(N)$ is a polynomial in $1/N$ with positive coefficients and constant term equal to zero. We can conclude that $y_n(N) \leq y_n(1)$ and thus it remains to show that $y_n(1) \leq 6^n$ for all $n \geq 0$. 
    
    We denote $y_n(1)$ by $y_n$ and prove that
    \begin{equation}\label{strong inequality}
    y_n \le \frac{6^n}{(n+1)^2},
    \end{equation}
    which of course implies the desired inequality. Computer computations show that \eqref{strong inequality} is satisfied for $n=1,\ldots, 199$. Suppose that \eqref{strong inequality} is satisfied for all values $0, \ldots, n-1$ for some $n \ge 200$. We observe
    \[
    y_n = y_{n-1} + \sum_{i=1}^{n-1} y_i y_{n-i}  \leq y_{n-1} + 2\sum_{i=1}^{99} y_i y_{n-i} + 2 \sum_{i=100}^{\lfloor n/2\rfloor} y_i y_{n-i}.
    \]
    Using the induction hypothesis we find that 
    \[
        \frac{(n+1)^2}{6^n}\Big(y_{n-1} + 2\sum_{i=1}^{99} y_i y_{n-i}\Big) \leq (n+1)^2 \Big(\frac{1}{6 n^2} + 2\sum_{i=1}^{99} \frac{y_i}{6^{i} (n+1-i)^2} \Big).
    \]
    The right-hand side is an explicit decreasing rational function in $n$ and thus upper bounded by the value obtained from plugging in $n=200$, yielding an upper bound of $0.87$. We also find 
    \[
    \frac{(n+1)^2}{6^n}\Big(2 \sum_{i=100}^{\lfloor n/2\rfloor} y_i y_{n-i}\Big) 
    \leq 2 \sum_{i=100}^{\lfloor n/2\rfloor} \left(\frac{n+1}{(i+1)(n+1-i)}\right)^2 \leq 8 \sum_{i=100}^\infty \frac{1}{(i+1)^2} \leq 0.08.
    \]
    Putting these two estimates together we conclude that $y_n \leq (0.87 + 0.08) \frac{6^n}{(n+1)^2} \leq \frac{6^n}{(n+1)^2}$.
\end{proof}

\begin{lemma}
    \label{lem: q bound}
    We have $|q^{\pm}_n| \leq N \cdot (6N^2)^n$ and $|(v^{\pm}_n)_S| \leq (6N^2)^n$ for all $n\geq 0$ and $S \in \mathcal{I}$.
\end{lemma}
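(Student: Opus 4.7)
The plan is to prove both bounds simultaneously by induction on $n$, using the recursion \eqref{eq: recusion eigenvectors} to set up an inequality on $V_n := \max_{S \in \mathcal{I}} |(v_n^{\pm})_S|$ that matches the defining recursion of the sequence $\{x_n\}$ from Lemma \ref{lem: sequence bound}; the desired estimate will then follow immediately from that lemma.

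First I would record the base case. Since $v_0^{\pm} = e_{\Seven} \pm e_{\Sodd}$, every entry lies in $\{-1,0,1\}$, so $V_0 = 1 = x_0$, and $|q_0^{\pm}| = 1 \leq N = N \cdot x_0$. For the inductive step, assume $V_m \leq x_m$ and $|q_m^{\pm}| \leq N x_m$ for all $m < n$. Apply $|q_0^{\pm}| = 1$ to the recursion, giving
\[
|(v_n^{\pm})_S| \;\leq\; \Bigl|\Bigl(\sum_{k=1}^{\min(n,\alpha)} Q_k A v_{n-k}^{\pm}\Bigr)_S\Bigr| + \sum_{i=1}^{n-1} |q_i^{\pm}|\,|(v_{n-i}^{\pm})_S|.
\]
The key observation for the first sum is that $Q_k$ is a diagonal 0/1 projection onto the coordinates $\{S : \|S\| = k\}$, so for any fixed $S$ exactly one value of $k$, namely $k = \|S\|$, can contribute. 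Combined with the fact that each row of the 0/1 matrix $A$ has at most $N$ ones (as all entries of $\mathcal{I}$ index columns), the first sum is bounded by $N V_{n-\|S\|} \leq N V_{n-1}$ (and is zero if $\|S\| = 0$ or $\|S\| > n$). The second sum is bounded by $\sum_{i=1}^{n-1} N V_i V_{n-i}$ using the inductive hypothesis on the $q_i^{\pm}$. Taking the maximum over $S$ yields
\[
V_n \;\leq\; N\Bigl(V_{n-1} + \sum_{i=1}^{n-1} V_i V_{n-i}\Bigr),
\]
which is exactly the recursion defining $x_n$. Induction then gives $V_n \leq x_n$, and Lemma \ref{lem: sequence bound} upgrades this to $V_n \leq (6N^2)^n$, proving the bound on the entries of $v_n^{\pm}$.

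Finally, since $q_n^{\pm} = e_{\Seven}^T A v_n^{\pm}$ is a sum of at most $N$ entries of $v_n^{\pm}$, we obtain $|q_n^{\pm}| \leq N V_n \leq N (6N^2)^n$, completing the inductive step and the proof. I do not expect a genuine obstacle here: the whole argument is essentially bookkeeping designed to line up with the sequence analyzed in Lemma \ref{lem: sequence bound}. The only small subtlety worth being careful about is the observation that the projectors $Q_k$ collapse the first sum to a single term per coordinate $S$ — without this, one would pick up an extra factor of $\alpha$ and the recursion for $V_n$ would no longer match that of $x_n$ exactly.
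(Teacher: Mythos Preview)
Your argument is correct and essentially identical to the paper's: both set up an induction comparing the entries of $v_n^{\pm}$ (and then $q_n^{\pm}$) with the sequence $x_n$ of Lemma~\ref{lem: sequence bound}, the only cosmetic difference being that the paper works in vector notation $|v_n^{\pm}| \leq x_n \mathds{1}$ while you track the scalar $V_n = \max_S |(v_n^{\pm})_S|$. One small slip to clean up: the step ``$N V_{n-\|S\|} \leq N V_{n-1}$'' is unjustified since you have not shown $V$ is monotone; replace it with $N V_{n-\|S\|} \leq N x_{n-\|S\|} \leq N x_{n-1}$ using the inductive hypothesis and the obvious monotonicity of $x_n$, and everything goes through unchanged.
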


\begin{proof}
For a vector $v$ let $|v|$ denote the vector whose entries are the magnitudes of the entries of $v$. For two vectors $v_1,v_2$ we write $v_1 \leq v_2$ if the inequality holds entrywise. We let $\mathds{1}$ denote the $N$-dimensional vector whose entries are all equal to $1$. We inductively prove that $|v^{\pm}_n| \leq x_n \cdot \mathds{1}$ and $|q_n^\pm| \leq N \cdot x_n$, where $x_n$ is defined as in Lemma~\ref{lem: sequence bound}. This is sufficient by the bound proved in that lemma. 

For $n=0$ this follows by definition. For larger $n$ we use the recursion in equation~(\ref{eq: recusion eigenvectors}) to obtain
\begin{align*}
    |v_n^{\pm}| &\leq \sum_{k=1}^{\min{(n,\alpha)}} |Q_{k}Av_{n-k}^{\pm}| +  \sum_{i=1}^{n-1} |q_i^{\pm} v_{n-i}^{\pm}| \\
    & \leq x_{n-1} \sum_{k=1}^{\min{(n,\alpha)}} Q_{k}A\ \mathds{1} + \Big(\sum_{i=1}^{n-1}N x_i x_{n-i} \Big) \mathds{1}\\
    &\leq N \cdot \Big(x_{n-1} + \sum_{i=1}^{n-1} x_i x_{n-i}\Big)\cdot \mathds{1} = x_{n} \cdot \mathds{1}.
\end{align*}
We also obtain $|q_{n}^{\pm}| = |e_{\Seven}^T A v_n^{\pm}| \leq e_{\Seven}^T A\ \mathds{1} \cdot x_n \leq N \cdot x_n$.
\end{proof}

\begin{corollary}
    The functions $q^{\pm}$ and the entries of $v^{\pm}$ define holomorphic functions in a disk of radius $1/(6N^2)$. On that disk they form two eigenvalue-eigenvector pairs.
\end{corollary}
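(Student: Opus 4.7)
The plan is to verify that the formal power series introduced in~\eqref{eq: power_series_eigensystem} are genuinely convergent on the disk of radius $1/(6N^2)$ and then promote the formal identity $M_z v^\pm(z) = q^\pm(z) v^\pm(z)$ to an identity between holomorphic functions on that disk.

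First I would appeal directly to the coefficient bounds proved in Lemma~\ref{lem: q bound}. Since $|q_n^\pm| \leq N\cdot (6N^2)^n$ for every $n \geq 0$, the root test gives
\[
\limsup_{n \to \infty} |q_n^\pm|^{1/n} \leq \lim_{n \to \infty} N^{1/n} \cdot (6N^2) = 6N^2,
\]
so the radius of convergence of the series defining $q^\pm(z)$ is at least $1/(6N^2)$. An entirely analogous computation for each coordinate, using $|(v_n^\pm)_S| \leq (6N^2)^n$, shows that each of the $N$ scalar series defining the entries of $v^\pm(z)$ also has radius of convergence at least $1/(6N^2)$. Hence $q^\pm$ and the components of $v^\pm$ are holomorphic on the disk $|z|<1/(6N^2)$.

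Next I would extend the formal identity
\[
M_z v^\pm(z) = q^\pm(z) v^\pm(z)
\]
to an actual identity of holomorphic vector-valued functions. The entries of $M_z$ are polynomials in $z$ (in fact monomials, since $M_z = \hat{D}_z A$ has entries of the form $z^{\|S\|} A_{S,T}$), so both sides are honest convergent power series on the disk in question. The proof that these two power series are equal was already done coefficient-by-coefficient in the preceding lemma, so by uniqueness of power series expansions the equality of coefficients translates into an equality of the corresponding holomorphic functions on $|z| < 1/(6N^2)$. In particular, for every $z$ in this disk the vector $v^\pm(z)$ is an eigenvector of $M_z$ with eigenvalue $q^\pm(z)$, provided $v^\pm(z) \ne 0$; but $v_0^\pm = e_{\Seven} \pm e_{\Sodd} \ne 0$, so by continuity $v^\pm(z) \ne 0$ on a possibly smaller disk, and in any case the eigenvalue-eigenvector identity is a valid holomorphic identity on the full disk.

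There is no serious obstacle here: the entire corollary is a routine consequence of the a priori coefficient bounds from Lemma~\ref{lem: q bound} combined with the already-verified formal identity. The only minor point to be careful about is noting that $v^\pm(z)$ does not identically vanish, which is immediate from the value at $z=0$.
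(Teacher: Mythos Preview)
Your proposal is correct and is exactly the argument the paper intends: the corollary is stated without proof because it follows immediately from the coefficient bounds of Lemma~\ref{lem: q bound} via the root test, together with the formal identity $M_z v^\pm(z) = q^\pm(z) v^\pm(z)$ established just before. The only addition you make is the explicit remark that $v^\pm(0)\neq 0$, which is a reasonable detail to include.
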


\subsubsection[The sum q+ + q-]{The sum $q^+ + q^-$}
Define $u_n = \frac{1}{2}(v_{n}^{+} + v_n^-)$, $a_n = \frac{1}{2}(q_n^+ + q_n^-)$ and $b_n = \frac{1}{2}(q_n^+ - q_n^-)$. Our goal is to show that $a_n=0$ for $n=0,\dots,\alpha-1$ and $a_\alpha > 0$.
We will start by deriving a useful recurrence for the $u_n$.

\begin{lemma}
    \label{lem: recursion u_n}
    Let $\sigma \in \mathrm{Aut}(\mathcal{T})$ be an odd permutation. Then for all $n \geq 1$
    \[
        u_n = \sum_{k=1}^{\min{(n,\alpha)}} Q_{k}Au_{n-k}^{\sigma} -  \sum_{i=1}^{n-1} \left(a_i u_{n-i}^{\sigma} + b_i u_{n-i}\right),
    \]
    moreover,
    \[
        a_n = e_{\Seven}^T A u_n
        \quad
        \text{ and }
        \quad 
        b_n = e_{\Sodd}^T A u_n.
    \]
\end{lemma}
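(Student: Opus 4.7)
My plan is to derive both identities directly from the defining recursion~\eqref{eq: recusion eigenvectors} for $v_n^{\pm}$ and $q_n^{\pm}$, using the symmetry afforded by Lemma~\ref{lem: symmetry eigenvectors} to rewrite half-sums and half-differences of the $v_n^{\pm}$ in terms of $u_n$ alone.

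The first step is to introduce $w_n := \tfrac{1}{2}(v_n^{+} - v_n^{-})$ as an auxiliary vector and observe that, by Lemma~\ref{lem: symmetry eigenvectors} applied to any odd $\sigma \in \Aut(\mathcal{T})$, we have $P_\sigma v_n^{\pm} = \pm v_n^{\pm}$, and hence
\[
    P_\sigma u_n = \tfrac{1}{2}(v_n^{+} - v_n^{-}) = w_n,
\]
i.e.\ $u_n^{\sigma} = w_n$. In particular $u_{n-k}^{\sigma}$ can always be replaced by $w_{n-k}$ in computations, and vice versa.

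Next I would take the recursion~\eqref{eq: recusion eigenvectors}, use $q_0^{+} = 1$ and $q_0^{-} = -1$ to write
\[
    v_n^{+} = \sum_{k=1}^{\min(n,\alpha)} Q_k A v_{n-k}^{+} - \sum_{i=1}^{n-1} q_i^{+} v_{n-i}^{+},
    \qquad
    v_n^{-} = -\sum_{k=1}^{\min(n,\alpha)} Q_k A v_{n-k}^{-} + \sum_{i=1}^{n-1} q_i^{-} v_{n-i}^{-},
\]
add these and divide by two. The first sum becomes $\sum_{k=1}^{\min(n,\alpha)} Q_k A w_{n-k} = \sum_{k=1}^{\min(n,\alpha)} Q_k A u_{n-k}^{\sigma}$, as desired. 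For the second sum I would substitute $q_i^{\pm} = a_i \pm b_i$ and $v_{n-i}^{\pm} = u_{n-i} \pm w_{n-i}$ and expand; the cross-terms cancel to give
\[
    \tfrac{1}{2}\bigl(q_i^{+} v_{n-i}^{+} - q_i^{-} v_{n-i}^{-}\bigr)
    = a_i w_{n-i} + b_i u_{n-i}
    = a_i u_{n-i}^{\sigma} + b_i u_{n-i},
\]
which is exactly the second bracket in the claimed recursion.

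For the scalar identities, the formula $a_n = e_{\Seven}^T A u_n$ is immediate from $q_n^{\pm} = e_{\Seven}^T A v_n^{\pm}$ and the definition of $a_n$. For $b_n$ I would compute
\[
    b_n = \tfrac{1}{2}(q_n^{+} - q_n^{-}) = e_{\Seven}^T A w_n = e_{\Seven}^T A P_\sigma u_n = e_{\Seven}^T P_\sigma A u_n = (P_{\sigma^{-1}} e_{\Seven})^T A u_n,
\]
using that $A$ commutes with $P_\sigma$. Since $\sigma$ is odd, so is $\sigma^{-1}$ (because $\epsilon$ is a homomorphism), hence $\Seven^{\sigma^{-1}} = \Sodd$ and $P_{\sigma^{-1}} e_{\Seven} = e_{\Sodd}$, yielding $b_n = e_{\Sodd}^T A u_n$. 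No step is genuinely hard; the only thing to be careful about is keeping track of the $P_\sigma$-action on the two distinguished ground-state vectors when converting between $u_n$, $w_n$, and $u_n^{\sigma}$.
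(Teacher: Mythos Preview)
Your proposal is correct and follows essentially the same route as the paper: both arguments set $u_n^\sigma=\tfrac12(v_n^+-v_n^-)$ (your $w_n$), add the two recursions~\eqref{eq: recusion eigenvectors} after absorbing the signs $q_0^\pm=\pm1$, and expand $\tfrac12(q_i^+v_{n-i}^+-q_i^-v_{n-i}^-)$ using $q_i^\pm=a_i\pm b_i$ and $v_{n-i}^\pm=u_{n-i}\pm u_{n-i}^\sigma$; the scalar identities are likewise obtained the same way, with your derivation of $b_n$ and the paper's $e_{\Seven}^T A u_n^\sigma = e_{\Seven}^T P_\sigma A P_{\sigma^{-1}} u_n^\sigma = e_{\Sodd}^T A u_n$ being minor rearrangements of one another.
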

\begin{proof}
It follows from Lemma~\ref{lem: symmetry eigenvectors} that $u_n^\sigma = \frac{1}{2}(v_n^+ - v_n^-)$ and thus $v_n^\pm = u_n \pm u_n^\sigma$. We similarly have $q_n^{\pm} = a_n \pm b_n$. We now use the recursion for $v_n^\pm$ defined in {(\ref{eq: recusion eigenvectors})} to get a recursion for $u_n$:
\begin{align*}
u_n &= \frac{1}{2}\Big[\Big(\sum_{k=1}^{\min{(n,\alpha)}} Q_{k}Av_{n-k}^{+} -  \sum_{i=1}^{n-1} q_i^{+} v_{n-i}^{+}\Big) - \Big(\sum_{k=1}^{\min{(n,\alpha)}} Q_{k}Av_{n-k}^{-} -  \sum_{i=1}^{n-1} q_i^{-} v_{n-i}^{-}\Big)\Big]\\
&= \sum_{k=1}^{\min{(n,\alpha)}} Q_{k}Au_{n-k}^{\sigma} -  \sum_{i=1}^{n-1} \frac{1}{2}\left(q_i^{+} v_{n-i}^{+}-q_i^{-} v_{n-i}^{-}\right).
\end{align*}
The claimed recursive formula for $u_n$ now follows from the following equality:
\[
\frac{1}{2}\left(q_i^{+} v_{n-i}^{+}-q_i^{-} v_{n-i}^{-}\right) = \frac{1}{2}\left[(a_i + b_i)(u_{n-i} + u_{n-i}^\sigma) - (a_i - b_i)(u_{n-i} - u_{n-i}^\sigma) \right] = a_i u_{n-i}^{\sigma} + b_i u_{n-i}.
\]
We use the part of equation {(\ref{eq: recusion eigenvectors})} that defines $q_n^\pm$ to observe that $a_n = \frac{1}{2}\left(e_{\Seven}^T A v_n^{+} + e_{\Seven}^T A v_n^{-}\right) = e_{\Seven}^T A u_n$ and $b_n = \frac{1}{2}\left(e_{\Seven}^T A v_n^{+} - e_{\Seven}^T A v_n^{-}\right) = e_{\Seven}^T A u_n^\sigma = e_{\Seven}^T P_\sigma A P_{\sigma^{-1}} u_n^\sigma = e_{\Sodd}^T A u_n$.
\end{proof}

The goal is to write the elements of $u_n$ as weighted paths of independent sets of $\torus$; see Lemma~\ref{lem: eigenvalue sum coefficients}. To make this formal we introduce some notation from formal language theory. 

For any set $F$ let $F^*$ denote the set of finite words of elements of $F$ (including the empty word denoted by $\emptyset_F$).  For $f \in F$ and $w \in F^*$ we use $f \in w$ to indicate that $f$ is a letter in the word $w$. For concatenation of two words $w_1,w_2 \in F^*$ we write $w_1 \cdot w_2$.

Let $\mathcal{I}_{\geq 1} = \{S \in \mathcal{I}: \|S\| \geq 1\}$. We define 
\[
    \mathcal{P} = \mathcal{I}_{\geq 1} \times  \mathbb{Z}_{\geq 1}^*.
\]
For $r \in \mathbb{Z}_{\geq 1}^*$ we let $\|r\|$ denote the sum of its entries with $\|\emptyset_{\mathbb{Z}_{\geq 1}}\| = 0$. For $p \in \mathcal{P}$ of the form $(S,r)$ we define the length and weight of $p$ respectively as 
\[
    \ell(p) = \|S\| + \|r\|
    \quad
    \text{ and }
    \quad
    W(p) = \prod_{n \in r} (-b_n).
\]
For an element $w \in \mathcal{P}^*$ we define 
\[
    \ell(w) = \sum_{p \in w} \ell(p)
    \quad
    \text{ and }
    \quad
    W(w) = \prod_{p \in w} W(p).
\]
An empty sum or product we treat as $0$ or $1$ respectively. 

Fix an odd $\sigma \in \mathrm{Aut}(\mathcal{T})$ with the property that $\sigma^2 = \text{id}$, (e.g. $(n_1,n_2, \dots, n_d) \mapsto (1-n_1,n_2,\dots, n_d)$). Define the subset $\mathcal{Q} \subseteq \mathcal{P}^*$ by 
\[
    \mathcal{Q} = \{(S_1, r_1) \cdots (S_m,r_m) \in \mathcal{P}^*: \Sodd \sim S_1 \text{ and } S_i^\sigma \sim S_{i+1} \text{ for all $i =1, \dots, m-1$} \} \cup \{\emptyset_{\mathcal{P}}\}.
\]

For any $S \in \mathcal{I}_{\geq 1}$ we let $\mathcal{Q}[S]$ denote the elements in $\mathcal{Q}$ that end in $(S,r)$ for some $r$. We let $\mathcal{Q}[\Seven] = \{\emptyset_\mathcal{P}\}$.
\begin{lemma}
    \label{lem: even comp -> length >= alpha}
    Let $S \in \mathcal{I}$ such that $S \sim \Seven$. For any $w \in \mathcal{Q}[S]$ we have $\ell(w) \geq \alpha$, moreover if $\ell(w) = \alpha$ then $W(w) = 1$.
\end{lemma}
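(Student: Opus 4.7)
The plan is to reduce the desired estimate on $\ell(w)$ to an elementary counting inequality on the even and odd parts of the independent sets $S_i$. Writing $w = (S_1, r_1) \cdots (S_m, r_m) \in \mathcal{Q}[S]$ with $S_m = S$, the definition gives $\ell(w) = \sum_{i=1}^m (\|S_i\| + \|r_i\|)$, and since $\|r_i\| \geq 0$ it suffices to prove $\sum_{i=1}^m \|S_i\| \geq \alpha$. Once this is done, equality $\ell(w) = \alpha$ will force both $\sum_i \|S_i\| = \alpha$ and $\|r_i\| = 0$ for every $i$, so each $r_i$ is the empty word and $W(w) = \prod_i W((S_i, r_i)) = 1$ as claimed. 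The degenerate cases are handled at once: $S = \Seven$ is excluded by the hypothesis $S \sim \Seven$, and $S = \Sodd$ gives $\mathcal{Q}[S] = \emptyset$ since $\Sodd \notin \mathcal{I}_{\geq 1}$; so we may assume $S \in \mathcal{I}_{\geq 1}$ and in particular $m \geq 1$.

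The core inequality I aim to prove is
\[
\sum_{i=1}^m |S_i| \leq (m-1)\alpha,
\]
which upon subtracting from $m\alpha$ gives $\sum_i \|S_i\| \geq \alpha$. To establish it, I will decompose each $S_i = (S_i \cap \Seven) \sqcup (S_i \cap \Sodd)$ and set $a_i := |S_i \cap \Seven|$ and $b_i := |S_i \cap \Sodd|$. The three defining conditions of $\mathcal{Q}[S]$ then translate into linear inequalities: $\Sodd \sim S_1$ gives $b_1 = 0$; $S_m \sim \Seven$ gives $a_m = 0$; and the chain condition $S_i^\sigma \cap S_{i+1} = \emptyset$, combined with the fact that $\sigma$ is an odd involution and therefore bijects $\Seven$ with $\Sodd$ (so that $|S_i^\sigma \cap \Seven| = b_i$ and $|S_i^\sigma \cap \Sodd| = a_i$), decouples into the two inequalities $b_i + a_{i+1} \leq \alpha$ and $a_i + b_{i+1} \leq \alpha$ for $1 \leq i \leq m-1$. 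Summing the inequality $a_i + b_{i+1} \leq \alpha$ over $i = 1, \ldots, m-1$ and reindexing gives $\sum_{i=1}^{m-1} a_i + \sum_{i=2}^{m} b_i \leq (m-1)\alpha$, and the boundary conditions $a_m = 0 = b_1$ turn the left-hand side into $\sum_{i=1}^m (a_i + b_i) = \sum_i |S_i|$, yielding the desired bound.

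The only nontrivial point is choosing the right combination of constraints in the summation step: a naive pairwise bound of the form $|S_i| + |S_{i+1}| \leq 2\alpha$, obtained by adding both half-constraints, only gives $\sum_i |S_i| \leq m\alpha$, which is one $\alpha$ too weak. The sharpness of the bound $(m-1)\alpha$ comes from summing only the single family $a_i + b_{i+1} \leq \alpha$, which telescopes cleanly against the boundary vanishings $b_1 = 0$ and $a_m = 0$ produced by the start- and end-point conditions of the chain. This is the step that actually exploits the orientation of the chain from $\Sodd$ to $\Seven$ rather than merely the alternating compatibility relations, and it is where I expect a first attempt to go wrong.
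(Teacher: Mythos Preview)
Your proof is correct, and it takes a cleaner, more aggregate route than the paper. The paper argues vertex by vertex: for each even vertex $v$ it tracks the pair $(\mathds{1}(v\in S_i),\mathds{1}(\sigma(v)\in S_i))$ along the chain, draws the transition diagram of admissible moves, and observes that the start condition $(*,0)$ and end condition $(0,*)$ force the pair to visit $(0,0)$ at least once more than $(1,1)$. Summing the resulting inequality $\sum_i[1-\mathds{1}(v\in S_i)-\mathds{1}(\sigma(v)\in S_i)]\geq 1$ over the $\alpha$ even vertices yields $\sum_i\|S_i\|\geq \alpha$. Your argument reaches the same inequality by passing immediately to the cardinalities $a_i=|S_i\cap\Seven|$, $b_i=|S_i\cap\Sodd|$, extracting the single family of constraints $a_i+b_{i+1}\leq\alpha$ from disjointness inside $\Sodd$, and telescoping against the endpoint vanishings $b_1=a_m=0$. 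This is genuinely more elementary: it avoids the transition-diagram case analysis entirely and isolates precisely which half of the compatibility constraint is doing the work. The paper's per-vertex argument gives slightly finer local information, but none of it is used later, so nothing is lost by your shortcut.
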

\begin{proof}
Because $\Seven$ is not compatible with itself $w$ is not the empty word and thus we can write $w = (S_1,r_1) \cdots (S_m,r_m)$. Let $v$ be a vertex of $\mathcal{T}$. If $v \in S_i$ for some $i$ then it follows from the requirement that $S_i^\sigma \sim S_{i+1}$ that $\sigma(v) \not \in S_{i+1}$. Applying this fact to $\sigma(v)$ and using that $\sigma^2$ is the identity we see that $\sigma(v) \in S_i$ implies that $v \not \in S_{i+1}$. The possible transitions for $(\mathds{1}(v \in S_i),\mathds{1}(\sigma(v) \in S_i))$ are thus given in the following diagram.
\[
\begin{tikzcd}
        & (1,1) \arrow[d,leftrightarrow]    & \\
 (1,0) \arrow[r,leftrightarrow] \arrow[loop left]  & (0,0) \arrow[l,leftrightarrow] \arrow[r,leftrightarrow] \arrow[out=-140, in=-40, loop,distance=2em] & (0,1) \arrow[l,leftrightarrow] \arrow[loop right]
\end{tikzcd}
\]
Assume that $v$ is an even vertex. Because $S_1 \sim \Sodd$ we see that $(\mathds{1}(v \in S_1),\mathds{1}(\sigma(v) \in S_1))$ is of the form $(*,0)$. Similarly, because $S_m \sim \Seven$, we see that $(\mathds{1}(v \in S_m),\mathds{1}(\sigma(v) \in S_m))$ is of the form $(0,*)$. It thus follows that $(\mathds{1}(v \in S_i),\mathds{1}(\sigma(v) \in S_i))$ takes on the value $(0,0)$ at least once more often than it takes on the value $(1,1)$. From this we can conclude that 
\begin{equation}
    \label{eq: lower bound loss}
    \sum_{i=1}^m \left[1 - \mathds{1}(v \in S_i) - \mathds{1}(\sigma(v) \in S_i)\right] \geq 1.
\end{equation}
We now find that
\begin{align*}
    \ell(w) &= \sum_{i=1}^m \|S_i\| + \|r_i\|
            = \sum_{i=1}^m \Bigg[\sum_{\substack{v \in \mathcal{T}\\ v\text{ even}}} 1 - \mathds{1}(v \in S_i) - \mathds{1}(\sigma(v) \in S_i) \Bigg] + \sum_{i=1}^m \|r_i\|\\
            &\geq \alpha + \sum_{i=1}^m \|r_i\| \geq \alpha,
\end{align*}
where we interchanged the two summations and used {(\ref{eq: lower bound loss})}. We see that indeed $\ell(w) \geq \alpha$. Moreover, if $\ell(w) = \alpha$ then the final two inequalities must be equalities and thus $r_i = \emptyset_{\mathbb{Z}_{\geq 1}}$ for all $i$, which implies that $W(w) = 1$. 
\end{proof}

For any $n \geq 0$ and $S \in \mathcal{I}$ define 
\[
    \mathcal{Q}_n[S] = \{p \in \mathcal{Q}[S]: \ell(p) = n\}.
\]

\begin{lemma}
    \label{lem: eigenvalue sum coefficients}
Let $0 \leq n \leq \alpha$ and $S \in \mathcal{I}$. Then 
\begin{equation}
    \label{eq: entries as weights}
  e_S^T u_n = \sum_{w \in \mathcal{Q}_n[S]} W(w).
\end{equation}
Moreover, if $n \neq \alpha$ then $a_n = 0$, while $a_\alpha \geq 1$.
\end{lemma}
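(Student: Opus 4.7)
The plan is to prove both claims by a joint strong induction on $n \in \{0, 1, \ldots, \alpha\}$: at each step we first verify the formula $e_S^T u_n = F_n(S)$, writing $F_n(S) := \sum_{w \in \mathcal{Q}_n[S]} W(w)$ with the convention $\mathcal{Q}_n[\Sodd] := \emptyset$, and then, when $n < \alpha$, deduce $a_n = 0$. The base case $n=0$ is immediate: $u_0 = e_{\Seven}$, $\mathcal{Q}_0[\Seven] = \{\emptyset_{\mathcal{P}}\}$ has weight $1$, $\mathcal{Q}_0[S] = \emptyset$ for $S \neq \Seven$, and $a_0 = \tfrac{1}{2}(q_0^+ + q_0^-) = 0$.

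For the inductive step at $1 \le n \le \alpha$, the inductive vanishing $a_i = 0$ for $1 \le i < n$ collapses the recursion of Lemma~\ref{lem: recursion u_n} to
\[
e_S^T u_n = \mathds{1}\{1 \le \|S\| \le n\}\sum_{S' \sim S^\sigma} e_{S'}^T u_{n-\|S\|} - \sum_{i=1}^{n-1} b_i\, e_S^T u_{n-i},
\]
after using $e_S^T u_{n'}^\sigma = e_{S^\sigma}^T u_{n'}$ and the change of variables $S' \mapsto S'^\sigma$ (which is legitimate since $\sigma^2 = \mathrm{id}$). On the combinatorial side, any $w \in \mathcal{Q}_n[S]$ with $S \in \mathcal{I}_{\geq 1}$ has a final letter $(S,r)$; separating the case $r = \emptyset_{\mathbb{Z}_{\geq 1}}$ (where the prefix runs over $\mathcal{Q}_{n-\|S\|}[S']$ with $S'^\sigma \sim S$, allowing $S' = \Seven$ as the empty prefix, which is responsible for single-letter paths requiring $S \sim \Sodd$) from the case $r \neq \emptyset_{\mathbb{Z}_{\geq 1}}$ (where peeling off the first entry $i \geq 1$ of $r$ at cost $-b_i$ leaves a shorter path in $\mathcal{Q}_{n-i}[S]$) yields
\[
F_n(S) = \sum_{S' \sim S^\sigma} F_{n-\|S\|}(S') - \sum_{i=1}^{n-1} b_i\, F_{n-i}(S),
\]
while the equalities $F_n(\Seven) = F_n(\Sodd) = 0$ for $n \geq 1$ handle the degenerate case $\|S\| = 0$. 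Applying the inductive hypothesis to both recursions term by term gives $e_S^T u_n = F_n(S)$.

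With the formula in hand at step $n$, we compute $a_n = e_{\Seven}^T A u_n = \sum_{S \sim \Seven} F_n(S)$. The contribution from $S = \Sodd$ vanishes by convention, and every other $S \sim \Seven$ lies in $\mathcal{I}_{\geq 1}$ (since the only independent sets of size $\alpha$ are $\Seven$ and $\Sodd$, and $\Seven$ is not compatible with itself); Lemma~\ref{lem: even comp -> length >= alpha} then forces $\ell(w) \geq \alpha$ for every $w \in \mathcal{Q}_n[S]$, so $\mathcal{Q}_n[S] = \emptyset$ when $n < \alpha$ and therefore $a_n = 0$, closing the induction. Finally, at $n = \alpha$ the same lemma forces $W(w) = 1$ for every contributing path, so $a_\alpha$ equals the number of valid paths; taking $S = \emptyset \in \mathcal{I}$, which is compatible with both $\Seven$ and $\Sodd$ and satisfies $\|\emptyset\| = \alpha$, the single-letter word $(\emptyset, \emptyset_{\mathbb{Z}_{\geq 1}})$ lies in $\mathcal{Q}_\alpha[\emptyset]$ with weight $1$, giving $a_\alpha \geq 1$.

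The main delicacy is the circular interplay between the two claims: the combinatorial recursion for $F_n$ carries only $b_i$ corrections, whereas Lemma~\ref{lem: recursion u_n} also features $a_i$ correction terms; their cancellation at indices $i < n$ is precisely supplied by the inductive vanishing $a_i = 0$, and is what makes the two recursions agree. This mutual dependence is what forces the joint induction rather than proving the two statements separately.
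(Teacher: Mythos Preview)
Your proof is correct and follows essentially the same joint-induction strategy as the paper: establish $e_S^T u_n = F_n(S)$ at step $n$ using the inductive vanishing $a_i = 0$ for $i < n$ to collapse the $a_i u_{n-i}^\sigma$ terms in Lemma~\ref{lem: recursion u_n}, then deduce $a_n = 0$ (for $n < \alpha$) via Lemma~\ref{lem: even comp -> length >= alpha}. The only cosmetic differences are that you peel off the \emph{first} entry of $r$ rather than the last (the paper uses the operation $w \oplus k$ appending at the end), and you make the convention $\mathcal{Q}_n[\Sodd] = \emptyset$ explicit where the paper leaves it implicit; both are harmless since $W$ is multiplicative over the entries of $r$ and membership in $\mathcal{Q}$ depends only on the $S_i$-sequence.
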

\begin{proof}
By definition $a_0 = 0$ and $u_0 = e_{\Seven}$. Moreover, $\mathcal{Q}_0[S]$ is non-empty only if $S=\Seven$ in which case it consists of the empty word. Therefore we see that for $n=0$ both sides of equation {(\ref{eq: entries as weights})} are equal to $1$ if $S=\Seven$ and equal to $0$ otherwise.

We will now prove the statement inductively, i.e. we let $1\leq n \leq \alpha$ and we assume that for all values $k < n$ both {(\ref{eq: entries as weights})} holds and $a_k=0$. 

First suppose that either $\|S\| = 0$ or $\|S\| > n$. Then it follows that $\mathcal{Q}_n[S]$ is empty and thus the right-hand side of {(\ref{eq: entries as weights})} is equal to $0$. Because $e_S^T Q_k = 0$ for $k \neq \|S\|$ we inductively obtain by Lemma~\ref{lem: recursion u_n} that in this case indeed the left-hand side is equal to
\[
    e_S^T u_n  = - \sum_{i=1}^{n-1} b_i e_S^T u_{n-i} = 0.
\]

Now suppose $1 \leq \|S\| \leq n$. We inductively find that the left-hand side of {(\ref{eq: entries as weights})} is equal to 
\begin{align*}
    e_S^T u_n &= e_S^T A u_{n-\|S\|}^\sigma - \sum_{i=1}^{n-1} b_i e_S^T u_{n-i}\\
    &= \sum_{\substack{X \in \mathcal{I} \\ X^\sigma \sim S}} e_X^T u_{n-\|S\|} + \sum_{i=1}^{n-1} (-b_i) e_S^T u_{n-i}\\
    &= \sum_{\substack{X \in \mathcal{I} \\ X^\sigma \sim S}} \sum_{w \in \mathcal{Q}_{n-\|S\|}[X]} W(w) + \sum_{i=1}^{n-1} \sum_{w \in \mathcal{Q}_{n-i}[S]} (-b_i) W(w).
\end{align*}
For any $T \in \mathcal{I}$, $k \in \mathbb{Z}_{\geq 1}$ and $i \in \mathbb{Z}_{\geq 1}$ let $\mathcal{Q}_i[T,k]$ be those elements of $\mathcal{Q}_i[T]$ ending in $(T,r)$ with $r$ ending in $k$. Moreover, let $\mathcal{Q}_i[T,0]$ denote those elements ending in $(T,\emptyset_{\mathbb{Z}_{\geq 1}})$. For $w \in \mathcal{Q}_i[T]$ we can write $w = w'\cdot(T,r)$ for some $r$. We let $w \oplus k$ denote the element $w'\cdot(T,r \cdot k) \in \mathcal{Q}_{i+k}[T]$. We have
\[
\sum_{w \in \mathcal{Q}_n[S,0]} W(w)= \sum_{\substack{X \in \mathcal{I} \\ X^\sigma \sim S}} \sum_{w' \in \mathcal{Q}_{n-\|S\|}[X]} W(w' \cdot (S,\emptyset)) = \sum_{\substack{X \in \mathcal{I} \\ X^\sigma \sim S}} \sum_{w' \in \mathcal{Q}_{n-\|S\|}[X]} W(w').
\]
While, if $i \in \{1, \dots, n-1\}$ we have 
\[
\sum_{w \in \mathcal{Q}_n[S,i]} W(w)= \sum_{w' \in \mathcal{Q}_{n-i}[S]} W(w \oplus i) = (-b_i) \cdot \sum_{w' \in \mathcal{Q}_{n-i}[S]} W(w).
\]
We thus have 
\[
    \sum_{w \in \mathcal{Q}_n[S]} W(w) = \sum_{i=0}^{n-1} \sum_{w \in \mathcal{Q}_n[S,i]} W(w) = \sum_{\substack{X \in \mathcal{I} \\ X^\sigma \sim S}} \sum_{w \in \mathcal{Q}_{n-\|S\|}[X]} W(w) + \sum_{i=1}^{n-1} \sum_{w \in \mathcal{Q}_{n-i}[S]} (-b_i) W(w),
\]
which proves equality {(\ref{eq: entries as weights})}.

We now have to show that $a_n = 0$ if $n < \alpha$ and $a_\alpha \geq 1$. It follows from Lemma~\ref{lem: recursion u_n} that 
\[
    a_n = e_{\Seven}^T A u_n = \sum_{\substack{S \in \mathcal{I}\\S \sim \Seven}} e_{S}^T u_n = \sum_{\substack{S \in \mathcal{I}\\S \sim \Seven}} \sum_{w \in \mathcal{Q}_n[S]} W(w).
\]
In Lemma~\ref{lem: even comp -> length >= alpha} it is shown that if $S \sim \Seven$ and $w \in Q_{n}[S]$, then $n \geq \alpha$. This shows that $a_n = 0$ for $n < \alpha$. Moreover, if $n=\alpha$ the lemma states that $W(w) = 1$. This shows that $a_n \geq 0$. Because $\emptyset$ is compatible with both $\Seven$ and $\Sodd$ we see that $(\emptyset,\emptyset_{\mathcal{Z}_{\geq 1}}) \in Q_\alpha[\emptyset]$ and thus we can conclude that $a_\alpha \geq 1$.
\end{proof}

\subsubsection{The other eigenvalues}
In this section we study the other eigenvalues of the transfer matrix $M_z$, i.e. those not equal to $q^{\pm}(z)$. Recall from Section~\ref{sec: cons_width_tori} that $M_z = \hat{D}_z A$, where $A$ is the compatibility matrix of the independent sets of $\mathcal{T}$ and $\hat{D}_z$ is a diagonal matrix with $(\hat{D}_z)_{S,S} = z^{\|S\|}$. In this section it will be more convenient to look at the symmetric transfer-matrix $\hat{M}_z = D_{z^{1/2}} A D_{z^{1/2}}$, where (for now) we make an arbitrary choice of $z^{1/2}$ for each $z$. The symmetric transfer-matrix $\hat{M}_z$ is conjugate to $M_z$ and thus has the same eigenvalues.

Recall that the matrix $\hat{M}_z$ is $N$-dimensional. For this section we order the indices of the $N$-dimensional vectors, indexed by elements of $\mathcal{I}$, in such a way that $\Seven$ and $\Sodd$ correspond to the final two coordinates. The $2\times2$ submatrix of $\hat{M}_z$ induced by the final two coordinates therefore has $0$s on the diagonal and $1$s on the off diagonal. Every other non-zero entry of $\hat{M}_z$ is a strictly positive power of $z^{1/2}$.

For $\epsilon>0$ we define the forward and backward cones $C^+(\epsilon)$ and $C^-(\epsilon)$ by
$$
C^+(\epsilon) = \{(v_1, \ldots, v_N) \in \mathbb C^N \; : \; \|(v_1, \ldots, v_{N-2})\|_1 \le \epsilon \cdot \|(v_{N-1}, v_N)\|_1\}
$$
and
$$
C^-(\epsilon) = \{(v_1, \ldots, v_N) \in \mathbb C^N \; : \; \epsilon \cdot \|(v_1, \ldots, v_{N-2})\|_1 \ge \|(v_{N-1}, v_N)\|_1\}
$$
For $\epsilon < 1$ these two cones intersect only in the origin.
\begin{lemma}
The symmetric transfer-matrix $\hat{M}_z$ maps $\mathbb C^N \setminus C^-(\epsilon)$ into $C^+(\epsilon)$ whenever
$$
|z|<\frac{\epsilon^4}{N^2(1+\epsilon)^2}.
$$
\end{lemma}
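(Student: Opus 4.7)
The plan is to split every vector $v \in \mathbb{C}^N$ as $v = (u, w)$, where $u \in \mathbb{C}^{N-2}$ collects the coordinates indexed by non-maximum independent sets and $w \in \mathbb{C}^2$ collects the two coordinates for $\Seven$ and $\Sodd$. Writing $\hat{M}_z$ in block form
$$
\hat{M}_z = \begin{pmatrix} B_{11} & B_{12} \\ B_{21} & B_{22} \end{pmatrix},
$$
the observation just before the lemma tells us that $B_{22}$ is the swap matrix (so in particular $\|B_{22}w\|_1 = \|w\|_1$), while every nonzero entry of $B_{11}$, $B_{12}$, $B_{21}$ is of the form $z^{k/2}$ with integer $k \geq 1$, hence bounded in modulus by $|z|^{1/2}$ provided $|z| \leq 1$.

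I would then translate this into $\ell^1$ operator-norm bounds. Each column of $B_{11}$ or $B_{12}$ has at most $N-2$ nonzero entries, and each column of $B_{21}$ has at most $2$, yielding $\|B_{11}\|_1,\, \|B_{12}\|_1 \leq (N-2)|z|^{1/2}$ and $\|B_{21}\|_1 \leq 2|z|^{1/2}$. Applying these to $(u',w') := \hat{M}_z(u,w) = (B_{11}u + B_{12}w,\; B_{21}u + B_{22}w)$ gives the two key estimates
$$
\|u'\|_1 \leq (N-2)|z|^{1/2}\bigl(\|u\|_1 + \|w\|_1\bigr), \qquad \|w'\|_1 \geq \|w\|_1 - 2|z|^{1/2}\|u\|_1.
$$

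The final step is to feed in the hypothesis $v \notin C^-(\epsilon)$, i.e.\ $\|w\|_1 > \epsilon\|u\|_1$ (the edge case $w = 0$ forces $v = 0$, which lies trivially in $C^+(\epsilon)$). Substituting $\|u\|_1 < \|w\|_1/\epsilon$ into the two bounds and chasing through the algebra reduces the desired inequality $\|u'\|_1 \leq \epsilon\|w'\|_1$ to the scalar condition $|z|^{1/2}\bigl[N(1+\epsilon) - 2\bigr] \leq \epsilon^2$, which is in turn implied by the hypothesis $|z| < \epsilon^4/(N^2(1+\epsilon)^2)$. The only mild obstacle is the bookkeeping of the two sources of loss — the block $B_{21}u$ erodes the lower bound on $\|w'\|_1$ while the block $B_{11}u$ contributes to $\|u'\|_1$ — together with the observation that $|z| \leq 1$ (needed for the uniform entrywise estimate) is automatic whenever $\epsilon$ lies in the regime of interest, since for the cones to be essentially disjoint one already needs $\epsilon < 1$ and then $\epsilon^4/(N^2(1+\epsilon)^2) < 1$.
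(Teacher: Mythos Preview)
Your proof is correct and follows essentially the same approach as the paper: the paper derives the identical estimates $\|u'\|_1 \leq (N-2)|z|^{1/2}(\|u\|_1+\|w\|_1)$ and $\|w'\|_1 \geq \|w\|_1 - 2|z|^{1/2}\|u\|_1$ (written componentwise rather than in block form), substitutes $\|u\|_1 < \|w\|_1/\epsilon$, and reduces to the same scalar inequality. Your explicit remark that the entrywise bound $|z^{k/2}| \leq |z|^{1/2}$ requires $|z|\leq 1$ is a point the paper leaves implicit.
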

\begin{proof}
    Let $v = (v_1, \ldots, v_N) \in \mathbb C^N \setminus C^-(\epsilon)$ and write $\hat{M}_z v = w = (w_1, \ldots, w_N)$. It follows that
    \begin{alignat*}{2}
    \|(w_1, \dots, w_{N-2})\|_1 \le & (N-2) \cdot \max_{j \le N-2} |w_j|
    &&\le  (N-2) |z|^{\frac{1}{2}}\cdot \sum_{i=1}^N |v_i|\\
    = & (N-2) |z|^{\frac{1}{2}} \|v\|_1
    &&\le  (N-2) |z|^{\frac{1}{2}} \frac{\epsilon+1}{\epsilon} \|(v_{N-1}, v_N)\|_1.
    \end{alignat*}
    On the other hand
    \begin{align*}
    \|(w_{N-1},w_N)\|_1 &\ge |v_{N-1}| + |v_N| - 2 \sum_{i=1}^{N-2} |z|^{\frac{1}{2}} |v_i|\\
    &= \|(v_{N-1}, v_N)\|_1 - 2|z|^{\frac{1}{2}} \|(v_1, \ldots, v_{N-2})\|_1\\
    &\ge \left( 1 - \frac{2|z|^\frac{1}{2}}{\epsilon}\right) \|(v_{N-1}, v_N)\|_1.
    \end{align*}
    The inclusion $M_z (\mathbb C^N\setminus C^-(\epsilon) )\subset C^+(\epsilon)$ is therefore satisfied whenever
    $$
    \epsilon\Big(1 - \frac{2|z|^\frac{1}{2}}{\epsilon}\Big) \ge (N-2) |z|^{\frac{1}{2}} \frac{\epsilon+1}{\epsilon},
    $$
    which is satisfied whenever
    $$
    |z| \le \frac{\epsilon^4}{N^2(1+\epsilon)^2}.
    $$
\end{proof}

From now on we fix $\epsilon = \frac{1}{3}$ so that the forward and backward cones $C^+(\frac{1}{3})$ and $C^-(\frac{1}{3})$ are forward respectively backward invariant whenever $|z|< \frac{1}{144 N^2}$.

\begin{corollary}
For $|z|< \frac{1}{144 N^2}$ the two eigenvectors $\hat{v}^+(z)$ and $\hat{v}^-(z)$ of $\hat{M}_z$ corresponding to the maximal eigenvalues $q^+(z)$ and $q^-(z)$ are contained in $C^+(\frac{1}{3})$, while all other (generalized) eigenvectors are contained in $C^-(\frac{1}{3})$.
\end{corollary}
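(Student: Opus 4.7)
The key is to combine the forward invariance from the previous lemma with a continuity argument starting at $z = 0$. Since $\epsilon = 1/3$, the cones $C^\pm(1/3)$ are closed, scale-invariant, and satisfy $C^+(1/3) \cap C^-(1/3) = \{0\}$. Any eigenvector $\hat v$ of $\hat M_z$ with nonzero eigenvalue $\mu$ then satisfies the following dichotomy: either $\hat v \in C^-(1/3)$, or $\hat v \notin C^-(1/3)$, in which case $\mu \hat v = \hat M_z \hat v \in C^+(1/3)$ by the previous lemma, forcing $\hat v \in C^+(1/3)$ by scale-invariance.

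For the dominant eigenvectors $\hat v^\pm(z)$: at $z = 0$, the eigenvectors are $e_{\Seven} \pm e_{\Sodd}$ (the only nonzero eigenvectors of $\hat M_0$), which lie strictly in the interior of $C^+(1/3)$ since their first $N-2$ coordinates vanish. Because $q^\pm(z)$ are simple eigenvalues that depend holomorphically on $z$ (via Lemma~\ref{lem: q definition}) and stay bounded away from any other eigenvalue on the disk $|z| < 1/(144 N^2)$, the eigenvectors $\hat v^\pm(z)$ can be chosen to depend holomorphically on $z$ as well. The dichotomy confines $\hat v^\pm(z)$ to the disjoint union $C^+(1/3) \sqcup C^-(1/3) \setminus \{0\}$; a continuous nowhere-vanishing curve that starts in the interior of $C^+(1/3)$ cannot jump to $C^-(1/3)$ without passing through $\{0\}$, so $\hat v^\pm(z) \in C^+(1/3)$ throughout the disk.

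For the remaining generalized eigenvectors, it is cleaner to work with the $\hat M_z$-invariant complement $V^-(z)$ of $V^+(z) := \mathrm{span}(\hat v^+(z), \hat v^-(z))$, i.e.\ the sum of all generalized eigenspaces for eigenvalues other than $q^\pm(z)$. At $z = 0$ we have $V^-(0) = \mathrm{span}\{e_S : S \notin \{\Seven, \Sodd\}\}$, which lies strictly in $C^-(1/3)$. Suppose a nonzero $v \in V^-(z)$ lies outside $C^-(1/3)$: the previous lemma forces $\hat M_z v \in C^+(1/3)$, and invariance of $V^-(z)$ places $\hat M_z v$ in $V^-(z) \cap C^+(1/3)$, producing an entire forward orbit inside this intersection. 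The block estimates already carried out in the proof of the previous lemma show that $\hat M_z$ approximately preserves the "top" $\ell^1$-mass on $C^+(1/3)$, so this orbit cannot decay; but $\hat M_z|_{V^-(z)}$ has spectral radius $O(|z|^{1/2})$, which is strictly less than $|q^\pm(z)| \approx 1$, forcing the orbit to decay. The contradiction pins every generalized eigenvector in $V^-(z)$ to $C^-(1/3)$.

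The main obstacle is justifying that $q^+(z)$ and $q^-(z)$ remain simple, dominant, and separated from the rest of the spectrum \emph{uniformly} on $|z| < 1/(144 N^2)$, since this is what legitimizes holomorphic extensions of the eigenvectors and of the spectral projection onto $V^-(z)$ that drive the continuity argument. The crude bound $|q_n^\pm| \le N(6N^2)^n$ from Lemma~\ref{lem: q bound} alone is too weak when $N$ is large, but the necessary separation can be extracted from the cone-contraction estimate itself: any eigenvector trapped in $C^+(1/3)$ has eigenvalue of modulus close to $1$ (as $\hat M_z$ acts almost by $E$ on the top block), while any eigenvector in $C^-(1/3)$ has eigenvalue of order at most $N|z|^{1/2}$, producing the needed spectral gap across the entire disk.
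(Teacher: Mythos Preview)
Your plan follows the same continuity-from-$z=0$ idea as the paper, whose proof is the single line ``continuity of the set of eigenvectors of $\hat M_{\sqrt{x}z^{1/2}}$.'' You add two genuine ingredients the paper omits: the dichotomy (an eigenvector with nonzero eigenvalue must lie in $C^+\cup C^-$, since if it is not in $C^-$ the previous lemma sends it into $C^+$) and the forward-orbit contradiction for generalized eigenvectors in $V^-(z)$. These buy rigor---the paper never explains why \emph{generalized} eigenvectors end up in $C^-$. Two wrinkles to straighten. First, $\hat M_z$ involves a choice of $z^{1/2}$, so speaking of $\hat v^\pm(z)$ as holomorphic in $z$ requires fixing a branch; the paper instead runs continuity along the real path $x\mapsto \hat M_{\sqrt x\,z^{1/2}}$ for fixed $z$, which avoids the issue. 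Second, your plan invokes the spectral gap (for eigenvector holomorphy and for the spectral-radius bound on $V^-$) before establishing it, which reads circularly. The clean order is: your dichotomy plus the cone estimates (these are exactly the computations in the paper's \emph{next} lemma) show that every nonzero eigenvalue satisfies $|s|\ge 17/18$ or $|s|\le 1/36$; then continuity of the unordered set of \emph{eigenvalues} (always available, no simplicity required) pins exactly two of them---$q^\pm$---in the large class throughout the disk, and only then do the eigenvector-in-$C^+$ and $V^-\subset C^-$ arguments go through without circularity.
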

\begin{proof}
    The statement clearly holds for $|z|$ sufficiently small. For any fixed $z$ the entries of the matrix $\hat{M}_{\sqrt{x} z^{1/2}}$ are continuous functions of $x$ for $x \in [0,1]$. The statement therefore follows for any $|z|< \frac{1}{144 N^2}$ from the previous lemma, using the continuity of the set of eigenvectors of $\hat{M}_{\sqrt{x} z^{1/2}}$.
\end{proof}

\begin{lemma}
    \label{lem: bound rest eigenvalues}
For $|z| < \frac{1}{144 N^2}$ the absolute values of the two eigenvalues $q^+(z)$ and $q^-(z)$ are at least twice as large as the absolute value of any other eigenvalue.
\end{lemma}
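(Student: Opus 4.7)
The plan is to use the cone invariance from the previous corollary to decouple the two dominant eigenvalues from the rest: $\hat v^+(z)$ and $\hat v^-(z)$ both lie in $C^+(1/3)$, while every other (generalized) eigenvector of $\hat M_z$ lies in $C^-(1/3)$. I will upper-bound the non-dominant eigenvalues using the $C^-$-cone and lower-bound $|q^\pm(z)|$ using the $C^+$-cone, then compare.

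For any eigenvalue $s\neq q^\pm(z)$, fix a corresponding eigenvector $w\in C^-(1/3)$ and decompose $w = u' + v'$ into its first $N-2$ coordinates (those indexed by $S$ with $\|S\|\geq 1$) and its last two coordinates $(a,b)$ at $S_{\mathrm{even}}, S_{\mathrm{odd}}$. Since $C^+\cap C^- = \{0\}$, necessarily $u'\neq 0$, and the cone condition reads $\|v'\|_1 \leq \tfrac{1}{3}\|u'\|_1$. For every row $i$ of $\hat M_z$ with $\|i\|\geq 1$, all entries $(\hat M_z)_{i,j}$ have magnitude at most $|z|^{1/2}\cdot|z|^{\|j\|/2}$, so the identity $sw_i = (\hat M_z w)_i$ gives $|s||w_i|\leq |z|^{1/2}\|v'\|_1 + |z|\|u'\|_1$. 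Summing over the $N-2$ such rows and dividing by $\|u'\|_1>0$ yields an estimate of the shape $|s| \leq (N-2)|z|^{1/2}(\tfrac{1}{3}+|z|^{1/2})$, which for $|z|<1/(144N^2)$ is easily smaller than $1/16$.

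For $q^\pm(z)$ the roles of the two coordinate blocks are swapped. After normalizing $\|\pi_2\hat v^\pm\|_1 = 1$, most of $\hat v^\pm$ sits on $(a^\pm,b^\pm) := \pi_2\hat v^\pm$, while the first $N-2$ coordinates contribute at most $1/3$. Projecting the eigenvalue equation onto the last two coordinates and using $\pi_2 \hat M_z \pi_2 = \bigl(\begin{smallmatrix}0&1\\1&0\end{smallmatrix}\bigr) =: B$ reduces the problem to a perturbed equation $(q^\pm I - B)(a^\pm,b^\pm) = \xi^\pm$ whose right-hand side has $\ell^1$-norm $O(|z|^{1/2})$. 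Taking sum and difference decouples this into two scalar equations for $(q^\pm - 1)(a^\pm + b^\pm)$ and $(q^\pm + 1)(a^\pm - b^\pm)$.

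The delicate step, and the main obstacle, is inverting the difference equation: to solve for $|a^+ - b^+|$ one needs an a priori bound that $|q^+ + 1|$ stays away from $0$. I would handle this by a bootstrap argument on the connected open disk $\mathbb D := \{|z|<1/(144N^2)\}$. Let $A^+ = \{z\in\mathbb D : |q^+(z) - 1|\leq 1/2\}$; this is closed in $\mathbb D$ by continuity of $q^+$ and contains $0$. On $A^+$, the inequality $|q^+ + 1|\geq 3/2$ makes the difference equation invertible and yields $|a^+ - b^+| = O(|z|^{1/2})$; the parallelogram identity $|a^+ + b^+|^2 + |a^+ - b^+|^2 \geq (|a^+|+|b^+|)^2 = 1$ then forces $|a^+ + b^+|$ close to $1$, and the sum equation upgrades to the stronger bound $|q^+(z) - 1|\leq |z|^{1/2} < 1/24 < 1/2$ strictly. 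This strict inequality makes $A^+$ open, so by connectedness $A^+ = \mathbb D$. The symmetric argument applied to $q^-$ gives $|q^-(z)+1|\leq |z|^{1/2}$ throughout $\mathbb D$. Combining, $|q^\pm(z)|\geq 1 - |z|^{1/2}\geq 23/24$ while $|s|\leq 1/16$, which comfortably gives the desired factor of two.
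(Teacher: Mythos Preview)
Your argument is correct, and the upper bound on the non-dominant eigenvalues via the $C^-$-cone is exactly what the paper does. The difference is entirely in the lower bound on $|q^\pm|$.

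The paper's argument there is much shorter and mirrors the upper bound: for an eigenvector $v\in C^+(1/3)$ with eigenvalue $q$, project $\hat M_z v = qv$ onto the \emph{last two} coordinates instead of the first $N-2$. The row structure of $\hat M_z$ gives
\[
|q|\,\|(v_{N-1},v_N)\|_1 \;\ge\; \|(v_{N-1},v_N)\|_1 - 2|z|^{1/2}\|(v_1,\dots,v_{N-2})\|_1 \;\ge\; \Bigl(1-\tfrac{2}{3}|z|^{1/2}\Bigr)\|(v_{N-1},v_N)\|_1,
\]
so $|q^\pm|\ge 1-\tfrac{1}{18N}\ge 17/18$ directly---no need to distinguish $q^+$ from $q^-$, no $2\times 2$ reduction, no bootstrap. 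Your route via the perturbed system $(q^\pm I - B)(a^\pm,b^\pm)=\xi^\pm$ and the open--closed argument on the disk is valid and in fact yields the stronger localization $|q^\pm(z)\mp 1|\le |z|^{1/2}$, but that extra precision is not needed for the lemma and costs real work. The observation you are missing is that cone membership alone already controls $|q^\pm|$ from below by the same $\ell_1$ mechanism you used for the upper bound, applied to the complementary coordinate block.
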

\begin{proof}
    Let us first write $v$ for one of the eigenvectors $\hat{v}^+(z)$ or $\hat{v}^-(z)$ of $\hat{M}_z$, and write $w = \hat{M}_z v$. Using that $v \in C^+(1/3)$ we obtain
    $$
    \|(w_{N-1}, w_N)\|_1 \ge \|(v_{N-1}, v_N)\|_1 - 2|z|^{\frac{1}{2}} \|(v_1, \ldots, v_{N-2})\|_1 = \|(v_{N-1}, v_N)\|_1 \cdot \left(1 - \tfrac{1}{18N}\right),
    $$
    which implies that $|q^+(z)|$ and $|q^-(z)|$ are bounded from below by $17/18$. 
    
    Now let $w = \hat{M}_z v$ for an eigenvector $v \in C^-(1/3)$. Then
    \begin{align*}
    \|(w_1, \ldots, w_{N-2})\|_1 &\le (N-2) \cdot \max_{j \le N-2} \|w_j\|\\
    &\le (N-2) \left( |z|^{\frac{1}{2}} \|(v_{N-1}, v_N)\|_1 + |z| \|(v_1,\ldots, v_{N-2})\|_1\right)\\
    &\le (N-2) \left(\tfrac{1}{36N} + \tfrac{1}{144 N^2} \right)\|(v_1,\ldots, v_{N-2})\|_1\\
    &\le \tfrac{1}{36} \|(v_1,\ldots, v_{N-2})\|_1.
    \end{align*}
    It follows that the corresponding eigenvalue is bounded above by $1/36$, which proves the statement for any $N \ge 1$.
\end{proof}

\subsubsection{Proof of the main theorem}

In this section we will again prove that zeros of $Z(C_n\square \mathcal{T};\lambda)$ accumulate at $\infty$, as is done in Lemma~\ref{lem: zer_acc_infty_basic}. Similar to the proof of that lemma, we use that $\frac{z^{n\alpha} \cdot Z(C_n \square \mathcal{T}; 1/z)}{q^+(z)^n} = 1 + q^{-}(z)/q^{+}(z) + \mathcal{O}(z)$. This culminates in a proof of Theorem~\ref{thm: zeros_at_inf}, which, as we showed in the beginning of this section, leads to a proof of the second part of the main theorem. We define $\beta(z) = q^{-}(z)/q^{+}(z)$.

\begin{lemma}
    \label{lem: image alpha}
    Suppose $z\in \mathbb{C}$ satisfies $|z| < \frac{1}{(6N^2)^{\alpha+2}}$ then $|\beta(z) + 1| \geq \frac{1}{2}|z|^\alpha$.
\end{lemma}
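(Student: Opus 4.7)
The plan is to write
\[
\beta(z) + 1 = \frac{q^{-}(z)+q^{+}(z)}{q^{+}(z)} = \frac{2\,a(z)}{q^{+}(z)},
\]
where $a(z) := \tfrac{1}{2}(q^{+}(z)+q^{-}(z)) = \sum_{n \geq 0} a_n z^n$ in the notation of Lemma~\ref{lem: eigenvalue sum coefficients}. From that lemma we already know $a_n = 0$ for all $n < \alpha$ and $a_\alpha \geq 1$, so $a(z) = a_\alpha z^\alpha + \sum_{n > \alpha} a_n z^n$. The strategy is therefore to bound the tail of $a(z)$ above (to get a lower bound on $|a(z)|$), bound $|q^{+}(z)|$ above, and divide.

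\textbf{Bounding the numerator.} Lemma~\ref{lem: q bound} gives $|q_n^{\pm}| \leq N(6N^2)^n$, hence $|a_n| \leq N(6N^2)^n$. Set $r := 6N^2|z|$. For $|z| \leq (6N^2)^{-(\alpha+2)}$ we have $r \leq (6N^2)^{-(\alpha+1)} \leq 1/2$, so a geometric series estimate yields
\[
\sum_{n > \alpha} |a_n|\,|z|^n \;\leq\; N \cdot \frac{r^{\alpha+1}}{1-r} \;\leq\; 2N r^{\alpha+1} \;=\; 2N(6N^2)^{\alpha+1}|z|\cdot|z|^\alpha \;\leq\; \frac{1}{3N}\,|z|^\alpha \;\leq\; \tfrac{1}{3}|z|^\alpha.
\]
Combined with $|a_\alpha z^\alpha| \geq |z|^\alpha$ this gives $|a(z)| \geq \tfrac{2}{3}|z|^\alpha$.

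\textbf{Bounding the denominator.} Since $q_0^{+} = 1$, the same estimates (with the even sharper fact that $6N^2|z|$ is extremely small in the relevant range) give
\[
|q^{+}(z)| \;\leq\; 1 + \sum_{n \geq 1} N r^n \;\leq\; 1 + 2Nr \;=\; 1 + 12N^3|z| \;\leq\; 1 + \frac{12N^3}{(6N^2)^{\alpha+2}} \;\leq\; 2,
\]
where the last step uses $\alpha \geq 1$.

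\textbf{Conclusion.} Combining the two bounds,
\[
|\beta(z)+1| \;=\; \frac{2|a(z)|}{|q^{+}(z)|} \;\geq\; \frac{2\cdot \tfrac{2}{3}|z|^\alpha}{2} \;=\; \tfrac{2}{3}|z|^\alpha \;\geq\; \tfrac{1}{2}|z|^\alpha,
\]
which is the desired inequality. There is no real obstacle here: all the genuine work has already been invested in Lemmas~\ref{lem: eigenvalue sum coefficients} and~\ref{lem: q bound}; the present lemma amounts to packaging the vanishing of $a_0,\dots,a_{\alpha-1}$ and the lower bound $a_\alpha\geq 1$ into a quantitative estimate using elementary geometric series bookkeeping.
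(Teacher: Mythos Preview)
Your proof is correct and follows essentially the same approach as the paper's: both write $\beta(z)+1=(q^+(z)+q^-(z))/q^+(z)$, use Lemma~\ref{lem: eigenvalue sum coefficients} for the vanishing of the low-order coefficients and the lower bound $a_\alpha\geq 1$, and then bound numerator and denominator via the coefficient estimate of Lemma~\ref{lem: q bound} together with a geometric series. The only differences are cosmetic (you get $|q^+(z)|\leq 2$ and $|a(z)|\geq \tfrac{2}{3}|z|^\alpha$, the paper gets $|q^+(z)|<\tfrac{3}{2}$ and the analogous factor $>\tfrac{3}{4}$), and both routes land on $|\beta(z)+1|\geq \tfrac{1}{2}|z|^\alpha$.
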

\begin{proof}
    We can assume that $z\neq 0$. We have 
    \begin{align*}
        |q^+(z)| = |1 + \sum_{n=1}^\infty q_n^+ z^n|
        &\leq 
        1  + \sum_{n=1}^\infty |q_n^+| |z|^n
        \leq
        1 + \sum_{n=1}^\infty N \cdot (6N^2)^n |z|^n \\
        &\leq
        1 + N \sum_{n=1}^\infty \Bigg(\frac{1}{(6N^2)^{\alpha+1}}\Bigg)^n
        =
        1  + \frac{N}{(6N^2)^{\alpha+1}-1} < \frac{3}{2},
    \end{align*}
    where we used Lemma~\ref{lem: q bound} for the bound on $|q_n^+|$.
    We now also have 
    \[
        |\beta(z) + 1| = \left|\frac{q^+(z)+q^{-}(z)}{q^+(z)}\right|
        \geq \frac{2}{3} \left|q^+(z)+q^{-}(z)\right| 
        = \frac{2}{3} |z|^\alpha \cdot \left|\frac{q^+(z)+q^{-}(z)}{z^\alpha}\right|.
    \]
    We now use Lemma~\ref{lem: eigenvalue sum coefficients}, which says that $q_n^++q_n^- = 0$ for $n < \alpha$, while $q_\alpha^+ + q_\alpha^- \geq 1$. We get
    \[
        \left|\frac{q^+(z)+q^{-}(z)}{z^\alpha}\right| \geq 1 - \sum_{n=1}^\infty
        (|q_{\alpha+n}^+| + |q_{\alpha+n}^-|) |z|^n
        \geq 1 - 2N \sum_{n=1}^\infty (6N^2)^{n+\alpha} |z|^n
        = 1 - \frac{2N (6N^2)^\alpha}{(6N^2)^{\alpha+1}-1} > \frac{3}{4}.
    \]
    We therefore find that $|\beta(z) + 1| > \frac{1}{2}|z|^\alpha$.
\end{proof}

The following is a purely geometric lemma that will be useful in the subsequent proof.

\begin{lemma}
    \label{lem: Sector in Disk}
    Let $0< \rho < 1$. The disk of radius $\rho$ around $-1$ contains the sector
    \[
        S_\rho = \{z \in \mathbb{C}: 1-\frac{1}{2}\rho \leq |z| \leq 1 + \frac{1}{2}\rho \text{ and } \pi-\frac{1}{5}\pi \rho \leq \arg(z) \leq \pi + \frac{1}{5}\pi \rho\}.
    \]
    Moreover, for an integer $n$ with $n \geq 40/\rho$ the sector $S_\rho$ contains at least $\frac{1}{8}n\rho + 2$ distinct $n$th roots of unity, i.e. $\zeta \in \mathbb{C}$ such that $\zeta^n = 1$.
\end{lemma}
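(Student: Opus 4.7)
The plan is to prove the two claims separately, both by direct computation.

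For the containment $S_\rho \subseteq \overline{B_\rho(-1)}$, I parametrize $z = re^{i(\pi + \phi)}$ with $r \in [1-\tfrac{\rho}{2}, 1+\tfrac{\rho}{2}]$ and $\phi \in [-\tfrac{\pi\rho}{5}, \tfrac{\pi\rho}{5}]$. Expanding yields
\[
|z+1|^2 = r^2 - 2r\cos\phi + 1 = (r-\cos\phi)^2 + \sin^2\phi = (2 \pm \rho)(1 - \cos\phi) + \tfrac{\rho^2}{4},
\]
where the $\pm$ corresponds to evaluating at the two endpoints $r = 1 \pm \rho/2$. (A quick check shows that the quadratic $r \mapsto r^2 - 2r\cos\phi + 1$ attains its maximum on this interval at one of the endpoints, and similarly the worst $\phi$ is $\pm \pi\rho/5$.) Using $1-\cos\phi \leq \phi^2/2$ I bound this above by $(2+\rho) \cdot \tfrac{\pi^2\rho^2}{50} + \tfrac{\rho^2}{4}$. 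For $\rho \in (0,1)$ this is at most $\tfrac{3\pi^2}{50}\rho^2 + \tfrac{\rho^2}{4} < \rho^2$, which gives $|z+1| < \rho$.

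For the root-of-unity count, the $n$th roots of unity lie on the unit circle, hence automatically satisfy $|\zeta| \in [1 - \rho/2, 1 + \rho/2]$. The root $\zeta_k = e^{2\pi i k/n}$ lies in $S_\rho$ iff (choosing $k \in \{0,\dots,n-1\}$) we have $2\pi k/n \in [\pi - \tfrac{\pi\rho}{5}, \pi + \tfrac{\pi\rho}{5}]$, equivalently $k \in [\tfrac{n}{2} - \tfrac{n\rho}{10},\ \tfrac{n}{2} + \tfrac{n\rho}{10}]$. This closed interval has length $n\rho/5$, and hence contains at least $\lfloor n\rho/5 \rfloor$ integers. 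When $n \geq 40/\rho$ we have $n\rho \geq 40$, so
\[
\left\lfloor \tfrac{n\rho}{5} \right\rfloor \geq \tfrac{n\rho}{5} - 1 = \tfrac{n\rho}{8} + \tfrac{3n\rho}{40} - 1 \geq \tfrac{n\rho}{8} + 2,
\]
giving the required count of distinct $n$th roots of unity in $S_\rho$.

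There is no real obstacle here; this is a clean elementary geometric estimate with constants chosen to leave slack. The only mild care point is verifying the inequality $(2+\rho)\tfrac{\pi^2}{50} + \tfrac{1}{4} \leq 1$ on $(0,1)$, which is immediate since $3\pi^2/50 \approx 0.59$ and $0.59 + 0.25 < 1$. The constants $\tfrac{1}{2}$ in the radial width and $\tfrac{1}{5}\pi$ in the angular width of $S_\rho$ are precisely tuned so that (a) the sector sits inside the disk, and (b) the linear angular measure $\rho/5$ beats the counting loss of $1$ with enough room to give the additive $+2$ after the $n\rho \geq 40$ threshold.
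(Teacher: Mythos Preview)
Your proof is correct and follows essentially the same approach as the paper's. For the containment you arrive at the identical numerical bound $\tfrac{3\pi^2}{50}+\tfrac14<1$ via the same $\cos\phi\ge 1-\phi^2/2$ estimate, just organizing the algebra slightly differently. For the root count the paper splits into even and odd $n$ before reaching the same inequality $\tfrac15 n\rho-1\ge \tfrac18 n\rho+2$; your unified interval argument is a mild simplification but otherwise identical in spirit and constants.
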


\begin{proof}
    Take $z \in {S}_\rho$. We can write $-z = r(\cos(\theta) + i\sin(\theta))$ for real values $r,\theta$ with $|1-r| \leq \frac{1}{2}\rho$ and $|\theta| \leq \frac{1}{5}\pi\rho$. We thus find
    \[
        |1-z|^2 = 1 - 2r\cos(\theta) + r^2 \leq (1-r)^2 + r\theta^2 \leq \frac{1}{4} \rho^2 + \frac{3}{2}(\frac{1}{5}\pi)^2 \rho^2 < \rho^2, 
    \]
    where we used that $\cos(\theta) \geq 1-\theta^2/2$. We conclude that the distance from $-1$ to $z$ is indeed less than $\rho$.

    Now let $n \in \mathbb{Z}_{\geq 1}$. For even $n$ the distinct roots of unity inside $S_\rho$ are given by $-\exp(2\pi i k/n)$ for integer $k$ satisfying $|k| \leq \frac{1}{10} \rho n$. There are $2\lfloor \frac{1}{10} \rho n\rfloor + 1$ such $k$. For odd $n$ the distinct roots of unity inside $S_\rho$ are given by $-\exp(\pi i (2k+ 1)/n)$ for integer $k$ satisfying $|2k+1| \leq \frac{1}{5} n \rho$ there are $\lfloor \frac{1}{10}n\rho - \frac{1}{2}\rfloor + \lfloor \frac{1}{10}n\rho + \frac{1}{2}\rfloor + 1$ such $k$. In both cases there are at least 
    \[
        \frac{1}{5}n\rho - 1 = \frac{1}{8}n\rho + \frac{3}{40}n\rho  - 1 \geq \frac{1}{8}n\rho + 2
    \]
    roots of unity inside $S_\rho$.
\end{proof}
We can now prove Theorem~\ref{thm: zeros_at_inf}, which we restate here for convenience. 
\explicitBounds*

\begin{proof}
    Let $B_{1/R}$ denote the disk of radius $1/R$.
    By Lemma~\ref{lem: image alpha} the image $\beta(B_{1/R})$ contains a disk of radius $\frac{1}{2}R^{-\alpha}$ around $-1$. Furthermore, by Lemma~\ref{lem: Sector in Disk}, this disk contains a sector $S_{\frac{1}{2}R^{-\alpha}}$ as defined in that lemma. 
    
    Let $k = \lceil \frac{1}{16}n R^{-\alpha}\rceil$. It follows from from Lemma~\ref{lem: Sector in Disk} that there are at least $k+2$ angles $\theta_1, \dots, \theta_{k+2}$, ordered increasingly, such that $e^{in\theta_m} = 1$ and $e^{i\theta_m}$ is contained in $S_{\frac{1}{2}R^{-\alpha}}$ for all $m$. For $m = 1, \dots, k+1$ define 
    \[
        T_{m} = \{z \in \mathbb{C}: 1-\frac{1}{4} R^{-\alpha} \leq |z| \leq 1 + \frac{1}{4}R^{-\alpha} \text{ and } \theta_m \leq \arg(z) \leq \theta_{m+1}\}.
    \]
    Observe that $T_m\subseteq \beta(B_{1/R})$ for all $m$. 
    
    We claim that for any $w \in \partial T_m$ we have $|1+w^n| > \left(\tfrac{1}{2}\right)^n N$. Because $n \gg N$ clearly $\left(\tfrac{1}{2}\right)^n N < \frac{1}{2}$, so it will be sufficient to prove that $|1+w^n| > \frac{1}{2}$. On the radial arcs of $\partial T_m$ we have $w^n = |w|^n$ so $|1 + w^n| = 1 + |w|^n > \frac{1}{2}$. If $w$ lies in the inner circular arc of $\partial T_m$ we have 
    \[
        |1+w^n| \geq 1 - |w|^n = 1 - \left(1-\frac{1}{4} R^{-\alpha}\right)^n \geq 1 - \exp\left[-\frac{n}{4}R^{-\alpha}\right] \geq 1 - e^{-20} > \frac{1}{2}.
    \]
    If $w$ lies on the outer circular arc of $\partial T_m$ we have
    \[
        |1+w^n| \geq |w|^n - 1 = \left(1+\frac{1}{4} R^{-\alpha}\right)^n - 1 \geq 1 + \frac{n}{4} R^{-\alpha} - 1 \geq 20 > \frac{1}{2}.
    \]
    This proves the claim.

    We now recall that 
    \[
        \frac{z^{n\alpha} \cdot Z(C_n \square \mathcal{T}; 1/z)}{q^+(z)^n} = 1 + \beta(z)^n + \sum_{s \neq q^{\pm}(z)} \left(\frac{s}{q^+(z)}\right)^n,
    \]
    where the sum runs over the eigenvalues of $M_z$ not equal to $q^{\pm}(z)$. Let $Q(z)$ denote this latter sum. By Lemma~\ref{lem: bound rest eigenvalues} we have that $|Q(z)| \leq \left(\frac{1}{2}\right)^n N$ for all $z \in B_{1/R}$. Note that $T_m$ contains an element $w_0$ such that $w_0^n = -1$. Consider a connected component $C_m$ of $\beta^{-1}(T_m)$ inside $B_{1/R}$. By the maximum modulus principle $C_m$ is simply connected and $\partial C_m$ is mapped to $\partial T_m$ by $\beta$. Moreover, $C_m$ contains an element $z_0$ in its interior with $\beta(z_0) = w_0$. For $z \in \partial C_m$ we thus have 
    \[
        |1 + \beta(z)^n| > \left(\tfrac{1}{2}\right)^n N \geq |Q(z)|,
    \]
    while $1+\beta(z_0)^n = 0$. It follows from Rouch\'e's theorem that $1 + \beta(z)^n + Q(z)$ contains a zero inside the interior of $C_m$. Therefore $z^{n\alpha} \cdot Z(C_n \square \mathcal{T}; 1/z)$ has $k+1$ distinct zeros inside $B_{1/R}$. As long as such a zero $z$ is itself nonzero then $\lambda = 1/z$ is a zero of $Z(C_n \square \mathcal{T}; \lambda)$ with norm at least $R$. We conclude that $Z(C_n \square \mathcal{T}; \lambda)$ has at least $k = \lceil \frac{1}{16}n R^{-\alpha}\rceil$ such zeros. 
\end{proof}

This theorem leads to a proof of the second part of the main theorem as is shown in the beginning of this section.

\section{An FPTAS for balanced tori}\label{sec:algorithms}
In this section we give a proof of Proposition~\ref{prop:algorithm}.
We will require the Newton identities that we recall here for convenience of the reader.
Let $p(x)=\sum_{j=0}^n a_j x^j$ be a polynomial with positive constant term and let $\log p(x)=\log(a_0)+\sum_{j\geq 1}-p_j \tfrac{x^j}{j}$ be the series expansion of the logarithm of $p$ around $0$. 
Then the Newton identities yield (cf.~\cite[Proposition 2.2]{PatelRegts17})
\begin{equation}\label{eq:newton}
    ka_k=-\sum_{i=0}^{k-1}a_ip_{k-i}
\end{equation}
for each $k\geq 1$, where $a_i = 0$ for $i > n$.

Proposition~\ref{prop:algorithm} immediately follows from the following more detailed result.
\begin{prop}
Let $d\in \mathbb{Z}_{\geq 2}$ and let $C>0$. Let $\delta(d,C)$ be the constant from Theorem~\ref{thm:main zero-freeness result contour version}.
For each $\lambda$ such that $|\lambda|>1/\delta(d,C)$ there exists an FPTAS for approximating $Z_{\torus}(\lambda)$ for $\torus\in \Balancedtori(d,C)$.
\end{prop}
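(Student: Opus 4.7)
The plan is to apply the Barvinok interpolation method~\cite{BarBook} in the efficient algorithmic form of Patel and Regts~\cite{PatelRegts17}, combining the zero-freeness from Theorem~\ref{thm:main zero-freeness result contour version} with the contour-enumeration bounds of Lemmas~\ref{lem:contours bound connected} and~\ref{lem:large contours bound}.

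First I would fix $\lambda$ with $|\lambda| > 1/\delta(d,C)$, set $z_0 := 1/\lambda$, and choose $\delta' \in (|z_0|, \delta(d,C))$. By Corollary~\ref{cor:Zmatch = Zind}, it suffices to produce a $(1 \pm \varepsilon)$-multiplicative approximation to $Z_{\text{match}}(\torus; z_0)$ in time polynomial in $|\torus|/\varepsilon$. Theorem~\ref{thm:main zero-freeness result contour version} ensures that $Z_{\text{match}}(\torus; z) \neq 0$ on $B_{\delta(d,C)}$, and $Z_{\text{match}}(\torus; 0) = 2$, so the principal branch $L(z) := \log Z_{\text{match}}(\torus; z) = \log 2 - \sum_{k \geq 1} p_k z^k/k$ is holomorphic on $B_{\delta(d,C)}$. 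Factoring $Z_{\text{match}}(\torus; z)$ over its zeros, each lying outside $B_{\delta(d,C)}$, yields $|L(z)| = O(|\torus|)$ uniformly on $\overline{B_{\delta'}}$, and Cauchy estimates then give $|p_k| \leq A k |\torus| (\delta')^{-k}$ for some constant $A$ depending only on $\delta'$ and $\delta(d,C)$. Truncating $L$ at degree $m := \lceil B \log(|\torus|/\varepsilon)\rceil$, for a sufficiently large constant $B = B(\delta'/|z_0|)$, then produces additive error at most $\varepsilon/2$ at $z_0$; exponentiating yields the desired multiplicative approximation of $Z_{\text{match}}(\torus; z_0)$.

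By the Newton identities~\eqref{eq:newton}, computing $p_1,\ldots,p_m$ reduces in time $O(m^2)$ to computing $a_k := [z^k] Z_{\text{match}}(\torus; z)$ for $k = 0,1,\ldots,m$. I would do this via the polymer representation of Lemma~\ref{lem:contourpartitionfunction_usingtoruscomptaibility}; the identity
$$Z_{\text{match}}(\torus; z) = Z^{\text{even}}_{\text{match}}(\torus; z) + \bigl( Z^{\text{even}}_{\text{match}}(\torus; z) + Z^{\text{large}}_{\text{match}}(\torus; z) \bigr)$$
expresses $Z_{\text{match}}$ as a sum of two polymer partition functions. The first $m$ Taylor coefficients of the logarithm of each such partition function are obtained by truncating the corresponding cluster expansion at clusters of total contour support-size at most $m/\rho(d)$ (Peierls, Lemma~\ref{lem:Peierls condition}) and enumerating them by Patel--Regts-style branching; exponentiating and summing gives the $a_k$. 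This enumeration is tractable precisely because Lemma~\ref{lem:large contours bound} bounds the number of contours (small or large, of any type) of size $s$ containing a given vertex by $(4C_d e^C)^s$, using $|\torus|^{1/\ell_1} \leq e^C$ for $\torus \in \Balancedtori_d(C)$. The resulting runtime is $|\torus| \cdot c_{d,C}^m = \mathrm{poly}(|\torus|/\varepsilon)$ since $m = O(\log(|\torus|/\varepsilon))$.

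The hard part will be the recursive computation of the polymer weights $w(\gamma; z) = z^{\|\gamma\|} \cdot Z^{\overline{\varphi}}_{\text{match}}(\inte_{\overline{\varphi}}(\gamma); z)/Z^{\varphi}_{\text{match}}(\inte_{\overline{\varphi}}(\gamma); z)$ as Taylor series up to degree $m$; these rational weights must themselves be computed recursively on the strictly smaller induced subgraphs $\inte_{\overline{\varphi}}(\gamma)$, which arise from running the same branching enumeration on those subgraphs. The balancedness of $\torus$ enters the algorithmic argument exclusively through Lemma~\ref{lem:large contours bound}, but it is essential: without it the factor $|\torus|^{1/\ell_1}$ grows unboundedly in $|\torus|$ and the per-vertex branching base ceases to be a constant, destroying the polynomial runtime bound.
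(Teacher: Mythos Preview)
Your proposal is correct and follows essentially the same route as the paper: Barvinok interpolation on $Z_{\text{match}}$, the decomposition into the two polymer partition functions $Z^{\text{odd}}_{\text{match}}$ and $Z^{\text{even}}_{\text{match}}+Z^{\text{large}}_{\text{match}}$, Newton identities back and forth, cluster expansions truncated via Peierls, and the recursive computation of the rational weights $w(\gamma;z)$ on interiors. You also correctly pinpoint that balancedness enters exactly through Lemma~\ref{lem:large contours bound} via $|\torus|^{1/\ell_1}\le e^C$.

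The only substantive difference is packaging: the paper does not redo the Patel--Regts enumeration from scratch but instead invokes Theorem~2.2 of~\cite{PirogovSinaiWillGuusTyler} as a black box for polymer models and then explains which parts of its proof need patching because large contours may have \emph{disconnected} support (so the polymer/cluster graphs are not connected subgraphs in the usual sense). Concretely, the paper shows that listing all candidate supports of size $m$ and all cluster graphs of size $O(m)$ still costs $\exp(O(m))$ by bounding the number of components by $m/\ell_1$ and absorbing the factor $n^{m/\ell_1}\le e^{Cm}$, exactly as in your use of Lemma~\ref{lem:large contours bound}. Your plan implicitly contains this, but when you flesh it out you should make the disconnected-support case explicit, since standard connected-subgraph enumeration lemmas do not apply to it directly.
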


\begin{proof}
Let us write $p_1(z):=Z^{\text{even}}_{\text{match}}(\torus;z)+Z^{\text{large}}_{\text{match}}(\torus;z)$, $p_2(z):=Z^{\text{odd}}_{\text{match}}(\torus;z)$ and $p(z)=p_1(z)+p_2(z).$
Taking $z=1/\lambda$, it suffices to approximate $p(z)$ by Corollary~\ref{cor:Zmatch = Zind}. 

Since $p$ has no zeros in the disk of radius $\delta(d,C)$ it suffices by Barvinok's interpolation method (\cite[Section 2.2]{BarBook}) to compute an $\varepsilon$-approximation to $\log p(z)$. This can be done by computing the first $O(\log (n/\varepsilon))$ coefficients of the Taylor series of $\log p(z)$. 
By \eqref{eq:newton} we can compute the first $m$ coefficients of the Taylor series of $\log p(z)$ from the first $m$ coefficients of the polynomial $p(z)$ in $O(m^2)$ time.
These coefficients in turn can be obtained from the first $m$ coefficients of $p_1(z)$ and $p_2(z)$, which in turn, using \eqref{eq:newton} again, can be computed from the first $m$ coefficients of the Taylor series of $\log p_1(z)$ and $\log p_2(z)$ in $O(m^2)$ time.
To obtain an FPTAS it thus suffices to compute the first $O(\log (n/\varepsilon))$ of the Taylor series of $\log p_1(z)$ and $\log p_2(z)$ in time polynomial in $n/\varepsilon$.

By the cluster expansion we have power series expressions for $\log p_1(z)$ given in \eqref{eq:cluster expansion large even} and for $\log p_2(z)$ given in \eqref{eq:cluster expansion trunc} using Theorem~\ref{thm:stable contours are all contours}.
From these we can extract the coefficients of the respective Taylor series.
Indeed, we can restrict the sum~\eqref{eq:cluster expansion large even} to clusters $X=\{\gamma_1,\ldots, \gamma_k\}$ such that  $\sum_{i=1}^k \|\gamma_i\|\leq  m$ and compute the coefficients of $z^j$ for $j\leq m$ of this restricted series.
The idea is to do this iteratively, since the weights appearing in the sum, $w(\gamma;z)$ are ratios of partition functions of smaller domains for which we can assume that we have already computed the first $m$ coefficients of its Taylor expansion around $0$.

To make this precise we need to combine some ingredients from~\cite{PirogovSinaiWillGuusTyler}.
We wish to apply Theorem 2.2 from~\cite{PirogovSinaiWillGuusTyler}\footnote{In the published version there is an error in the proof of that result, but this is corrected in a later arXiv version \texttt{{arXiv:1806.11548v3}}}.
For this we should view both $p_1$ and $p_2$ as polymer partition functions of a collection of bounded degree graphs. For us this collection will be the collection of all induced closed subgraphs of tori contained in $\Balancedtori_d(C)$ and denoted by $\mathfrak{G}$. (Here we maintain the information of the torus containing the closed induced subgraph.)
For $p_2$ this is clear but for $p_1$ this is a bit more subtle since in~\cite{PirogovSinaiWillGuusTyler} supports of polymers are connected subgraphs of graphs in $\mathfrak{G}$.
We would like to view our contours as polymers, but large contours may have disconnected support.
With this change, there are some potentials issues with the proof of Theorem 2.2.
We first indicate how to circumvent these issues and then verify the assumptions of that theorem.

One potential issue is in the use of \cite[Lemma 2.4]{PirogovSinaiWillGuusTyler}.
We sidestep this in a similar way as in the proof of Lemma~\ref{lem:large contours bound}.

Let $G\in \mathfrak{G}$. 
We know that $G$ is an induced closed subgraph of some torus $\torus$ in $\Balancedtori_d(C)$. 
Let $\ell_1$ be the shortest side length of $\torus$. Then the number of vertices of $G$, denoted by $n$, is at most $\exp(C\ell_1)$.
We need to list all subgraphs $H$ of $G$ such that either $H$ is connected or that each component of $H$ has size at least $\ell_1$ (since any component of a large contour has at least $\ell_1$ vertices) in time $\exp(O(m))$.
For connected graphs $H$ this follows directly from~\cite[Lemma 2.4]{PirogovSinaiWillGuusTyler}.
We now address the listing of subgraphs $H$ that are not necessarily connected. 
The number of components of such $H$ is at most $m/\ell_1$. 
By \cite[Lemma 2.4]{PirogovSinaiWillGuusTyler} it takes time $n \exp(O(m_i))$ to list all connected subgraphs $H_i$ of size $m_i$ and therefore it takes time $n^t \exp(\sum_{i=1}^t O(m_i))$ to list all subgraphs $H$ with $t$ components of sizes $m_1,\ldots,m_t$ respectively.
Let us denote $k:=\lceil m/\ell_1\rceil$. 
Putting this together this gives a running time bound of 
\begin{align*}
& \sum_{\substack{m_1, \ldots, m_k \\ \sum m_i = m \text{ and } m_i \geq \ell_1}}\prod_{i=1}^kn\exp(O(m_i))\leq \binom{m+k}{m}n^k\exp(O(m))
\\
&=n^k\exp(O(m))\leq \exp(kC\ell_1)\exp(O(m))=\exp(O(m)),
\end{align*}
for listing these graphs, as desired.

Another potential issue is in the use of cluster graphs in the proof of \cite[Theorem 2.2]{PirogovSinaiWillGuusTyler}.
In~\cite{PirogovSinaiWillGuusTyler} cluster graphs are assumed to be connected, but for us they may be disconnected (in case one of the contours in the cluster is large). 
In that case we have a lower bound of $\ell_1$ on the size of each component. 
So as above we can construct the list of all cluster graphs of size $O(m)$ in time $\exp(O(m))$.
With these modifications the proof of Theorem 2.2 given in~\cite{PirogovSinaiWillGuusTyler} still applies.

We next verify all the assumptions of (the modification of) Theorem 2.2 in~\cite{PirogovSinaiWillGuusTyler}.

The first assumption in the theorem is clearly satisfied, since $\|\gamma\|\leq |\overline{\gamma}|$ for any contour $\gamma$.

Our weight functions indeed satisfy Assumption 1 in~\cite{PirogovSinaiWillGuusTyler} by Lemma~\ref{lem:Peierls condition}.
It follows from the proof of~\cite[Lemma 3.3]{PirogovSinaiWillGuusTyler} that the first $m$ coefficients of the weights $w(\gamma;z)$ can be computed in time $\exp(m+\log |\gamma|).$ 
Here we need to take into account that large contours may consist of more than one component, and they should come first in the ordering of contours that is created in the proof of that lemma.

In our setting the third requirement translates that for a subgraph $H$ of some $G\in \mathfrak{G}$ we need to be able to list all polymers whose support is equal to $H$ in time $\exp(O(|V(H)|)$.
Let $\torus$ be the torus containing $G$. Let $\ell_1$ denote its smallest side length.
In case $H$ is not connected we know that we are dealing with a potentially large contour, while if $H$ is connected we have to compute its box-diameter to check whether or not the contour is large or small. This can be done in time polynomial in $|V(H)|$.
If $H$ is a candidate large contour it must have size at least $\ell_1$ and since the number of vertices of $G$ is at most $\exp(O(\ell_1)=\exp(O(|V(H)|)$, it follows that we can determine all components of $\torus\setminus V(H)$ in time $\exp(O(|V(H)|)$. 
If $H$ is a candidate small contour, we can determine all components of $\torus\setminus V(H)$ of size bounded by $|V(H)|^d$ in time polynomial in $|V(H)|$, by breadth first search. The remaining component must then be the exterior of the candidate contour.
We then go over all possible ways of assigning $0,1$ to the vertices of $V(H)$ and types to the components, i.e. select even or odd and check whether this yields a valid configuration. 
For this we need to check that vertices of $H$ are incorrect as per Definition~\ref{def:correct}.
Since the number of components is at most $O(|V(H)|)$ this takes time $\exp(O(|V(H)|)$.

The fourth assumption requires zero-freeness, which follows from convergence of the cluster expansion given in Theorem~\ref{thm:logtrunc converges_tori} in combination with Theorem~\ref{thm:stable contours are all contours}  for $p_2$ and in Theorem~\ref{thm:logtrunc converges for even plus big} for $p_1$.

This finishes the proof.
\end{proof}

\bibliographystyle{alpha}
\bibliography{biblio}

\end{document}